\newcommand{\A}{{\mathbb{A}}}
\newcommand{\N}{{\mathbb{N}}}
\newcommand{\Z}{{\mathbb{Z}}}
\newcommand{\Q}{{\mathbb{Q}}}
\newcommand{\R}{\mathbb{R}}			
\newcommand{\C}{\mathbb{C}}
\newcommand{\F}{{\mathbb F}}
\newcommand{\G}{{\mathbb G}}
\newcommand{\Hx}{{\mathcal H(x)}}
\renewcommand{\AA}{{\mathbb A}}
\newcommand{\PP}{{\mathbb P}}
\newcommand{\aank}{{\mathbb{A}^{1,\mathrm{an}}_k}}	
\newcommand{\pank}{{\mathbb{P}^{1,\mathrm{an}}_k}}	
\newcommand{\pana}[1]{{\mathbb{P}^{1,\mathrm{an}}_{#1}}}	
\newcommand{\calA}{{\mathcal A}}
\newcommand{\calB}{{\mathcal B}}
\newcommand{\calC}{{\mathcal C}}
\newcommand{\calG}{{\mathcal G}}
\newcommand{\calH}{{\mathcal H}}
\newcommand{\calI}{{\mathcal I}}
\newcommand{\calL}{{\mathcal L}}
\newcommand{\calM}{{\mathcal M}}
\newcommand{\calO}{{\mathcal O}}
\newcommand{\calS}{{\mathcal S}}
\newcommand{\calT}{{\mathcal T}}
\newcommand{\frakF}{{\mathfrak F}}
\newcommand{\frakB}{{\mathfrak B}}
\newcommand{\id}{\mathrm{id}}
\newcommand{\an}{\mathrm{an}}
\newcommand\wc{{\mkern 2mu\cdot\mkern 2mu}}
\newcommand\va{|\wc|}
\newcommand\nm{\|\wc\|}
\newcommand\eps{\varepsilon}
\renewcommand\le{\leqslant}
\renewcommand\ge{\geqslant}
\newcommand\Mumf{\mathrm{Mumf}}
\newcommand\MCG{\mathrm{MCG}}
\newcommand{\too}{\longrightarrow}
\newcommand{\mapstoo}{\longmapsto}
\DeclarePairedDelimiter\abs{\lvert}{\rvert}
\DeclareMathOperator{\Hom}{Hom}
\DeclareMathOperator{\Aut}{Aut}
\DeclareMathOperator{\Out}{Out}
\DeclareMathOperator{\Gal}{Gal}
\DeclareMathOperator{\GL}{GL}
\DeclareMathOperator{\PGL}{PGL}
\DeclareMathOperator{\Frac}{Frac}	
\DeclareMathOperator{\pr}{pr}
\DeclareMathOperator{\tr}{tr}
\DeclareMathOperator{\Stab}{Stab}
\newtheoremstyle{plain2}    
  {}            
  {}            
  {\itshape}    
  {}            
  {\bfseries}   
  {.}           
  {5pt plus 1pt minus 1pt}  
  {{\thmnumber{(#2)} \thmname{#1}{\thmnote{ (#3)}}}}          
\newtheorem{theorem}{Theorem}[subsection]
\newtheorem{theorem*}{Theorem}
\newtheorem{corollary}[theorem]{Corollary}
\newtheorem{lemma}[theorem]{Lemma}
\newtheorem{proposition}[theorem]{Proposition}
\newtheoremstyle{definition2}    
  {}   
  {}   
  {\normalfont}  
  {}       
  {\bfseries} 
  {.}        
  {5pt plus 1pt minus 1pt} 
  {{(\thmnumber{#2}) \thmname{#1}{\thmnote{#3}}}}          
\theoremstyle{definition}
\newtheorem{definition}[theorem]{Definition}
\newtheorem{notation}[theorem]{Notation}
\newtheorem{example}[theorem]{Example}
\theoremstyle{remark}
\newtheorem{remark}[theorem]{Remark}
\newtheoremstyle{stepstyle}
  {}     {}   
  {\normalfont}  
  {\parindent}       
  {\itshape} 
  {}         
  {5pt plus 1pt minus 1pt} 
  {{\thmname{#1} \thmnumber{#2}:{\thmnote{#3}}}}          
\theoremstyle{stepstyle}
\newtheoremstyle{point}
  {}     {}   
  {\normalfont}  
  {}       
  {\bfseries} 
  {}         
  {5pt plus 1pt minus 1pt} 
  {{\thmname{#1}\thmnumber{#2}.\thmnote{ #3.}}}          
\theoremstyle{point}
\newtheorem{point}[subsection]{}
\newtheoremstyle{point*}
  {}     {}   
  {\normalfont}  
  {}       
  {\bfseries} 
  {}         
  {5pt plus 1pt minus 1pt} 
  {{\thmname{#1}\thmnote{ #3.}}}          
\theoremstyle{point*}
\newtheorem{point*}[subsubsection]{}
\numberwithin{equation}{subsection}
\newtheoremstyle{subpoint}
  {}     {}            
  {\normalfont}  
  {}                   
  {\normalfont} 
  {}         
  {5pt plus 1pt minus 1pt} 
  {{\thmname{#1}(\thmnumber{#2})\thmnote{ #3.}}}          
\theoremstyle{subpoint}
\newtheorem{subpoint}[equation]{}
\title{Schottky spaces and universal Mumford curves over $\Z$}
\author{J\'er\^ome Poineau}
\address{Universit\'e de Caen - Normandie}
\email{\href{mailto:jerome.poineau@unicaen.fr}{jerome.poineau@unicaen.fr}}
\urladdr{\url{https://poineau.users.lmno.cnrs.fr/}}
\author{Daniele Turchetti}
\address{University of Warwick}
\email{\href{mailto:daniele.turchetti.math@gmail.com}{daniele.turchetti.math@gmail.com}}
\urladdr{\url{https://www.mathstat.dal.ca/~dturchetti/}}
\@date \else {\vskip3ex \centering\footnotesize\@date\par\vskip1ex}\fi
\else \@footnotetext{\@setdate}\fi}
\begin{document}

\begin{abstract}
For every integer $g \geq 1$ we define a universal Mumford curve of genus $g$ in the framework of Berkovich spaces over $\Z$.
This is achieved in two steps: first, we build an analytic space $\calS_g$ that parametrizes marked Schottky groups over all valued fields.
We show that $\calS_g$ is an open, connected analytic space over $\Z$.
Then, we prove that the Schottky uniformization of a given curve behaves well with respect to the topology of $\calS_g$, both locally and globally.
As a result, we can define the universal Mumford curve $\calC_g$ as a relative curve over $\calS_g$ such that every Schottky uniformized curve can be described as a fiber of a point in $\calS_g$.
We prove that the curve $\calC_g$ is itself uniformized by a universal Schottky group acting on the relative projective line $\mathbb{P}^1_{\calS_g}$.
Finally, we study the action of the group $\Out(F_g)$ of outer automorphisms of the free group with $g$ generators on $\calS_g$, describing the quotient $\Out(F_g) \backslash \calS_g$ in the archimedean and non-archimedean cases.
We apply this result to compare the non-archimedean Schottky space with constructions arising from geometric group theory and the theory of moduli spaces of tropical curves.
\end{abstract}

\maketitle

\setcounter{tocdepth}{1}
\tableofcontents

\section{Introduction}
The uniformization of Riemann surfaces is one of the most central results in the theory of analytic curves. Independently proven in 1907 by P. Koebe and H. Poincar\'e, this theorem is the culmination of almost a century of work in complex geometry\footnote{A complete account of the results and the mathematicians that made this breakthrough possible is given with detailed proofs in the impressive collective work \cite{SaintGervaisBook}.}, and states that the universal cover of a connected compact complex analytic curve of genus $g$ is analytically isomorphic to:
\begin{itemize}
\item[-] the projective line if $g=0$;
\item[-] the affine line if $g=1$;
\item[-] the open unit disc if $g\geq 2$.
\end{itemize}
This fact bears important consequences in numerous fields of mathematics, such as differential equations, number theory (special functions, modular forms, elliptic curves), Kleinian and Fuchsian group representations, and the theory of algebraic curves and their fundamental groups.
After proving the aforementioned theorem, Koebe went on to show several related results that clarified many aspects of uniformization theory.
The most notorious one is the \emph{retrosection theorem}, stating that every connected compact complex analytic curve is the quotient of an open dense subset $O \subset \C$ by the action of a free, finitely generated subgroup $\Gamma$ of $\PGL_2(\C)$ for which $O$ is the region of discontinuity. Any group $\Gamma$ arising in this way is called a \emph{Schottky group}, and the resulting theory is usually referred to as \emph{Schottky uniformization} of Riemann surfaces.

The quest for a non-archimedean analogue of uniformization theory is at the heart of the establishment of non-archimedean analytic geometry.
In fact, J. Tate developed his theory of rigid analytic geometry to show that elliptic curves with split multiplicative reduction over a non-archimedean field $k$ are always of the form $\G_m / q^{\Z}$ for some element $q\in k$ such that $0<|q|<1$.
In the early '70s, D. Mumford had the intuition that Tate's uniformization could be extended to higher genus curves by building a non-archimedean theory of Schottky uniformization.
In his celebrated paper \cite{Mumford72}, he defined a notion of non-archimedean Schottky group and, using tools from formal geometry and Bruhat-Tits theory, showed that any such group acts on a suitable open subset of the projective line, and that the quotient identifies to (the analytification of) a projective curve. Finally, he characterized the curves that admit Schottky uniformization according to their reduction type. A further development of this theory in the context of rigid geometry was subsequently carried out by several authors, most notably by L.~Gerritzen and M.~van der Put in their book \cite{GerritzenPut80}.

The non-archimedean theory is remarkable in many aspects, and particularly for the ways it addresses the issue of the lack of a nice topological structure: non-archimedean fields are totally disconnected, hence it is highly nontrivial even to define what an open dense subset of the projective line in this context should be.
The solution of Tate's rigid geometry, in the late '50s, is to consider non-archimedean spaces with a Grothendieck topology instead of a classical topology.
A more modern approach, developed by V.~Berkovich in the late '80s, consists in defining non-archimedean analytic spaces as spaces of absolute values. Since they contain many points beyond the classical ones, they are not easily described explicitly, but they may be endowed with a structure of topological space in the classical sense.
Given a non-archimedean field $k$, the group $\PGL_2(k)$ acts naturally on the Berkovich projective line $\pank$ and one can describe Schottky uniformization of Mumford curves in complete analogy with the complex case. 
A study of classical results on Schottky groups and Mumford curves in this new framework was initiated in Berkovich's first monograph (see \cite[\S 4.4]{Berkovich90}) and has been expanded in recent work by the authors \cite{VIASMII}, whose appendix contains a description of striking applications of these results.

\medbreak

In this paper, we are interested in the interplay between the archimedean and non-archimedean theories of Schottky uniformization.
In order to construct a rigorous and coherent common framework for these two theories, we adopted the viewpoint of ``Berkovich spaces over $\Z$''. 
Let us introduce it in a few words.

Although the theory of Berkovich analytic geometry was originally geared towards non-archimedean spaces, that is to say spaces over non-archimedean valued fields, it is worth noting that Berkovich's original definitions apply under less restrictive assumptions, and allow to make sense of analytic spaces over arbitrary Banach rings. For example, one may consider the base ring $(\C,\va_{\infty})$, where $\va_{\infty}$ denotes the usual absolute value. The Berkovich analytic spaces obtained in this way are nothing but the familiar complex analytic spaces.

Note that the field~$\C$ may be endowed with other absolute values. By considering a power~$\va_{\infty}^\eps$ of the usual absolute value, with $\eps\in (0,1]$, one still obtains a theory completely parallel to the complex analytic theory, but with a different normalization. Passing to the limit when $\eps$ tends to~$0$, one is led to consider the field~$\C$ endowed with the trivial absolute~$\va_{0}$ (defined by $\abs{a}_{0} = 0$ if $a = 0$ and $\abs{a}_{0} = 1$ otherwise), which is a non-archimedean field, hence belongs to the realm of the usual theory of Berkovich spaces. 

By using the theory of Berkovich spaces over Banach rings, one may fit all the preceding spaces into a common one. 
To be more precise, let us endow the field~$\C$ endowed with the (non-multiplicative) hybrid norm $\nm_{\textrm{hyb}} := \max(\va_{0},\va_{\infty})$. Its spectrum in the sense of Berkovich is $\calM_{\textrm{hyb}} := 
\{\va_{0}\} \cup \{\va_{\infty}^\eps, 0< \eps\le 1 \}$, and every Berkovich space over $(\C,\nm_{\textrm{hyb}})$ admits a natural morphism~$\pr_{\textrm{hyb}}$ to~$\calM_{\textrm{hyb}}$. As one may expect,  the fibers $\pr_{\textrm{hyb}}^{-1}(\va_{\infty}^\eps)$ are complex analytic spaces, whereas the fiber $\pr_{\textrm{hyb}}^{-1}(\va_{0})$ is a non-archimedean Berkovich space. As a result, such a hybrid space witnesses complex analytic spaces converging towards a non-archimedean space. This rough idea actually leads to concrete results and has allowed to investigate precisely various properties of degeneration of families of complex spaces such as mixed Hodge structures \cite{BerkovichW0}, volume forms \cite{BoucksomJonsson}, equilibrium measures of endomorphisms \cite{FavreEndomorphisms}, etc. It also found a striking arithmetic application to uniform Manin-Mumford bounds for a family of genus~2 curves in~\cite{DKY}. From this same point of view, in this paper, we will observe complex curves with a Schottky uniformization converging to non-archimedean curves with a Mumford uniformization.

It is possible to push further this line of thought and consider not only families of complex spaces and their non-archimedean limits (lying over a trivially valued field), but even $p$-adic spaces and, more generally, spaces over arbitrary valued fields. To this end, one starts with the base ring $(\Z,\va_{\infty})$ and consider the space~$\calM(\Z)$ of all absolute values (and more generally multiplicative seminorms) on~$\Z$. Thanks to Ostrowski's theorem, up to a power, the latter are known to be exactly the usual one~$\va_{\infty}$, the $p$-adic one~$\va_{p}$, where~$p$ is a prime number, the trivial one~$\va_{0}$ and the multiplicative seminorm~$\va_{p,0}$ induced by the trivial absolute value on~$\F_{p}$, where $p$ is a prime number and~$\F_{p}$ is the finite field with $p$~elements. By the same reasoning as in the hybrid case, Berkovich analytic spaces over~$\Z$ are global objects that naturally admit a morphism~$\pr_{\Z}$ to~$\calM(\Z)$. The fibers or~$\pr_{\Z}$ may either be complex analytic spaces, $p$-adic analytic spaces, or spaces over trivially valued fields. This is the framework we will use to carry out our study of analytic uniformization of curves in a uniform way. It will allow us to build a space parametrizing all uniformizable curves (or, equivalently, Schottky groups) over all possible valued fields: archimedean or not, of arbitrary characteristic and arbitrary residue characteristic.


The foundations of the theory of analytic spaces over Banach rings
were laid by Berkovich at the beginning of his manuscript~\cite{Berkovich90}, but he soon switched more specifically to non-archimedean spaces. Over base rings of a special type (including, among others, $\Z$, rings of integers of numbers fields, and the hybrid~$\C$), the theory was then further developed by the first-named author in~\cite{A1Z} (case of the affine line), \cite{Poineau13} (local algebraic properties such as Noetherianity of the stalks or coherence of the structure sheaf) and~\cite{LemanissierPoineau20} with T.~Lemanissier (definition of the category of analytic spaces, local path-connectedness of the spaces, cohomological vanishing on disks). 
As an example of application, let us mention that Berkovich spaces over~$\Z$ were used in~\cite{Raccord} to give a geometric proof of a result of D.~Harbater \cite{HarbaterGaloisCovers} solving the inverse Galois problem over a ring of convergent arithmetic power series (the subring of $\Z[\![t]\!]$ consisting of power series that converge on the complex open unit disc).

\medbreak

Let us go back to the construction of a common framework for Schottky uniformization. 
Our first main new contribution in this direction is the definition of a moduli space $\calS_g$ of Schottky groups of rank $g$ as a Berkovich space over $\Z$.
This comes as a generalization of the \emph{Schottky spaces} that have been extensively investigated both over the complex numbers and over non-archimedean fields.
In order to simplify the notation, let us assume that $g\geq 2$ (the case $g=1$ is analogous in many aspects, though much simpler).
The complex Schottky space is defined by L.~Bers in~\cite{Bers75} as a complex submanifold of $\C^{3g-3}$ and parametrizes complex Schottky groups together with the choice of a basis up to conjugation by elements of $\PGL_2(\C)$.
Its properties, generalizations, and various applications have been studied ever since, for example in \cite{JorgensenMardenEtAl79}, \cite{GerritzenHerrlich88}, and \cite{Herrlich91}.
In analogy with the complex case, Gerritzen \cite{Gerritzen81}, \cite{Gerritzen82} gave a definition of Schottky space over a non-archimedean field $k$ as a rigid analytic subspace of the affine space of dimension $3g-3$ over~$k$, \textit{i.e.} $\bar{k}^{3g-3}/\Gal(\bar k/k)$, where $\bar{k}$ is an algebraic closure of~$k$. Both parametrizations are based on the notion of \textit{Koebe coordinates}: a hyperbolic element $\gamma$ of $\PGL_2(\C)$ or $\PGL_2(k)$ is uniquely determined by the datum of its attracting fixed point $\alpha$, its repelling fixed point $\alpha'$, and its multiplier $\beta$, and, conversely, any ordered triplet $(\alpha, \alpha', \beta)$ of elements of~$\C$ or~$k$ satisfying 
\[\begin{cases} \alpha \neq \alpha' \\ 0<|\beta|<1 \end{cases}\]
gives rise to a hyperbolic element of $\PGL_2(\C)$ or $\PGL_2(k)$. A Schottky group of rank~$g$ admits a basis consisting of $g$ elements, which gives rise to $3g$ Koebe coordinates. Using a M\"obius transformation to send the first three fixed points to~0, 1 and~$\infty$, which amounts to some normalization, reduces the number to $3g-3$ coordinates.

The space~$\calS_g$ defined in this paper 
is a subset of an affine Berkovich space over~$\Z$, namely~$\AA^{3g-3,\an}_{\Z}$. 
Recall that each point~$x$ in~$\AA^{3g-3,\an}_\Z$ naturally determines a complete valued field~$\calH(x)$, its \textit{complete residue field}, as well as $3g-3$ elements in it (obtained by evaluating the coordinate functions at~$x$).
The space~$\calS_{g}$ then consists in the points~$x$ in~$\AA^{3g-3,\an}_\Z$ whose associated $(3g-3)$-tuple of elements in~$\calH(x)$ corresponds, up to normalization, to the Koebe coordinates of a basis of a Schottky group of rank~$g$ in~$\PGL_{2}(\calH(x))$ (see Definition \ref{def:Schottkyspace} for more precision).
This definition allows us to retrieve both the complex Schottky space and the Berkovich analogue of Gerritzen's space, as the complete residue fields~$\calH(x)$ may be complex, $p$-adic, etc.
We first establish some topological properties of the Schottky space over $\Z$.

\begin{theorem*}[\protect{Theorems~\ref{thm:open} and~\ref{thm:pathconn}}]\label{introthm:open}
The space $\calS_g$ is open in $\AA^{3g-3,\an}_{\Z}$ and path connected.
\end{theorem*}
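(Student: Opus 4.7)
My strategy is to exploit the standard characterization of Schottky groups by the existence of a \emph{Schottky figure}: a $2g$-tuple of pairwise disjoint closed disks $\{D_i^+,D_i^-\}_{i=1}^g$ in $\mathbb{P}^{1,\mathrm{an}}$ such that each generator $\gamma_i$ sends the complement of $D_i^-$ onto the interior of $D_i^+$. Given $x_0\in\calS_g$, I would fix a basis and an associated Schottky figure over $\calH(x_0)$. Because the centers and radii of these disks (and the coefficients of the M\"obius transformations $\gamma_i$) are explicit rational expressions in the Koebe coordinates $(\alpha_i,\alpha_i',\beta_i)$, evaluation of the coordinate functions at nearby points in $\AA^{3g-3,\an}_{\Z}$ yields a family of disks and generators depending continuously on~$x$. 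The disjointness conditions and the inclusions $\gamma_i(\PP^{1,\an}\setminus D_i^-)\subset D_i^+$ are defined by strict inequalities between continuous functions of~$x$, so they persist on some open neighborhood $U$ of $x_0$. Spreading out to a relative Schottky figure over~$U$ and checking fiberwise that each fiber still carries a Schottky group of rank~$g$ then gives $U\subset\calS_g$.

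\textbf{Plan for path connectedness.} Here I would proceed in two steps. The first is a local statement: since $\AA^{3g-3,\an}_{\Z}$ is locally path-connected (by \cite{LemanissierPoineau20}) and $\calS_g$ is open, each point of $\calS_g$ has a path-connected neighborhood inside $\calS_g$. The second step is to construct a global path from any $x\in\calS_g$ to a fixed base point~$x_\star$, for which I would take a Schottky group defined over $\Q$ endowed with the trivial absolute value (for instance the one with $\alpha_i=i$, $\alpha_i'=-i$, $\beta_i=p_i^{-1}$ for distinct primes~$p_i$, suitably normalized). The path would be built in two segments: first deform the Koebe coordinates of~$x$ toward rational values with a prescribed behavior, while simultaneously scaling along the hybrid-type arc $\va_{\calH(x)}^{\eps}$, $\eps\in (0,1]$, to reach a point over the trivially valued residue field; then move within the (much more rigid) trivially valued locus, where the Schottky condition reduces to the purely combinatorial requirement that the Koebe coordinates $(\alpha_i,\alpha_i',\beta_i)$ satisfy $\alpha_i\neq \alpha_i'$ and $0<\abs{\beta_i}<1$, and where one may freely interpolate between any two configurations. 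At every time the Schottky figure of~$x_0$ must be deformed along the path and one must check it remains a Schottky figure using the openness argument above.

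\textbf{Main obstacle.} The delicate point is bridging residues of different characteristics. In $\calM(\Z)$ the archimedean and $p$-adic branches only meet at the trivially valued~$\Q$, and the point~$\va_{p,0}$ sits on a different branch corresponding to $\F_p$. Consequently, connecting a complex Schottky point to a $p$-adic or a characteristic-$p$ Schottky point forces the path to pass through the trivially valued locus, and one has to ensure that the deformation of both the coordinates \emph{and} the ambient absolute value preserves the Schottky property throughout, in particular at the degenerate limits where multipliers or differences of fixed points may acquire trivial norm. The heart of the proof will consist in exhibiting an explicit such deformation of Schottky figures that is uniformly valid across all these regimes.
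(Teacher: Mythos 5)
Your openness strategy breaks down at archimedean points: it begins by fixing ``a basis and an associated Schottky figure over $\calH(x_0)$'', but by Marden's theorem \cite{Marden74} there exist complex Schottky groups admitting \emph{no} Schottky basis at all, so no such figure need exist when $\calH(x_0)$ is archimedean. The paper avoids this by treating the archimedean part separately, transporting the classical openness and connectedness of the complex Schottky space $\calS_{g,\C}$ (due to Bers \cite{Bers75}, via quasiconformal deformations) through the homeomorphism $\Phi$ of Section~\ref{sec:BerkovichZ} (Lemma~\ref{lem:Sga}). In the non-archimedean case your ``spread out the figure'' idea is indeed what the paper does, but the existence of the figure is itself Gerritzen's theorem~\ref{Gerritzen}, which only produces a Schottky figure adapted to \emph{some} basis (hence the automorphism $\tau\in\Aut(F_g)$ in Theorem~\ref{thm:Gerritzenvoisinageinfinity}) and only after $\infty$ has been moved off the limit set; arranging the latter uniformly on a neighborhood in $\AA^{3g-3,\an}_{\Z}$ is where most of the work of Corollary~\ref{cor:Gerritzenvoisinage} lies (including a base change to $\calO_K$ for a number field $K$ when $\calH(x)/\calH(\pr_\Z(x))$ is finite). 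None of these steps is addressed in your plan.

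For path-connectedness your global picture (funnel every point through the fiber over the trivially valued point $a_0$ by a hybrid arc) matches the paper's, but two of your claims are wrong as stated. First, your base point does not exist: over $\Q$ with the trivial absolute value every nonzero rational $\beta$ has $|\beta|_0=1$, so no tuple with rational Koebe coordinates satisfies $0<|\beta_i|<1$ in the fiber over $a_0$; one must use non-rigid (Shilov/Gauss-type) points, as in the section $\sigma$ of Theorem~\ref{thm:pathconn}, whose construction needs transcendental $\alpha$'s and radii $r_i^{1/\eps}$ precisely so that the archimedean points $\sigma(a_\infty^\eps)$ converge to a Gauss-type point of the central fiber. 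Second, the Schottky condition over the fiber of $a_0$ does \emph{not} reduce to ``$\alpha_i\ne\alpha_i'$ and $0<|\beta_i|<1$'': this is exactly the difference between $U_g$ and $\calS_g$, and even membership in the smaller locus $\mathcal{SB}_g$ of Schottky bases is governed by the nontrivial cross-ratio inequalities of Proposition~\ref{prop:equationsSBg}. Consequently one cannot ``freely interpolate'' there; the paper instead proves path-connectedness of $\mathcal{SB}_{g,k}$ (Corollary~\ref{cor:SBgkconnected}) by a careful two-stage deformation that first shrinks the multipliers, and must also pre-compose with an element of $\Out(F_g)$ (acting continuously by Proposition~\ref{prop:outeraction}) to move an arbitrary non-archimedean point into $\mathcal{SB}_g^{\mathrm{na}}$ in the first place. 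These missing ingredients are the substance of the proof, so the proposal as written has genuine gaps in both halves.
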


As a result of its openness, the space $\calS_g$ inherits a structure of analytic space. In other words, the Mumford and Schottky uniformizations naturally fit together into a well-behaved family. Under this new perspective, Mumford's construction appears to be more than a mere analogue of the complex construction: both are particular instances of a more general theory.

The properties of the space~$\calS_{g}$ allow to move continuously between the archimedean and the non-archimedean constructions, as in the case of hybrid spaces mentioned before.
Similarly, there is a phenomenon of continuous degeneration from $p$-adic Schottky spaces to Schottky spaces in characteristic $p$. 
We believe that these features could be exploited to build interesting partial compactifications of Schottky spaces and to establish analogies between the theory of Mumford curves in mixed characteristic and in positive characteristic.

\medbreak

We then turn to the question of the connection between the space $\calS_g$ and the moduli space of curves.
Let $\Out(F_g)$ be the group of outer automorphisms of the free group with $g$ generators.
There is a natural action of~$\Out(F_g)$ on~$\calS_g$, whose orbits consist of unmarked Schottky groups (\textit{i.e.} with no chosen basis).
Since a Mumford curve over a non-archimedean field $k$ determines a unique conjugacy class of Schottky groups, it is natural to consider the quotient by this action of the non-archimedean part of the Schottky space as a global space of Mumford curves.
In order to do so, we let~$\calS_g^{\mathrm{na}}$ be the set of non-archimedean points of~$\calS_g$. We prove the following result.

\begin{theorem*}[\protect{Corollaries~\ref{cor:actionOutFgna} and~\ref{cor:quotientOutFgfiber}}]\label{introthm:outer}
The action of~$\Out(F_g)$ on~$\calS_{g}^{\mathrm{na}}$ is proper and has finite stabilizers. For each $a\in \calM(\Z)$, the quotient space $\Mumf_{g,a}:=\Out(F_{g}) \backslash(\calS_{g}\cap\pr_{\Z}^{-1}(a))$ inherits a structure of $\calH(a)$-analytic space.
\end{theorem*}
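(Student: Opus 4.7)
The plan is to link the two assertions: once the action of $\Out(F_g)$ on $\calS_g^{\mathrm{na}}$ is shown to be proper with finite stabilizers, the quotient of each non-archimedean fiber $\calS_g\cap\pr_\Z^{-1}(a)$ acquires an $\calH(a)$-analytic structure by a standard local slicing argument: cover by small relatively compact open invariant neighborhoods of points, take invariants of the finite stabilizer action on each, and glue. For archimedean $a$, one invokes the classical theory of the complex Schottky space (\cite{Bers75}, \cite{GerritzenHerrlich88}), where $\Out(F_g)$ is already known to act properly discontinuously and the quotient is a complex analytic space. Thus the crux is the topological statement about the non-archimedean part.

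To handle it, I would exploit the action of each Schottky group on the Berkovich projective line. At a point $x\in\calS_g^{\mathrm{na}}$ the associated marked Schottky group $\Gamma_x\subset\PGL_2(\calH(x))$ admits a canonical invariant convex hull $T_x$ inside $\pana{\calH(x)}$ on which $\Gamma_x$ acts freely, each nontrivial element being a hyperbolic isometry whose translation length and axis are determined by its Koebe coordinates. Given a compact $K\subset\calS_g^{\mathrm{na}}$, the multipliers $|\beta_i|$ of the marked generators are bounded away from~$1$ uniformly on~$K$, forcing a uniform lower bound on translation lengths. Then if $\phi\in\Out(F_g)$ sends a point of~$K$ back into~$K$, the new generators $\phi(\gamma_i)$ must have bounded translation length and uniformly positioned axes on $T_x$; by standard ping-pong estimates, only finitely many $\phi$ can do this. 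Finiteness of stabilizers reduces to computing $N(\Gamma_x)/\Gamma_x$ inside $\PGL_2(\calH(x))$, which under Mumford uniformization identifies with the automorphism group of the associated Mumford curve, finite for $g\geq 2$ (the case $g=1$ is analogous via Tate curves).

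The main obstacle is uniformity over the base. The dynamical picture above is classical when $\calH(x)$ is a single fixed non-archimedean field, but $\calS_g$ over~$\Z$ mixes $p$-adic, trivially valued, and equal-characteristic points, and the convex hull $T_x$ together with the metric on it vary with~$x$. Showing that the ping-pong counting argument is locally uniform in~$x$---in particular that the systole of $\Gamma_x$ is bounded below on compacts of $\calS_g^{\mathrm{na}}$ and that the ping-pong open sets may be chosen continuously in families---is where the technical heart of the proof will lie. Once this uniformity is in hand, properness follows, and the analytic structure on each $\Mumf_{g,a}$ drops out by finite-group quotient in the analytic category.
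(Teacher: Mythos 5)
Your overall architecture matches the paper's: the archimedean case is delegated to the classical complex theory, finite stabilizers come from $\Gamma_x\backslash N(\Gamma_x)\cong\Aut(\calC_x)$ (Proposition~\ref{prop:stabilizer} and Remark~\ref{rem:stab<=autC}), and the analytic structure on each fiber quotient follows from the finite-group-quotient results in the analytic category (Corollary~\ref{cor:quotientOutFgfiber}). But for the crux --- properness on $\calS_g^{\mathrm{na}}$ --- your route is genuinely different from the paper's, and as written it has a gap. The paper does not argue metrically at all: it first uses the relative Gerritzen theorem (Corollary~\ref{cor:Gerritzenvoisinage}) to conjugate any point into the open locus $\calS\calB_g^{\mathrm{na}}$ of Schottky bases, and then proves (Theorem~\ref{thm:SBfinite}) that only finitely many $\tau\in\Out(F_g)$ satisfy $\tau(\calS\calB_g^{\mathrm{na}})\cap\calS\calB_g^{\mathrm{na}}\neq\emptyset$, via Serre's theory: Schottky bases of $\Gamma_x$ correspond bijectively to representative trees in the tree $T_{\Gamma_x}$, there are finitely many such trees through a fixed vertex, and the set $SB_x$ depends only on the finite combinatorial invariant $\lambda(x)\in\mathfrak{L}_g$ (a leaf-labeled finite tree). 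No translation-length estimates enter.

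The gap in your argument is the counting step: ``the new generators $\phi(\gamma_i)$ must have bounded translation length and uniformly positioned axes \dots by standard ping-pong estimates, only finitely many $\phi$ can do this.'' Bounding the translation lengths of the images of the generators alone does \emph{not} force finiteness: translation length is a conjugacy invariant, so for instance the infinitely many distinct outer automorphisms $e_1\mapsto e_1$, $e_2\mapsto we_2w^{-1}$ ($w\in F_2$ arbitrary) all preserve the translation lengths of the generators. To conclude you would need quantitative control on the \emph{position} of the axes of the $\phi(\gamma_i)$ relative to a basepoint in $T_x$ (so that $d(v,\phi(\gamma_i)v)$, not just $\ell(\phi(\gamma_i))$, is bounded, after which discreteness of the $\Gamma_x$-action on its minimal tree gives finiteness); you gesture at this with ``uniformly positioned axes'' but give no mechanism for obtaining such control from membership in a compact subset of $\calS_g^{\mathrm{na}}$, and this is precisely where a marked basis that is not a Schottky basis causes trouble. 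This is not merely the ``uniformity over the base'' issue you flag: the pointwise finiteness claim is itself unproven. The paper's combinatorial detour through $\calS\calB_g^{\mathrm{na}}$ and representative trees is exactly what replaces this missing estimate; alternatively, one could try to import the properness of the $\Out(F_g)$-action on the unprojectivized outer space $CV_g'$ via the equivariant continuous map $\phi_2$ of Theorem~\ref{thm:Schottky-outer-space}, but that is a substantive external input that your sketch neither states nor cites.
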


The action of~$\Out(F_g)$ is proper also over the space~$\calS_g^{\mathrm{a}}$ of archimedean points of~$\calS_g$. 
The proof of this fact boils down to a globalized version of the analogue result over the complex numbers, which is already known and can be proven using Teichm\"uller theory, once one provides the connection between the Schottky space and Teichm\"uller space (see \cite{HerrlichSchmithuesen07} for a detailed discussion of this connection).
In the non-archimedean framework, our proof is inspired by the work of Gerritzen \cite{Gerritzen82}, and consists in applying Serre's theory of free groups acting on trees \cite[\S 3]{Serre77} to the case of Schottky groups acting on the Berkovich projective line.
This strategy is not easily adapted to the archimedean case, and we are not able to prove the properness of the action of~$\Out(F_g)$ on the entire space~$\calS_g$.
This would be a first step to give a structure of analytic space over~$\Z$ to the global quotient $\Out(F_{g}) \backslash \calS_{g}$, a result that we believe to hold true.

Even without a $\Z$-analytic structure on the global space of uniformizable curves, the theory of Schottky spaces over $\Z$ allows us to construct a universal Mumford curve, in the sense of a relative curve over $\calS_g$ that encodes all possible archimedean and non-archimedean uniformizations at once.

\begin{theorem*}\label{introthm:univunif}
There is a smooth proper morphism of analytic spaces over $\Z$
\[\calC_{g}\longrightarrow \calS_{g}\]
that is of relative dimension 1 and satisfies the following:
given a point $x\in \calS_{g}$, its preimage in $\calC_{g}$ is a curve over $\calH(x)$ uniformized by the Schottky group $\Gamma_x$.
\end{theorem*}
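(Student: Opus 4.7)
The strategy is to build $\calC_g$ as a relative quotient $\Gamma \backslash \Omega$, where $\Omega$ is a universal open subset of $\PP^1_{\calS_g}$ serving as a relative domain of discontinuity and $\Gamma$ is a tautological Schottky group acting on $\PP^1_{\calS_g}$. Fiberwise this recovers the standard Schottky construction, so the content lies in making everything work uniformly over the Berkovich $\Z$-analytic base $\calS_g$, simultaneously across archimedean, $p$-adic, and trivially valued residue fields.

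The group $\Gamma$ is tautological: the defining inclusion $\calS_g \subset \AA^{3g-3,\an}_\Z$ provides normalized Koebe coordinates $(\alpha_i, \alpha_i', \beta_i)_{i=1}^{g}$, together with the fixed choice $0,1,\infty$ for the first three fixed points, as global sections of $\calO(\calS_g)$. These assemble into $g$ elements $\gamma_1,\dots,\gamma_g$ of $\PGL_{2}(\calO(\calS_g))$, acting as $\calS_g$-automorphisms of $\PP^1_{\calS_g}$ and generating a free subgroup $\Gamma \iso F_{g}$ that specializes to $\Gamma_x$ at each $x$. The relative domain of discontinuity is defined pointwise,
\[\Omega := \bigl\{\, z \in \PP^1_{\calS_g} \;\big|\; z \in \Omega_{\Gamma_{\pr(z)}}\,\bigr\},\]
where $\pr \colon \PP^1_{\calS_g} \to \calS_g$ denotes the projection, and the key step is to show that $\Omega$ is open. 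This should follow from the local continuity results for the Schottky uniformization established in the body of the paper: around each point of $\calS_g$ there is an open neighborhood $U$ and a continuously varying family of $2g$ closed disks in $\PP^1_U$ whose complement $D_U$ is a relative fundamental domain for $\Gamma|_U$, so that the $\Gamma$-translates of $D_U$ exhaust $\Omega|_U$ and display it as open.

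Granting this, the $\Gamma$-action on $\Omega$ is free and sufficiently discontinuous to form the quotient $\calC_g := \Gamma \backslash \Omega$ locally on the base: over each $U$, one obtains $\calC_g|_U$ by identifying the $2g$ boundary disks of $\overline{D_U}$ via the $\gamma_i$, and these local pieces glue globally by functoriality of the construction. Smoothness of relative dimension $1$ is inherited from $\PP^1_{\calS_g} \to \calS_g$ together with freeness of the action; properness follows from the compact local model over each $U$; and the fiber over $x$ is $\Gamma_x \backslash \Omega_x$ by base change, i.e.\ the Mumford or Schottky uniformized curve attached to $\Gamma_x$. The main obstacle throughout is the uniform fundamental domain construction invoked above, which must be insensitive to the archimedean/non-archimedean nature of $\calH(x)$ and work over a neighborhood of every point of $\calS_g$ simultaneously; openness of $\calS_g$ in $\AA^{3g-3,\an}_\Z$ (Theorem~\ref{thm:open}) combined with the earlier local analysis of the Schottky uniformization is exactly what supplies this uniformity, after which the remainder of the construction is standard analytic gluing and fiberwise verification.
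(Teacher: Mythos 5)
Your overall architecture is exactly the paper's: the tautological group generated by the $M(X_i,X_i',Y_i)$ in $\PGL_2(\calO(\calS_g))$ (the paper's universal Schottky group $\calG_g$), the fiberwise-defined domain of discontinuity $\Omega_g = \PP^1_{\calS_g} - \calL_g$, openness of $\Omega_g$ as the key step, and then $\calC_g := \calG_g\backslash\Omega_g$ with smoothness, properness and the fiberwise identification following formally. In the non-archimedean part your mechanism is also the paper's: Corollary~\ref{cor:Gerritzenvoisinage} produces, around every non-archimedean point of $\calS_g$, a relative Schottky figure (a family of $2g$ closed disks over a neighborhood $W$, after applying an automorphism of $F_g$), and Proposition~\ref{prop:actionproper} then gives openness of the complement of the limit set, freeness and properness of the action, and properness of the quotient over $W$.

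The gap is in the archimedean part. You assert that around \emph{each} point of $\calS_g$ there is a continuously varying family of $2g$ closed disks whose complement is a relative fundamental domain, and that this is "insensitive to the archimedean/non-archimedean nature of $\calH(x)$". That is false at archimedean points: by Marden's theorem (cited in the paper right after Definition~\ref{des:Schottkybasis}), there exist complex Schottky groups admitting no Schottky basis at all, so no such configuration of $2g$ disks exists even in a single fiber, let alone relatively over a neighborhood. Gerritzen's theorem, which supplies the Schottky figure, is a strictly non-archimedean statement. The paper therefore treats the archimedean case by a genuinely different route: it reduces to the complex Schottky space and invokes Bers's quasiconformal deformation theory --- a neighborhood of $x$ is parametrized by Beltrami coefficients $s\mu$, the domains of discontinuity are related by $\Omega_{g,\xi(s)} = w^{s\mu}(\Omega_{g,x})$, and the continuity of $s\mapsto w^{s\mu}$ (Ahlfors--Bers) yields both the closedness of $\calL_{g,\C}$ and a compact set meeting every orbit over a compact base, hence properness of the quotient. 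Your proposal as written has no substitute for this input, so the openness of $\Omega$ and the properness of $\calC_g \to \calS_g$ over the archimedean locus remain unproved.
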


It is important to note that the uniformization property which lies at the basis of the theories of Schottky and Mumford carries over to our global setting. Indeed, the universal Mumford curve~$\calC_{g}$ may be uniformized by an open subset of the relative projective line over~$\calS_{g}$. The precise statement is as follows.

\begin{theorem*}[\protect{Corollary~\ref{cor:universalunif}}]\label{introthm:Omegag}
There exists an open subset~$\Omega_{g}$ of~$\PP^{1}_{\calS_{g}}$ and a morphism $\Omega_{g} \to \calC_{g}$ over~$\calS_{g}$ that is a local isomorphism. Over each point~$x$ in~$\calS_{g}$, the restriction of the morphism gives back the uniformization by the Schottky group~$\Gamma_{x}$. 
\end{theorem*}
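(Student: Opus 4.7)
The plan is to construct $\Omega_{g}$ directly as the universal region of discontinuity of the universal Schottky group acting on $\PP^{1}_{\calS_{g}}$, and then identify the quotient with $\calC_{g}$.

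First, I would make the universal Schottky group $\Gamma_{g}$ precise. By construction, $\calS_{g}$ is a subspace of $\AA^{3g-3,\an}_{\Z}$ whose coordinate functions, combined with the normalizing choice $0,1,\infty$ for the first three fixed points, constitute a system of universal Koebe coordinates. They give rise to $g$ Möbius transformations $\gamma_{1},\dots,\gamma_{g}\in \PGL_{2}(\calO(\calS_{g}))$, each acting as an automorphism of $\PP^{1}_{\calS_{g}}$ over $\calS_{g}$. The free group they generate is the desired $\Gamma_{g}$, and over any point $x\in\calS_{g}$ it specializes to the Schottky group $\Gamma_{x}$.

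Next, I would set $\Omega_{g}\subset \PP^{1}_{\calS_{g}}$ to be the fiberwise union of the regions of discontinuity $\Omega_{x}$ of the actions of $\Gamma_{x}$ on $\PP^{1}_{\calH(x)}$. The main technical issue is to show that $\Omega_{g}$ is open in $\PP^{1}_{\calS_{g}}$. The approach is to use the local description of $\calS_{g}$ established earlier in the paper while building the universal curve $\calC_{g}$: near any point $x_{0}\in \calS_{g}$, one can produce a family of Schottky figures (a configuration of $2g$ relative open disks in $\PP^{1}_{\calS_{g}}$, paired by $\gamma_{1},\dots,\gamma_{g}$) whose centers and radii vary continuously with the parameter. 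The corresponding relative fundamental domain is open, and translating it by all elements of $\Gamma_{g}$ produces $\Omega_{g}$ as a union of open sets. This relative Schottky-figure picture also shows that $\Gamma_{g}$ acts freely (it is torsion-free) and properly discontinuously on $\Omega_{g}$, uniformly in families, both at archimedean and non-archimedean points.

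Once these properties are in place, the quotient $\Omega_{g}/\Gamma_{g}$ exists as an analytic space over $\calS_{g}$ and the quotient map is a local isomorphism. To identify it with $\calC_{g}$, I would appeal to the construction of $\calC_{g}$ from the preceding theorem: both are relative smooth proper curves of relative dimension $1$ over $\calS_{g}$, and over each point $x\in \calS_{g}$ both fibers coincide with the Schottky quotient $\Omega_{x}/\Gamma_{x}$. A fiberwise identification then glues into an isomorphism $\Omega_{g}/\Gamma_{g}\simeq \calC_{g}$ over $\calS_{g}$, which combined with the quotient map yields the desired local isomorphism $\Omega_{g}\to\calC_{g}$ specializing to the classical uniformization on each fiber.

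The principal obstacle is the openness of $\Omega_{g}$ in $\PP^{1}_{\calS_{g}}$: the fiberwise regions of discontinuity are clearly open within each $\PP^{1}_{\calH(x)}$, but assembling them into a genuine open subset of the total space requires that Schottky figures deform continuously through mixed-characteristic and archimedean-to-non-archimedean degenerations. This is precisely the content of the relative uniformization results used to build $\calC_{g}$, and the corollary essentially packages them into the statement of a universal uniformization.
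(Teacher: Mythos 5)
Your overall architecture matches the paper's: the universal group $\calG_{g}$ is generated by the $M(X_{i},X'_{i},Y_{i})$ in $\PGL_{2}(\calO(\calS_{g}))$, $\Omega_{g}$ is the complement of the fiberwise union of limit sets, and the whole content is the openness of $\Omega_{g}$ together with freeness and properness of the action (Theorem~\ref{thm:universalaction}), after which $\calC_{g}$ is \emph{defined} as the quotient $\Omega_{g}/\calG_{g}$, so no separate fiberwise gluing step is needed. Over non-archimedean points your argument is exactly the paper's: Corollary~\ref{cor:Gerritzenvoisinage} produces a relative Schottky figure over a neighborhood (after acting by an automorphism of $F_{g}$), and Proposition~\ref{prop:actionproper} then gives openness, freeness, properness, and properness of the quotient over the base.

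The genuine gap is your claim that the relative-Schottky-figure argument works ``uniformly in families, both at archimedean and non-archimedean points.'' It does not: by Marden's theorem (cited right after Definition~\ref{des:Schottkybasis}), there exist complex Schottky groups admitting \emph{no} Schottky basis, hence no adapted Schottky figure at all, so there is no hope of producing a continuously varying family of $2g$ paired disks near a general archimedean point of $\calS_{g}$. The paper handles the archimedean locus by an entirely different mechanism: following Bers, one parametrizes a neighborhood of $x$ in $\calS_{g,\C}$ by Beltrami coefficients, writes $M_{i}(\xi(s)) = w^{s\mu} M_{i}(x) (w^{s\mu})^{-1}$ for quasiconformal maps $w^{s\mu}$ fixing $0,1,\infty$, and deduces $\Omega_{g,\xi(s)} = w^{s\mu}(\Omega_{g,x})$; openness of $\Omega_{g}$ and compactness of a relative fundamental set then follow from the continuity of $(s,z)\mapsto w^{s\mu}(z)$ (Ahlfors--Bers). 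Without this (or some substitute, e.g.\ working with Jordan-curve fundamental domains rather than disks and proving they deform continuously), your proof establishes the statement only over $\calS_{g}^{\mathrm{na}}$. A secondary, smaller point: had $\calC_{g}$ been constructed independently, your final step of gluing fiberwise identifications $\Omega_{x}/\Gamma_{x}\simeq \calC_{x}$ into a global isomorphism over $\calS_{g}$ would itself require an argument; the paper avoids this entirely by taking the quotient as the definition of $\calC_{g}$.
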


To contextualize our result, let us mention a different construction of families of Mumford curves over $\Z$, due to T. Ichikawa. In \cite{Ichikawa00}, he introduces objects called \emph{generalized Tate curves}: stable curves defined over a $\Z$-algebra mixing polynomials and power series.
A typical example is the curve~$C_{0}$, which, by specialization on the base, gives back Mumford curves whose reduction graph is a rose with $g$ petals. It is defined over the ring $R_{0} := \Z[\underline{x},\prod_{i\neq j} \frac{1}{x_i-x_j}][[\underline{y}]]$, where the multivariables $\underline{x}$ and $\underline{y}$ are deformation parameters related to Schottky uniformization. A similar curve~$C_{\Delta}$ exists for each stable graph~$\Delta$ of genus~$g$ and is defined over a ring~$R_{\Delta}$ analogous to~$R_{0}$. Ichikawa's construction makes no use of Schottky spaces and is rather a generalization of Mumford's strategy to the case of a nonlocal base ring. More recently, in the preprint~\cite{Ichikawa20}, which appeared online after the first version of the present work, Ichikawa has shown that the generalized Tate curves may be glued into a universal Mumford curve, defined over a formal scheme over~$\Z$.
Ichikawa's approach has allowed him to study objects of arithmetic interest, such as Teichm\"uller modular forms \cite{Ichikawa18}, periods of Mumford curves \cite{Ichikawa20a}, and $p$-adic solutions of certain systems of partial differential equations, such as the Korteweg-deVries hierarchy \cite{Ichikawa01}.

There are several advantages in our construction of universal Schottky uniformization over $\Z$. First of all, the curve $\calC_g$ admits a nice description as a global object: the Schottky space $\calS_g$ is a subspace of the affine space and therefore has canonical global coordinates.
Moreover, the presence of the analytic topology provides a finer description of degenerations of families of Schottky groups, as well as a concrete way of studying natural group actions on the universal Mumford curve. 
Finally, Berkovich geometry brings out the connections between our construction and objects in neighboring theories such as tropical geometry and geometric group theory.

A convenient way to study these connections is the notion of \emph{skeleton} of a non-archimedean analytic space, a combinatorial object that plays an important role in Berkovich's theory and that can be interpreted as a ``tropical shadow'' of such a space.
In the one-dimensional case, skeletons are finite graphs that capture fundamental properties of the curve they represent.\footnote{For more details and a discussion of higher dimensional skeletons, we refer the reader to the excellent survey by A. Werner \cite{Werner16}.}
They appear in this work as invariants that define strata in the Schottky space, bringing out connections with tropical moduli spaces as follows.
Let us fix a non-archimedean point $a\in \calM(\Z)$, and recall that $\Mumf_{g,a}$ is the space of Mumford curves over extensions of $\calH(a)$, as in Theorem \ref{introthm:outer}.
Then, by assigning to each Mumford curve its skeleton, we can build a map of topological spaces
\[ \psi: \Mumf_{g,a} \to M_g^{\mathrm{trop}},\] 
where $M_g^{\mathrm{trop}}$ denotes the moduli space of tropical curves, parametrizing stable graphs of genus $g$ with weights assigned at vertices.
The image of the map~$\psi$ consists of unweighted metric graphs, and can be characterized as the quotient of the Culler-Vogtmann outer space $CV_g'$ by the action of $\Out(F_g)$.
The outer space is a fundamental object of geometric group theory, and has been used to show many group theoretic properties of $\Out(F_g)$ that were previously unknown.
The reader can find more information in the original paper \cite{CullerVogtmann86}, as well as in \cite{Vogtmann14}, which discusses more recent advances.
The space $CV_g'$ parametrizes graphs with an extra structure called a marking, and the action of the group $\Out(F_g)$ is nontrivial only at the level of the marking, in a way that is reminescent of the action of $\Out(F_g)$ on $\calS_g$.
It is therefore a very natural problem to ask for a comparison between the two actions, which we are able to provide with the following result.

\begin{theorem*}[Theorem~\protect{\ref{thm:Schottky-outer-space}}]\label{introthm:Schottkyouterspace}
Let $a \in \calM(\Z)$ be a non-archimedean point and let $\calS_{g,a}=\calS_g \cap \pr^{-1}_\Z(a)$ be the fiber of the Schottky space over $a$. Then, there is a continuous surjective function
\[\phi:\calS_{g,a}\longrightarrow CV_g' \times_{M_g^{\mathrm{trop}}} \Mumf_{g,a},\]
which is not injective for $g\geq 2$.
\end{theorem*}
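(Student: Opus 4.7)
The plan is to construct $\phi$ using the universal Mumford curve of Theorem~\ref{introthm:univunif} and to study its fibers by comparing the $\Out(F_g)$-stabilizers of points in $\calS_{g,a}$ with those of their images in $CV_g'$.

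For $x\in \calS_{g,a}$, let $\Gamma_x\subset \PGL_2(\calH(x))$ be the corresponding Schottky group with its distinguished basis. The fiber $C_x$ of the universal Mumford curve over $x$ is $\PP^{1,\an}_{\calH(x)}/\Gamma_x$ and represents a class $[C_x]\in \Mumf_{g,a}$. Its skeleton $\Sigma_x$ is an unweighted metric graph of first Betti number $g$, and the chosen basis together with the canonical identification $\pi_1(\Sigma_x)\cong \Gamma_x$ yields a marking $\alpha_x$, that is, a point of $CV_g'$. Both factors project to the same metric graph in $M_g^{\mathrm{trop}}$, so one obtains a well-defined point $\phi(x)=((\Sigma_x,\alpha_x),[C_x])$ in the fiber product. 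Continuity of $\phi$ will reduce to continuity of the skeleton construction in the Berkovich family over $\calS_{g,a}$: this should follow from describing $\Sigma_x$ as the quotient by $\Gamma_x$ of the convex hull in $\PP^{1,\an}_{\calH(x)}$ of the limit set and from the continuous dependence of this convex hull on the Koebe coordinates. Surjectivity is formal: since $\Mumf_{g,a}=\Out(F_g)\backslash \calS_{g,a}$ by Theorem~\ref{introthm:outer}, every class $[C]\in \Mumf_{g,a}$ lifts to some $x_0\in \calS_{g,a}$, and any two markings of a given metric graph in $CV_g'$ differ by an element of $\Out(F_g)$; applying such an element to $x_0$ adjusts the $CV_g'$-coordinate freely within the fiber over the skeleton.

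The substantive part is the failure of injectivity. By the $\Out(F_g)$-equivariance of the map to $CV_g'$, two points $x_1,x_2\in \calS_{g,a}$ satisfy $\phi(x_1)=\phi(x_2)$ if and only if $x_2=\sigma\cdot x_1$ for some $\sigma\in \Out(F_g)$ that stabilizes $(\Sigma_{x_1},\alpha_{x_1})\in CV_g'$. A direct group-theoretic check identifies the $\Out(F_g)$-stabilizer of a marked graph $(\Sigma,\alpha)$ with the metric isometry group $\mathrm{Isom}(\Sigma)$, while the $\Out(F_g)$-stabilizer of $x\in \calS_{g,a}$ is canonically isomorphic to $\Aut(C_x)\cong N_{\PGL_2(\calH(x))}(\Gamma_x)/\Gamma_x$. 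Hence $\phi$ fails to be injective precisely at those $x$ for which $\mathrm{Isom}(\Sigma_x)\supsetneq \Aut(C_x)$. For $g\ge 2$ such $x$ can be produced explicitly: in genus~$2$, a Mumford curve whose skeleton is a theta graph with three edges of equal length but whose only automorphism is the hyperelliptic involution satisfies $\mathrm{Isom}(\Sigma_x)\cong S_3\times \Z/2 \supsetneq \Z/2\cong \Aut(C_x)$, so six distinct points of $\calS_{2,a}$ lie in the same fiber of $\phi$. The main obstacle will be the careful comparison of these two stabilizers, since it requires tracking the $\PGL_2$-normalization inherent to the definition of $\calS_{g,a}$ simultaneously with the action of $\Out(F_g)$ on bases of the Schottky group.
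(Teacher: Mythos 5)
Your construction of $\phi$ and your surjectivity argument coincide with the paper's (two maps $\phi_1,\phi_2$ glued by the universal property of the fiber product; surjectivity by adjusting the marking with an element of $\Out(F_g)$). The genuine gap is in the continuity of $\phi_2$. The topology on $CV_g'$ is \emph{defined} by the embedding $CV_g\hookrightarrow\R^{\calC}$ via translation length functions, so continuity of the map to $CV_g'$ means precisely that, for every word $w$ in the generators, the function $y\mapsto \ell_{\Sigma_y}\big(\varphi_y(w)\big)$ is continuous on $\calS_{g,a}$. Your proposed route --- ``continuous dependence of the convex hull of the limit set on the Koebe coordinates'' --- does not engage with this: you would still have to say in which topology the convex hulls converge and why that implies convergence of all length functions, which is essentially the whole problem restated. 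The paper instead observes (Lemma~\ref{lem:betamodulus}) that the translation length of a loxodromic element on the tree equals $|\beta|^{-1}$ for $\beta$ its multiplier, and that the multiplier of any fixed word in the generators is an analytic, hence continuous, function of the Koebe coordinates by Proposition~\ref{prop:Koebe}. This is the missing step, and it is the only nontrivial content of the continuity claim. (Minor slip: the fiber of the universal curve is $\big(\pana{\calH(x)}-L_x\big)/\Gamma_x$, not $\PP^{1,\an}_{\calH(x)}/\Gamma_x$.)

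Your treatment of non-injectivity is a genuinely different and more structural route than the paper's. You describe the fiber of $\phi$ through $x$ as the coset space $\Stab_{\Out(F_g)}\big((\Sigma_x,\alpha_x)\big)/\Stab_{\Out(F_g)}(x)$ and identify these stabilizers with $\mathrm{Isom}(\Sigma_x)$ and $\Aut(\calC_x)\cong \Gamma_x\backslash N(\Gamma_x)$ respectively (the latter is Proposition~\ref{prop:stabilizer} together with Remark~\ref{rem:stab<=autC}); this gives a clean criterion for injectivity at $x$ and, if carried out, more information than the statement requires. The paper's Remark~\ref{rem:marking} is more economical: take a rose with $g$ petals of equal length and generators with pairwise distinct multipliers; permuting two petals is an isometry of the marked graph but moves the point of $\calS_{g,a}$, and this works uniformly for all $g\ge 2$. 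By contrast your argument, as written, proves non-injectivity only for $g=2$ and rests on an asserted but unproven existence statement (a genus-$2$ Mumford curve with equilateral theta skeleton whose only automorphism is the hyperelliptic involution); that assertion is plausible but needs justification, and you would still owe the check that the inclusion $\Stab(x)\hookrightarrow\Stab\big((\Sigma_x,\alpha_x)\big)$ really is the restriction map $\Aut(\calC_x)\to\mathrm{Isom}(\Sigma_x)$, which you yourself flag as the delicate point.
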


The interplay between non-archimedean Schottky spaces, tropical geometry, and the outer space is also the object of very recent work of M. Ulirsch. In \cite{Ulirsch21}, which appeared online at the same time as the present work, he defines a non-archimedean analytic Deligne-Mumford stack $\overline{\calT_g}$, over a fixed algebraically closed non-archimedean complete valued field~$K$, which provides an analogue of Teichm\"uller space in non-archimedean analytic geometry. The construction proceeds in two steps. He first considers the logarithmic algebraic stack $\calT_{g}^{\mathrm{log}} = \calM_{g}^{\mathrm{log}} \times_{M_{g}^{\mathrm{trop}}} \calT_{g}^{\mathrm{trop}}$, where $\calM_{g}^{\mathrm{log}}$ is the logarithmic algebraic stack of stable curves and $\calT_{g}^{\mathrm{trop}}$ is the tropical Teichm\"uller space defined in~\cite{ChanMeloEtAl13}. The desired space $\overline{\calT_g}$ is then obtained by taking the algebraic stack underlying~$\calT_{g}^{\mathrm{log}}$, base changing it to the value group of~$K$, and applying Raynaud's generic fiber functor.

The space $\overline{\calT_g}$ parametrizes Berkovich stable curves over valued extensions of $K$, together with a choice of a basis of their topological fundamental group.
It contains a natural locus of marked Mumford curves, which is related to the Schottky space $\calS_g \times_\Z K$. Even though the techniques of the present paper significantly differ from those of \cite{Ulirsch21}, we can describe the extent of this relation by considering the connections of the two constructions with the spaces $M_g^{\mathrm{trop}}$ and $CV_g'$.
This is done in Remark \ref{rmk:Martin}, as well as in Remark 5 in the introduction of \cite{Ulirsch21}.


\medbreak

Finally, let us note that other moduli spaces of arithmetic significance have been extensively studied, over both archimedean and non-archimedean valued fields.
It seems natural to try to define a version over $\Z$ of these objects, the closest to the present work being the moduli space $\Mumf_{g,n}$ of pointed Mumford curves and the moduli space $\calA_g$ of principally polarized abelian varieties.
While the case of $\Mumf_{g,n}$ descends easily from the results already established by the authors, the space~$\calA_g$ and a version of the Torelli map over $\Z$ would require a bigger effort.
We plan to address these issues and related applications in future work.


\medbreak

The paper is structured as follows. Section 2 is a self-contained introduction to the affine and projective Berkovich spaces over the Banach ring $(\Z,\va_{\infty})$, providing all the definitions and basic material on the subject needed for the rest of the paper. In Section 3, we formulate classical definitions and results of the theory of Schottky groups in such a way to be applied to any valued field, archimedean and non-archimedean alike.
We adapt classical notions in the non-archimedean theory to the framework of Berkovich spaces, relying on previous work \cite{VIASMII} by the authors.
In Section 4, we define the space $\calS_g$ as a Berkovich space over $\Z$ and we prove that it is open in $\AA^{3g-3,\an}_{\Z}$. In Section 5, we study the natural action of $\Out(F_g)$ on $\calS_g$: we  establish some results that allow us to complete the proof of Theorem \ref{introthm:open} and determine the properties of the quotient as stated in Theorem \ref{introthm:outer}.
Finally, in Section 6, we prove the universal uniformization theorem (Theorems \ref{introthm:univunif} and \ref{introthm:Omegag}) and a result clarifying the connections with the outer space and the moduli space of tropical curves (Theorem \ref{introthm:Schottkyouterspace}). 

All the results proven in this paper remain valid if we replace $(\Z,\va_{\infty})$ with the ring of integers of a number field.

\subsection*{Acknowledgements}
We warmly thank Melody Chan, Frank Herrlich, Fabian Ruoff, Thibaud Lemanissier, and Marco Maculan for stimulating mathematical exchanges that improved a preliminary version of this paper.
We are especially grateful to Sam Payne who raised to us the question answered by Theorem \ref{thm:Schottky-outer-space} and to Martin Ulirsch for sharing with us a preliminary version of his work \cite{Ulirsch21}.

\section{Berkovich spaces over $\Z$}

\subsection{Analytic spaces over Banach rings}\label{sec:AnA}

Let $(A,\nm)$ be a Banach ring. In this section, we recall Berkovich's definition of analytic spaces over~$A$ (see \cite[Section~1.5]{Berkovich90}).

We start with the affine analytic space of dimension~$n$ over~$A$, denoted by $\A^{n,\an}_{A}$. It is a locally ringed space and we define it in three steps: underlying set, topology and structure sheaf. 

The set underlying $\A^{n,\an}_{A}$ is the set of bounded multiplicative seminorms on $A[T_{1},\dotsc,T_{n}]$ that are bounded on~$A$, \emph{i.e.} the set of maps 
\[ \va \colon A[T_{1},\dotsc,T_{n}] \too \R_{\ge 0} \]
that satisfy the following properties:
\begin{enumerate}
\item $|0|=0$ and $|1|=1$;
\item $\forall P,Q \in A[T_{1},\dotsc,T_{n}$, $|P+Q| \le |P| + |Q|$;
\item $\forall P,Q \in A[T_{1},\dotsc,T_{n}$, $|P+Q| = |P|\, |Q|$;
\item $\forall a \in A$, $|a| \le \|a\|$.
\end{enumerate}
We set $\calM(A) := \A^{0,\an}_{A}$ and call it the \emph{spectrum} of~$A$. Note that we have a projection map $\pr_{A} \colon \A^{n,\an}_{A} \to \calM(A)$ induced by the morphism $A \to A[T_{1},\dotsc,T_{n}]$.

Let~$x$ be a point of~$\A^{n,\an}_{A}$. Denote by~$\va_{x}$ the multiplicative seminorm associated to it. The ring $A[T_{1},\dotsc,T_{n}]/\ker(\va_{x})$ is a domain and we can consider its field of fractions. The seminorm~$\va_{x}$ induces an absolute value on the later it and we can consider its completion, which we denote by~$\calH(x)$. We simply denote by~$\va$ the absolute value on~$\calH(x)$ induced by~$\va_{x}$ since no confusion may result.

We have a natural morphism $\chi_{x}\colon A[T_{1},\dotsc,T_{n}] \to \calH(x)$. For each $P \in A[T_{1},\dotsc,T_{n}]$, we set $P(x) := \chi_{x}(P)$. Note that, by definition, we have $|P(x)| = |P|_{x}$.

\medbreak

The set $\A^{n,\an}_{A}$ is endowed with the coarsest topology such that, for each $P \in A[T_{1},\dotsc,T_{n}]$, the map
\[x \in \A^{n,\an}_{A} \mapstoo |P(x)| \in \R_{\ge0}\] 
is continuous. The resulting topological space is Hausdorff and locally compact. The spectrum~$\calM(A)$ is even compact. The projection map~$\pr_{A}$ is continuous.

\medbreak

For each open subset~$V$ of~$\A^{n,\an}_{A}$, we denote by~$S_{V}$ the set of element of $A[T_{1},\dotsc,T_{n}]$ that do not vanish on~$V$ and set $K(V) := S_{V}^{-1} A[T_{1},\dotsc,T_{n}]$. 

Let~$U$ be an open subset of~$\A^{n,\an}_{A}$. We define~$\calO(U)$ to be the set of maps
\[f \colon U \too \bigsqcup_{x\in U} \calH(x)\]
such that
\begin{enumerate}
\item for each $x\in U$, $f(x)\in \calH(x)$;
\item each $x\in U$ has an open neighborhood~$V$ on which~$f$ is a uniform limit of elements of~$K(V)$. 
\end{enumerate}

\medbreak

One may now define arbitrary analytic spaces over~$A$ as locally ringed spaces that are locally isomorphic to some $(V(\calI),\calO_{U}/\calI)$, where~$U$ is an open subset of~$\A^{n,\an}_{A}$ and $\calI$~is a sheaf of ideals of~$\calO_{U}$.

\medbreak

A point~$x$ of an analytic space~$X$ over~$A$ is said to be archimedean or non-archimedean if the associated absolute valued on~$\Hx$ is. We denote by~$X^\mathrm{a}$ (resp. $X^\mathrm{na}$) the set of archimedean (resp. non-archimedean) points of~$x$ and call it the archimedean (resp. non-archimedean) part of~$X$. It is well-known that an absolute value on a field is archimedean if, and only if, its restriction to the prime field is. It follows that we have 
\[X^{\text{a}} = \{x\in X : |2(x)|>1\} \textrm{ and } X^{\text{na}} = \{x\in X : |2(x)|\le1\}\]
(and~2 could be replaced by any integer bigger than~1). In particular, the archimedean and non-archimedean parts of~$X$ are respectively open and closed subsets of~$X$.

\medbreak

To go further, one should define the category of analytic spaces over~$A$. When $A$~is a complete non-archimedean valued field, this has been achieved by V.~Berkovich in~\cite{Berkovich90, Berkovich94} (with a more general notion of analytic space). In~\cite[\S 2.1]{LemanissierPoineau20}, T.~Lemanissier and the first-named author gave a definition over an arbitrary Banach ring.  However, the category is shown to enjoy nice properties only under additional assumptions, for instance when $A$~is a discrete valuation ring (with some mild extra hypotheses) or the ring of integers of a number field (see Section~\ref{sec:BerkovichZ} for some definitions related to this setting). For future use, we note that, in those cases, fiber products exist (see \cite[Th\'eor\`eme~4.3.8]{LemanissierPoineau20}).


\subsection{Relative projective line}\label{sec:P1S}

In the rest of the text, we will not only need affine spaces, but also projective spaces and, more precisely, relative projective lines over affine spaces. We explain here how to construct them in a down-to-earth way. 
Let $(A,\nm)$ be a Banach ring. Let $n\in\N$ and denote by~$S$ the analytic space $\AA^{n,\an}_{A}$ with coordinates $T_{1},\dotsc,T_{n}$. 

Let~$U$ (resp.~$V$) be the affine space $\AA^{n+1,\an}_{A}$ with coordinates $T_{1},\dotsc,T_{n},Z$ (resp. $T_{1},\dotsc,T_{n},Z'$) and denote by $U_{0}$ (resp.~$V_{0}$) the open subset defined by the inequality $Z\ne 0$ (resp. $Z'\ne 0$). The morphism 
\[\begin{array}{ccc}
A[T_{1},\dotsc,T_{n},Z,Z^{-1}] & \too & A[T_{1},\dotsc,T_{n},Z',Z'^{-1}]\\
T_{i} & \mapstoo & T_{i}\\
Z & \mapstoo & Z'^{-1}
\end{array}\]
induces an isomorphism $U_{0} \xrightarrow[]{\sim} V_{0}$.

We denote by~$\PP^1_{S}$ the analytic space obtained by glueing~$U$ and~$V$ along~$U_{0}$ and~$V_{0}$ \textit{via} the previous isomorphism. It comes with a natural projection morphism $\pi \colon \PP^1_{S} \to S$. For any open subset~$S'$ of~$S$, we denote by~$\PP^1_{S'}$ the analytic space $\pi^{-1}(S')$. 

When $n=0$, we will denote~$\PP^{1}_{\calM(A)}$ by $\PP^{1,\an}_{A}$. Note that, for each $s\in S$, the fiber $\pi^{-1}(s)$ identifies to~$\PP^{1,\an}_{\calH(s)}$.

Let $M := \begin{pmatrix} a&b\\c&d \end{pmatrix} \in \GL_{2}(\calO(S))$. We may associate to it an endomorphism of~$\PP^1_{S}$ by the usual expression in coordinates
\[Z \mapstoo \frac{aZ+b}{cZ+d}.\]
This way we get an action of~$\GL_{2}(\calO(S))$ on~$\PP^1_{S}$. It factors through~$\PGL_{2}(\calO(S))$. The image of~$M$ in~$\PGL_{2}(\calO(S))$ will be denoted by
$[M] = \begin{bmatrix} a&b\\c&d \end{bmatrix}$. Note that the action restricts to an action on each fiber of~$\pi$, hence also on~$\PP^1_{S'}$ for any open subset~$S'$ of~$S$.

\subsection{Berkovich spaces over~$\Z$}\label{sec:BerkovichZ}

In this section, we consider the special case where $(A,\nm) = (\Z,\va_{\infty})$, where~$\va_{\infty}$ denotes the usual absolute value. We refer to~\cite{A1Z}, and especially Section~3.1 there, for more details.

The spectrum~$\calM(\Z)$ is easily described using Ostrowski's theorem. It contains the following points:
\begin{itemize}
\item a point $a_{0}$, associated to the trivial absolute value~$\va_{0}$, with residue field~$\Q$;
\item for each $\eps \in (0,1]$, a point~$a_{\infty}^\eps$ associated to the absolute value~$\va_{\infty}^\eps$, with residue field~$\R$;
\item for each prime number~$p$ and each $\eps \in (0,+\infty)$, a point~$a_{p}^\eps$ associated to the absolute value~$\va_{p}^\eps$, with residue field~$\Q_{p}$;
\item for each prime number~$p$, a point~$a_{p,0}$ associated to the seminorm on~$\Z$ induced by the trivial absolute value on~$\Z/p\Z$, with residue field~$\Z/p\Z$.
\end{itemize}
We will sometimes drop the exponent~1 and write $a_{p}$ and~$a_{\infty}$ instead of $a_{p}^1$ and~$a_{\infty}^1$. The archimedean part of~$\calM(\Z)$ is the open subset $\calM(\Z)^\mathrm{a} = \{a_{\infty}^\eps \mid \eps\in(0,1]\}$. 

The topology of~$\calM(\Z)$ is quite simple. First, the branches are all homeomorphic to segments: for each prime number~$p$, the map
\[b_{p} \colon \eta \in [0,1] \mapstoo 
\begin{cases}
a_{p,0} &\text{ if } \eta = 0;\\
a_{p}^{-\log(\eta)} &\text{ if } \eta \in (0,1);\\
a_{0} &\text{ if } \eta=1
\end{cases}\]
is a homeomorphism and the map
\[\beta_{\infty} \colon \eps \in [0,1] \mapstoo 
\begin{cases}
a_{0} &\text{ if } \eps = 0;\\
a_{\infty}^{\eps} &\text{ if } \eps \in (0,1]
\end{cases}\]
is a homeomorphism too. Moreover, a subset~$U$ of~$\calM(\Z)$ containg~$a_{0}$ is open if, and only if, the intersection of~$U$ with each $b_{p}([0,1])$ and $\beta_{\infty}([0,1])$ is open and only finitely many of those sets are not contained entirely in~$U$. In other words, $\calM(\Z)$ is homeomorphic to the Alexandroff one-point compactification of the disjoint union of the $b_{p}([0,1))$'s and $\beta_{\infty}((0,1])$, the point at infinity being~$a_{0}$.

\begin{figure}[!h]
\centering
\begin{tikzpicture}
\foreach \x [count=\xi] in {-2,-1,...,17}
\draw (0,0) -- ({10*cos(\x*pi/10 r)/\xi},{10*sin(\x*pi/10 r)/\xi}) ;
\foreach \x [count=\xi] in {-2,-1,...,17}
\fill ({10*cos(\x*pi/10 r)/\xi},{10*sin(\x*pi/10 r)/\xi}) circle ({0.07/(sqrt(\xi)}) ;

\draw ({10.5*cos(-pi/5 r)},{10.5*sin(-pi/5 r)}) node{$a_{\infty}$} ;
\fill ({5.5*cos(-pi/5 r)},{5.5*sin(-pi/5 r)}) circle (0.07) ;
\draw ({5.5*cos(-pi/5 r)},{5.5*sin(-pi/5 r)-.1}) node[below]{$a_{\infty}^\eps$} ;

\draw ({11*cos(-pi/10 r)/2+.1},{11*sin(-pi/10 r)/2}) node{$a_{2,0}$} ;
\fill ({2.75*cos(-pi/10 r)},{2.75*sin(-pi/10 r)}) circle ({0.07/(sqrt(2)}) ;
\draw ({2.75*cos(-pi/10 r)},{2.75*sin(-pi/10 r)-.05}) node[below]{$a_2^\eps$} ;

\draw ({3.9},{-.05}) node{$a_{3,0}$} ;
\fill ({2},{0)}) circle ({0.07/(sqrt(2)}) ;
\draw ({2.1},{-.05}) node[below]{$a_3^\eps$} ;

\draw ({12*cos(pi/5 r)/5+.1},{12*sin(pi/5 r)/5+.1}) node{$a_{p,0}$} ;

\draw (-1.4,-.9) node{$a_{0}$} ;
\draw[thick,dotted,- stealth] (-1.3,-.7) to [out=60,in=190] (-.3,-.05) ;
\end{tikzpicture}
\caption{The analytic spectrum $\calM(\Z)$.}\label{fig:MZ}
\end{figure}

\medbreak

We will often think about an analytic space over~$\Z$ as a family of analytic spaces over the different valued fields associated to the points of~$\calM(\Z)$. The spaces over~$\Q_{p}$, $\Q$, $\Z/p\Z$ (the last two being endowed with the trivial absolute value) are then usual Berkovich spaces. Recall that the analytic spaces over~$\R$ in the sense of Berkovich are the quotients of the corresponding usual analytic spaces over~$\C$ by the complex conjugation.

Let us be more precise in the case of an affine space $\AA^{n,\an}_{\R,\va_{\infty}^\eps}$, for some $\eps\in(0,1]$. The complex conjugation induces an automorphism of~$\C^n$ given by
\[c\colon z = (z_{1},\dotsc,z_{n}) \in \C^n \mapstoo (\bar z_{1},\dotsc,\bar z_{n}) \in \C^n\]
and we have a homeomorphism
\[ \rho_{\eps} \colon z \in \C^n/\langle c \rangle \mapstoo v_{z,\eps} \in \AA^{n,\an}_{\R,\va_{\infty}^\eps},\]
where $v_{z,\eps} \colon P(T) \in \R[T] \mapsto |P(z)|_{\infty}^\eps$. It follows that all the archimedean fibers are the same. More precisely, the map
\[\Phi\colon (v,\eps) \in \AA^{n,\an}_{\R,\va_{\infty}} \times(0,1] \mapstoo \rho_{\eps} \circ \rho_{1}^{-1}(v) \in \big(\AA^{n,\an}_{\Z}\big)^\text{a}\]
is a homeomorphism. Note that $\Phi(v,\eps)$ may also be defined explicitly as the seminorm defined by
\[\Phi(v,\eps) \colon P(T) \in \R[T] \mapstoo |P(v)|^\eps.\]
In particular, the seminorms~$v$ and~$\Phi(v,\eps)$ are equivalent.

\medbreak

As regards topology, analytic spaces over~$\Z$ are known to be locally path-connected thanks to~\cite[Th\'eor\`eme~7.2.17]{LemanissierPoineau20}. As one can expect, surprising phenomena occur when passing from archimedean to the non-archimedean part. We illustrate this by giving two examples of continuous sections of the projection $\pr_{\Z} \colon \AA^{1,\an}_{\Z} \to \calM(\Z)$.

\begin{example}\label{ex:sectionGauss}
Let $\alpha$ be an element of~$\C$ that is transcendental over~$\Q$. For each $a \in \calM(\Z)^\text{na}$, denote by~$\eta_{a,1}$ the Shilov boundary of the disc of center~0 and radius~1, \emph{i.e.} the Gau\ss{} point, in the fiber~$\pr_{\Z}^{-1}(a)$. The map 
\[\sigma \colon a \in \calM(\Z) \mapstoo 
\begin{cases}
\eta_{a,1} & \text{ if } a \text{ is non-archimedean;}\\
\rho_{\eps}(\alpha) & \text{if } a=a_{\infty}^{\eps} \text{ with } \eps \in (0,1].
\end{cases} 
\]
is a continuous section of $\pr_{\Z} \colon \AA^{1,\an}_{\Z} \to \calM(\Z)$. 
For this it is enough to show that $\rho_{\eps}(\alpha)$ tends to~$\eta_{a_{0},1}$ when $\eps$ goes to~0. Remark that the point~$\eta_{a_{0},1}$ corresponds to the trivial absolute value on~$\Z[T]$ and that, for each $P \in \Z[T] - \{0\}$, we have 
\[|P(\rho_{\eps}(\alpha))| = |P(\alpha)|_{\infty}^\eps \xrightarrow[\eps \to 0]{} 1\]
since $\alpha$ is transcendental over~$\Q$. The result follows.
\end{example}

\begin{example}\label{ex:sectionetar}
Let $r \in (0,1)$. For each $a \in \calM(\Z)^\text{na}$, denote by~$\eta_{a,r}$ the Shilov boundary of the disc of center~0 and radius~r in the fiber~$\pr_{\Z}^{-1}(a)$. The map 
\[\tau_{r} \colon a \in \calM(\Z) \mapstoo 
\begin{cases}
\eta_{a,r} & \text{ if } a \text{ is non-archimedean;}\\
\rho_{\eps}(r^{1/\eps}) & \text{if } a=a_{\infty}^{\eps} \text{ with } \eps \in (0,1].
\end{cases} 
\]
is a continuous section of $\pr_{\Z} \colon \AA^{1,\an}_{\Z} \to \calM(\Z)$. It is enough to show that $\rho_{\eps}(r^{1/\eps})$ tends to~$\eta_{a_{0},r}$ when $\eps$ goes to~0. This is clear since, for each $\eps \in (0,1]$, we have 
\[|T(\rho_{\eps}(r^{1/\eps})| = |r^{1/\eps}|_{\infty}^\eps = r\] 
and $\eta_{a_{0},r}$ is the only point of the fiber $\pr_{\Z}^{-1}(a_{0})$ where~$T$ has absolute value~$r$.
\end{example}

\medbreak

One can build a similar theory replacing~$\Z$ by the ring of integers~$\calO_{K}$ of a number field~$K$. To be more precise, let us denote by~$\Sigma_{K}$ the set of complex embeddings of~$K$ up to conjugation and endow~$\calO_{K}$ with the norm 
\[\nm_{K} := \max(|\sigma(\wc)|_{\infty}, \ \sigma\in \Sigma_{K}).\] 
Then, the spectrum~$\calM(\calO_{K})$ looks very similar to~$\calM(\Z)$: it is a tree with one point associated to the trivial absolute value and, for each place of~$K$, one branch emanating from it.

Remark that the restriction of seminorms induces a map $\calM(\calO_{K})\to\calM(\Z)$, and more generally a map $\A^{n,\an}_{\calO_{K}} \to \A^{n,\an}_{\Z}$. Those maps are continuous and open. 

Note also that~$\calM(\calO_{K})$ is an analytic space over~$\Z$ in the sense of Section~\ref{sec:AnA}. In particular, by~\cite[Th\'eor\`eme~4.3.8]{LemanissierPoineau20}, it makes sense to consider the fiber product of an analytic space over~$\Z$ by~$\calM(\calO_{K})$ over~$\calM(\Z)$. We obtain canonical identifications $\A^{n,\an}_{\calO_{K}} = \A^{n,\an}_{\Z} \times_{\calM(\Z)} \calM(\calO_{K})$, $\PP^{1,\an}_{\calO_{K}} = \PP^{1,\an}_{\Z} \times_{\calM(\Z)} \calM(\calO_{K})$, etc.

\subsection{Some useful inequalities}

In this section, we fix a complete valued field $(k,\va)$, archimedean or not. We state a few results that will be useful later.


\begin{lemma}\label{lem:almostnonarchimedean}
Let $a,b \in k$. We have 
\[|a+b| \le \max(|2|,1)\, \max(|a|,|b|).\]
If $|a|> \max(|2|,1)\,|b|$, then we have
\[|a+b| \ge \frac{|a|}{\max(|2|,1)}.\]
\end{lemma}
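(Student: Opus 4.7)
The plan is to split the argument according to whether the absolute value on $k$ is non-archimedean or archimedean. In the non-archimedean case, $|2|\le 1$, so $\max(|2|,1)=1$, and the first inequality is simply the ultrametric inequality $|a+b|\le\max(|a|,|b|)$. The second assertion reduces, under the hypothesis $|a|>|b|$, to $|a+b|\ge|a|$; this is the standard fact that strict dominance forces equality of absolute values in sums, since otherwise $|a|=|(a+b)-b|\le\max(|a+b|,|b|)<|a|$, a contradiction.

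For the archimedean case, the key step is to invoke the Gelfand-Mazur/Ostrowski classification of complete archimedean valued fields, which identifies $k$ isometrically with a subfield of $\mathbb{C}$ whose absolute value has the form $|\cdot|_{\infty}^{\eps}$ for some $\eps\in(0,1]$. In particular $|2|=2^{\eps}\ge 1$, and $\max(|2|,1)=|2|$. The first inequality follows by raising the elementary real bound $|a+b|_{\infty}\le|a|_{\infty}+|b|_{\infty}\le 2\max(|a|_{\infty},|b|_{\infty})$ to the power~$\eps$, using the monotonicity of $x\mapsto x^{\eps}$ on $\R_{\ge 0}$ and the compatibility $\max(|a|_{\infty},|b|_{\infty})^{\eps}=\max(|a|,|b|)$. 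For the second, the hypothesis rewrites as $|a|_{\infty}>2|b|_{\infty}$; the reverse triangle inequality then gives $|a+b|_{\infty}\ge|a|_{\infty}-|b|_{\infty}>|a|_{\infty}/2$, and raising to the power~$\eps$ yields $|a+b|>|a|/|2|$, which is actually strictly sharper than the claimed~$\ge$.

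I expect essentially no obstacle: the only step requiring a moment of care is the invocation of the structure theorem for complete archimedean valued fields, which allows us to reduce to a power of the standard complex absolute value. Once this reduction is in place, both inequalities become routine manipulations with real numbers, and the archimedean and non-archimedean cases glue seamlessly because the constant $\max(|2|,1)$ takes exactly the value needed in each regime.
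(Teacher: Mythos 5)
Your proposal is correct and follows essentially the same route as the paper: reduce the archimedean case to $(\C,\va_{\infty}^{\eps})$ via the classification of complete archimedean valued fields and raise the real inequalities to the power $\eps$, while the non-archimedean case is the ultrametric inequality. The only cosmetic difference is in the second inequality, where the paper deduces it uniformly by applying the first inequality to $a+b$ and $-b$, whereas you use the reverse triangle inequality in the archimedean case and the strict-dominance argument in the non-archimedean case; both work.
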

\begin{proof}
If~$(k,\va)$ is non-archimedean, then $\max(|2|,1) = 1$, and those inequalities are well-known.

Assume that~$(k,\va)$ is archimedean. Then $(k,\va)$ embeds isometrically into $(\C,\va_{\infty}^{\eps})$ for some $\eps \in (0,1]$ and it is enough to prove the result for the latter. In this case, we have $\max(|2|,1) = 2^\eps$. For any $a,b\in\C$, we have $|a+b|_{\infty} \le 2\, \max(|a|_{\infty},|b|_{\infty})$ and the first result follows by raising the inequality to the power~$\eps$.

The inequality applied to $a+b$ and $-b$ gives $|a| \le |2| \max(|a+b|,|b|)$. As a consequence, if we have $|a| > |2|\, |b|$, we must have $|a| \le |2|\, |a+b|$. 
\end{proof}

It will be useful to introduce a notation for discs. We consider here the Berkovich affine line~$\aank$ over~$k$ with coordinate~$T$.

\begin{notation}
For $a\in k$ and $r\in \R_{>0}$, we set
\begin{align*}
D^+(a,r) &:=  \{x\in \aank \mid |T(x)-a| \le r\},\\
D^-(a,r) &:=  \{x\in \aank \mid |T(x)-a| < r\}.
\end{align*}
\end{notation}




\begin{lemma}\label{lem:2rho}
Let $a,b \in k$ and $\rho_{a},\rho_b \in \R_{>0}$. If $|a-b| > \max(|2|,1)\, \max(\rho_{a},\rho_{b})$, then the closed discs $D^+(a,\rho_a)$ and $D^+(b, \rho_b)$ are disjoint.

If~$\va$ is non-archimedean, then the closed discs $D^+(a,\rho_a)$ and $D^+(b, \rho_b)$ are disjoint if, and only if, $|a-b| >\max(\rho_{a},\rho_{b})$.
\end{lemma}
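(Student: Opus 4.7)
The plan is to derive the first statement directly from Lemma~\ref{lem:almostnonarchimedean}: suppose that there exists a point $x$ lying in both $D^+(a,\rho_a)$ and $D^+(b,\rho_b)$, i.e.\ $|T(x)-a|\le\rho_a$ and $|T(x)-b|\le\rho_b$. Writing $a-b = (T(x)-b) - (T(x)-a)$ and applying the first inequality of Lemma~\ref{lem:almostnonarchimedean}, we get
\[|a-b| \le \max(|2|,1)\,\max(|T(x)-a|,|T(x)-b|) \le \max(|2|,1)\,\max(\rho_a,\rho_b),\]
which contradicts the hypothesis. So no such $x$ exists and the two discs are disjoint.

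For the non-archimedean case, the forward implication is an immediate specialization: when $\va$ is non-archimedean, $\max(|2|,1)=1$, so the hypothesis $|a-b|>\max(\rho_a,\rho_b)$ is exactly the hypothesis of the first statement and disjointness follows.

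For the converse, I would prove the contrapositive: assume $|a-b|\le\max(\rho_a,\rho_b)$ and, without loss of generality, $\rho_a\ge\rho_b$, so that $|a-b|\le\rho_a$. The idea is to exhibit a common point; the natural candidate is the rigid point~$b\in k\subset\aank$, corresponding to the seminorm $P\mapsto |P(b)|$. Clearly $b\in D^+(b,\rho_b)$ since $|T(b)-b|=0$. Moreover, $|T(b)-a|=|b-a|\le\rho_a$, so $b\in D^+(a,\rho_a)$ as well. Hence the two discs meet and the ``only if'' direction is proven. (More generally, the non-archimedean triangle inequality shows $D^+(b,\rho_b)\subseteq D^+(a,\rho_a)$ in this situation.)

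I do not anticipate any real obstacle here: both parts follow from a single triangle-inequality manipulation, and the main point is just to keep track of the archimedean correction factor $\max(|2|,1)$ provided by Lemma~\ref{lem:almostnonarchimedean}.
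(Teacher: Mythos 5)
Your proof is correct and follows essentially the same route as the paper: the disjointness statement is obtained by assuming a common point $x$ and applying the quasi-triangle inequality of Lemma~\ref{lem:almostnonarchimedean} to $a-b = (a-T(x)) + (T(x)-b)$. The only difference is that the paper dismisses the non-archimedean converse as ``well-known,'' whereas you supply the short argument (exhibiting the rigid point $b$ as a common point), which is a harmless and correct addition.
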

\begin{proof}
If there exists a point~$x$ in $D^+(a,\rho_a) \cap D^+(b, \rho_b)$, then we have $|T(x)-a| \le \rho_{a}$ and $|T(x)-b| \le \rho_{b}$ in $\Hx$, hence 
\[|a-b| = |(a - T(x)) + (T(x) - b)| \le \max(|2|,1)\, \max(\rho_{a},\rho_{b})\]
by Lemma~\ref{lem:almostnonarchimedean}. The first part of the result follows.

The converse implication in the non-archimedean setting is well-known.
\end{proof}

\begin{lemma}\label{lem:distinctabsval}
Let $a,b \in k$. If $|a+b|^2 > \max(|4|,1)\, |ab|$, then $|a|\ne|b|$.

If~$\va$ is non-archimedean, then we have $|a|\ne|b|$ if, and only if, $|ab| < |a+b|^2$.
\end{lemma}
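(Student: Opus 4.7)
The plan is to prove both parts by contraposition, splitting on whether $\va$ is archimedean. The non-archimedean half is essentially formal and handles both statements simultaneously; the archimedean half reduces to the classical triangle inequality on~$\C$.

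First, if $\va$ is non-archimedean, then $|4|=|2|^2\le 1$, so $\max(|4|,1)=1$ and the two assertions coincide. If $|a|\neq|b|$, say $|a|>|b|$, the ultrametric identity gives $|a+b|=|a|$, whence $|a+b|^2=|a|^2>|a|\,|b|=|ab|$. Conversely, if $|a|=|b|$, then $|a+b|\le|a|$, and so $|a+b|^2\le|a|^2=|ab|$. This proves the equivalence claimed in the non-archimedean case and, by contraposition, the general implication in this case.

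Second, in the archimedean case, I would embed $(k,\va)$ isometrically into $(\C,\va_\infty^\eps)$ for some $\eps\in(0,1]$, exactly as in the proof of Lemma~\ref{lem:almostnonarchimedean}. Here $|4|=4^\eps\ge 1$, so $\max(|4|,1)=|4|$. To prove the contrapositive, assume $|a|=|b|$, i.e.\ $|a|_\infty=|b|_\infty$. The ordinary complex triangle inequality gives $|a+b|_\infty\le 2|a|_\infty$, whence $|a+b|_\infty^2\le 4|a|_\infty\,|b|_\infty=4|ab|_\infty$; raising to the $\eps$-th power yields $|a+b|^2\le 4^\eps |ab|=|4|\cdot|ab|$, which is the desired bound.

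The argument is short and I anticipate no genuine obstacle; the only care needed is to track the way $\max(|4|,1)$ degenerates in each regime (namely $1$ in the non-archimedean case and $|4|$ in the archimedean case), which is precisely why the statement uses a $\max$ rather than simply~$|4|$, and why the strict inequality in the general implication is not sharp in the non-archimedean setting (where it can be upgraded to the equivalence stated in the second sentence).
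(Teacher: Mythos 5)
Your proof is correct and follows essentially the same route as the paper: the general implication by contraposition from the bound $|a+b|\le\max(|2|,1)\,\max(|a|,|b|)$ (you re-derive the archimedean case inline where the paper simply cites Lemma~\ref{lem:almostnonarchimedean}), and the non-archimedean equivalence from the ultrametric equality $|a+b|=\max(|a|,|b|)$ when $|a|\ne|b|$.
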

\begin{proof}
If $|a| = |b|$, then, by Lemma~\ref{lem:almostnonarchimedean}, we have $|a+b| \le \max(|2|,1)\, |a|$, hence
\[|a+b|^{2} \le \max(|2|^2,1)\, |a|^2 = \max(|4|,1)\, |ab|.\]
The first part of the result follows.

Let us now assume that~$\va$ is non-archimedean. Assume that $|a|\ne |b|$. Then, we have $|a+b| = \max(|a|,|b|) > \min(|a|,|b|)$, hence 
\[|a+b|^2 =  \max(|a|,|b|)^2 > \max(|a|,|b|)\,\min(|a|,|b|) = |a|\,|b|.\]
The converse implication follows directly from the first part of the statement.
\end{proof}

\subsection{Metric structure}

In this section, we fix a complete non-archimedean valued field $(k,\va)$. In the following, we will often encounter the projective line~$\pank$ and we gather here a few metric properties.

First recall that $\pank$ has the structure of a real tree (see \cite[(3.4.20)]{DucrosRSS}). In particular, for any two distinct points $x,y \in \pank$, there exists a unique segment $[x y]$ joining~$x$ to~$y$. Recall also that each segment consisting of points of type~2 or~3 carries a multiplicative length (or modulus) that is invariant under isomorphisms of~$\pank$, \textit{i.e.} under M\"obius transformations  (see \cite[(3.6.23)]{DucrosRSS}). To define the length of such a segment~$I$, one may proceed as follows. 

\begin{notation}
For $a\in k$ and $r\in \R_{>0}$, we denote by~$\eta_{a,r}$ the unique point is the Shilov boundary of the closed disc~$D^+(a,r)$. 
\end{notation}

There exist a finite extension~$k'$ of~$k$, a coordinate~$T$ on~$\pank$, $a\in k'$ and $r\le s\in \R_{>0}$ such that $I$ is the image of the segment $[\eta_{a,r}, \eta_{a,s}]$ by the projection map $\pana{k'} \to \pank$. We then set 
\[\ell(I) := \frac s r \in [1,+\infty).\] 
It is independent of the choices made. It will convenient to set $\ell(\emptyset) := 1$.

\begin{lemma}\label{lem:CrossRatio}
Let $a,b,c,d$ be distinct points of~$\PP^1(k)$ and denote their cross-ratio by~$[a, b;c, d]$. 

Set $I := [ab] \cap [cd]$. It is either a segment consisting of points of type~2 or~3 or the empty set. If $I$ is a non-trivial segment and if going from~$a$ to~$b$ and from~$c$ to~$d$ induces the same orientation on~$I$, then we set $\eps:=-1$. In all other cases, we set $\eps:=1$.

%

Then, we have
\[|[a,b;c,d]| = \ell(I)^\eps.\]
\end{lemma}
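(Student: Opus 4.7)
The plan is to exploit the invariance under Möbius transformations of both sides of the equation. The cross-ratio is, of course, invariant under $\PGL_{2}(k)$, and the multiplicative length of a segment in $\pank$ is invariant under such transformations (this is precisely what justifies the definition of $\ell$). Similarly, the orientation data encoded by~$\eps$ is preserved. Therefore, by choosing $M \in \PGL_{2}(k)$ sending $a \mapsto 0$ and $b \mapsto \infty$, we reduce to proving the equality in the case $(a,b,c,d) = (0,\infty,c,d)$ with $c,d \in k^{\times}$ and $c \neq d$. In this situation, a direct computation gives $[0,\infty;c,d] = c/d$, so it suffices to show $|c/d| = \ell(I)^{\eps}$.

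Next, I would carry out the geometric analysis of $I = [0,\infty] \cap [c,d]$ inside~$\pank$. Recall that $[0,\infty]$ is the ``trunk'' consisting of the Gauß points $\eta_{0,r}$ for $r\in(0,+\infty)$, and that the segment $[c,d]$ goes from~$c$ up to the join point $\eta_{c,|c-d|} = \eta_{d,|c-d|}$ and back down to~$d$. The argument splits according to whether $|c| = |d|$ or not. If $|c| = |d|$, then $|c-d| \le |c|$ and the join point sits at height~$|c-d|$ above~$c$; it meets the trunk if and only if $|c-d| = |c|$, in which case $I$ reduces to the single point $\eta_{0,|c|}$, so $\ell(I) = 1$; otherwise $I = \emptyset$, so $\ell(I) = 1$ by convention. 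Either way $|c/d| = 1$, matching the formula. If $|c| \ne |d|$, say $|c| < |d|$, then $|c-d| = |d|$, the path $[c,d]$ enters the trunk at $\eta_{0,|c|}$ and exits at $\eta_{0,|d|}$, so $I = [\eta_{0,|c|},\eta_{0,|d|}]$ and $\ell(I) = |d|/|c|$.

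It remains to check the sign~$\eps$ in the non-trivial case. Going from $a = 0$ to $b = \infty$ orients the trunk ``upwards'' (from smaller to larger radius); going from $c$ to $d$ traverses $I$ from $\eta_{0,|c|}$ to $\eta_{0,|d|}$, which is also upwards. The two orientations agree, hence $\eps = -1$, and $\ell(I)^{\eps} = |c|/|d| = |c/d|$, as desired. The symmetric case $|d| < |c|$ is identical by swapping the roles.

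The main potential obstacle is purely bookkeeping: one must make sure that the reductions $a \mapsto 0$, $b \mapsto \infty$ respect the orientation conventions used to define~$\eps$, and that the degenerate cases where $I$ is empty or a single point are handled consistently with $\ell(\emptyset) = 1$ and the natural convention $\ell(\{x\}) = 1$. No delicate estimates are needed beyond the ultrametric inequality and the standard description of segments in the Berkovich projective line.
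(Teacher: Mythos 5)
Your proof is correct and follows essentially the same route as the paper's: both use invariance of the cross-ratio and of $\ell$ under M\"obius transformations to normalize, then compute the intersection of the two segments explicitly in the tree $\pank$ (the paper sends $b,c,d$ to $1,0,\infty$ and cases on $|a|$; you send $a,b$ to $0,\infty$ and cases on $|c|$ versus $|d|$, which is an equivalent bookkeeping choice). Your handling of the degenerate cases and of the sign $\eps$ matches the intended argument.
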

\begin{proof}
Since the cross-ratio is invariant under M\"obius transformations, we may assume that $b=1$, $c=0$ and $d=\infty$. Assume that $|a| <1$. Then $[ab] \cap [cd] = [\eta_{0,|a|},\eta_{0,1}]$ and going from~$a$ to~$b$ and from~$c$ to~$d$ induces the same orientation on it, hence $\eps=-1$. We have 
\[\ell([ab] \cap [cd])^{-1} = |a| = |[a,b;c,d]|\]
as desired. The other cases are dealt with similarly.

\end{proof}

\section{Schottky groups}

The notion of Schottky group is classical over~$\C$ (see \cite{IndrasPearls}) and even over a complete non-archimedean valued field (see \cite{GerritzenPut80}). The definitions, results and proofs that appear in this section are adaptations of the standard ones to a relative setting. 

We would also like to point the reader to the recent text \cite{VIASMII} by the authors, which provides a detailed exposition of the theory non-archimedean Schottky groups and Schottky uniformization in the setting of Berkovich spaces (and corrects some mistakes in the existing literature). Our treatment here follows this reference quite closely.

\subsection{Geometric situation}\label{sec:geometricSchottky}

Let~$S$ be an analytic space over a Banach ring (archimedean or not). As in Section~\ref{sec:P1S}, consider the analytic space~$\PP^1_{S}$ and the projection morphism $\pi\colon \PP^1_{S} \to S$. In this section, we describe geometric properties of the action of some groups of automorphisms of $\PP^1_{S}$. 
It follows the strategy of \cite[I, 4.1]{GerritzenPut80}, see also \cite[\S 6.4.1]{VIASMII}.

\begin{definition}\label{def:Schottkyfigure}
Let $(\gamma_{1},\dotsc,\gamma_{g}) \in \PGL_{2}(\calO(S))^g$. Let $\calB = \big(B^+(\gamma_{i}^\eps), 1\le i\le g, \eps=\pm1\big)$ be a family of closed subsets of~$\PP^1_{S}$ that are disjoint. For each $i\in\{1,\dotsc,g\}$ and $\eps \in \{-1,1\}$, set 
\[B^-(\gamma_{i}^\eps) := \gamma_{i}^\eps (\PP^1_{S} - B^+(\gamma_{i}^{-\eps})).\]
For each $s\in S$, $i\in\{1,\dotsc,g\}$, $\eps \in \{-1,1\}$ and $\sigma\in\{-,+\}$, set $B_{s}^\sigma(\gamma_{i}^\eps) := B^\sigma(\gamma_{i}^\eps) \cap \pi^{-1}(s)$. 

We say that~$\calB$ is a \emph{Schottky figure} adapted to $(\gamma_{1},\dotsc,\gamma_{g})$ if, for each $s\in S$, $i\in\{1,\dotsc,g\}$ and $\eps \in \{-1,1\}$, $B_{s}^+(\gamma_{i}^\eps)$ is a closed disc in $\pi^{-1}(s) \simeq \PP^1_{\calH(s)}$ and $B_{s}^-(\gamma_{i}^\eps)$ is a maximal open disc inside it.
\end{definition}

\begin{remark}\label{rem:conjugatedSchottkyfigure}
Let $\varphi \in \PGL_{2}(\calO(S))$. If $\calB = \big(B^+(\gamma_{i}^\eps), 1\le i\le g, \eps=\pm1\big)$ is a Schottky figure adapted to $(\gamma_{1},\dotsc,\gamma_{g})$, then $\big(\varphi(B^+(\gamma_{i}^\eps)), 1\le i\le g, \eps=\pm1\big)$ is a Schottky figure adapted to $(\varphi\gamma_{1}\varphi^{-1},\dotsc,\varphi\gamma_{g}\varphi^{-1})$.
\end{remark}

In this section, we assume that we are in the situation of Definition~\ref{def:Schottkyfigure}. For $\sigma\in \{-,+\}$, we set 
\[F^\sigma := \PP^1_{S} - \bigcup_{\substack{1\le i\le g \\ \eps=\pm1}} B^{-\sigma}(\gamma_{i}^\eps).\]

Note that, for $\gamma_{0}\in\{\gamma_{1}^{\pm1},\dotsc,\gamma_{g}^{\pm1}\}$ and $\sigma \in \{-,+\}$, $B^\sigma(\gamma_{0})$ is the unique disc among the $B^\sigma(\gamma)$'s  containing~$\gamma_{0} F^\sigma$.

\medbreak

Set $\Delta := \{\gamma_{1},\dotsc,\gamma_{g}\}$. Denote by~$F_{g}$ the free group over the alphabet~$\Delta$ and by~$\Gamma$ the subgroup of $\PGL_{2}(\calO(B))$ generated by~$\Delta$. We have a natural morphism $\varphi\colon F_{g} \to \Gamma$ sending each $\gamma$ in~$\Delta$ to~$\gamma$. It induces an action of~$F_{g}$ on~$\PP^1_{S}$.

We now define subsets of~$\PP^1_{S}$ associated to the elements of~$F_g$. As usual, we will identify those elements with the words over the alphabet $\Delta^{\pm} := \{\gamma^{\pm1}_{1},\dotsc,\gamma_{g}^{\pm1}\}$.

\begin{notation}
For a non-empty reduced word $w=w'\gamma$ over~$\Delta^\pm$ and $\sigma \in \{-,+\}$, we set
\[B^\sigma(w) := w' \,B^\sigma(\gamma).\]
\end{notation}


%

%
%
%
%
%


The following result is stated and proved in \cite[Lemma 6.4.6]{VIASMII}.

\begin{lemma}\label{lem:wordF}
Let $u$ be a non-empty reduced word over~$\Delta^\pm$. Then we have $u F^+ \subseteq B^+(u)$.

Let $v$ be a non-empty reduced word over~$\Delta^\pm$. If there exists a word~$w$ over~$\Delta^\pm$ such that $u = vw$, then we have $u F^+ \subseteq B^+(u) \subseteq B^+(v)$. If, moreover, $u\ne v$, then we have $B^+(u) \subseteq B^-(v)$.

Conversely, if we have $B^+(u) \subseteq B^+(v)$, then there exists a word~$w$ over~$\Delta^\pm$ such that $u = vw$.
\qed
\end{lemma}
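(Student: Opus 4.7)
The backbone of the proof is a single base inclusion, which I would establish first: for any two letters $\delta, \gamma \in \Delta^\pm$ with $\delta \ne \gamma^{-1}$,
\[\delta\, B^+(\gamma) \subseteq B^-(\delta) \subseteq B^+(\delta).\]
Indeed, $\delta^{-1} \ne \gamma$ and the Schottky figure is disjoint, so $B^+(\gamma) \subseteq \PP^1_S - B^+(\delta^{-1})$; applying $\delta$ gives the first inclusion by the very definition of $B^-(\delta)$, and the second is just the fiberwise fact that the maximal open disc sits inside its closed one.

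From this the first assertion is nearly immediate. Writing $u = u_1 \cdots u_n$ reduced, note that $F^+$ is disjoint from $B^-(u_n^{-1})$ by the definition of $F^+$, while a short computation from the definitions yields $u_n(\PP^1_S - B^-(u_n^{-1})) = B^+(u_n)$; hence $u_n F^+ \subseteq B^+(u_n)$. Left-multiplying by the remaining prefix $u_1 \cdots u_{n-1}$ then gives $uF^+ \subseteq u_1 \cdots u_{n-1} B^+(u_n) = B^+(u)$.

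For the second assertion, suppose $u = vw$ with $v = v_1 \cdots v_k$ and $w = w_1 \cdots w_m$ (both non-empty, the case $w = \emptyset$ being trivial), the concatenation being reduced. Iterating the base inclusion along $w$ (any two consecutive letters of the reduced word $u$ automatically satisfy $\delta \ne \gamma^{-1}$) yields the chain
\[w_1 \cdots w_{m-1} B^+(w_m) \subseteq w_1 \cdots w_{m-2} B^+(w_{m-1}) \subseteq \cdots \subseteq B^+(w_1),\]
so that $B^+(u) = v_1 \cdots v_k w_1 \cdots w_{m-1} B^+(w_m) \subseteq v_1 \cdots v_k B^+(w_1)$. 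One more application of the base inclusion at the junction $v_k w_1$ refines this to $v_k B^+(w_1) \subseteq B^-(v_k)$, giving $B^+(u) \subseteq v_1 \cdots v_{k-1} B^-(v_k) = B^-(v) \subseteq B^+(v)$. The stronger inclusion $B^+(u) \subseteq B^-(v)$ thus comes for free whenever $w$ is non-empty, covering both statements of the assertion.

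For the converse I would induct on $|v|$. The base case $|v| = 1$, $v = \delta$, is a ping-pong argument: if the first letter $\gamma_1$ of $u$ were different from $\delta$, the second assertion would force $B^+(u) \subseteq B^+(\gamma_1) \cap B^+(\delta) = \emptyset$, contradicting that $B^+(u)$ is a non-empty closed disc on each fiber. For the inductive step $v = \delta v_1$ with $|v_1| \ge 1$, the base case forces $u$ to start with $\delta$; one must further exclude $u = \delta$, which follows because otherwise $B^+(\delta) = B^+(u) \subseteq B^-(\delta)$ (by the ``moreover'' applied to $v$) would contradict $B^-(\delta) \subsetneq B^+(\delta)$. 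Writing $u = \delta u'$ with $u'$ non-empty, the identities $\delta^{-1} B^+(u) = B^+(u')$ and $\delta^{-1} B^+(v) = B^+(v_1)$ (both immediate from the definition $B^+(\cdot) = (\text{prefix}) \cdot B^+(\text{last letter})$ once a single $\delta$ may be stripped from the front) reduce the problem to $(u', v_1)$, where the inductive hypothesis applies. The main obstacle here is precisely the bookkeeping of length degeneracies, and the key leverage is the strict inclusion $B^-(\delta) \subsetneq B^+(\delta)$ built into the definition of a Schottky figure.
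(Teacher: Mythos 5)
Your proof is correct, and it is essentially the standard ping-pong argument: the paper itself does not reproduce a proof here but cites \cite[Lemma 6.4.6]{VIASMII}, whose argument rests on exactly the same ingredients you use, namely the identity $\gamma(\PP^1_S - B^-(\gamma^{-1})) = B^+(\gamma)$, the base inclusion $\delta B^+(\gamma) \subseteq B^-(\delta)$ for $\delta \ne \gamma^{-1}$ coming from disjointness of the Schottky figure, and induction on word length for the converse. Your handling of the degenerate cases in the converse (excluding $u=v$ via the strictness $B^-(\delta) \subsetneq B^+(\delta)$, and non-emptiness of $B^+(u)$ for the ping-pong contradiction) is exactly the bookkeeping that makes the induction go through.
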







\begin{corollary}\label{cor:free}
The morphism~$\varphi$ is an isomorphism and the group~$\Gamma$ is free on the generators $\gamma_{1},\dotsc,\gamma_{g}$.
\qed
\end{corollary}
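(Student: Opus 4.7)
The plan is a ping-pong style argument: surjectivity of $\varphi$ is immediate (since $\Gamma$ is by definition generated by the image of $\Delta$), so the only task is injectivity, i.e.\ showing that no non-empty reduced word $u$ over $\Delta^{\pm}$ acts as the identity on $\PP^1_S$.

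First, write $u = \gamma_{i_1}^{\eps_1} w$ with $\gamma_{i_1}^{\eps_1} \in \Delta^{\pm}$ and $w$ a (possibly empty) reduced word. Applying Lemma~\ref{lem:wordF} to $u$ and its prefix $v = \gamma_{i_1}^{\eps_1}$ yields $u F^+ \subseteq B^+(u) \subseteq B^+(\gamma_{i_1}^{\eps_1})$. If one had $\varphi(u) = 1$, then $u$ would act as the identity on $\PP^1_S$, so $u F^+ = F^+$, forcing $F^+ \subseteq B^+(\gamma_{i_1}^{\eps_1})$. I would then derive a contradiction by exhibiting a point of $F^+$ lying outside this disc.

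To produce such a point, pick any $(j, \eta) \in \{1, \ldots, g\} \times \{-1, +1\}$ different from $(i_1, \eps_1)$; this is possible since there are $2g \ge 2$ elements in $\Delta^{\pm}$. Working fiberwise, in any fiber $\pi^{-1}(s) \simeq \PP^1_{\calH(s)}$ the boundary $B^+_s(\gamma_j^\eta) \setminus B^-_s(\gamma_j^\eta)$ of the closed disc is non-empty (the Shilov boundary point of the disc in the non-archimedean case, the bounding circle in the archimedean case). Any point $x$ of this boundary lies in $F^+_s$: it avoids $B^-_s(\gamma_j^\eta)$ by construction, and it avoids every other $B^-_s(\gamma_i^\eps)$ because $B^+_s(\gamma_j^\eta) \cap B^+_s(\gamma_i^\eps) = \emptyset$ when $(i,\eps) \neq (j,\eta)$, by the disjointness requirement in Definition~\ref{def:Schottkyfigure}. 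On the other hand $x \in B^+_s(\gamma_j^\eta)$, which is disjoint from $B^+_s(\gamma_{i_1}^{\eps_1}) \supseteq B^+_s(u)$, so $x \notin B^+(u)$, contradicting $F^+ \subseteq B^+(u)$.

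I do not expect a serious obstacle beyond the bookkeeping above: the disjointness hypothesis in the definition of a Schottky figure both forces the closed/open disc boundary to meet $F^+$ and prevents $B^+(u)$ from swallowing this boundary point. Once this is in place, the corollary is simply the combination of the tautological surjectivity of $\varphi$ and the ping-pong injectivity just sketched, identifying $\Gamma$ with the abstract free group $F_g$ on $\gamma_{1},\dotsc,\gamma_{g}$.
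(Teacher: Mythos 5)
Your argument is correct and is exactly the intended one: the paper states this corollary with no written proof precisely because it is the standard ping-pong deduction from Lemma~\ref{lem:wordF}, namely that a non-empty reduced word $u$ sends $F^+$ into $B^+(\gamma_{i_1}^{\eps_1})$, which cannot contain all of $F^+$ since the $2g\ge 2$ closed discs of the Schottky figure are pairwise disjoint and their boundaries lie in $F^+$. Nothing to add.
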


As a consequence, we will now identify~$\Gamma$ with~$F_g$ and express the elements of~$\Gamma$ as words over the alphabet~$\Delta^\pm$. In particular, we will allow us to speak of the length of an element~$\gamma$ of~$\Gamma$. We will denote it by~$|\gamma|$.

Set
\[O_{n} :=  \bigcup_{|\gamma|\le n} \gamma F^+.\]
Since the complement of~$F^+$ is the disjoint union of the open disks $B^-(\gamma)$ with $\gamma \in \Delta^\pm$, it follows from the description of the action that, for each $n\ge 0$, we have 
\[\PP^1_{S} - O_{n} = \bigsqcup_{|\gamma|= n+1} B^-(\gamma).\]
It follows from Lemma~\ref{lem:wordF} that, for each $n\ge 0$, $O_{n}$ is contained in the interior of~$O_{n+1}$. We set 
\[O := \bigcup_{n\ge 0} O_{n} = \bigcup_{\gamma\in\Gamma} \gamma F^+.\]

\subsection{Over a valued field.}

Let $(k,\va)$ be a complete valued field. In this section, we will focus on the particular case $S = \calM(k)$. In this setting, the material of this section is classical: see \cite[Project~4.5]{IndrasPearls} and \cite[I, 4.1.3]{GerritzenPut80} (or \cite[\S 6.4.1]{VIASMII}) in the archimedean and non-archimedean case respectively.

We still assume that we are in the situation of Definition~\ref{def:Schottkyfigure}. Set $\iota := \begin{bmatrix} 0&1\\1&0 \end{bmatrix} \in \PGL_{2}(k)$. It corresponds to the map $z \mapsto 1/z$ on~$\pank$. The first result follows from an explicit computation.

\begin{lemma}\label{lem:inversiondisc}
Let $\alpha\in k^\ast$ and $\rho \in [0,|\alpha|)$. Then, we have
\[\iota D^+(\alpha,\rho) = 
\begin{dcases}
D^+\left(\frac{\bar\alpha}{|\alpha|^2-\rho^2}, \frac{\rho}{|\alpha|^2-\rho^2}\right) & \textrm{if } k \text{ is archimedean};\\
D^+\left(\frac{1}{\alpha}, \frac{\rho}{|\alpha|^2}\right) \ \text{otherwise}.
\end{dcases}\]
\qed
\end{lemma}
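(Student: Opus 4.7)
The plan is to verify each case by a direct coordinate computation using that $\iota$ acts as $z \mapsto 1/z$ on $\pank$, applied pointwise to the Berkovich disc.

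For the non-archimedean case, the key observation is that any point $x$ with $|T(x) - \alpha| \le \rho < |\alpha|$ satisfies $|T(x)| = |\alpha|$ by the ultrametric inequality; in particular $T(x) \ne 0$, so $\iota$ is defined at $x$. The identity
\[
\frac{1}{T(x)} - \frac{1}{\alpha} = \frac{\alpha - T(x)}{\alpha\, T(x)}
\]
then yields $|1/T(x) - 1/\alpha| = |T(x) - \alpha|/|\alpha|^2 \le \rho/|\alpha|^2$, which gives the inclusion $\iota D^+(\alpha,\rho) \subseteq D^+(1/\alpha, \rho/|\alpha|^2)$. Since $\rho/|\alpha|^2 < 1/|\alpha|$, applying the same argument to the disc $D^+(1/\alpha, \rho/|\alpha|^2)$ and using that $\iota$ is an involution yields the reverse inclusion, hence equality.

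For the archimedean case, one works inside $\calH(x)$, which embeds isometrically into $\C$ (with a power of the usual absolute value). Writing $w = 1/z$, the condition $|z - \alpha| \le \rho$ becomes $|1 - \alpha w|^2 \le \rho^2 |w|^2$. Expanding $|1 - \alpha w|^2 = 1 - 2\,\mathrm{Re}(\alpha w) + |\alpha|^2 |w|^2$, this reads
\[
(|\alpha|^2 - \rho^2)\,|w|^2 - 2\,\mathrm{Re}(\alpha w) + 1 \le 0.
\]
Since $\rho < |\alpha|$, we may divide by $|\alpha|^2 - \rho^2 > 0$ and complete the square, obtaining
\[
\Bigabs{w - \frac{\bar\alpha}{|\alpha|^2 - \rho^2}}^2 \le \frac{|\alpha|^2}{(|\alpha|^2 - \rho^2)^2} - \frac{1}{|\alpha|^2 - \rho^2} = \frac{\rho^2}{(|\alpha|^2 - \rho^2)^2},
\]
which is the claimed inequality. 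Each step above is reversible (because $|\alpha|^2 - \rho^2 > 0$ throughout), so the inclusion is actually an equality.

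There is no real obstacle here; the statement is a classical formula for the image of a disc under a M\"obius inversion, and the only mild subtlety is the case distinction. The non-archimedean case is entirely ultrametric, while the archimedean case is a standard algebraic manipulation with the hermitian structure, which is why the formula involves $\bar\alpha$ in that case and only $1/\alpha$ in the other. Because the computation is valid pointwise in $\calH(x)$, no extra argument is needed to cover non-rational Berkovich points of the disc.
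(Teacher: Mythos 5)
Your proof is correct and matches the paper's intent exactly: the paper states this lemma with no written proof, remarking only that it ``follows from an explicit computation,'' and your argument supplies precisely that computation (ultrametric identity plus involution in the non-archimedean case, completing the square in the archimedean case), including the right observation that working pointwise in $\calH(x)$ handles all Berkovich points. The only cosmetic caveat, which is already implicit in the paper's own statement, is that the archimedean formula is written for the genuine absolute value $\va_\infty$ rather than a power $\va_\infty^\eps$, and your expansion of $|1-\alpha w|^2$ likewise presupposes that normalization.
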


\begin{lemma}\label{lem:radiusdisc}
Let $r>0$ and let $\gamma =
\begin{bmatrix} 
a & b \\
c & d
\end{bmatrix}$ in $\PGL_{2}(k)$ such that $\gamma D^+(0,r) \subseteq \aank$. Then, we have $|d|>r |c|$ and 
\[\gamma D^+(0,r) = 
\begin{dcases}
D^+\left(\frac{b \bar d - a \bar c r^2}{|d|^2 - |c|^2\, r^2} , \frac{|ad-bc|\, r}{|d|^2 - |c|^2\, r^2} \right) & \textrm{if } k \text{ is archimedean};\\
D^+\left(\frac{b}{d}, \frac{|ad-bc|\, r}{|d|^2}\right) \ \text{otherwise}.
\end{dcases}\]

\end{lemma}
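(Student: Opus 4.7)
The plan is to first establish the inequality $|d| > r|c|$, then reduce the computation of $\gamma D^+(0,r)$ to one application of Lemma~\ref{lem:inversiondisc} sandwiched between two affine maps.

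For the inequality, if $c=0$ then $d\ne 0$ (as $\gamma\in\PGL_{2}(k)$) and there is nothing to prove. Otherwise $\gamma^{-1}(\infty) = -d/c$, and the hypothesis $\gamma D^+(0,r)\subseteq\aank$ means $-d/c\notin D^+(0,r)$, which gives $|d|>r|c|$.

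Next I would split into two cases. If $c=0$, then $\gamma(z) = (a/d)z + b/d$ is affine, and a direct computation gives $\gamma D^+(0,r) = D^+(b/d, |a/d|\,r) = D^+(b/d, |ad-bc|r/|d|^2)$, which matches both formulas in the statement (the archimedean denominator reducing to $|d|^2$ since $c=0$). Assume now $c\ne 0$. The identity
\[
\gamma(z) \;=\; \frac{a}{c} \;+\; \frac{bc-ad}{c}\cdot\frac{1}{cz+d}
\]
expresses $\gamma$ as $A\circ\iota\circ B$, where $B\colon z\mapsto cz+d$ and $A\colon w\mapsto \frac{a}{c} + \frac{bc-ad}{c}\,w$. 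Applying $B$ to $D^+(0,r)$ gives $D^+(d,|c|r)$, and the condition $|d|>|c|r$ established above is exactly the hypothesis needed to apply Lemma~\ref{lem:inversiondisc} with $\alpha=d$ and $\rho=|c|r$. This yields
\[
\iota B(D^+(0,r)) \;=\;
\begin{dcases}
D^+\!\left(\tfrac{\bar d}{|d|^2-|c|^2r^2},\,\tfrac{|c|r}{|d|^2-|c|^2r^2}\right) & \text{if } k \text{ is archimedean};\\
D^+\!\left(\tfrac{1}{d},\,\tfrac{|c|r}{|d|^2}\right) & \text{otherwise}.
\end{dcases}
\]

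Finally, the affine map $A$ sends $D^+(w_{0},\rho)$ to $D^+(A(w_{0}), |(bc-ad)/c|\rho)$. In the non-archimedean case, the center becomes $\frac{a}{c} + \frac{bc-ad}{cd} = \frac{ad + bc - ad}{cd} = \frac{b}{d}$ and the radius becomes $\frac{|ad-bc|r}{|d|^2}$, matching the statement. In the archimedean case, a short computation using $d\bar d=|d|^2$ and $c\bar c=|c|^2$ gives
\[
A\!\left(\tfrac{\bar d}{|d|^2-|c|^2r^2}\right) \;=\; \tfrac{bc\bar d - a|c|^2r^2}{c(|d|^2-|c|^2r^2)} \;=\; \tfrac{b\bar d - a\bar c r^2}{|d|^2 - |c|^2 r^2},
\]
and the radius becomes $\frac{|ad-bc|r}{|d|^2-|c|^2r^2}$ as desired. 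The only mild subtlety is the bookkeeping in the archimedean center computation, where one must recognize the numerator $bc\bar d - a|c|^2 r^2$ as $c(b\bar d - a\bar c r^2)$ in order to cancel the extra factor of $c$ in the denominator; everything else is a routine chain of substitutions.
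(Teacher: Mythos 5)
Your proposal is correct and follows essentially the same route as the paper: the case split on $c=0$, the identification of $|d|>r|c|$ with the condition $\gamma^{-1}(\infty)=-d/c\notin D^+(0,r)$, the partial-fraction decomposition of $\gamma$ into affine maps and the inversion $\iota$, and the application of Lemma~\ref{lem:inversiondisc}. The only difference is that you carry out in full the ``explicit computation'' the paper leaves to the reader (and conjugate by $z\mapsto cz+d$ rather than $z\mapsto z+d/c$, which is immaterial); your algebra checks out in both the archimedean and non-archimedean cases.
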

\begin{proof}
Let us first assume that $c=0$. Then, we have $d\ne 0$, so the inequality $|d|>r |c|$ holds, and~$\gamma$ is affine with ratio~$a/d$. The result follows. 

Let us now assume that $c\ne 0$. In this case, we have $\gamma^{-1}(\infty) = -\frac{d}{c}$, which does not belong to~$D(0,r)$ if, and only if, $|d| >r |c|$. Note that we have the following equality in~$k(T)$:
\[\frac{aT+b}{cT+d} = \frac{a}{c} - \frac{ad-bc}{c^2}\, \frac{1}{T+\frac{d}{c}}.\]
By Lemma~\ref{lem:inversiondisc}, there exist $\beta \in k$ and $\sigma>0$ such that $\iota D^+(\frac{d}{c},r) = D^+(\beta,\sigma)$. Then, we have $\gamma D^+(0,r) = D^+(\frac{a}{c} - \frac{ad-bc}{c^2}\,\beta, \big|\frac{ad-bc}{c^2}\big|\,\sigma)$ and the result follows from an explicit computation.
\end{proof}

\begin{lemma}\label{lem:quotient}
Let $D' \subseteq D$ be closed concentric discs in~$\aank$. Let $\gamma \in \PGL_{2}(k)$ such that $\gamma D' \subseteq \gamma D \subseteq \A^{1,\an}_{k}$. Then, we have
\[\frac{\textrm{radius of } \gamma(D')}{\textrm{radius of } \gamma(D)} \le \frac{\textrm{radius of } D'}{\textrm{radius of } D},\]
with an equality if~$k$ is non-archimedean.
\end{lemma}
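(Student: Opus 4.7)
The plan is to reduce to the case where both discs are centered at $0$ and then apply Lemma~\ref{lem:radiusdisc} directly to read off the two radii and compare their ratio.

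First, I would remove the center. Let $D = D^+(\alpha, R)$ and $D' = D^+(\alpha, r)$ with $0 < r \le R$ and $\alpha \in k$. The translation $\tau \colon z \mapsto z-\alpha$ lies in $\PGL_{2}(k)$ and sends $D$ and $D'$ to $D^+(0,R)$ and $D^+(0,r)$ respectively. Replacing $\gamma$ by $\gamma' := \gamma \circ \tau^{-1}$ leaves the images $\gamma D$ and $\gamma D'$ unchanged, so I may and will assume $\alpha = 0$.

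Next, write $\gamma = \begin{bmatrix} a & b \\ c & d \end{bmatrix}$. Since $\gamma D^+(0,R) \subseteq \aank$, Lemma~\ref{lem:radiusdisc} applied to $D^+(0,R)$ yields $|d| > R\,|c|$, which in turn implies $|d| > r\,|c|$, so that Lemma~\ref{lem:radiusdisc} also applies to $D^+(0,r)$ and gives $\gamma D^+(0,r) \subseteq \aank$. Thus in both cases I get explicit formulas for the radii of $\gamma D$ and $\gamma D'$.

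In the non-archimedean setting, Lemma~\ref{lem:radiusdisc} gives radii $\frac{|ad-bc|\,R}{|d|^2}$ and $\frac{|ad-bc|\,r}{|d|^2}$, so the ratio of radii equals $r/R$ exactly. In the archimedean setting, the radii are $\frac{|ad-bc|\,R}{|d|^2 - |c|^2 R^2}$ and $\frac{|ad-bc|\,r}{|d|^2 - |c|^2 r^2}$, and a direct computation gives
\[\frac{\mathrm{rad}(\gamma D')}{\mathrm{rad}(\gamma D)} = \frac{r}{R} \cdot \frac{|d|^2 - |c|^2 R^2}{|d|^2 - |c|^2 r^2}.\]
Since $r \le R$, one has $|d|^2 - |c|^2 R^2 \le |d|^2 - |c|^2 r^2$ (both being positive by the inequality $|d| > R|c|$), so the second factor is at most $1$ and the inequality follows. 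There is no real obstacle here: once Lemma~\ref{lem:radiusdisc} is in hand the statement reduces to an elementary manipulation, the only point worth mentioning being the reduction to centered discs via composition with a translation.
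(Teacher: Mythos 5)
Your proof is correct and follows the same route as the paper's: reduce to discs centered at $0$ by composing with a translation, then read off the radii from Lemma~\ref{lem:radiusdisc}. The paper leaves the final ratio computation implicit, whereas you carry it out explicitly; there is no substantive difference.
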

\begin{proof}

Let~$p$ be the center of~$D$ and~$D'$ and let~$\tau$ be the translation sending~$p$ to~0. Up to changing~$D$ into~$\tau D$, $D'$ into~$\tau D'$, $\gamma$ into~$\gamma \tau^{-1}$ and $\gamma'$ into~$\gamma' \tau^{-1}$, we may assume that~$D$ and~$D'$ are centered at~0. The result then follows from Lemma~\ref{lem:radiusdisc}.
\end{proof}

%
%


\begin{proposition}\label{prop:radiusn}
Assume that $\infty \in F^-$. Then, there exist $R>0$ and $c \in (0,1)$ such that, for each $\gamma \in \Gamma-\{\id\}$, $B^+(\gamma)$ is a closed disc of radius at most $R \,c^{|\gamma|}$. 
\end{proposition}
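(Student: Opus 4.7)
The plan is to prove the bound by induction on the length $n=|\gamma|$, establishing a uniform contraction factor $c\in (0,1)$ such that $r(B^{+}(\gamma))\le c\cdot r(B^{+}(\gamma'))$ whenever $\gamma = \gamma' s$ is a reduced word of length $n\ge 2$. The fact that each $B^{+}(\gamma)$ is a genuine closed disc inside $\aank$ follows from Lemma~\ref{lem:wordF}: every such $B^{+}(\gamma)$ is contained in some $B^{+}(s)$ with $s\in\Delta^{\pm}$, hence bounded (using $\infty\in F^{-}$), and since $B^{+}(\gamma)$ is the M\"obius image of a closed disc whose image lies in $\aank$, it is itself a closed disc.

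The inductive step relies on the decomposition $B^{+}(\gamma) = \gamma''\cdot B^{+}(s_{n-1}s_n)$ and $B^{+}(\gamma') = \gamma''\cdot B^{+}(s_{n-1})$, where $\gamma = s_1\cdots s_n$ is reduced and $\gamma'' := s_1\cdots s_{n-2}$. The definition of $B^{-}$ combined with the disjointness of the discs of the Schottky figure (using $s_n \ne s_{n-1}^{-1}$) yields $B^{+}(s_{n-1}s_n) = s_{n-1}B^{+}(s_n) \subseteq B^{-}(s_{n-1}) \subsetneq B^{+}(s_{n-1})$. The main obstacle is that Lemma~\ref{lem:quotient} requires two \emph{concentric} nested discs, a property that holds automatically in the non-archimedean setting (any two nested discs share a center) but may fail in the archimedean one. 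To handle both cases uniformly, for each admissible pair $(t,s)\in(\Delta^{\pm})^{2}$ with $s\ne t^{-1}$, I introduce $\tilde{D}_{t,s}$, the smallest closed disc concentric with $B^{+}(t)$ that contains $B^{+}(ts)$: in the non-archimedean case $\tilde{D}_{t,s} = B^{+}(ts)$, while in the archimedean case, an elementary Euclidean computation using that $B^{-}(t)$ is concentric with $B^{+}(t)$ of the same radius shows $r(\tilde{D}_{t,s}) < r(B^{+}(t))$. Since only finitely many pairs arise, the quantity $c := \max_{(t,s)} r(\tilde{D}_{t,s})/r(B^{+}(t))$ lies in $(0,1)$.

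Applying Lemma~\ref{lem:quotient} to the concentric pair $\tilde{D}_{s_{n-1},s_n} \subseteq B^{+}(s_{n-1})$ and to the M\"obius transformation $\gamma''$ (whose image $B^{+}(\gamma')$ lies in $B^{+}(s_1) \subseteq \aank$) gives
\[ r(\gamma''\cdot\tilde{D}_{s_{n-1},s_n}) \le c\cdot r(B^{+}(\gamma')). \]
The inclusion $B^{+}(\gamma) = \gamma''\cdot B^{+}(s_{n-1}s_n) \subseteq \gamma''\cdot\tilde{D}_{s_{n-1},s_n}$ of closed discs in $\aank$ then yields $r(B^{+}(\gamma)) \le c\cdot r(B^{+}(\gamma'))$. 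Iterating down to words of length one gives $r(B^{+}(\gamma)) \le M c^{n-1}$, with $M := \max_{s\in\Delta^{\pm}} r(B^{+}(s))$, which is the desired bound with $R := M/c$.
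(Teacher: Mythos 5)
Your proof is correct and follows the same overall skeleton as the paper's: both arguments induct on word length, obtain a uniform contraction constant from the finitely many admissible pairs in $\Delta^\pm\times\Delta^\pm$, and propagate it along the word via Lemma~\ref{lem:quotient} applied to the decomposition $B^+(\gamma''s_{n-1}s_n)=\gamma'' B^+(s_{n-1}s_n)\subseteq \gamma'' B^+(s_{n-1})=B^+(\gamma's_{n-1})$. Where you genuinely diverge is in how you meet the concentricity hypothesis of Lemma~\ref{lem:quotient} in the archimedean case: the paper conjugates the nested pair $B^+(\delta'\delta)\subseteq B^+(\delta')$ by a M\"obius transformation $f_{\delta,\delta'}$ rendering them concentric, outsourcing the construction to \cite[Project~3.4]{IndrasPearls}, whereas you keep the ambient coordinate and instead replace the inner disc by its concentric hull $\tilde D_{t,s}$ inside $B^+(t)$. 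Your observation that this hull still has strictly smaller radius — because $B^+(ts)$ lies in $B^-(t)$, which in the archimedean case is the concentric open interior of $B^+(t)$ by maximality — is exactly what is needed, and it makes the archimedean case self-contained at the cost of a slightly lossier constant $c$. Both routes are valid; yours trades the external reference for an elementary Euclidean estimate, and your normalization $R:=M/c$ even handles the base case $|\gamma|=1$ a bit more carefully than the paper's statement of the final bound.
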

\begin{proof}

Let $\delta,\delta' \in \Delta^\pm$. By Lemma~\ref{lem:wordF}, we have an inclusion of discs $B^+(\delta' \delta) \subseteq B^+(\delta')$. There exists $f_{\delta,\delta'} \in \PGL_{2}(k)$ that sends those discs to concentric disks inside~$\aank$. In the non-archimedean case, the discs are already concentric, so one may take $f_{\delta,\delta'} = \id$, while, in the archimedean case, there is some work to be done, for which we refer to \cite[Project 3.4]{IndrasPearls}. Set 
\[c_{\delta,\delta'} := \frac{\text{radius of } f_{\delta,\delta'}(B^+(\delta' \delta))}{\text{radius of } f_{\delta,\delta'}(B^+(\delta'))} \in (0,1).\]

For each $\gamma\in \Gamma$ such that $\gamma\delta'$ is a reduced word, by Lemma~\ref{lem:quotient}, we have
\[ \frac{\text{radius of } B^+(\gamma\delta'\delta)}{\text{radius of } B^+(\gamma\delta')} = \frac{\text{radius of } \gamma f_{\delta,\delta'}^{-1}f_{\delta,\delta'}(B^+(\delta'\delta))}{\text{radius of } \gamma f_{\delta,\delta'}^{-1} f_{\delta,\delta'}(B^+(\delta'))}  \le c_{\delta,\delta'}.\]

Set 
\[R := \max (\{\text{radius of } B^+(\gamma) \mid \gamma \in \Delta^\pm\})\]
and 
\[c := \max (\{c_{\gamma,\gamma'} \mid \gamma,\gamma' \in \Delta^\pm, \gamma' \ne \gamma^{-1}\}).\]
By induction, for each $\gamma \in \Gamma-\{\id\}$, we have 
\[\text{radius of } B^+(\gamma) \le R\, c^{|\gamma|}.\]
\end{proof}

Let us mention an easy consequence of that result (see \cite[Corollary~6.4.12]{VIASMII} for details).

\begin{corollary}\label{cor:loxodromic}
Every element of $\Gamma - \{\id\}$ is loxodromic.
\qed
\end{corollary}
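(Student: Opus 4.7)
My plan is to exploit the shrinking-disc mechanism from Proposition \ref{prop:radiusn} to produce two distinct fixed points and detect attracting dynamics. Two preliminary reductions: first, loxodromicity is conjugation-invariant in $\PGL_{2}(k)$, so by Remark \ref{rem:conjugatedSchottkyfigure} I may conjugate the Schottky figure by an element of $\PGL_{2}(k)$ to arrange that $\infty \in F^{-}$, placing us in the hypotheses of Proposition \ref{prop:radiusn}. Second, using the isomorphism $F_{g} \simeq \Gamma$ from Corollary \ref{cor:free}, every non-trivial $\gamma \in \Gamma$ is conjugate inside $\Gamma$ to a cyclically reduced element $\gamma_{0} \neq \id$, so it suffices to treat the cyclically reduced case.

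Assume then that $\gamma_{0}$ is cyclically reduced. The powers $\gamma_{0}^{n}$ ($n \ge 1$) are reduced words of length exactly $n|\gamma_{0}|$ (no cancellation at the junctions) and $\gamma_{0}^{n}$ is a prefix of $\gamma_{0}^{n+1}$, so Lemma \ref{lem:wordF} gives a nested chain
\[B^{+}(\gamma_{0}) \supseteq B^{+}(\gamma_{0}^{2}) \supseteq \cdots \supseteq B^{+}(\gamma_{0}^{n}) \supseteq \cdots\]
of closed discs whose radii, by Proposition \ref{prop:radiusn}, are bounded by $R\, c^{n|\gamma_{0}|} \to 0$. The intersection is therefore a single point $\alpha \in \pank$. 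Running the same argument on $\gamma_{0}^{-1}$ produces a point $\alpha'$; since $\alpha \in B^{+}(\gamma_{0})$ and $\alpha' \in B^{+}(\gamma_{0}^{-1})$ and these discs are disjoint by the Schottky figure property, $\alpha \neq \alpha'$. The word-based definition of $B^{+}(\wc)$ gives the identity $B^{+}(\gamma_{0}^{n}) = \gamma_{0}^{n-1} B^{+}(\gamma_{0})$, whence $\gamma_{0} \cdot B^{+}(\gamma_{0}^{n}) = B^{+}(\gamma_{0}^{n+1})$, and passing to the intersection yields $\gamma_{0}(\alpha) = \alpha$.

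To conclude that $\gamma_{0}$ is loxodromic I would exhibit $\alpha$ as an attracting fixed point: for any $z \in F^{+}$, Lemma \ref{lem:wordF} gives $\gamma_{0}^{n}\, z \in \gamma_{0}^{n} F^{+} \subseteq B^{+}(\gamma_{0}^{n})$, so $\gamma_{0}^{n}\, z \to \alpha$. An element of $\PGL_{2}(k)$ with two distinct fixed points is conjugate to $z \mapsto \lambda z$, and the existence of an attracting fixed point forces $|\lambda|\ne 1$, which is loxodromicity (elliptic maps with $|\lambda|=1$ preserve the family of spheres around a fixed point and admit no such attracting dynamics, and parabolic maps have only one fixed point). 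The main technical point is tracking the word combinatorics carefully enough to justify the nesting $B^{+}(\gamma_{0}^{n+1}) \subseteq B^{+}(\gamma_{0}^{n})$ and the identity $B^{+}(\gamma_{0}^{n}) = \gamma_{0}^{n-1} B^{+}(\gamma_{0})$; once cyclic reducedness is in hand, both are immediate from the definitions, and the remainder is dynamical bookkeeping.
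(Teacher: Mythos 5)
Your argument is correct and is essentially the proof the paper has in mind: the corollary is presented as an immediate consequence of Proposition~\ref{prop:radiusn}, with details deferred to \cite[Corollary~6.4.12]{VIASMII}, where the same scheme is carried out --- reduce to a cyclically reduced word, use the shrinking nested discs $B^+(\gamma_0^{n})$ and $B^+(\gamma_0^{-n})$ to produce two distinct fixed points, and rule out $|\lambda|=1$ by the attracting dynamics. The only point to watch is your opening normalization ``conjugate so that $\infty\in F^-$'': this needs a $k$-rational point of $F^-$, which may require a preliminary finite extension of scalars (harmless, since loxodromicity is insensitive to such extensions; compare the proof of Corollary~\ref{cor:intersectiondiscs}).
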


We now investigate the set~$O$.

\begin{corollary}\label{cor:intersectiondiscs}
Let $w = (w_{n})_{\ne 0}$ be a sequence of reduced words over~$\Delta^\pm$ such that the associated sequence of discs $(B^+(w_{n}))_{n\ge 0}$ is strictly decreasing. Then, the intersection $\bigcap_{n\ge 0} B^+(w_{n})$ is a single $k$-rational point~$p_{w}$. Moreover, the discs $B^+(w_{n})$ form a basis of neighborhoods of~$p_{w}$ in~$\pank$.
\end{corollary}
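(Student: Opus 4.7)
The strict decrease of the nested sequence $(B^+(w_n))_n$ combined with the converse part of Lemma~\ref{lem:wordF} forces each $w_n$ to be a strict prefix of $w_{n+1}$, and the same lemma then yields the sharper inclusion $B^+(w_{n+1}) \subseteq B^-(w_n)$. In particular, $|w_n| \to \infty$. Since the statement is invariant under conjugating the Schottky figure by a M\"obius transformation (Remark~\ref{rem:conjugatedSchottkyfigure}), and since one can always conjugate so as to send any prescribed $k$-rational point lying outside $\bigcup_\gamma B^+(\gamma)$ to~$\infty$, we may assume $\infty \in F^-$. Then every $B^+(w_n)$ with $w_n$ non-empty is contained in $\aank$ and is a closed disc $D^+(a_n, r_n)$, and Proposition~\ref{prop:radiusn} gives $r_n \le R\, c^{|w_n|} \to 0$.

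The nested inclusions $D^+(a_{n+1}, r_{n+1}) \subseteq D^+(a_n, r_n)$ imply $|a_{n+1} - a_n| \le r_n$, so $(a_n)_n$ is Cauchy in the complete valued field $k$ and converges to some $p_w \in k$. Letting $m \to \infty$ in $|a_m - a_n| \le r_n$ (valid for $m \ge n$) gives $|p_w - a_n| \le r_n$, so the rigid point associated to $p_w$ lies in every $B^+(w_n)$. Conversely, for any $x \in \bigcap_n B^+(w_n)$, Lemma~\ref{lem:almostnonarchimedean} yields
\[
|T(x) - p_w| \le \max(|2|,1)\,\max\!\bigl(|T(x) - a_n|,\, |a_n - p_w|\bigr) \le \max(|2|,1)\, r_n
\]
for every $n$, so $T(x) = p_w$ in $\Hx$ and $x$ is the rigid point $p_w$. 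This proves $\bigcap_n B^+(w_n) = \{p_w\}$.

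For the basis-of-neighborhoods statement, observe that the inclusion $p_w \in B^+(w_{n+1}) \subseteq B^-(w_n)$ already exhibits each $B^+(w_n)$ as a neighborhood of $p_w$. Conversely, any open neighborhood of $p_w$ in $\pank$ contains a disc $D^-(p_w, \rho)$ for some $\rho > 0$, and the very same estimate as above shows $B^+(w_n) \subseteq D^-(p_w, \rho)$ as soon as $\max(|2|,1)\, r_n < \rho$, which holds for $n$ large. The main subtlety is the uniform treatment of the archimedean and non-archimedean cases, but this is exactly what Lemma~\ref{lem:almostnonarchimedean} is designed to absorb; everything else reduces to the classical "intersection of nested closed discs with radii tending to zero" argument.
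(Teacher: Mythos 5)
Your overall strategy coincides with the paper's (normalize so that $\infty \in F^-$, invoke Proposition~\ref{prop:radiusn} to force the radii to~$0$, and conclude by the nested-closed-discs argument), and the details you supply for the shrinking-discs part -- the Cauchy sequence of centers, the use of Lemma~\ref{lem:almostnonarchimedean} to handle both archimedean and non-archimedean cases at once, and the neighborhood-basis argument via $B^+(w_{n+1}) \subseteq B^-(w_n)$ -- are correct and match what the paper defers to \cite[Corollary~6.4.13]{VIASMII}.

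There is, however, a genuine gap in your reduction step. You write that ``one can always conjugate so as to send any prescribed $k$-rational point lying outside $\bigcup_\gamma B^+(\gamma)$ to $\infty$,'' which silently assumes that $F^- \cap \pank(k) \neq \emptyset$. Over a non-archimedean field this can fail: $F^-$ is a non-empty open subset of the Berkovich line, but $k$-rational points are not dense there, and one can easily arrange (already for $g=1$, where $F^-$ is an open annulus whose modulus may leave no room for the value group) that $F^-$ contains no $k$-rational point at all. This is precisely why the paper first observes that it suffices to prove the statement after extending scalars to a finite extension $k_0$ of $k$ -- over $k_0$ one can find a rational point of $F^-$ and move it to $\infty$ -- and then descends. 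Your proof omits this extension entirely, and with it the (nontrivial) verification that the limit point $p_w$ produced over $k_0$ is in fact $k$-rational, as the statement asserts. To repair the argument you would need to add both the scalar-extension step and the descent of $p_w$ to $k$ (e.g.\ by a Galois/uniqueness argument: the intersection is a single point stable under the relevant automorphisms, hence rational over $k$).
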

\begin{proof} 
We include here the idea of the proof and refer the interested reader to \cite[Corollary~6.4.13]{VIASMII} for the complete details (in the non-archimedean case, but the same arguments apply when $k$ is archimedean).\\
One checks that is enough to prove the result after extending the scalars to a finite extension~$k_{0}$ of~$k$. As a result, we may assume that $F^- \cap \pank(k) \ne \emptyset$, and then, up to changing coordinates, that $\infty \in F^-$. By Proposition~\ref{prop:radiusn}, the radius of~$B^+(w_{n})$ tends to~0 when~$n$ goes to~$\infty$, and the result follows.
\end{proof}

\begin{corollary}\label{cor:limittype1}
The set~$O$ is dense in~$\pank$ and its complement is contained in~$\pank(k)$. \qed
\end{corollary}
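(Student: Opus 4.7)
Both assertions will be reduced to the combinatorial structure of the nested discs attached to reduced words, for which Lemma~\ref{lem:wordF} and Corollary~\ref{cor:intersectiondiscs} do essentially all the work. The main observation is that any point $x \in \pank - O$ produces, for each $n \ge 0$, a (unique) reduced word $w_{n}$ of length $n+1$ with $x \in B^-(w_{n})$, and that these words must form a nested sequence. The plan is to first exploit this nested sequence to locate~$x$ in~$\pank(k)$, and then to exploit the smallness of the~$B^+(w_{n})$ to show density of~$O$.

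For the first statement, fix $x \in \pank - O$. Since $\pank - O_{n} = \bigsqcup_{|\gamma|=n+1} B^-(\gamma)$, there is a unique reduced word $w_{n}$ with $|w_{n}|=n+1$ and $x \in B^-(w_{n}) \subseteq B^+(w_{n})$. I would next argue that $w_{n+1}$ must extend $w_{n}$: if not, writing $w_{n}$ and $w_{n+1}$ with their maximal common prefix~$p$ as $w_{n}=p\delta\cdots$ and $w_{n+1}=p\delta'\cdots$ with $\delta\ne\delta'$ in $\Delta^\pm$, Lemma~\ref{lem:wordF} would force $B^+(w_{n})\subseteq B^+(p\delta)=pB^+(\delta)$ and $B^+(w_{n+1})\subseteq pB^+(\delta')$, which are disjoint since $B^+(\delta)\cap B^+(\delta')=\emptyset$ in a Schottky figure, contradicting $x\in B^+(w_{n})\cap B^+(w_{n+1})$. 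Hence $w_{n+1}$ extends~$w_{n}$ properly, so by Lemma~\ref{lem:wordF} we get strict inclusions $B^+(w_{n+1})\subseteq B^-(w_{n}) \subsetneq B^+(w_{n})$. Corollary~\ref{cor:intersectiondiscs} then guarantees that $\bigcap_{n\ge 0} B^+(w_{n})$ is a single point $p_{w} \in \pank(k)$, and since $x$ lies in every $B^+(w_{n})$ we conclude $x = p_{w} \in \pank(k)$.

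For the density statement, I would note first that $O$ is open: each complement $\pank - O_{n} = \bigsqcup_{|\gamma|=n+1} B^-(\gamma)$ is open, so each $O_{n}$ is closed and the inclusion $O_{n}\subseteq \operatorname{int}(O_{n+1})$ observed in Section~\ref{sec:geometricSchottky} yields $O = \bigcup_n \operatorname{int}(O_{n+1})$. Now suppose, for contradiction, that some non-empty open set~$U$ in~$\pank$ is contained in $\pank - O$. Pick any $x\in U$; by the previous paragraph, $x$ admits an associated nested sequence $(w_{n})$, and the last assertion of Corollary~\ref{cor:intersectiondiscs} says that the discs~$B^+(w_{n})$ form a basis of neighborhoods of~$x$. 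Hence $B^+(w_{n}) \subseteq U$ for $n$ large enough. On the other hand, Lemma~\ref{lem:wordF} gives $w_{n} F^+ \subseteq B^+(w_{n})$, and $F^+$ is non-empty since the $2g$ closed discs $B^+(\gamma_{i}^\eps)$ cannot cover~$\pank$. Therefore $\emptyset \ne w_{n}F^+ \subseteq B^+(w_{n}) \cap O \subseteq U \cap O$, contradicting $U \subseteq \pank - O$.

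The only delicate point in this plan is the nestedness argument, which depends on the disjointness of the~$B^+(\delta)$ in a Schottky figure being preserved under applying a common prefix~$p$; everything else is an essentially formal consequence of Lemma~\ref{lem:wordF} and Corollary~\ref{cor:intersectiondiscs}, both in the archimedean and in the non-archimedean case.
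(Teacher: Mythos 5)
Your proof is correct and follows exactly the route the paper intends (the corollary is stated with a \qed and no argument, as an immediate consequence of Lemma~\ref{lem:wordF} and Corollary~\ref{cor:intersectiondiscs}): the nested-word construction identifies each point of $\pank - O$ with the $k$-rational point $p_{w}$ of Corollary~\ref{cor:intersectiondiscs}, and the neighborhood-basis assertion there, combined with $\emptyset \ne w_{n}F^+ \subseteq B^+(w_{n}) \cap O$, gives density. The one step needing care --- that consecutive words $w_{n}$, $w_{n+1}$ are nested --- you handle correctly via the maximal common prefix and the disjointness of the translated discs $pB^+(\delta)$ and $pB^+(\delta')$.
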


\subsection{Schottky uniformization}\label{sec:limitsets}

We return to the general case of Definition~\ref{def:Schottkyfigure} with an arbitrary analytic space~$S$. Here again, we follow \cite[\S 6.4.1]{VIASMII}.

\begin{definition}\label{def:limit}
We say that a point $x\in \PP^1_{S}$ is a \emph{limit point} if there exist $x_{0} \in \PP^1_{S}$ and a sequence $(\gamma_{n})_{n\ge 0}$ of distinct elements of~$\Gamma$ such that $\lim_{n\to\infty} \gamma_{n}(x_{0}) = x$.

The \emph{limit set}~$L$ of~$\Gamma$ is the set of limit points of~$\Gamma$.

\end{definition}

Following \cite[III, \S4, D\'efinition~1]{BourbakiTG14}, we say that the action of~$\Gamma$ on a subset~$E$ of~$\PP^1_{S}$ is \emph{proper} if the map
\[\begin{array}{ccc}
\Gamma \times E &\too & E \times E\\
(\gamma,x) &\mapstoo & (x,\gamma\cdot x)
\end{array}\] 
is proper, where~$\Gamma$ is endowed with the discrete topology. By \cite[III, \S4, Proposition~7]{BourbakiTG14}, it is equivalent to requiring that, for every $x, y \in E$, there exist neighborhoods~$U_{x}$ and~$U_{y}$ of~$x$ and~$y$ respectively such that the set $\{\gamma\in \Gamma : \gamma U_{x} \cap U_{y} \ne\emptyset\}$ is finite. By \cite[III, \S4, Proposition~3]{BourbakiTG14}, in this case, the quotient $\Gamma\backslash E$ is Hausdorff.

\medbreak

We denote by~$C$ the set of points $x\in \PP^1_{S}$ that admit a neighborhood~$U_{x}$ satisfying $\{\gamma \in \Gamma : \gamma U_{x} \cap U_{x} \ne\emptyset\} =\{\id\}$. 
Then $C$ is an open subset of~$\PP^1_{S}$ and the quotient map $(\PP^1_{S} - C) \to \Gamma\backslash(\PP^1_{S} - C)$ is a local homeomorphism. In particular, the topological space $\Gamma\backslash(\PP^1_{S} - C)$ is naturally endowed with a structure of analytic space \textit{via} this map.

\begin{proposition}\label{prop:actionproper}
We have $O = C = \PP^1_{S} - L$. Moreover, the action of~$\Gamma$ on~$O$ is free and proper and the quotient map $\Gamma\backslash O \to S$ is proper.
\end{proposition}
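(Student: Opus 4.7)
\emph{Combinatorial key.} Write $\sigma_\delta$ for the fiberwise Shilov point of the closed disc $B^+(\delta)$, $\delta \in \Delta^\pm$. The proof rests on the observation that if $a,b\in F^+$ and $\gamma a = b$ for some $\gamma\in\Gamma$, then either $\gamma=\id$, or $\gamma=\delta\in\Delta^\pm$ with $a = \sigma_{\delta^{-1}}$ and $b = \sigma_\delta$. Indeed, letting $\delta$ be the first letter of $\gamma$, Lemma~\ref{lem:wordF} gives $b\in\gamma F^+\subseteq B^+(\gamma)\subseteq B^+(\delta)$, while $F^+\cap B^+(\delta)$ reduces fiberwise to $\{\sigma_\delta\}$ since $F^+$ avoids $B^-(\delta)$; moreover Lemma~\ref{lem:wordF} gives $B^+(\gamma)\subseteq B^-(\delta)$ strictly when $|\gamma|\ge 2$, excluding $\sigma_\delta$. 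Freeness of the action on $O$ follows at once: $\gamma a = a$ with $\gamma = \delta$ would force $\sigma_{\delta^{-1}} = \sigma_\delta$, contradicting the disjointness of $B^+(\delta)$ and $B^+(\delta^{-1})$ required by Definition~\ref{def:Schottkyfigure}.

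\emph{Inclusion $O\subseteq C$ and equality $O=C=\PP^1_S-L$.} A direct consequence of the combinatorial key is that each $x\in O$ lies in at most two translates $\alpha F^+$: writing $x = \alpha_0 a$ with $a\in F^+$, any other translate through $x$ is of the form $\alpha_0\delta F^+$ with $a = \sigma_{\delta^{-1}}$. One then constructs an open neighborhood $V_x$ of $x$ meeting only those translates --- either by choosing $V_x \subseteq \alpha_0 F^-$ when $a \in F^-$, or, when $a = \sigma_{\delta_0}$, by excising from $\PP^1_S$ the finitely many closed discs $B^+(\alpha_0 \delta')$ with $\delta' \ne \delta_0$ and $B^+(\alpha_0\delta_0\delta')$ with $\delta'\ne\delta_0^{-1}$. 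Any $\gamma$ with $\gamma V_x\cap V_x\ne\emptyset$ must send one such translate into another, constraining $\gamma$ to a finite set of the form $\bigcup_{i,j}\alpha_j'(\{\id\}\cup\Delta^\pm)\alpha_i'^{-1}$ by the combinatorial key; Hausdorffness and freeness then allow shrinking $V_x$ so that $\gamma V_x\cap V_x=\emptyset$ for every nontrivial $\gamma$ in this finite set, giving $x\in C$. For $\PP^1_S-O\subseteq L$, any $x\notin O$ belongs to a strictly nested sequence of closed discs $B^+(w_n)$, $|w_n|\to\infty$; picking $x_0\in F^+$ in the fiber over $\pi(x)$ (nonempty as a complement of finitely many open discs) and applying Corollary~\ref{cor:intersectiondiscs} fiberwise yields $w_n x_0\to x$, so $x\in L$. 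The inclusion $L\cap C=\emptyset$ is formal: a convergent sequence $\gamma_n x_0\to x\in C$ with pairwise distinct $\gamma_n$ would force $\gamma_n\gamma_m^{-1}V_x\cap V_x\ne\emptyset$ with $\gamma_n\gamma_m^{-1}\ne\id$ for $n,m$ large, a contradiction. Combining these inclusions yields $O = C = \PP^1_S - L$.

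\emph{Properness of the action.} Let $K\subseteq O$ be compact. Since each point of $K$ has a neighborhood meeting at most two translates, compactness forces $K$ to meet only finitely many translates $\alpha_1 F^+,\dots,\alpha_M F^+$. A $\gamma\in\Gamma$ with $\gamma K\cap K\ne\emptyset$ then satisfies $\gamma\alpha_i F^+\cap\alpha_j F^+\ne\emptyset$ for some indices $i,j$, forcing $\gamma\in\bigcup_{i,j}\alpha_j(\{\id\}\cup\Delta^\pm)\alpha_i^{-1}$, a finite set; this is precisely the Bourbaki criterion for properness.

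\emph{Properness of $\Gamma\backslash O\to S$.} The fundamental domain $F^+$ is closed in $\PP^1_S$ and $\pi\colon \PP^1_S\to S$ is proper as a relative projective line, so $\pi|_{F^+}$ is proper. The composition $F^+\hookrightarrow O\to\Gamma\backslash O$ is continuous and surjective because every orbit meets $F^+$, and composing further with $\Gamma\backslash O\to S$ recovers $\pi|_{F^+}$. Combined with the Hausdorffness of $\Gamma\backslash O$ given by properness of the action, a routine diagram chase forces $\Gamma\backslash O\to S$ to be proper. The main technical obstacle is the treatment of Shilov-boundary points in the construction of $V_x$, which relies on the combinatorial structure of neighborhoods of Gauss points in the Berkovich setting; the remaining steps are essentially bookkeeping from the combinatorial key.
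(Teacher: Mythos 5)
Your overall architecture is the same as the paper's (control which translates of $F^+$ can meet via Lemma~\ref{lem:wordF}, build explicit neighborhoods to get $O\subseteq C$, use the closed set $F^+$ as a fundamental domain for properness of $\Gamma\backslash O\to S$), but your ``combinatorial key'' contains a false claim that leaves a genuine gap. You assert that $F^+\cap B^+(\delta)$ reduces fiberwise to the single Shilov point $\sigma_\delta$. This is not true: since the discs $B^+(\gamma)$ are pairwise disjoint, $F^+\cap B^+(\delta)=B^+(\delta)-B^-(\delta)$, and by Definition~\ref{def:Schottkyfigure} the set $B^-_s(\delta)$ is only \emph{one} maximal open disc inside the closed disc $B^+_s(\delta)$. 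In a non-archimedean fiber, a closed disc minus one maximal open subdisc is the Shilov point \emph{together with all the other residue classes} (e.g.\ $D^+(0,1)-D^-(0,1)$ contains $\eta_{0,1}$ but also the point $1$ and all of $D^-(1,1)$); in an archimedean fiber it is an entire boundary circle, and there is no Shilov point at all — note the proposition is stated over an arbitrary $S$, so archimedean fibers must be covered. The correct statement is only that $\gamma a=b$ with $a,b\in F^+$ forces $\gamma\in\{\id\}\cup\Delta^\pm$, and, when $\gamma=\delta$, that $a\in B^+(\delta^{-1})-B^-(\delta^{-1})$ and $b\in B^+(\delta)-B^-(\delta)$.

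The consequences: your freeness argument still goes through with the corrected statement ($\gamma a=a$ with $\gamma=\delta$ would put $a$ in $B^+(\delta)\cap B^+(\delta^{-1})=\emptyset$), and ``at most two translates through each point of $O$'' also survives because $a$ lies in at most one $B^+(\delta^{-1})$. But your construction of $V_x$ has an incomplete case analysis: you only treat $a\in F^-$ and $a=\sigma_{\delta_0}$, which omits every point of $F^+-F^-$ other than the $2g$ Shilov points (and all boundary points in the archimedean case). The fix is to run your excision construction for every $a\in B^+(\delta_0)-B^-(\delta_0)$, not just $a=\sigma_{\delta_0}$; it works verbatim. Alternatively, note that the paper sidesteps the case analysis entirely with the single open set $U=F^+\cup\bigcup_{\delta\in\Delta^\pm}\delta F^-=\PP^1_S-\bigsqcup_{|\gamma|=2}B^+(\gamma)$, which is a neighborhood of all of $F^+$ at once and satisfies $\{\gamma:\gamma U\cap U\neq\emptyset\}=\{\id\}\cup\Delta^\pm$; and it proves properness of the action by observing that any two points lie in the interior of some $O_n$ and $\{\gamma:\gamma O_n\cap O_n\neq\emptyset\}$ consists of words of length at most $2n+1$. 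Your remaining steps ($\PP^1_S-O\subseteq L$ via Corollary~\ref{cor:intersectiondiscs}, $L\cap C=\emptyset$, and the fundamental-domain argument for properness of $\Gamma\backslash O\to S$) match the paper and are fine.
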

\begin{proof}
The proof is the same as that of \cite[Theorem 6.4.18]{VIASMII}. We include it for the convenience of the reader.

Let $x \in L$. By definition, there exists $x_{0} \in \PP^1_{S}$ and a sequence $(\gamma_{n})_{n\ge 0}$ of distinct elements of~$\Gamma$ such that $\lim_{n\to\infty} \gamma_{n}(x_{0}) = x$. Assume that $x\in F^+$. Since $F^+$ is contained in the interior of~$O_{1}$, there exists $N\ge 0$ such that $\gamma_{N}(x_{0}) \in O_{1}$, hence we may assume that $x_{0} \in O_{1}$. Lemma~\ref{lem:wordF} then leads to a contradiction. It follows that~$L$ does not meet~$F^+$, hence, by $\Gamma$-invariance, $L$ is contained in $\PP^1_{S} - O$.

Let $y\in \PP^1_{S} - O$. By definition, there exists a sequence $(w_{n})_{n\ge 0}$ of reduced words over~$\Delta^\pm$ such that, for each $n\ge 0$, $|w_{n}|\ge n$ and $y \in B^-(w_{n})$. Let~$y_{0} \in F^-$. By Lemma~\ref{lem:wordF}, for each $n\ge 0$, we have $w_{n}(y_{0}) \in B^-(w_{n})$ and the sequence of discs $(B^+(w_{n}))_{n\ge 0}$ is strictly decreasing. By Corollary~\ref{cor:intersectiondiscs}, $(w_{n}(y_{0}))_{n\ge 0}$ tends to~$y$, hence $y\in L$. It follows that $\PP^1_{S} - O = L$.

Set 
\[U := F^+ \cup \bigcup_{\gamma \in \Delta^\pm} \gamma F^- = \PP^1_{S} - \bigsqcup_{|\gamma|=2} B^+(\gamma).\] 
It is an open subset of~$\PP^1_{S}$ and it follows from the properties of the action (see Lemma~\ref{lem:wordF}) that we have $\{\gamma\in \Gamma \mid \gamma U \cap U \ne \emptyset\} = \{\id\} \cup \Delta^\pm$. Using the fact that the stabilizers of the points of~$U$ are trivial, we deduce that $U\subseteq C$. Letting~$\Gamma$ act, it follows that $O \subseteq C$. Since no limit point may belong to~$C$, we deduce that this is actually an equality.

\medbreak

We have already seen that the action is free on~$O$. Let us prove that it is proper. Let $x, y \in O$. There exists $n\ge 0$ such that~$x$ and~$y$ belong to the interior of~$O_{n}$. By Lemma~\ref{lem:wordF}, the set $\{\gamma\in\Gamma \mid \gamma O_{n} \cap O_{n} \ne \emptyset\}$ is made of element of length at most~$2n+1$. In particular, it is finite. We deduce that the action of~$\Gamma$ on~$O$ is proper.

Let~$K$ be a compact subset of~$S$. Since $\pi \colon \PP^1_{S} \to S$ is proper, $\pi^{-1}(K)$ is compact, hence its closed subset $F^+ \cap \pi^{-1}(K)$ is compact too. Since $F^+ \cap \pi^{-1}(K)$ contains a point of every orbit of every element of $\pi^{-1}(K)$, we deduce that $\Gamma\backslash (O \cap \pi^{-1}(K))$ is compact.
\end{proof}

\subsection{Koebe coordinates}\label{sec:Koebe}

Let $(k,\va)$ be a complete valued field and and let $\gamma$ be a loxodromic element of~$\PGL_{2}(k)$. The eigenvalues of~$\gamma$ belong to a quadratic extension of~$k$ and have distinct absolute values. If~$k$ is archimedean, it follows immediately that they both belong to~$k$, hence $\gamma$ admits exactly two fixed points $\alpha, \alpha' \in \PP^{1,\an}(k)$. If~$k$ is non-archimedean, then the result still holds by the same argument in characteristic different from~2 and, in general, as a consequence of Hensel's lemma (see \cite[I, 1.4]{GerritzenPut80}). 

We can choose~$\alpha$ so that the associated eigenvalue has minimal absolute value. In this case, $\alpha$ and~$\alpha'$ will be respectively the attracting and repelling fixed points of the M\"obius transformation associated to~$\gamma$. Denote by~$\beta$ the multiplier of~$\gamma$, \textit{i.e.} the ratio of the eigenvalues such that~$|\beta|<1$. For $\varepsilon \in \mathrm{PGL}_2(k)$ such that $\varepsilon(0) = \alpha$ and $\varepsilon(\infty) = \alpha'$, we have $\varepsilon^{-1}\gamma \varepsilon (z) = \beta z$. It follows that the parameters $\alpha, \alpha'$ and~$\beta$ determine uniquely the transformation $\gamma$. They are called the \emph{Koebe coordinates} of~$\gamma$.

Conversely, given $(\alpha = [u\colon v], \alpha' = [u'\colon v'], \beta) \in (\pank)^3$ with $\alpha \ne \alpha'$ (\emph{i.e.} $uv' \ne u'v$) and $0 < |\beta| < 1$, it is not difficult to determine explicitly the element of~$\PGL_{2}(k)$ that has those Koebe coordinates. It is given by 
\[M(\alpha,\alpha',\beta) =   
\begin{bmatrix}
uv' - \beta u' v & (\beta-1) u u'\\
(1-\beta) v v' & \beta u v' - u' v
\end{bmatrix} \in \PGL_{2}(k).
\]
In the rest of the paper, we will sometimes abuse notation and allow ourselves to use $M(\alpha,\alpha',\beta)$ in different contexts, for example when $\alpha,\alpha',\beta$ belong to a ring (provided the conditions $\alpha \ne \alpha'$ and $0 < |\beta| < 1$ are satisified at each point of its spectrum). This should not cause any trouble.

\medbreak

The following interpretation of~$|\beta|$ will be useful later. 

\begin{lemma}\label{lem:betamodulus}
Assume that~$k$ is non-archimedean. Let~$D$ be an open disc of~$\pank$ containing~$\alpha'$ and not~$\alpha$. Then $\gamma(D)$ is an open disc containing~$\alpha'$ and the segment joining the boundary point of~$D$ to that of~$\gamma(D)$ consists of points of type~2 or~3 and has length equal to~$|\beta|^{-1}$.
\end{lemma}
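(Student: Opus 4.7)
The plan is to exploit the conjugation $\varepsilon$ introduced just before the lemma, which satisfies $\varepsilon(0) = \alpha$, $\varepsilon(\infty) = \alpha'$ and $\varepsilon^{-1}\gamma\varepsilon(w) = \beta w$. Since $\varepsilon$ is a Möbius transformation, it acts as an automorphism of $\pank$ that preserves the real-tree structure, sends open discs to open discs, preserves the types of points, and preserves the multiplicative length on segments of points of type $2$ or $3$ (as noted in Section 2.5, citing \cite[(3.6.23)]{DucrosRSS}). It therefore suffices to verify the conclusion for $\gamma_{0}(w) = \beta w$, with attracting fixed point $0$ and repelling fixed point $\infty$, applied to the disc $D_{0} = \varepsilon^{-1}(D)$, which contains $\infty$ but not $0$.

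Next I would describe $D_{0}$ explicitly. An open disc of $\pank$ that contains $\infty$ but not $0$ is the complement of a closed disc $D^{+}(a, r)$ that contains $0$; by the ultrametric property every such disc has $0$ as a center, so $D_{0} = \pank - D^{+}(0, r)$ for some $r > 0$, whose boundary point in $\pank$ is the Shilov point $\eta_{0,r}$. Applying $\gamma_{0}$ and using that $\gamma_{0}(D^{+}(0,r)) = D^{+}(0, |\beta|r)$ (either directly or \emph{via} Lemma~\ref{lem:radiusdisc}), one obtains
\[
\gamma_{0}(D_{0}) \;=\; \pank - D^{+}(0, |\beta|r),
\]
which is again an open disc containing $\infty$, with boundary point $\eta_{0,|\beta|r}$. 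Since $|\beta|<1$, we have $|\beta|r<r$, so the two boundary points are distinct.

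Finally, the segment $[\eta_{0,|\beta|r},\,\eta_{0,r}]$ in $\pank$ consists precisely of the points $\eta_{0,s}$ for $s\in[|\beta|r,r]$, all of which are of type $2$ or $3$. By the very definition of the multiplicative length recalled in Section 2.5, this segment has length $r/(|\beta|r) = |\beta|^{-1}$. Transporting back by $\varepsilon$ yields the claim for $\gamma$ and $D$.

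There is no serious obstacle: the only point worth double-checking is the invariance of length and of the "boundary point of an open disc" under the Möbius transformation $\varepsilon$, which is standard in the theory of the Berkovich projective line and was already invoked in the paper. The rest is a direct computation in the normal form $w\mapsto\beta w$.
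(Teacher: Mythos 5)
Your proof is correct and follows the same route as the paper: conjugate by $\varepsilon$ to reduce to the normal form $w \mapsto \beta w$, using the invariance of discs, boundary points, types, and lengths under M\"obius transformations. The paper simply declares the normal-form case ``clear,'' whereas you spell out the computation with $D_{0} = \pank - D^{+}(0,r)$ and the segment $[\eta_{0,|\beta|r},\eta_{0,r}]$ of length $|\beta|^{-1}$; this is a faithful expansion of the same argument.
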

\begin{proof}
M\"obius transformations preserve open discs, their boundary points, and the lenght of segments. Since $\eps^{-1}\gamma\eps(\eps^{-1}(D)) = \eps^{-1}(\gamma(D))$, it is enough to prove the result for $\varepsilon^{-1}\gamma \varepsilon$ and $\eps^{-1}(D)$. In this case, it is clear.

\end{proof}

\medbreak

We now check that the Koebe coordinates depend analytically on the entries of the corresponding matrix. In fact, this is true not only over a valued field, but even over~$\Z$. To prove this, let us introduce some notation. Set
\[K_{\Z} := \{(\alpha,\alpha',\beta) \in (\PP^{1,\an}_{\Z})^3 : \alpha \ne \alpha', 0 < |\beta| < 1\}.\]
It is an open subset of $(\PP^{1,\an}_{\Z})^3$. We also consider $\PP^{3,\an}_{\Z}$ and write its elements in coordinates in the form $\begin{bmatrix} a & b\\ c & d \end{bmatrix}$ instead of the usual $[a\colon b\colon c \colon d]$. Denote by~$L_{\Z}$ the set of elements $x\in \PP^{3,\an}_{\Z}$ such that the matrix 
\[ \begin{bmatrix} a(x) & b(x)\\ c(x) & d(x) \end{bmatrix} \in \PGL_{2}(\calH(x)) \]
is loxodromic. 

\begin{lemma}
The subset~$L_{\Z}$ is open in~$\PP^{3,\an}_{\Z}$.
\end{lemma}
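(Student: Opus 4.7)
The plan is to decompose $L_{\Z}$ along the archimedean/non-archimedean splitting of $\PP^{3,\an}_{\Z}$, reducing openness to two separate, local arguments. This is possible because both parts are themselves open in $\PP^{3,\an}_{\Z}$ (Section~\ref{sec:AnA}), and because $L_{\Z}$ is automatically contained in the open subset $U := \{x : (ad-bc)(x) \ne 0\}$, on which the globally defined, homogeneous of degree zero, conjugation-invariant analytic function
\[ t := \frac{(a+d)^{2}}{ad-bc} \in \calO(U) \]
yields a continuous map $t\colon U \to \AA^{1,\an}_{\Z}$. Writing $\beta$ for the multiplier of the matrix at $x$ (\emph{a priori} in a quadratic extension of $\calH(x)$), the elementary identity $t(x) - 4 = \beta + \beta^{-1} - 2$ will be the main bookkeeping device.

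For a non-archimedean point $x \in U^{\mathrm{na}}$, I expect to show that $x \in L_{\Z}$ if and only if $|t(x) - 4| > 1$, an open condition by continuity of $t$. The direct implication follows from the ultrametric inequality: if $|\beta| < 1$, then $\beta^{-1}$ strictly dominates in $\beta + \beta^{-1} - 2$ (using $|2| \le 1$), hence $|t(x) - 4| = |\beta|^{-1} > 1$. Conversely, if $|t(x) - 4| > 1$ then $|\beta| \ne 1$ (otherwise $|\beta + \beta^{-1} - 2| \le 1$ again by ultrametricity and $|2|\le 1$), and Hensel's lemma---as recalled at the beginning of Section~\ref{sec:Koebe}---forces $\beta$ to lie in $\calH(x)$ itself, so the associated matrix is genuinely loxodromic.

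For an archimedean point $x$, I will invoke the classical fact that a matrix in $\PGL_{2}(\C)$ is loxodromic if and only if $t \notin [0, 4] \subset \R \subset \C$. Using the description of $U^{\mathrm{a}}$ recalled in Section~\ref{sec:BerkovichZ} (an underlying classical projective variety modulo conjugation, crossed with a rescaling parameter $\eps \in (0, 1]$ that only modifies the norm by raising it to the power~$\eps$), the value $t(x) \in \calH(x) \hookrightarrow \C$ depends solely on the underlying classical point and not on $\eps$. Continuity of $t$ on the classical $\PP^{3}(\C)$ and closedness of $[0, 4] \subset \C$ then guarantee openness of $L_{\Z} \cap U^{\mathrm{a}}$. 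The only conceptual obstacle worth flagging is that the criteria for loxodromicity genuinely differ between the two regimes, so that no uniform description across $\PP^{3,\an}_{\Z}$ seems available; however, the arch/non-arch splitting sidesteps this issue entirely, since each local argument is carried out inside its own open piece.
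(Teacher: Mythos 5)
Your decomposition rests on the claim that the archimedean and non-archimedean parts of $\PP^{3,\an}_{\Z}$ are both open, citing Section~\ref{sec:AnA}; but that section states the opposite for the second one: the non-archimedean part $\{x : |2(x)|\le 1\}$ is \emph{closed}, not open. This is not a technicality. A non-archimedean point lying over $a_{0}\in\calM(\Z)$ (the trivially valued $\Q$) is a limit of archimedean points over the $a_{\infty}^{\eps}$ as $\eps\to 0$ (compare Example~\ref{ex:sectionGauss}), so every neighborhood of such a point in $\PP^{3,\an}_{\Z}$ contains archimedean points. Proving that $L_{\Z}\cap U^{\mathrm{na}}$ is open \emph{inside} the non-archimedean part therefore does not prove that such a point is interior to $L_{\Z}$. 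Worse, your candidate neighborhood $\{|t-4|>1\}$ genuinely fails to be contained in $L_{\Z}$: at an archimedean point with $t=0$ (trace zero, eigenvalues of equal modulus, hence not loxodromic) one has $|t-4|=4^{\eps}>1$. So the two local criteria cannot be glued along the archimedean/non-archimedean splitting, and the final sentence of your proposal, which asserts that the splitting ``sidesteps'' the mismatch of criteria, is exactly where the argument breaks.

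The repair is what the paper does: around a non-archimedean point $x$ of $L_{\Z}$ one must exhibit an open subset of the \emph{full} space $\PP^{3,\an}_{\Z}$ that contains $x$ and consists entirely of loxodromic points, archimedean ones included. The paper takes the locus $r\,\max(|4|,1)\,|ad-bc| < |a+d|^{2}$ for suitable $r>1$: it contains $x$ because $\max(|4(x)|,1)=1$ and $|ad-bc|(x)<|a+d|^{2}(x)$, and it is contained in $L_{\Z}$ because the first half of Lemma~\ref{lem:distinctabsval} --- the half valid over \emph{every} complete valued field --- forces the two eigenvalues to have distinct absolute values at every point of this set. In your normalization this amounts to replacing $|t-4|>1$ by $|t|>\max(|4|,1)$, a condition that is only sufficient, not necessary, at archimedean points, but that is all one needs for openness. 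Your archimedean half, and your non-archimedean computation $t-4=\beta+\beta^{-1}-2$ giving $|t-4|=|\beta|^{-1}$, are both correct and consistent with the paper's; the gap is solely the topology of the splitting.
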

\begin{proof}
Let us first consider the archimedean part~$L_{\Z}^\text{a}$ of~$L_{\Z}$. By Section~\ref{sec:BerkovichZ}, it is enough to prove that its intersection with the fiber over the point~$a_{\infty}^1$, corresponding to the usual absolute value, is open. This allows to translate the statement into a statement about $\PP^3(\C)$ (since the set is clearly stable by complex conjugation), where it is a consequence of the continuity of the roots of a (degree~2) polynomial. 

Let us now handle the non-archimedean part~$L_{\Z}^\text{na}$. By Lemma~\ref{lem:distinctabsval}, we have 
\[L_{Z}^\text{na} = \left\{ \begin{bmatrix} a & b\\ c & d \end{bmatrix} \in \big(\PP^{3,\an}_{\Z}\big)^\text{na} : |ad-bc| < |a+d|^2 \right\}.\]
Let $x \in L_{Z}^\text{na}$. There exists $r >1$ such that $r\, |ad-bc| < |a+d|^2$. The open subset of $\PP^{3,\an}_{\Z}$ defined by the inequality 
\[r\, \max(|4|,1)\,  |ad-bc| < |a+d|^2\]
contains~$x$ and sits inside~$L_{\Z}$, by Lemma~\ref{lem:distinctabsval} again. The result follows.
\end{proof}

\begin{proposition}\label{prop:Koebe}
The morphism
\[ M \colon (\alpha,\alpha',\beta) \in K_{\Z} \mapstoo M(\alpha,\alpha',\beta) \in L_{\Z}\]
is an isomorphism of analytic spaces over~$\Z$. Its inverse is the map that associates to a loxodromic matrix its Koebe coordinates.
\end{proposition}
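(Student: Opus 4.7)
The plan is to verify in succession that (i) $M$ is a morphism of analytic spaces over~$\Z$ whose image lies in~$L_{\Z}$ and that realizes $(\alpha,\alpha',\beta)$ as the Koebe coordinates of its output, (ii) $M$ is a bijection on points, and (iii) the inverse of~$M$ is analytic.

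For (i), the entries of the matrix $M(\alpha,\alpha',\beta)$ are polynomial in the projective coordinates $u,v$ of~$\alpha$, in $u',v'$ of~$\alpha'$ and in~$\beta$, so the formula defines a morphism of analytic spaces over~$\Z$. The formula is $\PGL_{2}$-equivariant in the sense that $M(\varphi\alpha,\varphi\alpha',\beta) = \varphi\, M(\alpha,\alpha',\beta)\,\varphi^{-1}$; consequently, to identify the Koebe interpretation it is enough to treat the special case $\alpha=[0\colon 1]$, $\alpha'=[1\colon 0]$, where $M$ becomes (up to a common scalar) the diagonal matrix $\operatorname{diag}(\beta,1)$, representing the M\"obius transformation $z\mapsto\beta z$, which is loxodromic with attracting fixed point~$\alpha$, repelling fixed point~$\alpha'$ and multiplier~$\beta$. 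In particular $M$ lands in~$L_{\Z}$. Step~(ii) then follows: the introduction to Section~\ref{sec:Koebe} recalls that every loxodromic element of~$\PGL_{2}$ is uniquely determined by its Koebe coordinates, so~(i) yields pointwise bijectivity of~$M$.

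The substantive step is~(iii). Writing a loxodromic matrix as $\begin{bmatrix} A & B \\ C & D \end{bmatrix}$ one has eigenvalues $\lambda_{\pm} = \tfrac12\bigl((A+D)\pm\sqrt{\Delta}\,\bigr)$ with discriminant $\Delta=(A+D)^2-4(AD-BC)$, and the Koebe coordinates are explicit rational functions of $A,B,C,D$ and a chosen square root~$\sigma$ of~$\Delta$, the sign of~$\sigma$ being determined canonically by the condition $|\beta|<1$. The task therefore reduces to producing, locally on~$L_{\Z}$, an analytic square root of~$\Delta$. At every point $x_{0}\in L_{\Z}$ the two eigenvalues have distinct absolute values and both lie in~$\calH(x_{0})$, so $\Delta(x_{0}) = (\lambda_{+}(x_{0})-\lambda_{-}(x_{0}))^{2}$ is a nonzero square in~$\calH(x_{0})$. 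Invoking the implicit function theorem for Berkovich analytic spaces over~$\Z$ applied to $\sigma^{2}=\Delta$ at a prescribed pointwise square root provides the desired analytic~$\sigma$ on a neighborhood of~$x_{0}$, hence a local analytic inverse of~$M$; by pointwise bijectivity these local inverses glue to a global analytic inverse.

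I expect the main obstacle to be the availability of the implicit function theorem in the mixed archimedean/non-archimedean setting of $\Z$-analytic spaces, particularly over fibers of residue characteristic~$2$ where the linearization $2\sigma$ degenerates. Away from such fibers the standard argument applies directly; to treat them I would consider the finite cover $\tilde L_{\Z}\to L_{\Z}$ parametrizing an ordering of the two eigenvalues, on which the Koebe coordinates become tautological analytic functions, and verify that the section $L_{\Z}\to\tilde L_{\Z}$ selected by the condition $|\beta|<1$ is analytic, using that it is continuous and that $\tilde L_{\Z}\to L_{\Z}$ is a local isomorphism of analytic spaces near the loxodromic locus.
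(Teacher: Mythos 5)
Your steps (i) and (ii) match the paper, and your archimedean treatment of (iii) via a local analytic determination of the square root is essentially what the paper does on the archimedean part (the paper additionally notes that when the discriminant is real it must be positive, since conjugate eigenvalues would have equal absolute value, so a real-analytic square root exists). The problem is your main non-archimedean argument. Extracting the eigenvalues as $\tfrac12\bigl((A+D)\pm\sqrt{\Delta}\bigr)$ is not merely delicate in residue characteristic~$2$ — it is unavailable there: the formula divides by~$2$, and over the fibers of $\calM(\Z)$ lying above $\F_2$ (and on the branch $a_2^\eps$, $a_{2,0}$) the reduction of $\Delta=(A+D)^2-4(AD-BC)$ is a square of the reduction of $A+D$, so $\sigma^2=\Delta$ has a double root modulo the maximal ideal and no form of the implicit function theorem or Hensel's lemma applies to it. You correctly flag this, but your fallback — the double cover $\tilde L_{\Z}\to L_{\Z}$ ordering the eigenvalues — leaves unproved exactly the crux: that this cover is a local isomorphism of $\Z$-analytic spaces near a loxodromic point. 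Distinctness of the two eigenvalues in $\calH(x_0)$ does not by itself give this; you need a Hensel-type statement for the stalks.

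The paper's proof supplies the two missing ingredients. First, it replaces the quadratic formula by a normalization of the characteristic polynomial: dividing by $(a+d)^2$ (legitimate since loxodromy forces $|ad-bc|<|a+d|^2$ by Lemma~\ref{lem:distinctabsval}, hence $(a+d)(m)\ne 0$) yields $P(Y)=Y^2-Y+\frac{ad-bc}{(a+d)^2}$, whose reduction modulo the maximal ideal of $\kappa(m)^\circ$ is $Y^2-Y=Y(Y-1)$. This has two \emph{simple} roots in every residue characteristic, including~$2$. Second, it invokes the henselianity of $\kappa(m)^\circ$ and of the stalk $\calO_m$ of the structure sheaf of a $\Z$-analytic space — a nontrivial foundational fact cited from \cite[Corollaire~5.3]{Poineau13} and \cite[Corollaire~2.5.2]{A1Z} — to lift the roots $0$ and $1$ to analytic functions $\lambda,\lambda'$ near $m$. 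This is the rigorous version of your double-cover idea, and it is precisely the step your sketch asserts rather than proves. To repair your proposal you would need to carry out this normalization and cite (or prove) the henselian property of the local rings; with that done, the recovery of $\beta=\lambda/\lambda'$ and of $\alpha,\alpha'$ from the eigenlines proceeds as you and the paper both indicate.
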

\begin{proof}
The map~$M$ is clearly analytic and it follows from the discussion above that it is a bijection. Let us prove that its inverse is also analytic.

Let $m\in L_{\Z}$. We may work in an affine chart of~$\PP^{3,\an}_{\Z}$ containing~$m$ and identify it to~$\AA^{3,\an}_{\Z}$. As a result, we may assume that the coefficients $a(m),b(m),c(m),d(m)$ of~$m$ are well-defined. Denote by~$\lambda(m)$ and  $\lambda'(m)$ the eigenvalues of the matrix associated to~$m$, chosen so that $|\lambda(m)| < |\lambda'(m)|$. Remark that the inequality on the absolute values implies that $(a+d)(m)\ne 0$. The elements $\lambda(m)$ and~$\lambda'(m)$ are then the two roots of the characteristic polynomial of the matrix:
\[X^2 - (a+d)(m) X + (ad-bc)(m) = (a+d)^2(m)\, \Big(Y^2 - Y + \frac{ad-bc}{(a+d)^2}(m)\Big),\]
where $Y = \frac{1}{(a+d)(m)}\, X$. 

Note that the polynomial $P(Y) := Y^2-Y + (ad-bc)/(a+d)^2$ (which is actually well-defined on the whole~$L_{\Z}$) has analytic coefficients. We claim that~$\lambda$ and~$\lambda'$ are analytic functions of~$m$. If~$m$ is archimedean and the discriminant~$\Delta(m)$ of~$P(m)(Y)$ is not real, this follows from the fact that there exists a determination of the square-root that is analytic in the neighborhood of~$\Delta(m)$. If~$m$ is archimedean and $\Delta(m)$ is real, then $\Delta(m) > 0$, since otherwise $\lambda(m)$ and $\lambda'(m)$ would be complex conjugates hence would have the same absolute value. The result then follows from the fact that there exists a determination of the square-root that is analytic in the neighborhood of~$\Delta(m)$ and commutes with the complex conjugation.  

Assume that~$m$ is non-archimedean. The stalk~$\calO_{m}$ of the structure sheaf is a local ring (whose maximal ideal is the set of elements that vanish at~$m$). Denote its residue field by~$\kappa(m)$ and set 
\[\kappa(m)^\circ := \{f\in \kappa(m) : |f(m)| \le 1\}\]
and 
\[\kappa(m)^{\circ\circ} := \{f\in \kappa(m) : |f(m)| < 1\}.\]
The set~$\kappa(m)^\circ$ is a local ring with maximal ideal~$\kappa(m)^{\circ\circ}$. We denote its residue field by~$\tilde{\kappa}(m)$. By Lemma~\ref{lem:distinctabsval}, the image of $P(Y)$ in~$\kappa(m)[Y]$ has coefficients in~$\kappa(m)^\circ$ and its reduction is $Y^2-Y$. The roots~$\lambda(m)$ and~$\lambda'(m)$ of~$P(m)(Y)$ reduce respectively to the roots~0 and~1 of $Y^2-Y$. By \cite[Corollaire~5.3]{Poineau13} and \cite[Corollaire~2.5.2]{A1Z}, $\kappa(m)^\circ$ and~$\calO_{m}$ are henselian, and it follows that~$\lambda$ and~$\lambda'$ are analytic in the neighborhood of~$m$. 

It is now clear that $\beta = \lambda/\lambda'$ is analytic in the neighborhood of~$m$. Note that we can also recover~$\alpha$ and~$\alpha'$ from~$\lambda$ and~$\lambda'$ since they correspond to the associated eigenline. More precisely, we have $\alpha(m) = [b(m) \colon (\lambda - a)(m)]$ if $\lambda(m) \ne a(m)$ and $\alpha(m) = [(\lambda-d)(m) \colon c(m)]$ otherwise, and similarly for~$\alpha'$. It follows that~$\alpha$ and~$\alpha'$ are analytic in the neighborhood of~$m$.
\end{proof}

\subsection{Group theory}\label{sec:groupSchottky}
Let $(k,\va)$ be a complete valued field. In this section, we give the general definition of Schottky group over~$k$ and explain how it relates to the geometric situation considered in Section~\ref{sec:geometricSchottky}. Here, we borrow from \cite[\S 6.4.2 and \S 6.4.3]{VIASMII}.

\begin{definition}
A discrete subgroup~$\Gamma$ of $\PGL_2(k)$ is said to be a \emph{Schottky group over~$k$} if
\begin{enumerate}
\item it is free and finitely generated;
\item all its non-trivial elements are loxodromic;
\item there exists a non-empty $\Gamma$-invariant connected open subset of~$\pank$ on which the action of~$\Gamma$ is free and proper (cf. Section \ref{sec:limitsets}).
\end{enumerate}
\end{definition}



\begin{remark}\label{rem:Schottkyva}
The notion of Schottky group over~$k$ is left unchanged if one replaces the absolute value on~$k$ by an equivalent one.
\end{remark}

\begin{remark}\label{rem:SchottkyKL}
Let $(k',\va')$ be an extension of~$(k,\va)$. A subgroup~$\Gamma$ of $\mathrm{PGL}_2(k)$ is a Schottky group over~$k$ if, and only if, it is a Schottky group over~$k'$.
\end{remark}

Schottky groups arise naturally from Schottky figures, as the following proposition shows.

\begin{proposition}\label{prop:geometrygroup}
Let~$\Gamma$ be a discrete subgroup of $\PGL_{2}(k)$ generated by finitely many elements $\gamma_{1},\dotsc,\gamma_{g}$. 
If there exists a Schottky figure adapted to $(\gamma_{1},\dotsc,\gamma_{g})$, then~$\Gamma$ is a Schottky group.
\end{proposition}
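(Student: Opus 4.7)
The plan is to verify the three conditions in the definition of Schottky group one by one; each essentially follows from a result already established in Section~\ref{sec:geometricSchottky} or Section~3.2, together with one extra topological observation about connectedness.

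First I would address condition~(i). The group $\Gamma$ is finitely generated by hypothesis, and Corollary~\ref{cor:free} applied to the Schottky figure $\calB$ shows that the generators $\gamma_1,\dotsc,\gamma_g$ freely generate $\Gamma$, so $\Gamma \simeq F_g$. Condition~(ii) is even more immediate: since we are working over the valued field $k$ (\textit{i.e.}\ $S = \calM(k)$), Corollary~\ref{cor:loxodromic} directly gives that every element of $\Gamma - \{\id\}$ is loxodromic.

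For condition~(iii), the natural candidate is the open subset $O = \bigcup_{\gamma \in \Gamma} \gamma F^+$ constructed in Section~\ref{sec:geometricSchottky}. It is $\Gamma$-invariant by construction, it is open (it coincides with $C$, the interior locus where the action is discontinuous, by Proposition~\ref{prop:actionproper}), and the action of $\Gamma$ on it is free and proper by that same proposition. Thus the only point left to address is connectedness of~$O$.

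The main technical step is the connectedness argument, which I expect to be the only non-trivial part. I would argue as follows: $F^+$ is connected, because it is the complement in the connected space $\pank$ of the $2g$ pairwise disjoint open discs $B^-(\gamma_{i}^{\pm 1})$. I would then prove by induction on $n$ that the set $O_n = \bigcup_{|\gamma|\le n} \gamma F^+$ is connected. For the inductive step, any reduced word $w$ of length $n+1$ can be written $w = w'\delta$ with $|w'| = n$ and $\delta\in\Delta^\pm$, and Lemma~\ref{lem:wordF} gives $B^+(w) \subseteq B^-(w')\delta^{-1}$-style containments that force the closed set $B^+(w) \setminus B^-(w)$ to lie simultaneously in the closure of $wF^+$ (since $wF^+$ is $B^+(w)$ minus finitely many strictly smaller open sub-discs, hence contains $\partial B^+(w)$) and in $w'F^+$ (since $w'F^+$ contains $w'\partial B^-(\delta)$). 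Hence $wF^+$ meets $O_n$, and $O_{n+1}$ remains connected. Taking the ascending union yields connectedness of $O$. The only subtlety is that in the archimedean case this ``boundary'' is a topological circle while in the non-archimedean case it is a single point of type~2 or~3; but the connecting argument works uniformly because in both cases it is non-empty and contained in both $\overline{wF^+}$ and $w'F^+$. This completes the verification of~(iii).
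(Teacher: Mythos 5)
Your proof is correct and, for conditions (i) and (ii), identical to the paper's (which simply cites Corollaries~\ref{cor:free} and~\ref{cor:loxodromic}). For condition (iii) you also use the same open set $O$ and the same input (Proposition~\ref{prop:actionproper} for freeness and properness), but you justify connectedness differently: the paper deduces it from Corollary~\ref{cor:limittype1} (the complement of $O$ is a closed set contained in $\pank(k)$, i.e.\ in the type-1/classical points, and such a complement cannot disconnect $\pank$), whereas you prove directly by induction that each $O_n$ is connected by exhibiting a point of $wF^+\cap w'F^+$ on the boundary sphere $B^+(w)\setminus B^-(w)$ for $w=w'\delta$. Your inductive argument is sound -- indeed $wF^+ = B^+(w)-\bigcup_{\gamma\ne\delta^{-1}}B^-(w\gamma)\supseteq B^+(w)\setminus B^-(w)$ and $w'F^+\supseteq w'\bigl(B^+(\delta)\setminus B^-(\delta)\bigr)=B^+(w)\setminus B^-(w)$, so the intersection is non-empty and one does not even need to pass to closures -- and it has the merit of being entirely self-contained, at the cost of being longer than the paper's one-line appeal to the structure of the limit set. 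Two small points to tidy up: the displayed containment ``$B^+(w)\subseteq B^-(w')\delta^{-1}$'' is garbled (Lemma~\ref{lem:wordF} gives $B^+(w)\subseteq B^-(w')$, and the relevant facts are the explicit inclusions above); and the claim that $F^+$ is connected ``because it is the complement of disjoint open discs'' deserves a word of justification, since this is a property of discs in $\pank$ (a sphere with holes in the archimedean case, a real tree minus open branches in the non-archimedean case) rather than a general topological fact.
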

\begin{proof}

The group $\Gamma$ satisfies (i) by Corollary~\ref{cor:free}, (ii) by Corollary~\ref{cor:loxodromic} and (iii) by Corollary~\ref{cor:limittype1}.

\end{proof}

\begin{definition}
Let $\gamma = \begin{bmatrix} 
a & b \\
c & d
\end{bmatrix} \in \PGL_2(k)$, with $c\ne 0$, be a loxodromic matrix
and let $\lambda \in \R_{>0}$ be a positive real number. We call open and closed \emph{twisted Ford discs} associated to $(\gamma,\lambda)$ the sets 
\[D_{(\gamma, \lambda)}^- := \Big\{z\in k \ \Big|\ \lambda |\gamma'(z)|=\lambda \frac{|ad-bc|}{|cz+d|^2} > 1\Big\}\]
and
\[D_{(\gamma, \lambda)}^+ := \Big\{z\in k \ \Big|\  \lambda |\gamma'(z)|=\lambda \frac{|ad-bc|}{|cz+d|^2} \ge 1\Big\}.\]

\end{definition}


\begin{lemma}\label{lem:explicitisometricdiscs}
Let $\alpha,\alpha',\beta \in k$ with $|\beta|<1$ and let $\lambda \in \R_{>0}$. Set $\gamma= M(\alpha, \alpha', \beta) = \begin{bmatrix} a&b\\c&d \end{bmatrix}$. The twisted Ford discs $D_{(\gamma, \lambda)}^-$ and $D_{(\gamma, \lambda)}^+$ have center 
\[\frac{\alpha' - \beta \alpha}{1-\beta} = -\frac d c\]
and radius 
\[\rho=\frac{(\lambda|\beta|)^{1/2}|\alpha-\alpha'|}{|1-\beta|} = \frac{(\lambda \,|ad-bc|)^{1/2}}{|c|}.\]
The twisted Ford discs $D_{(\gamma^{-1}, \lambda^{-1})}^-$ and $D_{(\gamma^{-1}, \lambda^{-1})}^+$ have center 
\[\frac{\alpha - \beta \alpha'}{1-\beta} = \frac a c\]
and radius $\rho' =\rho/\lambda$. 
\qed
\end{lemma}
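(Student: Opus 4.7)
The plan is to carry out a direct computation from the explicit formula for $M(\alpha,\alpha',\beta)$, then deduce the statement for $\gamma^{-1}$ by invoking the first part applied to the Koebe coordinates of the inverse.

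First, I specialize the definition of $M(\alpha,\alpha',\beta)$ to the affine representatives $\alpha=[\alpha:1]$, $\alpha'=[\alpha':1]$, which yields
\[a = \alpha - \beta\alpha',\quad b = (\beta-1)\alpha\alpha',\quad c = 1-\beta,\quad d = \beta\alpha - \alpha'.\]
Since $|\beta|<1$, we have $c\neq 0$, so the Ford disc condition makes sense. An elementary expansion gives $ad-bc = \beta(\alpha-\alpha')^2$, which immediately shows that the two proposed expressions for $\rho$ agree, and $-d/c = (\alpha'-\beta\alpha)/(1-\beta)$ is direct.

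Next I rewrite the defining inequality of $D_{(\gamma,\lambda)}^+$ as
\[|cz+d|^{2}\leq \lambda\,|ad-bc|, \quad \text{i.e.} \quad \Bigl|z+\frac{d}{c}\Bigr|\leq \frac{(\lambda\,|ad-bc|)^{1/2}}{|c|},\]
which exhibits the asserted center $-d/c$ and radius $\rho$; the open case is identical. Combined with the identities of the previous paragraph, this proves the first half of the lemma.

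For the second half, I observe that $\gamma^{-1}$ in $\PGL_{2}(k)$ is represented by $\left[\begin{smallmatrix} d & -b \\ -c & a\end{smallmatrix}\right]$, so in particular its lower-left entry is $-c\neq 0$. Moreover, its Koebe coordinates are $(\alpha',\alpha,\beta)$, because inversion swaps attracting and repelling fixed points while preserving the ratio of eigenvalues of absolute value $<1$. Applying the first half of the lemma to $\gamma^{-1}$ with parameter $\lambda^{-1}$ yields center $(\alpha-\beta\alpha')/(1-\beta)=a/c$ and radius $(\lambda^{-1}|\beta|)^{1/2}|\alpha-\alpha'|/|1-\beta| = \rho/\lambda$, as required. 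The only step requiring a sanity check is the identification of the Koebe coordinates of $\gamma^{-1}$, but this is an immediate consequence of the definition given in Section~\ref{sec:Koebe}; no serious obstacle is anticipated.
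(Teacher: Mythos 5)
Your computation is correct, and it is exactly the routine verification the paper intends: the lemma is stated with the proof omitted (marked \qed), and the direct expansion $a=\alpha-\beta\alpha'$, $b=(\beta-1)\alpha\alpha'$, $c=1-\beta$, $d=\beta\alpha-\alpha'$, $ad-bc=\beta(\alpha-\alpha')^2$ together with rewriting $|cz+d|^2\le\lambda|ad-bc|$ as $|z+d/c|\le(\lambda|ad-bc|)^{1/2}/|c|$ gives everything. Your handling of $\gamma^{-1}$ is also fine, whether via the adjugate representative (whose Ford disc is well defined since $|ad-bc|/|cz+d|^2$ is invariant under scaling the matrix) or via the observation that $\gamma^{-1}=M(\alpha',\alpha,\beta)$.
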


\begin{lemma}
For every loxodromic $\gamma\in \mathrm{PGL}_2(k)$ that does not fix~$\infty$ and every $\lambda\in \R_{>0}$, we have $\gamma(D_{(\gamma, \lambda)}^+)= \pank - {D}_{(\gamma^{-1}, \lambda^{-1})}^-$.\qed
\end{lemma}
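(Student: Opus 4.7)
The plan is to decompose the M\"obius transformation $\gamma$ into elementary pieces (two translations, a homothety, and one inversion) and to track the disc $D^+_{(\gamma,\lambda)}$ through this composition, using the explicit description of both twisted Ford discs provided by Lemma~\ref{lem:explicitisometricdiscs}.

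First I record that, by that lemma, $D^+_{(\gamma,\lambda)}$ is the closed disc with centre $-d/c$ and radius $\rho = (\lambda|ad-bc|)^{1/2}/|c|$, while $D^-_{(\gamma^{-1},\lambda^{-1})}$ is the open disc with centre $a/c$ and radius $\rho/\lambda$. Observe that $-d/c$ is the unique preimage of $\infty$ under $\gamma$ and belongs to $D^+_{(\gamma,\lambda)}$. Consequently $\gamma(D^+_{(\gamma,\lambda)})$ contains $\infty$, so it must have the shape of the complement of an open disc not containing $\infty$, which is consistent with the statement to be proved.

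The partial fraction decomposition $\gamma(z) = a/c - ((ad-bc)/c^{2}) \cdot (z + d/c)^{-1}$ writes $\gamma$ as $\tau_{2} \circ \mu \circ \iota \circ \tau_{1}$, where $\tau_{1}$ and $\tau_{2}$ are the translations by $d/c$ and $a/c$ respectively, $\iota$ is the inversion $z \mapsto 1/z$, and $\mu$ is the homothety of ratio $-(ad-bc)/c^{2}$. Applying each map in turn to $D^{+}(-d/c,\rho)$ yields successively $D^{+}(0,\rho)$, then $\pank \setminus D^{-}(0,1/\rho)$, then $\pank \setminus D^{-}(0,|ad-bc|/(|c|^{2}\rho))$, and finally $\pank \setminus D^{-}(a/c,\rho/\lambda)$; the last equality of radii is a direct check from the definition of $\rho$. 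This last set is exactly $\pank \setminus D^{-}_{(\gamma^{-1},\lambda^{-1})}$, which gives the claim.

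The only step not directly covered by Lemma~\ref{lem:inversiondisc} is the behaviour of $\iota$ on $D^{+}(0,\rho)$, since the lemma requires the centre to have absolute value strictly larger than the radius. This is the mild technical point, but it is settled at once: $\iota$ is an involution of $\pank$ exchanging $0$ and $\infty$, and for any $x \in \aank$ with $|T(x)|>0$ one has $|T(\iota(x))| = 1/|T(x)|$, so $\iota$ sends $D^{+}(0,\rho)$ bijectively onto $\pank \setminus D^{-}(0,1/\rho)$. Once this is noted, the remainder of the argument is a routine computation.
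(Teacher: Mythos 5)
Your proof is correct. The paper states this lemma without proof (it is given a \qed), but your argument is exactly in the spirit of the paper's own toolkit: the partial-fraction decomposition $\gamma(z) = a/c - \frac{ad-bc}{c^2}(z+d/c)^{-1}$ is the same one the paper uses to prove Lemma~\ref{lem:radiusdisc}, and your radius bookkeeping ($|ad-bc|/(|c|^2\rho) = \rho/\lambda$) checks out, as does your direct treatment of the inversion on a disc centred at $0$, which indeed falls outside the hypotheses of Lemma~\ref{lem:inversiondisc}.
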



\begin{definition}\label{des:Schottkybasis}
Let~$\Gamma$ be a Schottky group of rank~$g$. We say that a basis $B=(\gamma_{1},\dotsc,\gamma_{g})$ of~$\Gamma$ is a \emph{Schottky basis} if there exists a Schottky figure that is adapted to it.
The datum $(\Gamma, B)$ of a Schottky group and a Schottky basis $B$ of~$\Gamma$ is called a \emph{marked Schottky group}.
\end{definition}

If $k$ is archimedean, it is a classical result of Marden \cite{Marden74} that there exist Schottky groups with no Schottky bases.
On the contrary, in the non-archimedean case, a theorem of Gerritzen ensures that Schottky bases always exist (see \cite[\S 2, Satz 1]{Gerritzen74}).
We rephrase it here using the notation of the present paper (see \cite[Corollary 6.4.32]{VIASMII} for a proof using Berkovich geometry).

\begin{theorem}[Gerritzen]\label{Gerritzen}
Assume that~$k$ is non-archimedean. Let $\Gamma$ be a Schottky group over~$k$ whose limit set does not contain~$\infty$. Then, there exist a basis $(\delta_1, \dots, \delta_g)$ of~$\Gamma$ and positive real numbers $\lambda_1, \dots, \lambda_g \in \R_{>0}$ such that the family of twisted Ford discs $\big (D_{\delta_1, \lambda_1}^+, \dots, D_{\delta_g, \lambda_g}^+, D_{\delta^{-1}_1, \lambda^{-1}_1}^+, \dots, D_{\delta^{-1}_g, \lambda^{-1}_g}^+\big )$ is a Schottky figure adapted to $(\delta_1, \dots, \delta_g)$.\qed
\end{theorem}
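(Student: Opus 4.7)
The plan is to combine the action of $\Gamma$ on a Berkovich subtree with Bass--Serre theory in order to produce both a basis and the twist parameters simultaneously.

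First, since $\infty \notin L$, the convex hull $T \subset \pank$ of the limit set is a $\Gamma$-invariant subtree contained in $\aank$. By Corollary~\ref{cor:loxodromic}, all non-trivial elements of $\Gamma$ are loxodromic, hence act on $T$ by translations along their axes with no fixed points. A standard compactness argument, using the proper and free action of $\Gamma$ on $\pank\setminus L = O$ from Proposition~\ref{prop:actionproper} together with the finite generation of $\Gamma$, shows that the quotient $\Gamma\backslash T$ is a finite metric graph. Since $\Gamma$ acts freely on $T$, Bass--Serre theory identifies $\Gamma$ with the topological fundamental group of $\Gamma\backslash T$, which therefore has first Betti number~$g$.

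Next, I would choose a spanning tree $\bar S$ of $\Gamma\backslash T$ and lift it to a subtree $\tilde S \subset T$ playing the role of a fundamental domain. The $g$ edges $\bar e_1, \dotsc, \bar e_g$ outside $\bar S$ each lift to a pair of edges $(e_i^-, e_i^+)$ of $T$ attached to $\tilde S$, and for each $i$ there is a unique $\delta_i \in \Gamma$ with $\delta_i(e_i^-) = e_i^+$. Standard Bass--Serre theory ensures that $(\delta_1, \dotsc, \delta_g)$ is then a basis of $\Gamma$. Up to a preliminary M\"obius conjugation of $\Gamma$ (which does not affect the conclusion), I may also arrange that $\infty$ lies on $\tilde S$, so that $\infty$ is separated from every $e_i^\pm$ by $\tilde S$.

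For each $i$, pick a point $p_i^-$ of type~2 or~3 in the interior of $e_i^-$ and set $p_i^+ := \delta_i(p_i^-)$, which lies in the interior of $e_i^+$; let $D_i^\pm$ denote the connected component of $\pank \setminus \{p_i^\pm\}$ not meeting $\tilde S$. Each $D_i^\pm$ is a closed Berkovich disc contained in $\aank$, because $\infty \in \tilde S$ lies in the opposite component. It then remains to identify these discs with twisted Ford discs. Writing $\delta_i = \begin{bmatrix} a_i & b_i\\ c_i & d_i \end{bmatrix}$, Lemma~\ref{lem:explicitisometricdiscs} shows that $D^+_{(\delta_i, \lambda_i)}$ is the closed disc centered at $\delta_i^{-1}(\infty) = -d_i/c_i$ with radius $(\lambda_i|a_id_i - b_ic_i|)^{1/2}/|c_i|$. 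Choosing $p_i^-$ to lie on the geodesic $[\infty,\delta_i^{-1}(\infty)]$ (which can be done because this geodesic enters $\tilde S$ through the edge $e_i^-$), one can tune $\lambda_i > 0$ uniquely so that $D_i^+ = D^+_{(\delta_i, \lambda_i)}$; the relation $p_i^+ = \delta_i(p_i^-)$ combined with Lemma~\ref{lem:radiusdisc} then automatically forces $D_i^- = D^+_{(\delta_i^{-1}, \lambda_i^{-1})}$ with matching parameter $\lambda_i^{-1}$.

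Finally, the $2g$ discs $D_i^\pm$ correspond to $2g$ distinct ``ends'' of $T$ that are separated from each other by the subtree $\tilde S$, hence are pairwise disjoint. Together with the automatic action condition $\delta_i(D^+_{(\delta_i, \lambda_i)}) = \pank \setminus D^-_{(\delta_i^{-1}, \lambda_i^{-1})}$ supplied by the lemma preceding the theorem, this yields a Schottky figure adapted to $(\delta_1, \dotsc, \delta_g)$. The main obstacle will be the explicit identification carried out in the third paragraph: coordinating the tree-theoretic position of $p_i^\pm$ with the algebraic formulas for twisted Ford discs, and in particular verifying that $\delta_i^{\pm 1}(\infty)$ falls on the correct side of $p_i^\pm$ so that the radii on the two sides of $\delta_i$ automatically satisfy the constraint $r_i \cdot r_i' = |a_id_i - b_ic_i|/|c_i|^2$ imposed by the matching $\lambda_i$ and $\lambda_i^{-1}$.
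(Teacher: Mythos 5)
The paper does not actually prove this statement: it is quoted from Gerritzen \cite[\S 2, Satz 1]{Gerritzen74}, with a Berkovich-geometric proof referenced in \cite[Corollary 6.4.32]{VIASMII}. Your strategy --- act on the tree of the group, extract a basis via Serre's theorem from a lifted spanning tree $\tilde S$, cut off discs at points $p_i^\pm$ on the $2g$ edges leaving $\tilde S$, and then recognize these as twisted Ford discs --- is essentially that Berkovich-geometric route, and it closely parallels the machinery the paper itself builds in Section~5.3 (Lemmas \ref{lem:Serre}--\ref{lem:sbx}, where a Schottky figure is manufactured from a representative tree in exactly this way). The key observation making the last step work is sound: the twisted Ford discs $D^+_{(\delta,\lambda)}$, $\lambda>0$, are precisely the closed discs of $\aank$ containing $\delta^{-1}(\infty)$, so once each $D_i^\pm$ contains the relevant point $\delta_i^{\mp1}(\infty)$ and omits $\infty$, the parameters $\lambda_i$ exist, and the relation $p_i^+=\delta_i(p_i^-)$ together with $\gamma(D^+_{(\gamma,\lambda)})=\pank-D^-_{(\gamma^{-1},\lambda^{-1})}$ forces the two radii to match as $\lambda_i$ and $\lambda_i^{-1}$.

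There is, however, one step that fails as written: the normalization ``up to a preliminary M\"obius conjugation, $\infty$ lies on $\tilde S$''. First, $\infty$ never lies on $T$ (the convex hull of $L$ is contained in $\aank$), so what you need is that the \emph{retraction} of $\infty$ onto $T$ lands in $\tilde S$; this is exactly where the hypothesis $\infty\notin L$ enters, and it is what guarantees $\infty\notin D_i^\pm$ and $\delta_i^{\mp1}(\infty)\in D_i^\pm$. Second, a M\"obius conjugation cannot be used to arrange this: twisted Ford discs are defined through the matrix entries via $|cz+d|^2\le\lambda|ad-bc|$ and are \emph{not} equivariant under conjugation by transformations moving $\infty$, so proving the statement for $\varphi\Gamma\varphi^{-1}$ does not prove it for $\Gamma$; and a conjugation fixing $\infty$ (the only kind that preserves Ford discs) does not change the combinatorial position of $\infty$ relative to $T$. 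The correct fix is purely combinatorial: add the $\Gamma$-orbit of the retraction point $q$ of $\infty$ to the vertex set of $T$ and choose the lift $\tilde S$ of a spanning tree so that it contains $q$ (always possible, as in the paper's own argument that $\frakF_{x,v}\neq\emptyset$ for any vertex $v$). Two smaller points: ``the connected component of $\pank\setminus\{p_i^\pm\}$ not meeting $\tilde S$'' is ambiguous at type-2 points --- you want the complement of the component \emph{containing} $\tilde S$, which is the desired closed disc; and the finiteness of $\Gamma\backslash T$ should not be derived from Proposition~\ref{prop:actionproper}, which is proved under the assumption that a Schottky figure already exists (a circularity), but from the standard fact that a finitely generated group acting freely on a tree has finite quotient of its minimal invariant subtree.
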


\section{The Schottky space over $\Z$}
In this section, we define a parameter space for marked Schottky groups of a given rank, where the marking is given by the choice of a basis.
Already for Schottky groups of rank one, one gets an interesting construction, but most uniformization phenomena that are at the center of our interest become apparent only when the rank is at least two.

\subsection{The space $\calS_1$}\label{sec:S1} \text{ }\\
Let $\Gamma=\langle \gamma \rangle$ be a Schottky group of rank one over a complete valued field~$(k, |\cdot|)$. 
Then, $\gamma$ is conjugated in $PGL_2(k)$ to a unique matrix of the form $M(0,\infty, \beta)$ with $0 < |\beta| < 1$ (which corresponds to the multiplication by~$\beta$ as an endomorphism of~$\pank$; see Section~\ref{sec:Koebe} for the notation).

Consider the affine line $\A^{1,\an}_{\Z}$ with coordinate~$Y$ and set 
\[\calS_{1} := \{x\in \A^{1,\an}_{\Z} : 0<|Y(x)|<1 \}.\]
With each point $x\in \calS_1$, one can canonically associate a Schottky group of rank one 
\[\Gamma_{x} := \langle M(0,\infty, Y(x)) \rangle \subset \PGL_2(\calH(x)).\]
The condition imposed on $\calS_1$ ensures that $M(0,\infty, Y(x))$ is a loxodromic transformation of $\mathbb{P}^{1,\mathrm{an}}_{\calH(x)}$ having $0$ as attracting point and $\infty$ as repelling point.

Given a Schottky group of rank one over $\Q_p$, we can retrieve it as $\langle M(0,\infty, Y(x))\rangle$ for a unique $x \in \calS_1$ with $\calH(x) = \Q_p$.
For a general valued field $(k, |\cdot|)$ and an element $\beta\in k$ such that $0<|\beta|<1$, the group generated by $M(0,\infty, \beta)$ can be retrieved as above from a point of $\calS_1 \times_\Z k$. 
This can be seen as a consequence of Lemma \ref{lem:kpoints} below in the special case where $g=1$.

\subsection{Construction of $\calS_{g}$ and equivalent definitions}\label{subsec:Sspace} \text{ }\\
Fix $g\ge 2$. Consider the space $\AA^{3g-3,\an}_{\Z}$ and denote its coordinates by $X_{3},\dots,X_{g},X'_{2},\dots ,X'_{g},Y_{1},\dots ,Y_{g}$. 
For notational convenience, set $X_{1}:=0$, $X_{2}:=1$ and $X'_{1}:=\infty$ (seen as morphisms from $\AA^{3g-3,\an}_{\Z}$ to $\PP^{1,\an}_{\Z}$). 
Denote by $\pr_{\Z} \colon \AA^{3g-3,\an}_{\Z}\to\calM(\Z)$ the projection morphism. 
 Let~$U_{g}$ be the open subset of~$\AA^{3g-3, \an}_\Z$ defined by the inequalities
\[\begin{cases}
0<|Y_{i}| < 1 \text{ for } 1\le i\le g;\\
X^{\sigma_{i}}_i \neq X^{\sigma_{j}}_j \text{ for } i,j \in\{1,\dotsc,g\} \text{ and } \sigma_{i},\sigma_{j} \in \{\emptyset,'\}.
\end{cases}\]
For $i \in \{1,\dotsc,g\}$, consider the transformations
\[ M_{i} := M(X_{i},X'_{i},Y_{i})   \in \PGL_{2}(\calO(U_{g})).\]

\begin{definition}\label{def:Schottkyspace}
The \emph{Schottky space of rank~$g$} over $\Z$, denoted by $\calS_g$, is the set of points $x\in U_{g}$ such that the subgroup $\Gamma_x$ of $\PGL_{2}(\calH(x))$ defined by
\[\langle M_{1}(x), M_{2}(x), \dots, M_g(x) \rangle\]
is a Schottky group of rank~$g$.

\begin{notation}\label{not:GammaxCx}
Recall that a Schottky group over a complete valued field $(k, |\cdot|)$ gives rise to a unique $k$-analytic curve by means of the uniformization described in Section \ref{sec:limitsets}. 
Given $x\in \calS_g$, we denote by $\Gamma_x$ the marked Schottky group of ordered basis $(M_{1}(x), M_{2}(x), \dots, M_g(x))$, and by $\calC_x$ the $\calH(x)$-analytic curve obtained via Schottky uniformization by $\Gamma_x$.

In the non-archimedean case, the curve $\calC_x$ has semi-stable reduction, and results of Berkovich (\cite[Section 4.3]{Berkovich90}) then assert that the dual graph of the stable model of $\calC_x$ can be canonically realized as a subset of $\calC_x$.
Such a subset, denoted by $\Sigma_x$, is a graph of Betti number $g$, called the \emph{skeleton} of~$\calC_x$.
If $\calB=\big({D_{i,\varepsilon}^{+}},\ 1\leq i \leq g, \varepsilon=\pm 1\big)$ is a Schottky figure adapted to a Schottky basis of $\Gamma_x$ and $F^{+}$ is the associated ``closure of a fundamental domain'' (see Definition \ref{def:Schottkyfigure} and following paragraph), then there is an isomorphism $\calC_x \cong F^{+}  / \Gamma_x$ and the skeleton~$\Sigma_x$ of the Mumford curve~$\calC_x$ is obtained through pairwise identification, for every $i=1,\dots,g$, of the points of the Shilov boundaries of $D_{i,1}^{+}, D_{i,-1}^{+}$ in the tree corresponding to the skeleton of $F^{+}$ (see Figure \ref{fig:fundom} for an example with $g=2$). We refer the reader to~\cite[Theorem~6.4.18]{VIASMII} for a proof of this fact.

If $k=\C$, there might not exist a Schottky basis for $\Gamma_x$, but an analogue of the set $F^+$ can be built by replacing the Schottky figure $\calB$ with a $2g$-uple of Jordan curves $\big({C_{i,\eps}},\ 1\leq i \leq g, \varepsilon=\pm 1\big)$ in $\mathbb{P}^1_\C$ with disjoint interiors and such that $\gamma_{i} (C_{i,1})=C_{i, -1}$ for every $i=1,\dots,g$.
In this context, the closed set $F^+$ is the complement of the interiors of these curves.
The quotient map $F^+\to \calC_x$ sends the Jordan curves $C_{1,1},\dots, C_{g,1}$ into smooth simple non-intersecting curves $\alpha_1,\dots, \alpha_g$ inside~$\calC_x$.
The complex description allows to easily treat the case $k=\R$.
\end{notation}

\begin{figure}[ht]
\centering
    \begin{subfigure}[b]{0.44\textwidth}
\includegraphics[scale=.3]{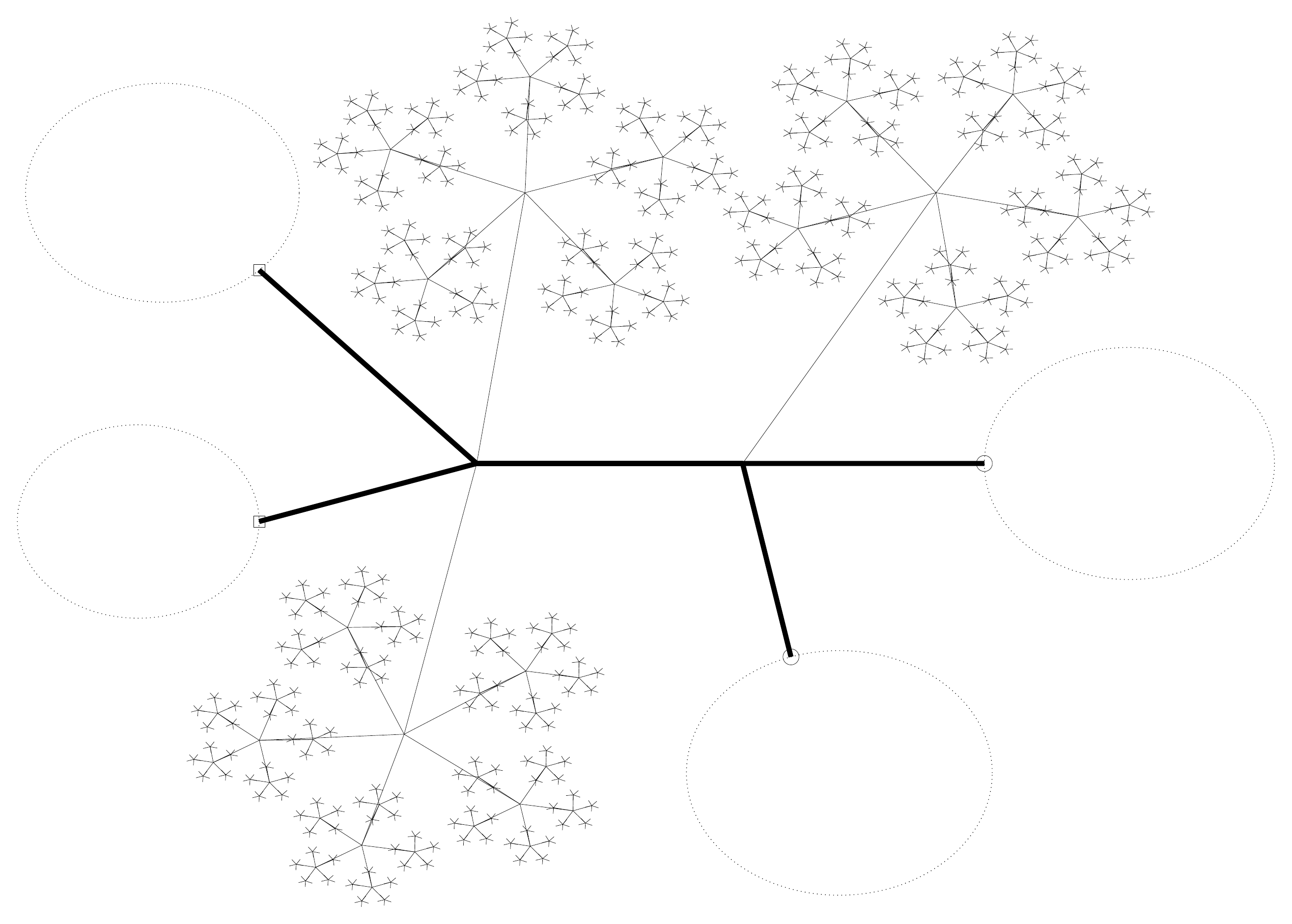}
    \end{subfigure}
    \hspace{1.5cm}
    \begin{subfigure}[b]{0.44\textwidth}
\includegraphics[scale=.3]{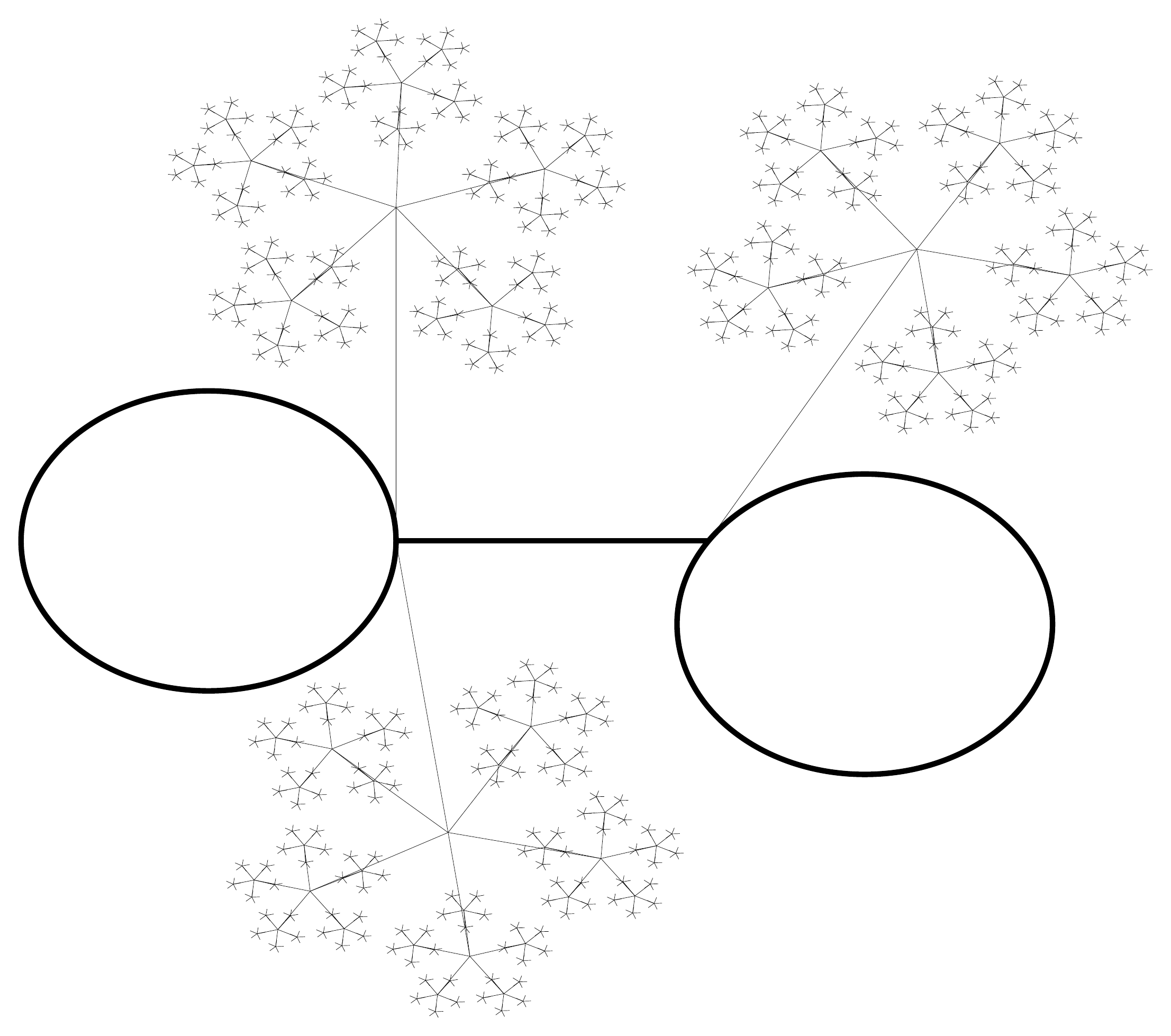}
    \end{subfigure}

 \caption{The domain $F^+$ of a Schottky figure for the Schottky group $\Gamma_x$ in the non-Archimedean case is depicted on the left. The group $\Gamma_x$ identifies the ends of the skeleton $\Sigma_{F^+}$, so that the corresponding Mumford curve (on the right) contains the finite graph $\Sigma_x$.}
 \label{fig:fundom}
\end{figure}

Note that the identification of $\calH(x)$ with a valued extension of $\calH(\mathrm{pr}_{\Z}(x))$ is not canonical, and to different immersions $\calH(\mathrm{pr}_{\Z}(x)) \hookrightarrow \calH(x)$ one associates different Schottky groups, yielding curves $\calC_x$ that might not be isomorphic.
To lift the ambiguity from this situation one has then to consider the points of the base-change $\calS_g \times_\Z \calH(x)$, as made more precise in the following remark.
\end{definition}

\begin{remark}\label{rem:SgA}
Let~$(A,\nm)$ be a Banach ring. Starting with $\calM(A)$ instead of $\calM(\Z)$, one can define a Schottky space~$\calS_{g,A}$ over~$A$. It is related to the Schottky space over~$\Z$ in the following way.
If we denote by $\pi_{A} \colon \AA^{3g-3,\an}_{A} \to \AA^{3g-3,\an}_{\Z}$ the projection map, it follows from Remark~\ref{rem:SchottkyKL} that we have $\calS_{g,A} = \pi_{A}^{-1}(\calS_{g})$. In other words, assuming that the suitable categories and fiber products are defined, we have $\calS_{g,A} = \calS_{g} \times_{\calM(\Z)} \calM(A)$. 
Moreover, the projection map~$\pi_{A}$ respects all the data: for each $x\in \calS_{g,A}$, the group~$\Gamma_{x}$ is the image of~$\Gamma_{\pi(x)}$ by the inclusion $\PGL_{2}(\calH(\pi(x)) \subseteq \PGL_{2}(\calH(x))$, the limit set of~$\Gamma_{\pi(x)}$ in~$\PP^{1,\an}_{\calH(\pi(x))}$ is the preimage of the limit set of~$\Gamma_{x}$ in~$\PP^{1,\an}_{\calH(x)}$, and so on.

In the special case where $A = \C$, we obtain a subset~$\calS_{g,\C}$ of~$\C^{3g-3}$. As one may expect, this is a classical object that has already been closely investigated. 
By \cite[Lemma 5.11]{Hejhal75}, one has a covering map $\calT_{g,\C} \to \calS_{g,\C}$ from the Teichm\"uller space to the complex Schottky space.
In particular, one deduces that $\calS_{g,\C}$ is a connected subset of~$\C^{3g-3}$. See also \cite[Proposition~2]{Bers75} for a direct proof.
\end{remark}

Recall that, for $\eps \in (0,1]$, we denote by~$a^\eps_{\infty}$ the point of~$\calM(\Z)$ associated with~$\va_{\infty}^\eps$ and that we have an isomorphism $\calH(a^\eps_{\infty}) \simeq \R$. We will use the canonical map $\rho_{\eps} \colon \C^{3g-3} = \AA^{3g-3,\an}_\C \to \AA^{3g-3,\an}_\R$, where~$\R$ and~$\C$ are endowed with~$\va_{\infty}^\eps$. See Section~\ref{sec:BerkovichZ} for details.

\begin{lemma}\label{lem:Sga}
For $\eps \in (0,1]$, we have $\calS_{g} \cap \pr_{\Z}^{-1}(a_{\infty}^\eps) = \rho_\eps(\calS_{g,\C})$. 
The set $\calS_{g} \cap \big(\AA^{3g-3,\an}_{\Z}\big)^\mathrm{a}$ is a connected open subset of $\AA^{3g-3,\an}_\Z$.
\end{lemma}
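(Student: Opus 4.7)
My approach for part~(i) is direct bookkeeping. A point $x$ of the fiber $\pr_\Z^{-1}(a_\infty^\eps)$ corresponds under the identifications of Section~\ref{sec:BerkovichZ} to a complex conjugation orbit of some $z \in \C^{3g-3}$ via $x = \rho_\eps(z)$, and the values at~$x$ of the coordinate functions are simply the coordinates of~$z$ (the absolute value on $\calH(x)\simeq \R$ being $\va_\infty^\eps$). In particular the matrices $M_i(x)$ generating $\Gamma_x \subseteq \PGL_2(\calH(x))$ are, as complex matrices, those obtained by evaluating the Koebe data at~$z$. By Remark~\ref{rem:Schottkyva}, whether these matrices generate a Schottky group is independent of the exponent~$\eps$, and by Remark~\ref{rem:SchottkyKL} it does not depend on whether one views them inside $\PGL_2(\R)$ or $\PGL_2(\C)$. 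Hence $x \in \calS_g$ if and only if $z \in \calS_{g,\C}$, which establishes the claimed equality.

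For part~(ii), I plan to invoke the homeomorphism $\Phi\colon \AA^{3g-3,\an}_{\R,\va_\infty}\times (0,1] \xrightarrow{\sim} (\AA^{3g-3,\an}_\Z)^\mathrm{a}$ recalled in Section~\ref{sec:BerkovichZ}. It satisfies $\Phi(\rho_1(z),\eps) = \rho_\eps(z)$ for every $z\in\C^{3g-3}$ by direct comparison of the defining formulas, so part~(i) yields
\[\Phi^{-1}\bigl(\calS_g\cap (\AA^{3g-3,\an}_\Z)^\mathrm{a}\bigr) = \rho_1(\calS_{g,\C})\times (0,1].\]
The set $\calS_{g,\C}$ is stable under complex conjugation (the Schottky property is), open in $\C^{3g-3}$ (a classical fact following from Bers' description in~\cite{Bers75} of $\calS_{g,\C}$ as a complex submanifold), and connected by Remark~\ref{rem:SgA}. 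Since $\rho_1$ induces a homeomorphism $\C^{3g-3}/\langle c\rangle \xrightarrow{\sim} \AA^{3g-3,\an}_{\R,\va_\infty}$, the image $\rho_1(\calS_{g,\C})$ inherits both properties. Taking the product with the connected interval $(0,1]$ and transporting via the homeomorphism~$\Phi$, we conclude that $\calS_g\cap (\AA^{3g-3,\an}_\Z)^\mathrm{a}$ is an open and connected subset of $(\AA^{3g-3,\an}_\Z)^\mathrm{a}$. Openness in the full space then follows from the fact, recalled in Section~\ref{sec:AnA}, that the archimedean part is itself open in $\AA^{3g-3,\an}_\Z$.

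The only substantive non-trivial input is the openness of $\calS_{g,\C}$ in $\C^{3g-3}$, which is classical but is not reproved in this paper; everything else is a careful translation between the abstract fiber $\pr_\Z^{-1}(a_\infty^\eps)$, the real analytic affine space with absolute value~$\va_\infty^\eps$, and the complex affine space modulo conjugation, combined with the absolute-value-insensitivity of the Schottky condition expressed by Remarks~\ref{rem:Schottkyva} and~\ref{rem:SchottkyKL}. No serious obstacle is expected.
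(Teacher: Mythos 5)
Your proof is correct and follows essentially the same route as the paper's: both identify the fiber $\calS_g\cap\pr_\Z^{-1}(a_\infty^\eps)$ with $\calS_{g,\C}$ modulo complex conjugation via Remarks~\ref{rem:Schottkyva} and~\ref{rem:SchottkyKL}, and then transport openness and connectedness of the complex Schottky space through the homeomorphism~$\Phi$ to the whole archimedean part. Your write-up is somewhat more explicit about the compatibility $\Phi(\rho_1(z),\eps)=\rho_\eps(z)$ and about isolating the openness of $\calS_{g,\C}$ as the one classical input, but the substance is identical.
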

\begin{proof}
By Remarks~\ref{rem:Schottkyva} and~\ref{rem:SchottkyKL}, $\rho_{\eps}^{-1}(\calS_{g} \cap \pr_{\Z}^{-1}(a_{\infty}^\eps))$ coincides with the usual complex Schottky space $\calS_{g,\C}$. In other words, $\calS_{g} \cap \pr_{\Z}^{-1}(a_{\infty}^\eps)$ is the quotient of~$\calS_{g,\C}$ by the complex conjugation. In particular, it is a connected open subset of $\A^{3g-3,\an}_\R$.

Recall the homeomorphism
\[ \Phi \colon \AA^{3g-3,\an}_{\R} \times (0,1] \to \big(\AA^{3g-3,\an}_{\Z}\big)^\text{a}\]
from Section~\ref{sec:BerkovichZ}. It follows from Remark~\ref{rem:Schottkyva} that it induces a bijection between $\big(\calS_{g} \cap \pr_{\Z}^{-1}(a_{\infty}^\eps)\big) \times (0,1]$ and $\calS_{g}^\text{a}$. As a consequence, $\calS_{g}^\text{a}$ is connected and open, since $\big(\AA^{3g-3,\an}_{\Z}\big)^\text{a}$ is open in $\AA^{3g-3,\an}_{\Z}$.

\end{proof}

Let~$F_{g}$ be the free group of rank~$g$ with basis $e_{1},\dotsc,e_{g}$. For each complete valued field~$k$, we denote by $\Hom_{S}(F_{g},\PGL_{2}(k))$ the set of group morphisms $\varphi \colon F_{g} \to \PGL_{2}(k)$ that satisfy the following conditions:
\begin{enumerate}
\item $\varphi(e_{1})$ is loxodromic with attracting fixed point~0 and repelling fixed point~$\infty$;
\item $\varphi(e_{2})$ is loxodromic with attracting fixed point~1;
\item the image of~$\varphi$ is a Schottky group of rank~$g$.
\end{enumerate}
Each point~$x$ of~$\calS_{g}$ gives rise to an element $\varphi_{x}$ of~$\Hom_{S}(F_{g},\PGL_{2}(\calH(x)))$ that sends $e_{i}$ to $M_{i}(x)$. 

To state a converse result, we need to introduce an equivalence relation similar to that of \cite[Remark 1.2.2 (ii)]{Berkovich90}. We say that two elements $\varphi_{1} \in \Hom_{S}(F_{g},\PGL_{2}(k_{1}))$ and $\varphi_{2} \in \Hom_{S}(F_{g},\PGL_{2}(k_{2}))$ are equivalent if there exists an element $\varphi \in \Hom_{S}(F_{g},\PGL_{2}(k))$ and isometric embeddings $k \hookrightarrow k_{1}$ and $k \hookrightarrow k_{2}$ that make the following diagram commute:

\[\begin{tikzcd}
& & & \PGL_{2}(k_{1})\\ 
F_g \ar[rr, "\varphi"]  \ar[rrrd,"\varphi_2"', bend right=20] \ar[rrru,"\varphi_1", bend left=20] & &  \PGL_{2}(k) \ar[rd,hook] \ar[ru,hook]\\    
&  & & \PGL_{2}(k_{2})
\end{tikzcd}\]


We denote by $\Hom_{S}(F_{g},\PGL_{2})$ the set of classes of this equivalence relation.

\begin{lemma}\label{lem:kpoints}
The map $x \mapsto \varphi_{x}$ is a bijection between the underlying set of~$\calS_{g}$ and $\Hom_{S}(F_{g},\PGL_{2})$.
\end{lemma}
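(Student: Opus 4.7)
The plan is to construct an explicit inverse to $x \mapsto \varphi_x$ using Koebe coordinates (Proposition~\ref{prop:Koebe}), exploiting the fact that, by the definition of points of an affine analytic space over~$\Z$ (Section~\ref{sec:AnA}), a point $x \in \AA^{3g-3,\an}_\Z$ is equivalently the datum of a complete valued field~$K$ together with a $(3g-3)$-tuple in~$K$, modulo the equivalence relation identifying two such data that factor isometrically through a common valued field. Under this identification, the complete residue field $\calH(x)$ is the smallest such field, namely the completion of the subfield of~$K$ generated by the tuple.

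Given a class $[\varphi] \in \Hom_S(F_g, \PGL_2)$ represented by $\varphi \colon F_g \to \PGL_2(k)$, I would extract the Koebe coordinates $(\alpha_i, \alpha'_i, \beta_i)$ of $\gamma_i := \varphi(e_i)$ for $i = 1, \ldots, g$. Conditions~(i) and~(ii) in the definition of~$\Hom_S$ force $\alpha_1 = 0$, $\alpha'_1 = \infty$, $\alpha_2 = 1$, while condition~(iii) guarantees $0 < |\beta_i| < 1$ for all~$i$ and also that the $2g$ fixed points $\{\alpha_i, \alpha'_i\}_{i=1}^g$ are pairwise distinct: two loxodromic generators of a Schottky group can share neither both fixed points (otherwise they commute, contradicting freeness) nor exactly one (otherwise their commutator is parabolic, contradicting pure loxodromy, as an elementary matrix computation shows). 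The remaining $3g-3$ coordinates form a tuple in~$k^{3g-3}$, which defines a point $x \in \AA^{3g-3,\an}_\Z$ equipped with an isometric embedding $\calH(x) \hookrightarrow k$. By construction, $M_i(x)$ maps to~$\gamma_i$ under $\PGL_2(\calH(x)) \hookrightarrow \PGL_2(k)$; Remark~\ref{rem:SchottkyKL} then implies that $\Gamma_x$ is a Schottky group of rank~$g$, so $x \in \calS_g$, and also that $\varphi_x$ is equivalent to~$\varphi$, yielding surjectivity.

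For injectivity, I would suppose $\varphi_x$ and $\varphi_y$ are equivalent via some $\varphi \colon F_g \to \PGL_2(K)$ with isometric embeddings $K \hookrightarrow \calH(x)$ and $K \hookrightarrow \calH(y)$. Then the Koebe coordinates of each $\varphi(e_i)$ are elements of~$K$ whose images in $\calH(x)$ and $\calH(y)$ are, respectively, the coordinate values at~$x$ and at~$y$. Hence~$x$ and~$y$ arise from the same equivalence class of tuples, and so coincide as points of~$\AA^{3g-3,\an}_\Z$. The main technical subtlety I anticipate is the careful bookkeeping of the two equivalence relations at play (on $\Hom_S(F_g, \PGL_2)$ and on the set of tuples underlying points of the affine space); once Proposition~\ref{prop:Koebe} is invoked, the geometric content of the bijection is essentially transparent.
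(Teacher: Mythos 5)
Your proof is correct and follows essentially the same route as the paper's: Koebe coordinates provide the inverse, surjectivity is obtained by forming the point of $\AA^{3g-3,\an}_k$ from the (already normalized) Koebe coordinates of a representative $\varphi$ and projecting to $\AA^{3g-3,\an}_{\Z}$ with the induced isometric embedding $\calH(x)\hookrightarrow k$, and injectivity comes from matching the equivalence relation on $\Hom_{S}(F_{g},\PGL_{2})$ with that on characters defining points of the affine space. You in fact supply a detail the paper leaves implicit, namely that the $2g$ fixed points are pairwise distinct (via the commuting/parabolic-commutator dichotomy), so that the constructed point genuinely lies in $U_{g}$.
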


\begin{proof}
If $\varphi_x=\varphi_y$ then $\calH(x)=\calH(y)$ and $\varphi_x(F_g)=\varphi_y(F_g)$ coincide as marked Schottky groups in $\PGL_{2}(\calH(x))$. 
Hence $x=y$ in $\calS_{g}$ and $x \mapsto \varphi_x$ is injective.
Conversely, for a valued field $k$ and a map $\varphi\in \Hom_{S}(F_{g},\PGL_{2}(k))$, we can consider the point $y \in \AA^{3g-3,\an}_k$ given by the Koebe coordinates of the marked Schottky group $\varphi(F_{g})$.
The image of $y$ under the canonical projection $\AA^{3g-3,\an}_k \to \AA^{3g-3,\an}_\Z$ is a point $x \in \calS_g$ and there is an isometric embedding $\calH(x) \hookrightarrow k$ realizing $\varphi_x$ as canonical representative of the class of $\varphi$ in $\Hom_{S}(F_{g},\PGL_{2})$.
Hence $x \mapsto \varphi_x$ is surjective.
\end{proof}

\begin{remark}
Every marked Schottky group of rank $g$ over a complete valued field $k$ is conjugated in $\PGL_2(k)$ to a unique marked Schottky group with the property that its Koebe coordinates are of the form $\{ (0, \infty, \beta_1), (1, \alpha'_2, \beta_2), \dots, (\alpha_g, \alpha'_g, \beta_g) \}$.
The combination of this observation with Lemma~\ref{lem:kpoints} implies that every Schottky group over $k$ can be retrieved, up to conjugation, as $\Gamma_x$ for some suitable point $x\in\calS_{g, k}$.
\end{remark}

\subsection{Openness of $\calS_g$}
\begin{definition}
Let~$S$ be an analytic space. Consider the relative affine line~$\AA^1_{S}$ with coordinate~$Z$. For $\gamma = \begin{bmatrix} a&b\\ c&d\end{bmatrix}$ in $\PGL_{2}(\calO(S))$ and $\lambda\in \R_{>0}$, set 
\[D^+_{(\gamma,\lambda)} := \{x\in \AA^1_{S} : |(c Z +d)(x)|^2 \le \lambda |(ad-bc)(x)|\}\]
and
\[D^-_{(\gamma,\lambda)} := \{x\in \AA^1_{S} : |(c Z +d)(x)|^2 < \lambda |(ad-bc)(x)|\}.\]
We call such sets closed and open \emph{relative twisted Ford discs} respectively.
\end{definition}

We now generalize Gerritzen's theorem~\ref{Gerritzen} to the relative setting. 

\begin{theorem}\label{thm:Gerritzenvoisinageinfinity}
Let~$x$ be a non-archimedean point of~$\calS_{g}$ such that~$\infty$ is not a limit point of~$\Gamma_{x}$. There exist an open neighborhood~$W$ of~$x$ in~$U_{g}$, an automorphism~$\tau\in \Aut(F_{g})$ and positive real numbers $\lambda_{1},\dotsc,\lambda_{g} \in \R_{>0}$ such that, denoting
\[(N_{1},\dotsc,N_{g}) := \tau \cdot (M_{1},\dotsc,M_{g}) \in \GL_{2}(\calO(U_{g}))^g,\]
the family of relative twisted Ford discs over~$W$
\[\big(D^+_{N_{1},\lambda_{1}},\dotsc, D^+_{N_{g},\lambda_{g}}, D^+_{N_{1}^{-1},\lambda_{1}^{-1}},\dotsc,D^+_{N_{g}^{-1},\lambda_{g}^{-1}}\big)\] 
is a Schottky figure adapted to the family $(N_{1},\dotsc,N_{g})$ of $\PGL_{2}(\calO(W))$.

\end{theorem}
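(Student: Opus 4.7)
The plan is to apply Gerritzen's theorem~\ref{Gerritzen} at the fiber over~$x$ to produce the automorphism $\tau$ and the parameters $\lambda_i$, and then to show by continuity that the corresponding family of relative twisted Ford discs remains a Schottky figure on a suitable open neighborhood~$W$ of~$x$ in~$U_g$.

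Since~$x$ is non-archimedean and the limit set of~$\Gamma_x$ avoids~$\infty$, Gerritzen's theorem yields a basis $(\delta_1,\dots,\delta_g)$ of $\Gamma_x \subset \PGL_2(\calH(x))$ and positive reals $\lambda_1,\dots,\lambda_g$ such that the family $\big(D^+_{\delta_i,\lambda_i},D^+_{\delta_i^{-1},\lambda_i^{-1}}\big)_{1\le i\le g}$ is a Schottky figure in $\PP^{1,\an}_{\calH(x)}$. Both $(M_1(x),\dots,M_g(x))$ and $(\delta_1,\dots,\delta_g)$ are ordered bases of the free group~$\Gamma_x$, so they differ by a unique $\tau\in\Aut(F_g)$. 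Writing each~$\delta_i$ as a reduced word $\tau_i(e_1,\dots,e_g)$ in the generators of~$F_g$ and substituting globally, one defines $N_i := \tau_i(M_1,\dots,M_g) \in \PGL_2(\calO(U_g))$. By construction $N_i(x)=\delta_i$, and the locus $U'_g \subseteq U_g$ where all $N_i$ are loxodromic is open (by the proof that~$L_\Z$ is open) and contains~$x$.

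The main step is to check that, on a neighborhood of~$x$ in~$U'_g$, the relative twisted Ford discs $D^+_{N_i,\lambda_i}$ and $D^+_{N_i^{-1},\lambda_i^{-1}}$ form a Schottky figure adapted to $(N_1,\dots,N_g)$. Lemma~\ref{lem:explicitisometricdiscs} ensures that, on each fiber over $w\in U'_g$, each such set is a genuine closed disc whose center and radius depend analytically on~$w$, while the corresponding open Ford disc is the open disc with the same center and radius, hence a maximal open disc inside the closed one in both the archimedean and non-archimedean settings. The only non-automatic condition is the pairwise disjointness of the $2g$ closed discs on every fiber of~$W$. By Lemma~\ref{lem:2rho}, a sufficient condition for two of these discs of centers $c,c'$ and radii $\rho,\rho'$ to be fiberwise disjoint is the open inequality
\[|c-c'| > \max(|2|,1)\,\max(\rho,\rho').\]
At~$x$ itself, $|2(x)|\le 1$ and Gerritzen's Schottky figure together with the non-archimedean converse in Lemma~\ref{lem:2rho} yield the strict inequality $|c(x)-c'(x)| > \max(\rho(x),\rho'(x))$ for every pair of discs in the family.

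Since all functions involved are continuous and $\max(|2(x)|,1)=1$, each of the finitely many strict non-archimedean inequalities at~$x$ propagates to its sharper counterpart (with the $\max(|2|,1)$ factor) on an open neighborhood of~$x$ in~$U'_g$; intersecting these yields the desired~$W$. The main subtlety is that when $|2(x)|=1$, every neighborhood of~$x$ in~$U_g$ meets the archimedean locus, so~$W$ necessarily contains archimedean fibers over which twisted Ford discs a priori need not form Schottky figures for arbitrary Schottky groups. The uniform formulation of Lemma~\ref{lem:2rho}, valid over both archimedean and non-archimedean valued fields, is precisely what allows the disjointness established at~$x$ to persist across such mixed neighborhoods, thereby concluding the proof.
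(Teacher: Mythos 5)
Your proposal is correct and follows essentially the same route as the paper: apply Gerritzen's theorem in the fiber over~$x$ to obtain $(\delta_i)$, $\tau$ and the $\lambda_i$, pass to the open locus where the $N_i$ are defined and loxodromic, and then use the explicit centers and radii from Lemma~\ref{lem:explicitisometricdiscs} together with the quantitative two-sided criterion of Lemma~\ref{lem:2rho} to propagate the strict disjointness inequalities at the non-archimedean point~$x$ (where $\max(|2(x)|,1)=1$) to the stronger archimedean-safe inequalities on a neighborhood. The only minor imprecision is the claim that every neighborhood of~$x$ meets the archimedean locus whenever $|2(x)|=1$ (this holds only when $\pr_{\Z}(x)=a_0$), but this does not affect the argument.
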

\begin{proof}
By Theorem~\ref{Gerritzen}, we can find a basis $(\delta_1, \dotsc, \delta_g)$ of~$\Gamma_{x}$ and positive real numbers $\lambda_1,\dotsc,\lambda_g$ such that the family of twisted Ford discs $\big (D_{\delta_1, \lambda_1}^+, \dots, D_{\delta_g, \lambda_g}^+, D_{\delta^{-1}_1, \lambda^{-1}_1}^+, \dots, D_{\delta^{-1}_g, \lambda^{-1}_g}^+\big )$ is a Schottky figure adapted to~$(\delta_1, \dotsc, \delta_g)$. 

Denote by~$\tau$ the automorphism of~$\Gamma_{x}$ sending~$M_{i}(x)$ to~$\delta_{i}$. Identifying~$F_{g}$ with~$\Gamma_{x}$ by sending~$e_{i}$ to~$M_{i}(x)$, we get an automorphism of~$F_{g}$ that we still denote by~$\tau$. Set 
\[(N_{1},\dotsc,N_{g}) := \tau \cdot (M_{1},\dotsc,M_{g}) \in \GL_{2}(\calO(U_{g}))^g\]
and write 
\[N_{i} = \begin{pmatrix} a_{i}&b_{i}\\c_{i}& d_{i}\end{pmatrix}\]
for $i\in\{1,\dotsc,g\}$. Note that the coefficients of the~$N_{i}$'s are rational functions in the $X_{j}$'s, $X'_{j}$'s and $Y_{j}$'s. Denote by~$U'$ the open subset of~$U_{g}$ where they are all defined.

Let~$V$ be the open subset of~$U'$ defined by
\[|\tr(N_{i})|^2 > \max(|4|,1)\, |\det(N_{i})| \text{ for } 1 \le i\le g.\]
By Lemma~\ref{lem:distinctabsval}, $V$ contains~$x$ and, for each $y\in V$ and each $i\in\{1,\dotsc,g\}$, the matrix~$N_{i}$ is loxodromic. 

\medbreak

Let $i\ne j \in \{1,\dotsc,g\}$. Since $\infty$ is not a limit point of~$\Gamma_{x}$, it cannot be a fixed point of~$N_{i}(x)$ or~$N_{j}(x)$, hence $c_{i}(x) c_{j}(x) \ne 0$. There exists a neighborhood~$W_{i,j}$ of~$x$ in~$V$ such that $c_{i}c_{j}$ does not vanish on~$W_{i,j}$. In this case, for each $y\in W_{i,j}$, $\infty$ is not a fixed point of~$N_{i}(y)$ or~$N_{j}(y)$ and, by Lemma~\ref{lem:explicitisometricdiscs}, we have
\[D^+_{N_{i}(y),\lambda_{i}} = D^+\left(-\frac{d_{i}}{c_{i}}(y),  \left|\frac{a_{i}d_{i} - b_{i}c_{i}}{c_{i}^2}(y)\right|^{1/2} \lambda_{i}^{1/2}\right)\]
and 
\[D^+_{N_{j}(y),\lambda_{j}} = D^+\left(-\frac{d_{j}}{c_{j}}(y),  \left|\frac{a_{j}d_{j} - b_{j}c_{j}}{c_{j}^2}(y)\right|^{1/2} \lambda_{j}^{1/2}\right).\]
By assumption, the discs at~$x$ are disjoint and Lemma~\ref{lem:2rho} ensures that we have
\[\left| \frac{d_{i}}{c_{i}}(x) -\frac{d_{j}}{c_{j}}(x)\right| > \max \left( \left|\frac{a_{i}d_{i} - b_{i}c_{i}}{c_{i}^2}(x)\right|^{1/2} \lambda_{i}^{1/2},  \left|\frac{a_{j}d_{j} - b_{j}c_{j}}{c_{j}^2}(x)\right|^{1/2} \lambda_{j}^{1/2}\right).\]
Since~$x$ is non-archimedean, we have $\max(|2(x)|,1) = 1$, hence, up to shrinking~$W_{i,j}$, we may assume that, for each $y\in W$, we have
\[\left| \frac{d_{i}}{c_{i}}(y) -\frac{d_{j}}{c_{j}}(y)\right| > \max(|2(y)|,1) \max \left( \left|\frac{a_{i}d_{i} - b_{i}c_{i}}{c_{i}^2}(y)\right|^{1/2} \lambda_{i}^{1/2},  \left|\frac{a_{j}d_{j} - b_{j}c_{j}}{c_{j}^2}(y)\right|^{1/2} \lambda_{j}^{1/2}\right),\]
which implies that $D^+_{N_{i}(y),\lambda_{i}}$ and $D^+_{N_{j}(y),\lambda_{j}}$ are disjoint, by Lemma~\ref{lem:2rho}. Similar arguments show that, up to shrinking~$W_{i,j}$, we may ensure that the discs $D^+_{N_{i}(y),\lambda_{i}}$, $D^+_{N^{-1}_{i}(y),\lambda^{-1}_{i}}$, $D^+_{N_{j}(y),\lambda_{j}}$ and $D^+_{N^{-1}_{j}(y),\lambda^{-1}_{j}}$ are all disjoint.

The result now holds with $W := \bigcap_{i\ne j} W_{i,j}$.
\end{proof}

\begin{corollary}\label{cor:Gerritzenvoisinage}
Let~$x$ be a non-archimedean point of~$\calS_{g}$. There exist an open neighborhood~$W$ of~$x$ in~$U_{g}$, an automorphism~$\tau$ of~$F_{g}$ and a family of closed subsets of~$\PP^1_{W}$ that is a Schottky figure adapted to $\tau \cdot (M_{1},\dotsc,M_{g})$.
\end{corollary}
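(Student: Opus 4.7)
The strategy is to reduce Corollary~\ref{cor:Gerritzenvoisinage} to Theorem~\ref{thm:Gerritzenvoisinageinfinity} by a relative change of coordinates that brings $\infty$ out of the limit set of $\Gamma_{x}$. First I would observe that $L_{\Gamma_{x}}$ cannot exhaust $\PP^{1}(\calH(x))$: by Proposition~\ref{prop:actionproper} the ordinary set $O$ is open and dense in $\PP^{1,\an}_{\calH(x)}$, so $L_{\Gamma_{x}}$ has empty interior, while the constraint $0<|Y_{i}(x)|<1$ forces $\calH(x)$ to be non-trivially valued, which ensures that $\PP^{1}(\calH(x))$ is dense in $\PP^{1,\an}_{\calH(x)}$. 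Picking $\alpha\in\PP^{1}(\calH(x))\setminus L_{\Gamma_{x}}$ and using that $\calO(U_{g})$ has dense image in $\calH(x)$ together with the openness of the complement of $L_{\Gamma_{x}}$, one obtains $\tilde\alpha\in\calO(U_{g})$ with $\tilde\alpha(x)\notin L_{\Gamma_{x}}$ (after applying a preliminary integer M\"obius transformation if needed to arrange $\alpha\ne\infty$).

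Next, I would form the relative M\"obius transformation
\[\varphi=\begin{bmatrix} 0 & 1 \\ 1 & -\tilde\alpha \end{bmatrix}\in\PGL_{2}(\calO(U_{g})),\]
which satisfies $\varphi^{-1}(\infty)=\tilde\alpha$, and consider the conjugated family $M'_{i}:=\varphi M_{i}\varphi^{-1}$. At the point $x$ these matrices generate a Schottky group conjugate to $\Gamma_{x}$, and by construction $\infty$ is not a limit point of this conjugate group. I would then re-run the argument of Theorem~\ref{thm:Gerritzenvoisinageinfinity} with the family $(M'_{i})$ in place of $(M_{i})$: an inspection of its proof shows that only the Schottky group structure at $x$, the fact that $\infty\notin L$, Gerritzen's theorem~\ref{Gerritzen} and the analytic dependence of the matrix coefficients are used, none of which requires the specific Koebe normalization of $\calS_{g}$. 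This produces an open neighborhood $W\subseteq U_{g}$ of $x$, an automorphism $\tau\in\Aut(F_{g})$, and positive reals $\lambda_{1},\dotsc,\lambda_{g}$ such that the associated family of relative twisted Ford discs is a Schottky figure adapted to $\tau\cdot(M'_{1},\dotsc,M'_{g})$ over $W$.

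Finally, I would transport the figure back via $\varphi^{-1}$. Since the action of $\tau$ on tuples is by substitution into words, it commutes with global conjugation, so $\tau\cdot(\varphi M_{i}\varphi^{-1})=\varphi\,(\tau\cdot(M_{i}))\,\varphi^{-1}$. Applying Remark~\ref{rem:conjugatedSchottkyfigure} with $\varphi^{-1}$, the images of the twisted Ford discs under $\varphi^{-1}$ form a family of closed subsets of $\PP^{1}_{W}$ that is a Schottky figure adapted precisely to $\tau\cdot(M_{1},\dotsc,M_{g})$; note that M\"obius transformations send closed discs in $\PP^{1,\an}_{\calH(y)}$ to closed discs (in the generalized sense of Definition~\ref{def:Schottkyfigure}) on every fiber, so the figure condition is preserved.

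The main obstacle is really the first step: securing a global analytic section $\tilde\alpha$ whose value at $x$ lies outside $L_{\Gamma_{x}}$. Once this is done, the remaining work is a formal transport through the relative M\"obius transformation $\varphi$, together with the observation that Theorem~\ref{thm:Gerritzenvoisinageinfinity} is robust enough to accept any analytic family of matrices defining a Schottky group at $x$ for which $\infty$ avoids the limit set.
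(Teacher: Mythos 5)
There is a genuine gap in your first step, and it is precisely the point the paper has to work hardest to address. You claim that $\PP^1(\calH(x))$ is dense in $\PP^{1,\an}_{\calH(x)}$ because $\calH(x)$ is non-trivially valued, and deduce from the density of $O$ that $L_{\Gamma_x}\subsetneq\PP^1(\calH(x))$. This is false when $\calH(x)$ is a $p$-adic local field (which happens exactly when the extension $\calH(x)/\calH(\pr_{\Z}(x))$ is finite): for instance the point $\eta_{0,r}$ with $r\notin p^{\Q}$ has a neighborhood in $\PP^{1,\an}_{\Q_p}$ containing no $\Q_p$-rational point, so $\PP^1(\Q_p)$ is not dense. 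Worse, in that case $\PP^1(\calH(x))$ is compact and can be covered by the $2g$ open discs $B^-(\gamma_i^{\pm1})$, and the limit set can genuinely equal all of $\PP^1(\calH(x))$ --- this occurs for Mumford curves over $\Q_p$ with no $\Q_p$-rational points, since $O\cap\PP^1(\Q_p)$ surjects onto $\calC_x(\Q_p)$. So in the locally compact case there may be no $\alpha\in\PP^1(\calH(x))\setminus L_{\Gamma_x}$ at all, and your section $\tilde\alpha$ cannot be constructed.

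The paper therefore splits into two cases. When $\calH(x)/\calH(\pr_{\Z}(x))$ is infinite, $\calH(x)$ is not locally compact, $\PP^1(\calH(x))$ is not compact while $L_{\Gamma_x}$ is closed, and one can pick $\omega$ in the dense subfield $\kappa(x)$ outside the limit set; from there your conjugation-and-transport argument (conjugate by $\begin{bmatrix}0&1\\1&-\Omega\end{bmatrix}$, apply Theorem~\ref{thm:Gerritzenvoisinageinfinity}, pull the twisted Ford discs back via Remark~\ref{rem:conjugatedSchottkyfigure}) is exactly what the paper does and is correct. When the extension is finite, the paper instead base-changes to the Schottky space over $\calM(\calO_K)$ for a number field $K$ containing a root $\omega$ of a polynomial with no roots in $\calH(x)$; then $\omega\notin\calH(x)\supseteq L_{\Gamma_{x_K}}$ automatically, the theorem applies upstairs, and the Schottky figure is descended along the finite projection $p_K$ --- this descent (checking that the discs upstairs are preimages of discs in the fibers downstairs, using that the fixed points of the $N_i$ are defined over $\calH(y)$) is an additional argument your proposal does not supply. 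The remaining steps of your proposal (robustness of Theorem~\ref{thm:Gerritzenvoisinageinfinity} under conjugation, compatibility of the $\Aut(F_g)$-action with global conjugation) are fine.
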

\begin{proof}
We will distinguish two cases. 

\medbreak


$\bullet$ Assume that the extension $\calH(x)/\calH(\pr_{\Z}(x))$ is finite.

Then, there exists an algebraic integer that does not belong to~$\calH(x)$, in the sense that there exists a polynomial $P\in\Z[T]$ with no roots in~$\calH(x)$. If $\pr_{\Z}(x)=a_{0}$, so that $\calH(\pr_{\Z}(x))=\Q$, we may moreover assume that $P$ is totally real: all of its complex roots are real. Let~$K$ be a number field containing a root~$\omega$ of~$P$. Note that $\omega\in \calO_{K}$. 

We will work over the Schottky space $\calS_{g,\calO_{K}}$ over $\calM(\calO_{K})$ defined in Remark~\ref{rem:SgA}. Denote by $p_{K} \colon \calS_{g,\calO_{K}} \to \calS_{g}$ the projection morphism. Let $x_{K} \in p_{K}^{-1}(x)$ and set 
$U_{g}':=p^{-1}_{K}(U_{g})$. 

Let $A = \begin{pmatrix} 0 & 1\\ 1 & -\omega \end{pmatrix} \in \GL_{2}(\calO(U'_{g}))$. We have $A(\omega) = \infty$. If $\pr_{\Z}(x) = a_{0}$, then, for each $z\in (U'_{g})^\text{a}$, we have $A(z) \in GL_{2}(\R)$. For $i\in \{1,\dotsc,g\}$, set $M_{\infty,i} := A\, M_{i}\, A^{-1}$ in $\GL_{2}(\calO(U'_{g}))$. 
Denote by~$\Gamma_{\infty,x_{K}}$ the subgroup of~$\PGL_{2}(\calH(x_{K}))$ generated by $M_{\infty,1}(x_{K}),\dotsc,M_{\infty,g}(x_{K})$. By Corollary~\ref{cor:limittype1} and Remark~\ref{rem:SgA}, $\omega$ is not a limit point of~$\Gamma_{x_{K}}$, hence $\infty$ is not a limit point of~$\Gamma_{\infty,x_{K}}$. By Theorem~\ref{thm:Gerritzenvoisinageinfinity}, there exists an open neighborhood~$W'$ of~$x_{K}$ in~$U'_{g}$, an automorphism~$\tau$ of~$F_{g}$ and $\lambda_{1},\dotsc,\lambda_{g} \in \R_{>0}$ such that, denoting
\[(N_{\infty,1},\dotsc,N_{\infty,g}) := \tau \cdot (M_{\infty,1},\dotsc,M_{\infty,g}) \in \GL_{2}(\calO(U'_{g}))^g,\]
the family of twisted isometric discs 
\[\big(D^+_{N_{\infty,1},\lambda_{1}},\dotsc, D^+_{N_{\infty,g},\lambda_{g}}, D^+_{N_{\infty,1}^{-1},\lambda_{1}^{-1}},\dotsc,D^+_{N_{\infty,g}^{-1},\lambda_{g}^{-1}}\big)\] 
is a Schottky figure adapted to the family $(N_{\infty,1},\dotsc,N_{\infty,g})$ of $\PGL_{2}(\calO(W'))$. 
If $p(x) \ne a_{0}$, then $x$~belongs to the interior of the non-archimedean part of~$\calS_{g}$ and we may assume that $W' \subseteq \calS_{g}^\text{na}$.
  
For each $i\in\{1,\dotsc,g\}$, set $N_{i} := A^{-1} \, N_{\infty,i} \, A$. Note that we have 
\[(N_{1},\dotsc,N_{g}) := \tau \cdot (M_{1},\dotsc,M_{g}) \in \GL_{2}(\calO(U_{g}'))^g\]
and that the family 
$(A^{-1}(D^+_{N_{\infty,1},\lambda_{1}}),\dotsc, A^{-1}(D^+_{N_{\infty,g},\lambda_{g}}), A^{-1}(D^+_{N_{\infty,1}^{-1},\lambda_{1}^{-1}}),\dotsc, A^{-1}(D^+_{N_{\infty,g}^{-1},\lambda_{g}^{-1}}))$ 
is a Schottky figure adapted to the family $(N_{1},\dotsc,N_{g})$ of $\PGL_{2}(\calO(W'))$ (see Remark~\ref{rem:conjugatedSchottkyfigure}). 
Set $W := p_{K}(W')$. By \cite[Corollaire~5.6.4]{LemanissierPoineau20}, it is an open subset of~$U_{g}$.

Let us still denote by~$p_{K}$ the projection morphism $\PP^1_{\calS_{g,\calO_{K}}} \to \PP^1_{\calS_{g}}$. For $i\in\{1,\dotsc,g\}$ and $\eps\in\{-1,1\}$, set $B^+(N_{i}^\eps) := p_{K}(A^{-1}(D^+_{N_{\infty,i},\lambda_{i}^\eps}))$. It is a closed subset of $\PP^1_{W}$ and, by construction, for each $y'\in W'$, we have $B^+(N_{i}^\eps)\cap \pi^{-1}(y) = p_{K}\big(A^{-1}(D^+_{N_{\infty,i},\lambda_{i}^\eps}) \cap \pi^{-1}(y')\big)$. To prove that the family $(B^+(N_{i}^\eps), 1\le i\le g, \eps=\pm1)$ is a Schottky figure adapted to the family $(N_{1},\dotsc,N_{g})$ of $\PGL_{2}(\calO(W))$, it is enough to prove that for each $y\in W$, $i\in \{1,\dotsc,g\}$ and $\eps\in\{-1,1\}$, there exists a closed disc~$E_{y}$ in~$\pi^{-1}(y)$ such that, for each $y' \in p_{K}^{-1}(y)\cap W'$,  $A^{-1}(D^+_{N_{\infty,i},\lambda_{i}^\eps}) \cap \pi^{-1}(y') = p_{K}^{-1}(E_{y})$.



Let $y\in W$, $i\in \{1,\dotsc,g\}$ and $\eps\in\{-1,1\}$. Let $y'\in p^{-1}_{K}(y) \cap W'$. Assume that~$y$ is non-archimedean. The set $A^{-1}(D^+_{N_{\infty,i},\lambda_{i}^\eps}) \cap \pi^{-1}(y')$ is an open disc over~$\calH(y')$ that contains a fixed point of~$N_{i}(y')$. Since~$N_{i}(y')$ is defined over~$\calH(y)$, its fixed points come from $\calH(y)$-rational points by base change to~$\calH(y')$ and we deduce that $A^{-1}(D^+_{N_{\infty,i},\lambda_{i}^\eps}) \cap \pi^{-1}(y')$ is the preimage by~$p_{K}$ of a closed disc in~$\pi^{-1}(y)$. 
Assume that~$y$ is archimedean. Note that, in this case, we have $\pr_{\Z}(x)=a_{0}$. 
If $\calH(y') = \calH(y)$, then the result holds. Otherwise, we have $\calH(y') = \C$ and $\calH(y)=\R$ and the result follows from the fact that $A(y') \in \GL_{2}(\R)$. 

\medbreak

%

$\bullet$ Assume that the extension $\calH(x)/\calH(\pr_{\Z}(x))$ is infinite.

We deduce that the field~$\calH(x)$ is not locally compact, 
hence $\PP^1(\calH(x))$ is not compact. It follows from Corollary~\ref{cor:limittype1} that there exists a point $\omega \in \PP^1(\calH(x))$ that is not a limit point of~$\Gamma_{x}$. Moreover, by definition, the image~$\kappa(x)$ of $\Frac(\calO_{x})$ in~$\calH(x)$ is dense. By Corollary~\ref{cor:limittype1} again, the limit set of~$\Gamma_{x}$ is closed, hence we may assume that~$\omega$ belongs to $\PP^1(\kappa(x))$.

If $\omega=\infty$, then the result follows directly from Theorem~\ref{thm:Gerritzenvoisinageinfinity}, so let us assume that this is not the case. Then, there exists an open neighborhood~$V$ of~$x$ in~$U_{g}$ and an element~$\Omega \in \calO(V)$ whose image in~$\kappa(x)$ is~$\omega$. Let $A = \begin{pmatrix} 0 & 1\\ 1 & -\Omega \end{pmatrix} \in \GL_{2}(\calO(V))$. By construction, we have $A(x)(\omega) = \infty$ in $\PP^1(\calH(x))$. The same arguments as in the proof of the first case apply (without having to worry about a base-change), and the result follows.  
\end{proof}

We have now collected all the results necessary to prove the openness of~$\calS_{g}$.

\begin{theorem}\label{thm:open}
The Schottky space $\calS_g$ is an open subset of $\A^{3g-3, \mathrm{an}}_\Z$.
\end{theorem}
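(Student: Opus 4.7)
The openness of $\calS_g$ splits naturally along the decomposition $\calS_g = \calS_g^{\mathrm{a}} \sqcup \calS_g^{\mathrm{na}}$, and the two parts can be handled quite differently. The plan is to invoke the two main results that have just been established, namely Lemma~\ref{lem:Sga} for the archimedean side and Corollary~\ref{cor:Gerritzenvoisinage} for the non-archimedean side, and to glue the resulting local statements.

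First I would observe that, by Lemma~\ref{lem:Sga}, the intersection $\calS_g \cap \bigl(\AA^{3g-3,\an}_\Z\bigr)^{\mathrm{a}}$ is open in $\AA^{3g-3,\an}_\Z$, so it suffices to show that every non-archimedean point $x \in \calS_g$ admits an open neighborhood contained in $\calS_g$. Given such an $x$, apply Corollary~\ref{cor:Gerritzenvoisinage} to produce an open neighborhood $W \subseteq U_g$ of $x$, an automorphism $\tau \in \Aut(F_g)$, and a family of closed subsets of $\PP^1_W$ that forms a Schottky figure adapted to $(N_1,\dotsc,N_g) := \tau\cdot(M_1,\dotsc,M_g)$ viewed in $\PGL_2(\calO(W))$.

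Next I would transfer this relative Schottky figure to the fibers: for every $y \in W$, restricting the figure yields a Schottky figure over $\calM(\calH(y))$ adapted to $(N_1(y),\dotsc,N_g(y))$ in $\PGL_2(\calH(y))$. Applying Proposition~\ref{prop:geometrygroup} fiberwise then shows that the subgroup of $\PGL_2(\calH(y))$ generated by $N_1(y),\dotsc,N_g(y)$ is a Schottky group, and Corollary~\ref{cor:free} guarantees that it is free of rank~$g$. Since $\tau$ is an automorphism of $F_g$, the elements $N_1(y),\dotsc,N_g(y)$ and $M_1(y),\dotsc,M_g(y)$ generate the same subgroup of $\PGL_2(\calH(y))$, which is therefore exactly $\Gamma_y$. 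Hence $W \subseteq \calS_g$, and $x$ lies in the interior of $\calS_g$.

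The substantive work has essentially been done upstream: Corollary~\ref{cor:Gerritzenvoisinage} is what makes the argument go through, since the original generators $M_i$ of $\Gamma_x$ need not admit a Schottky figure even after shrinking, and one must allow the change of basis~$\tau$. The only remaining subtlety is the routine but important check that passing from $(M_i)$ to $(N_i)$ does not alter the subgroup of $\PGL_2(\calH(y))$, which is immediate from $\tau$ being an automorphism. Combining the archimedean and non-archimedean cases yields the openness of $\calS_g$ in $\AA^{3g-3,\an}_\Z$.
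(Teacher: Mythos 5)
Your proposal is correct and follows exactly the paper's argument: the archimedean case is handled by Lemma~\ref{lem:Sga}, and the non-archimedean case by combining Corollary~\ref{cor:Gerritzenvoisinage} with Proposition~\ref{prop:geometrygroup} (the paper leaves implicit the fiberwise restriction and the observation that the change of basis $\tau$ does not alter the generated subgroup, both of which you rightly spell out).
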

\begin{proof}
Let $x\in \calS_g$. We want to prove that there exists an open subset of $\A^{3g-3, \mathrm{an}}_\Z$ containing~$x$ that is contained in~$\calS_{g}$. This follows from Lemma~\ref{lem:Sga} when~$x$ is archimedean and from Corollary~\ref{cor:Gerritzenvoisinage} and Proposition~\ref{prop:geometrygroup} when~$x$ is non-archimedean.
\end{proof}

\subsection{The space of Schottky bases}

Let us assume $g\geq 2$ and fix a complete non-archimedean valued field $(k,\va)$.
We denote by $\mathcal{SB}_{g,k}$ the subspace of $\calS_{g,k}$ consisting of Schottky bases (see Remark~\ref{rem:SgA} and Definition~\ref{des:Schottkybasis} for these notions).

\begin{notation}
Let~$A$ be a finite subset of~$\PP^{1,\an}_{k}(k)$ with at least 2 elements. For each $\alpha\in A$, we denote by~$D^-(\alpha,A)$ the biggest open disc with center~$\alpha$ containing no other element of~$A$ and we denote by~$p_{\alpha,A}$ its boundary point in~$\PP^{1,\an}_{k}$.

Note that, if~$A$ contains at least 3 elements, then all the discs $D^-(\alpha,A)$ are disjoint.
\end{notation}

\begin{proposition}\label{prop:equationsSBg}
Let $\alpha_{1},\alpha'_{1},\dotsc,\alpha_{g},\alpha'_{g}$ be distinct elements of~$\pank(k)$. Set $A := \{\alpha_{1},\alpha'_{1},\dotsc,\alpha_{g},\alpha'_{g}\}$. Let $\beta_{1},\dotsc,\beta_{g}$ be elements of~$k$ with absolute values in~$(0,1)$. For each $i\in \{1,\dotsc,g\}$, set $\gamma_{i} := M(\alpha_{i},\alpha'_{i},\beta_{i}) \in \PGL_{2}(k)$. The following conditions are equivalent:
\begin{enumerate}
\item there exists a Schottky figure adapted to $(\gamma_{1},\dotsc,\gamma_{g})$;
\item for each $i\in \{1,\dotsc,g\}$, we have $\ell([p_{\alpha_{i},A} \, p_{\alpha'_{i},A}]) < |\beta_{i}|^{-1}$;
\item for each $i,j,k \in\{1,\dotsc,g\}$ with $j \ne i$, $k \ne i$ and $\sigma_{j},\sigma_{k} \in \{\emptyset,'\}$, we have
\[|\beta_i| \cdot |[\alpha_j^{\sigma_{j}}, \alpha_k^{\sigma_{k}}; \alpha_i, \alpha_i']| < 1.\]
\end{enumerate}
\end{proposition}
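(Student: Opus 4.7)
The plan is to prove the equivalences $(\text{ii}) \Leftrightarrow (\text{iii})$ and $(\text{i}) \Leftrightarrow (\text{ii})$. The first follows from a direct computation in normalized coordinates using Lemma~\ref{lem:CrossRatio}. The second is a geometric argument on the Berkovich tree, where the dynamics of each loxodromic~$\gamma_i$ along its axis is controlled by Lemma~\ref{lem:betamodulus}.

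For $(\text{ii}) \Leftrightarrow (\text{iii})$, fix $i$. Both $\ell([p_{\alpha_i,A}, p_{\alpha'_i,A}])$ and the cross-ratios $|[\alpha_j^{\sigma_j}, \alpha_k^{\sigma_k}; \alpha_i, \alpha'_i]|$ are invariant under M\"obius transformations, so we normalize $\alpha_i = 0$, $\alpha'_i = \infty$ and set
\[ R_-^{(i)} := \min\{|\alpha_j^{\sigma_j}| : j\ne i,\ \sigma_j \in \{\emptyset,'\}\}, \qquad R_+^{(i)} := \max\{|\alpha_j^{\sigma_j}| : j\ne i,\ \sigma_j \in \{\emptyset,'\}\}.\]
A direct inspection in $\pank$ gives $p_{\alpha_i,A} = \eta_{0, R_-^{(i)}}$ and $p_{\alpha'_i,A} = \eta_{0, R_+^{(i)}}$, hence $\ell([p_{\alpha_i,A}, p_{\alpha'_i,A}]) = R_+^{(i)}/R_-^{(i)}$. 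On the other hand $|[\alpha_j^{\sigma_j}, \alpha_k^{\sigma_k}; 0, \infty]| = |\alpha_j^{\sigma_j}|/|\alpha_k^{\sigma_k}|$, whose maximum over $j, k \ne i$ and $\sigma_j, \sigma_k$ is precisely $R_+^{(i)}/R_-^{(i)}$. Both (ii) and (iii) at the index~$i$ thus reduce to $R_+^{(i)}/R_-^{(i)} < |\beta_i|^{-1}$.

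For $(\text{i}) \Rightarrow (\text{ii})$, assume a Schottky figure $(B^+(\gamma_i^\eps))$ is given. Work in the coordinates $\alpha_i = 0$, $\alpha'_i = \infty$, so that $\gamma_i(z) = \beta_i z$. Write $B^+(\gamma_i) = D^+(0, r_i)$ and $B^+(\gamma_i^{-1}) = \{|T| \ge R_i\}$. The Schottky figure identity $\gamma_i(\pank - B^+(\gamma_i^{-1})) = B^-(\gamma_i)$ reads $|\beta_i|\,R_i = r_i$. Since the $2g$ discs of the figure are pairwise disjoint and each $\alpha_j^{\sigma_j}$ lies in $B^+(\gamma_j^{\sigma_j})$, no $\alpha_j^{\sigma_j}$ with $j \ne i$ can belong to $B^+(\gamma_i)$ or $B^+(\gamma_i^{-1})$. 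This forces the strict inequalities $r_i < R_-^{(i)}$ and $R_+^{(i)} < R_i = r_i/|\beta_i|$, whence $R_+^{(i)}/R_-^{(i)} < |\beta_i|^{-1}$.

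For $(\text{ii}) \Rightarrow (\text{i})$, for each $j \in \{1,\dots,g\}$ work in the coordinates $\alpha_j = 0$, $\alpha'_j = \infty$, pick $r_j$ with $|\beta_j|\,R_+^{(j)} < r_j < R_-^{(j)}$ (possible by~(ii)), and define $B^+(\gamma_j) := D^+(0, r_j)$ and $B^+(\gamma_j^{-1}) := \{|T| \ge r_j/|\beta_j|\}$. By construction $\gamma_j(\pank - B^+(\gamma_j^{-1})) = B^-(\gamma_j)$, and each $B^+(\gamma_j^{\sigma_j})$ contains $\alpha_j^{\sigma_j}$ but no other element of~$A$. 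The main obstacle lies in verifying the pairwise disjointness of the $2g$ discs, since (ii) only controls each index separately; the key non-archimedean fact that two closed discs in $\pank$ are either nested or disjoint reduces this to checking that none contains another, which holds because each of them contains a point of $A$ absent from the other. This yields the required Schottky figure.
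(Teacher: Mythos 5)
Your argument follows the paper's route quite closely: the normalization $\alpha_i=0$, $\alpha_i'=\infty$ reducing both (ii) and (iii) to the single inequality $R_+^{(i)}/R_-^{(i)}<|\beta_i|^{-1}$, the radius computation giving (i)$\Rightarrow$(ii), and the choice $|\beta_j|\,R_+^{(j)}<r_j<R_-^{(j)}$ with $B^+(\gamma_j)=D^+(0,r_j)$ and $B^+(\gamma_j^{-1})=\{|T|\ge r_j/|\beta_j|\}$ for the converse are all essentially what the paper does (the paper phrases the first implication via Lemma~\ref{lem:betamodulus} and the comparison of (ii) with (iii) via Lemma~\ref{lem:CrossRatio}, but the content is the same). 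Those parts are correct.

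There is, however, a genuine gap in the final disjointness verification. The ``key non-archimedean fact'' you invoke --- that two closed discs in $\pank$ are either nested or disjoint --- is false for discs of the \emph{projective} line: the closed discs $D^+(0,1)$ and $\pank - D^-(0,1)$ are not nested (one contains $0$ but not $\infty$, the other $\infty$ but not $0$), yet they meet along $\{x : |T(x)|=1\}$, which contains $\eta_{0,1}$. The nested-or-disjoint dichotomy holds only for bounded discs in a common affine chart, and half of your sets, namely the $B^+(\gamma_j^{-1})=\{|T_j|\ge r_j/|\beta_j|\}$, are complements of open discs in their respective charts; so ``neither contains the other, hence disjoint'' is not a valid deduction here. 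The conclusion is nevertheless true, and it follows from facts you have already established: the inequalities $r_j<R_-^{(j)}$ and $r_j/|\beta_j|>R_+^{(j)}$ say precisely that $B^+(\gamma_j)\subseteq D^-(\alpha_j,A)$ and $B^+(\gamma_j^{-1})\subseteq D^-(\alpha_j',A)$, and since $|A|=2g\ge 3$ the open discs $D^-(\alpha,A)$, $\alpha\in A$, are pairwise disjoint (as noted just before the proposition), whence so are the $2g$ closed discs. This containment is exactly how the paper organizes the construction; you should replace the nested-or-disjoint argument by it.
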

\begin{proof}
$(i) \implies (ii)$ Let $\calB = (B^+(\gamma_{i}^\eps), 1\le i\le g, \eps=\pm1)$ be a Schottky figure adapted to $(\gamma_{1},\dotsc,\gamma_{g})$. Let $i\in \{1,\dotsc,g\}$. Note that we have $\alpha_{i} \in B^+(\gamma_{i})$ and $\alpha_{i}' \in B^+(\gamma_{i}^{-1})$, hence $B^+(\gamma_{i}) \subset D^-(\alpha_{i},A)$ and $B^+(\gamma_{i}^{-1}) \subset D^-(\alpha'_{i},A)$. It follows that the segment between the boundary points of $D^-(\alpha_{i},A)$ and $D^-(\alpha'_{i},A)$ is strictly contained in the segment between the boundary points of $B^+(\gamma_{i})$ and $B^+(\gamma_{i}^{-1})$. Lemma~\ref{lem:betamodulus} then provides the desired inequality.

\medbreak

$(ii) \implies (iii)$ Let $i,j,k \in\{1,\dotsc,g\}$ with $j \ne i$, $k \ne i$ and $\sigma_{j},\sigma_{k} \in \{\emptyset,'\}$. If $[\alpha_{j}^{\sigma_{j}} \alpha_{k}^{\sigma_{k}}] \cap [\alpha_{i}\alpha'_{i}] = \emptyset$, then we have $|[\alpha_{j}^{\sigma_{j}} ,\alpha_{k}^{\sigma_{k}} ; \alpha_{i} , \alpha'_{i}]|=1$ and the inequality of the statement holds.

Assume that $I := [\alpha_{j}^{\sigma_{j}} \alpha_{k}^{\sigma_{k}}] \cap [\alpha_{i}\alpha'_{i}] \ne \emptyset$. Since $\alpha_{j}^{\sigma_{j}}$ and $\alpha_{k}^{\sigma_{k}}$ do not belong to the discs $D^-(\alpha_{i},A)$ and $D^-(\alpha'_{i},A)$, the segment~$I$ must be contained in the segment joining the boundary points of those two discs. It now follows from Lemma~\ref{lem:CrossRatio} that we have 
\[\max\big( |[\alpha_j^{\sigma_{j}}, \alpha_k^{\sigma_{k}}; \alpha_i, \alpha_i']|,  |[\alpha_j^{\sigma_{j}}, \alpha_k^{\sigma_{k}}; \alpha_i, \alpha_i']|^{-1}\big) = \ell(I) \le \ell([p_{\alpha_{i},A} \, p_{\alpha'_{i},A}]) <  |\beta_{i}|^{-1}.\]

\medbreak

$(iii) \implies (i)$ Let $i\in \{1,\dotsc,g\}$. We will construct discs $B^+(\gamma_{i})$ and $B^+(\gamma_{i}^{-1})$ that lie in~$D^-(\alpha_{i},A)$ and~$D^-(\alpha'_{i},A)$ respectively and such that $\gamma_{i}(\pank - B^+(\gamma_{i}^{-1}))$ is a maximal open disc inside $B^-(\gamma_{i})$ and $\gamma^{-1}_{i}(\pank - B^+(\gamma_{i}))$ is a maximal open disc inside $B^-(\gamma^{-1}_{i})$. To do so, we may choose coordinates on~$\pank$ such that $\alpha_{i} =0$ and $\alpha'_{i} = \infty$. The equalities of the statement then become 
\[|\beta_{i}| \cdot \frac{\alpha_{j}^{\sigma_{j}}}{\alpha_{k}^{\sigma_{k}}} < 1\] 
for  $j,k \in\{1,\dotsc,g\}$ with $j \ne i$, $k \ne i$ and $\sigma_{j},\sigma_{k} \in \{\emptyset,'\}$. It follows that there exists $r_{i} \in \R_{>0}$ such that 
\[|\beta_{i}| \, \max(|\alpha^{\sigma_{j}}_{j}|,\ j\ne i, \sigma_{j} \in \{\emptyset,'\}) < r_{i} < \min(|\alpha^{\sigma_{j}}_{j}|,\ j\ne i, \sigma_{j} \in \{\emptyset,'\}) .\]
The discs $B^+(\gamma_{i}) := D^+(0,r_{i})$ and $B^+(\gamma_{i}^{-1}) := \pank - D^-(0,|\beta_{i}|^{-1}r_{i})$ then satisfy the required conditions.

The family of discs $(B^+(\gamma_{i}^\eps), 1\le i\le g, \eps=\pm1)$ is a Schottky figure adapted to $(\gamma_{1},\dotsc,\gamma_{g})$.
\end{proof}

\begin{corollary}\label{cor:SBgkconnected}
The topological space $\mathcal{SB}_{g,k}$ is path-connected.
\end{corollary}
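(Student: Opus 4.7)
By Proposition~\ref{prop:equationsSBg}, $\mathcal{SB}_{g,k}$ is the open subset of $U_{g,k}$ defined by the strict inequalities
\[|Y_i| \cdot \bigl|[X_j^{\sigma_j}, X_k^{\sigma_k}; X_i, X_i']\bigr| < 1\]
for all admissible indices. Since the cross-ratios appearing here depend only on the fixed-point coordinates $X_i, X_i'$ and not on the multipliers $Y_i$, the plan is to fix a reference point $x_0 \in \mathcal{SB}_{g,k}$ whose multipliers are very small and whose fixed points are in some standard configuration, and then to connect an arbitrary $x \in \mathcal{SB}_{g,k}$ to $x_0$ by a path produced in two independent stages.

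\emph{Shrinking the multipliers.} Consider the projection $\pi \colon \AA^{3g-3,\mathrm{an}}_k \to \AA^{2g-3,\mathrm{an}}_k$ that forgets the $Y_i$-coordinates; the fiber $\pi^{-1}(\pi(x))$ is isomorphic to $\AA^{g,\mathrm{an}}_{\calH(\pi(x))}$. Inside this fiber, each coordinate $Y_i$ is deformed independently along a geodesic in the Berkovich tree $\AA^{1,\mathrm{an}}_{\calH(\pi(x))}$: first from the rigid point $\beta_i := Y_i(x)$ to the Gauss-type point $\eta_{0, |\beta_i|}$ via the segment $\eta_{\beta_i, s}$ with $s \in (0, |\beta_i|]$ (along which $|Y_i|$ is constantly $|\beta_i|$), then from $\eta_{0, |\beta_i|}$ to $\eta_{0, \delta}$ via $\eta_{0, r}$ with $r \in [\delta, |\beta_i|]$ (along which $|Y_i| = r$), for a prescribed small $\delta > 0$. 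Pushing forward into $\AA^{3g-3,\mathrm{an}}_k$ produces a continuous deformation along which the cross-ratios are unchanged and $|Y_i| \le |\beta_i|$, so the Schottky inequalities remain strict. The endpoint $x_1$ satisfies $|Y_i(x_1)| = \delta$ and $\pi(x_1) = \pi(x)$.

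\emph{Moving the fixed points.} The open subspace of $\AA^{2g-3,\mathrm{an}}_k$ parametrizing tuples with pairwise distinct entries (after the chosen normalization) is path-connected, by local path-connectedness of Berkovich analytic spaces over~$k$. Choose a path in it from $\pi(x_1)$ to $\pi(x_0)$ and lift it to $\AA^{3g-3,\mathrm{an}}_k$ by keeping each $Y_i$ at the Gauss-type point $\eta_{0,\delta}$ reached in the first stage. Its image is compact, so the cross-ratio functions are bounded there by some constant $R$; choosing $\delta < 1/R$ at the outset ensures that the Schottky inequalities remain strict along the entire lifted path. A final analogous shrinking/unshrinking step then adjusts the multipliers to match those at $x_0$, completing the path.

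The main technical obstacle is justifying rigorously that the tree-path construction in the first stage defines a continuous map into $\AA^{3g-3,\mathrm{an}}_k$: this rests on the definition of the Berkovich topology via evaluation of polynomials and on the continuity of the inclusion $\pi^{-1}(\pi(x)) \hookrightarrow \AA^{3g-3,\mathrm{an}}_k$ of a fiber into the total space. A secondary subtlety lies in the order of the two stages, since the bound $R$ depends on the chosen path of fixed-point configurations, which forces the second stage to be planned before $\delta$ is fixed in the first.
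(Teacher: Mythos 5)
Your proof follows essentially the same route as the paper's: the same two-stage decomposition (first shrink the multipliers inside a fiber of the fixed-point projection, where the cross-ratios are unchanged, then move the fixed points at small constant $|Y_i|$), with Proposition~\ref{prop:equationsSBg} and the boundedness of the cross-ratio functions on the compact image of a path chosen in advance doing all the work. The only real difference is that you park the multipliers at Gauss-type points $\eta_{0,\delta}$ rather than at a rational point $\beta\in k^{\ast}$ of small absolute value, which spares you the paper's preliminary reduction to a non-trivially valued extension of $k$.
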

\begin{proof}
Let~$k'$ be a complete non-trivially valued extension of~$k$. Denote by $\pi_{k'/k} \colon \AA^{3g-3, \an}_{k'} \to \AA^{3g-3, \an}_k$ the projection map. By Remark~\ref{rem:SgA}, we have $\calS_{g,k'} = \pi_{k'/k}^{-1}(\calS_{g,k})$ and, by Proposition~\ref{prop:equationsSBg}, $\mathcal{SB}_{g,k'} = \pi_{k'/k}^{-1}(\mathcal{SB}_{g,k})$. Up to replacing~$k$ by~$k'$, we may assume that~$k$ is not trivially valued. 

We will consider the affine spaces $\AA^{2g-3, \an}_k$ with coordinates $X_{3},\dotsc,X_{g},X'_{2},\dotsc,X'_{g}$ and $\AA^{g, \an}_k$ with the coordinates $Y_{1},\dotsc,Y_{g}$. We denote by $\pi_{1} \colon \AA^{3g-3, \an}_k \to \AA^{2g-3, \an}_k$ and $\pi_{2} \colon \AA^{3g-3, \an}_k \to \AA^{g, \an}_k$ the corresponding projections.

Let $a,b \in \mathcal{SB}_{g,k}$. Let~$V$ be the open subset of~$\AA^{2g-3, \an}_k$ consisting of the points all of whose coordinates are distinct.
It is path-connected and contains $a_{1} := \pi_{1}(a)$ and $b_{1} := \pi_{1}(b)$. Let $\varphi \colon [0,1] \to V$ be a continuous map such that $\varphi(0)=a_{1}$ and $\varphi(1)=b_{1}$. The continuous maps $|[X_j^{\sigma_{j}}, X_k^{\sigma_{k}}; X_i, X_i']|$ for $i,j,k \in\{1,\dotsc,g\}$ with $j \ne i$, $k \ne i$ and $\sigma_{j},\sigma_{k} \in \{\emptyset,'\}$ are all bounded on $\varphi([0,1])$. Let $M \in \R_{>0}$ be a common upper bound. Since~$k$ is not trivially valued, there exists $\beta\in k^*$ such that
\[  |\beta|< \min\big(M^{-1}, |Y_{i}(a)|, |Y_{i}(b)|,\ 1\le i\le g\big).\]

Let us identify $\pi_{1}^{-1}(a_{1})$ and $\AA^{g,\an}_{\calH(a_{1})}$, so that~$a$ may be seen as a point in the latter space. The point $\beta := (\beta,\dotsc,\beta)$ of $\AA^{g,\an}_{k}$ canonically lifts to a point~$a_{\beta}$ of $\AA^{g,\an}_{\calH(a_{1})}$ and there exists a continuous path from~$a$ to~$a_{\beta}$ in $\AA^{g,\an}_{\calH(a_{1})}$ along which all the $|Y_{i}|$'s are non-increasing and remain in~$(0,1)$. By Proposition~\ref{prop:equationsSBg}, the corresponding path in~$\AA^{3g-3, \an}_k$ stays in~$\mathcal{SB}_{g,k}$. We similarly define a point~$b_{\beta}$ in $\pi^{-1}_{1}(b_{1})$ and a continuous path from~$b$ to~$b_{\beta}$ in~$\mathcal{SB}_{g,k}$.

To prove the result, it is now enough to construct a continuous path from~$a_{\beta}$ to~$b_{\beta}$ in $\mathcal{SB}_{g,k}$. Note that $\pi_{2}(a_{\beta}) = \pi_{2}(b_{\beta}) = \beta$, so that~$a_{\beta}$ and~$b_{\beta}$ identify to two points of the same fiber $\pi_{2}^{-1}(\beta) \simeq  \AA^{2g-3, \an}_{\calH(\beta)} =  \AA^{2g-3, \an}_k$. We may now use the path defined by~$\varphi$ to go from~$a_{\beta}$ to~$b_{\beta}$. By construction, it stays inside $\mathcal{SB}_{g,k}$.
\end{proof}

\begin{corollary}\label{cor:eqSgna}
The set $\mathcal{SB}_g^{\text{na}}$ is the subset of~$U_{g}^\text{na}$ described by the inequalities
\[|Y_i| \cdot |[X_j^{\sigma_{j}}, X_k^{\sigma_{k}}; X_i, X_i']| < 1\]
for all $i,j,k \in\{1,\dotsc,g\}$ with $j \ne i$, $k \ne i$ and $\sigma_{j},\sigma_{k} \in \{\emptyset,'\}$. It is a path-connected open subset of~$\calS_g^{\mathrm{na}}$.
\end{corollary}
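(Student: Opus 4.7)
The identification of $\mathcal{SB}_g^{\text{na}}$ with the locus cut out by the strict inequalities will follow immediately from combining the equivalence (i)$\,\Leftrightarrow\,$(iii) of Proposition~\ref{prop:equationsSBg} with Proposition~\ref{prop:geometrygroup}: at a non-archimedean $x\in U_g$, the Koebe coordinates of the generators $M_i(x)$ are $(X_i(x),X'_i(x),Y_i(x))\in\calH(x)^{3g-3}$, so the inequalities translate verbatim into the existence of a Schottky figure adapted to $(M_1(x),\dots,M_g(x))$, and by Proposition~\ref{prop:geometrygroup} such a figure forces $\Gamma_x$ to be a Schottky group admitting this tuple as a Schottky basis. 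Openness in $\calS_g^{\text{na}}$ is then immediate: the coordinate and cross-ratio functions are continuous on $U_g$ and the inequalities are strict.

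For path-connectedness my plan is to reduce to proving connectedness, using that $\mathcal{SB}_g^{\text{na}}$ is an open subset of the locally path-connected space $\AA^{3g-3,\an}_\Z$ (see \cite[Th\'eor\`eme~7.2.17]{LemanissierPoineau20}), and then to analyze the continuous projection
\[\pr_\Z\colon \mathcal{SB}_g^{\text{na}}\too\calM(\Z)^{\text{na}}.\]
The topological principle I will invoke is that a continuous open surjection with connected target and all fibers connected has connected total space: any disconnection $X=U\sqcup V$ into nonempty opens would project, by openness of the map, onto two open subsets of the base whose union is all of the base, and fiber-connectedness would force these to be disjoint, contradicting connectedness of the base.

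Three hypotheses need to be checked. The target $\calM(\Z)^{\text{na}}$ is the tree obtained by joining $a_0$ to each $a_{p,0}$ by a segment, hence is path-connected; each fiber identifies via Remark~\ref{rem:SgA} with $\mathcal{SB}_{g,\calH(a)}$, which is path-connected by Corollary~\ref{cor:SBgkconnected}; and openness of $\pr_\Z$ on $\AA^{3g-3,\an}_\Z$ is a standard feature of analytic projections to the Berkovich spectrum of a Banach ring, and it persists on the open subset $\mathcal{SB}_g^{\text{na}}$. Surjectivity onto $\calM(\Z)^{\text{na}}$ is automatic over any $a_p^\eps$, whose residue field $\Q_p$ carries Schottky groups of every rank; over a trivially valued point $a_0$ or $a_{p,0}$ I will construct an explicit witness as a multi-Gauss seminorm on $\Z[X_3,\dots,Y_g]$ whose defining radii $r_i$ on the $Y_i$ are small enough to validate all strict inequalities, arranging the $X_i^{\sigma_i}$ to have pairwise distinct reductions (by using integers when the residue characteristic allows, or Gauss points of small positive radius otherwise). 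This last explicit construction is the main technical step I expect to encounter; the remainder of the argument is formal.
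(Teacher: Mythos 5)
Your treatment of the first part (the identification of $\mathcal{SB}_g^{\text{na}}$ with the locus cut out by the strict inequalities, via Proposition~\ref{prop:equationsSBg} (i)$\Leftrightarrow$(iii) together with Proposition~\ref{prop:geometrygroup}, and the resulting openness in $\calS_g^{\mathrm{na}}$) is exactly the paper's argument. For path-connectedness, however, you take a genuinely different and more fragile route, and two of its inputs are problematic. First, $\mathcal{SB}_g^{\text{na}}$ is \emph{not} an open subset of $\AA^{3g-3,\an}_\Z$: the non-archimedean locus is the \emph{closed} set $\{|2(x)|\le 1\}$, so $\mathcal{SB}_g^{\text{na}}$ is only open inside that closed subset, and \cite[Th\'eor\`eme~7.2.17]{LemanissierPoineau20} does not directly give local path-connectedness of such a set; your reduction from connectedness to path-connectedness therefore does not go through as stated. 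Second, the openness of $\pr_\Z\colon \AA^{3g-3,\an}_\Z\to\calM(\Z)$ (and of its restriction to $\mathcal{SB}_g^{\text{na}}$), on which your ``open surjection with connected fibers'' principle entirely rests, is asserted as standard but is proved nowhere in the paper and would need a precise reference. The principle itself and the verification of connectedness of the base and of the fibers (via Remark~\ref{rem:SgA} and Corollary~\ref{cor:SBgkconnected}) are fine.

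The paper's proof sidesteps both issues and is strictly simpler: the multi-Gauss witness point you construct for surjectivity is essentially the paper's point $P_z$ (the Shilov point of the polydisc $|X_i|\le 1$, $|X'_i|\le 1$, $|Y_i|\le \tfrac12$ in the fiber over $z$), and the key observation is that $z\mapsto P_z$ is a \emph{continuous section} of $\pr_\Z$ over $\calM(\Z)^{\mathrm{na}}$ (by the arguments of Examples~\ref{ex:sectionGauss} and~\ref{ex:sectionetar}) landing in $\mathcal{SB}_g^{\text{na}}$. Given $x,y\in\mathcal{SB}_g^{\text{na}}$, one then concatenates a path from $x$ to $P_{\pr_\Z(x)}$ inside the fiber (Corollary~\ref{cor:SBgkconnected}), the image under the section of a path in the tree $\calM(\Z)^{\mathrm{na}}$, and a path from $P_{\pr_\Z(y)}$ to $y$ inside the target fiber. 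This yields path-connectedness directly, with no appeal to openness of the projection or to local path-connectedness. I would recommend upgrading your witness construction to such a section and arguing as above.
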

\begin{proof}
The first part of the statement follows from Proposition~\ref{prop:equationsSBg}. The fact that $\mathcal{SB}_g^{\text{na}}$ is open in~$\calS_g^{\mathrm{na}}$ is an immediate consequence.

By Corollary~\ref{cor:SBgkconnected}, for each $z\in \calM(\Z)$, the fiber $\mathcal{SB}_{g} \cap \pr_{\Z}^{-1}(z)$ is connected and contains the point $P_{z}$ defined as the unique point in the Shilov boundary of the disc defined by the inequalities
\[\begin{cases}
|X_i|\le 1 \text{ for } 3\le i \le g;\\
|X'_i|\le 1 \text{ for } 2\le i \le g;\\
|Y_i|\le \frac12 \text{ for } 1\le i \le g.
\end{cases}\]
The result now follows from the continuity of the map $z\in \calM(\Z) \mapsto P_{z} \in \AA^{3g-3,\an}_{\Z}$.
\end{proof}

\section{Outer automorphisms and connectedness of $\calS_g$}
In this section, we study a natural action of the group $\Out(F_g)$ of outer automorphisms on the Schottky space $\calS_g$.
We show that this action respects the analytic structure of the Schottky space and use this to prove that $\calS_g$ is path-connected (see \ref{subsec:connect}).
Finally, we study the properness of this action. When $g=1$, $\Out(F_1)$ acts trivially on $\calS_1$, hence we assume for the rest of the section that $g\geq 2$.

\subsection{The action of $\Out(F_g)$ on the Schottky space}\label{sec:outer}
Recall from Lemma \ref{lem:kpoints} that a point of~$\calS_{g}$ corresponds to a homomorphism in $\Hom_S(F_g,\PGL_2)$.
This identification gives rise to a natural action of~$\Aut(F_{g})$ on~$\calS_{g}$ by letting an element of~$\Aut(F_{g})$ act on the source of homomorphisms in $\Hom_S(F_g,\PGL_2)$.\\
More precisely, let $\tau \in \Aut(F_{g})$, $x\in \calS_{g}$, and $\varphi_{x}$ be the associated homomorphism of $\Hom_S(F_g,\PGL_2)$.
Then, the map $\varphi_{x} \circ \tau$ is a group homomorphism from~$F_{g}$ to~$\PGL_{2}(\calH(x))$. 
Its image, being the same as that of~$\varphi_{x}$, is the Schottky group $\Gamma_x$.
Moreover, there exists a unique M\"obius transformation~$\eps$ that sends the attracting and repelling fixed points of $\varphi_{x}\circ \tau(e_{1})$ to~0 and~$\infty$ respectively and the attracting fixed point of $\varphi_{x}\circ \tau(e_{2})$ to~1. 
Then, $\eps^{-1} (\varphi_{x}\circ \tau)\, \eps$ belongs to $\Hom_{S}(F_{g},\PGL_{2}(\calH(x))$, hence gives rise to a point of~$\calS_{g}$. We denote it by~$\tau x$. 

\begin{definition}\label{def:outeraction}
The map $(\tau,x) \in \Aut(F_{g}) \times \calS_{g} \mapsto \tau x \in \calS_{g}$ defines an action of $\Aut(F_{g})$ on $\calS_{g}$ that factors through $\Out(F_{g})$. 
\end{definition}

We now describe the stabilizers of the points of~$\calS_{g}$ under the action of~$\Out(F_{g})$. 
The corresponding result for rigid Schottky spaces is known (see \cite[Satz~3]{Gerritzen81}).

\begin{lemma}\label{lem:conjugationfixedpoints}
Let $(k,\va)$ be a complete valued field and and let $\gamma$ be a loxodromic element of~$\PGL_{2}(k)$ with fixed points~$\alpha$ and~$\beta$. Let $\eps_{1},\eps_{2} \in \PGL_{2}(k)$ such that $\eps_{1}^{-1} \gamma \eps_{1} = \eps_{2}^{-1} \gamma \eps_{2}$. Then, we have $\eps_{1}^{-1}(\alpha) = \eps_{2}^{-1}(\alpha)$ and $\eps_{1}^{-1}(\beta) = \eps_{2}^{-1}(\beta)$.
\end{lemma}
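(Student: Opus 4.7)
The plan is to rewrite the hypothesis as a commutation relation and then describe the centralizer of a loxodromic element of $\PGL_2(k)$ explicitly.

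Set $\delta := \varepsilon_2 \varepsilon_1^{-1} \in \PGL_2(k)$. The equality $\varepsilon_1^{-1}\gamma\varepsilon_1 = \varepsilon_2^{-1}\gamma\varepsilon_2$ rearranges to $\gamma\, \delta^{-1} = \delta^{-1}\, \gamma$, so $\delta$ commutes with $\gamma$. It therefore suffices to show that any element of $\PGL_2(k)$ commuting with a loxodromic element must fix each of its two fixed points individually: once we know $\delta(\alpha) = \alpha$ and $\delta(\beta) = \beta$, applying $\varepsilon_2^{-1}$ to both sides gives the two equalities of the statement.

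To compute the centralizer, conjugate $\gamma$ into the normal form described in Section~\ref{sec:Koebe}: there exists $\mu \in \PGL_2(k)$ with $\mu(0) = \alpha$, $\mu(\infty) = \beta$ and $\mu^{-1}\gamma\mu$ equal to $z \mapsto \lambda z$ for some $\lambda \in k^\ast$ with $|\lambda| \neq 1$. Conjugation by $\mu$ reduces the problem to describing the elements of $\PGL_2(k)$ commuting with the dilation $z \mapsto \lambda z$. Writing such an element as $\begin{bmatrix} a & b \\ c & d \end{bmatrix}$ and imposing the commutation relation on $2 \times 2$ matrices (up to scalar), a direct computation shows that either $b = c = 0$, in which case the element is a dilation $z \mapsto (a/d) z$ fixing $0$ and $\infty$, or $a = d = 0$, in which case the element is $z \mapsto (b/c)/z$, which exchanges $0$ and $\infty$ and conjugates $z \mapsto \lambda z$ into $z \mapsto \lambda^{-1} z$. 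Since $\gamma$ is loxodromic we have $\lambda \neq \lambda^{-1}$, so the second case is impossible and every element of the centralizer fixes $0$ and $\infty$. Transporting back by $\mu$ yields $\delta(\alpha) = \alpha$ and $\delta(\beta) = \beta$, which completes the proof.

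There is no real obstacle here; the only point requiring attention is the explicit elimination of the ``swap'' case in the centralizer, which is where loxodromicity of $\gamma$ (ensuring $\lambda \neq \lambda^{-1}$) is used in an essential way. Note that the argument is insensitive to whether $(k, |\cdot|)$ is archimedean or non-archimedean, since it is purely algebraic.
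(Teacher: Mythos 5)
Your proof is correct, but it takes a genuinely different route from the paper's. You reduce the statement to the assertion that $\delta := \eps_{2}\eps_{1}^{-1}$ lies in the centralizer of~$\gamma$, and then compute that centralizer explicitly after conjugating $\gamma$ into the normal form $z \mapsto \lambda z$: the only invertible matrices commuting projectively with $\mathrm{diag}(\lambda,1)$ are the diagonal ones, the antidiagonal (swap) case forcing $\lambda^{2}=1$, which is excluded by $|\lambda|\ne 1$. This is a purely algebraic argument in which the absolute value enters only through that last inequality. The paper instead argues dynamically: from the identity $\eps_{1}^{-1}\gamma^{n}\eps_{1} = \eps_{2}^{-1}\gamma^{n}\eps_{2}$ applied to a point $P$ outside the four preimages of the fixed points, it lets $n\to+\infty$ (resp.\ $n\to-\infty$) and uses that $\gamma^{n}$ drives such a point to the attracting (resp.\ repelling) fixed point, so both sides converge to $\eps_{i}^{-1}(\alpha)$ (resp.\ $\eps_{i}^{-1}(\beta)$), forcing the two preimages to coincide. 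The two arguments exploit loxodromy in different guises: you use $\lambda\ne\lambda^{-1}$ to rule out the swap in the centralizer, while the paper uses the attracting/repelling dynamics; both treat the archimedean and non-archimedean cases uniformly. Your version is limit-free and yields slightly more information (the centralizer of $\gamma$ is exactly the maximal torus fixing $\alpha$ and $\beta$), at the cost of a matrix computation that the paper's coordinate-free argument avoids.
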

\begin{proof}
We may assume that~$\alpha$ is the attracting point of~$\gamma$. Let $P \in \PP^{1,\an}_{k} - \{\eps_{1}^{-1}(\alpha),\eps_{1}^{-1}(\beta),\eps_{2}^{-1}(\alpha),\eps_{2}^{-1}(\beta)\}$. Then, for each $n\in \Z$, we have
\[\eps_{1}^{-1}(\gamma^n(\eps_{1}(P))) = \eps_{2}^{-1}(\gamma^n(\eps_{2}(P))) .\]
The left-hand side converges to~$\eps_{1}^{-1}(\alpha)$ (resp. $\eps_{1}^{-1}(\beta)$) when $n$~goes to~$+\infty$ (resp. $-\infty$). The right-hand side converges to~$\eps_{2}^{-1}(\alpha)$ (resp. $\eps_{2}^{-1}(\beta)$) when $n$~goes to~$+\infty$ (resp. $-\infty$). The result follows.
\end{proof}

\begin{proposition}\label{prop:stabilizer}
Let $x\in \calS_{g}$. The stabilizer of~$x$ under the action of $\Out(F_{g})$ is isomorphic to the quotient $\Gamma_x\backslash N(\Gamma_x)$, where $N(\Gamma_{x})$ denotes the normalizer of~$\Gamma_{x}$ in~$\PGL_{2}(\calH(x))$.

%
\end{proposition}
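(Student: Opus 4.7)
The plan is to construct an explicit group isomorphism
\[ \Phi : \Stab(x) \xrightarrow{\sim} \Gamma_x \backslash N(\Gamma_x) \]
by sending an outer class $[\tau]$ to the class of the normalizing M\"obius transformation $\eps_\tau$ attached to it by Definition~\ref{def:outeraction}. The underlying reason this works is that the map $\varphi_x$ realizes an isomorphism between $F_g$ and $\Gamma_x$ (Corollary~\ref{cor:free}), so that every conjugation automorphism of $\Gamma_x$ by an element of its normalizer pulls back to a bona fide automorphism of $F_g$.

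First I would unwind the stabilizer condition. For $\tau \in \Aut(F_g)$, the relation $\tau x = x$ is equivalent, through the definition of the action, to an equality of the form $\varphi_x \circ \tau = \eps_\tau\, \varphi_x\, \eps_\tau^{-1}$ as homomorphisms from $F_g$ to $\PGL_2(\calH(x))$, where $\eps_\tau$ is the unique M\"obius transformation selected by the normalization condition on the fixed points of $(\varphi_x\circ\tau)(e_1)$ and $(\varphi_x\circ\tau)(e_2)$. Comparing images gives $\eps_\tau \Gamma_x \eps_\tau^{-1} = \Gamma_x$, so $\eps_\tau \in N(\Gamma_x)$. To check that the map $[\tau] \mapsto \Gamma_x \eps_\tau$ is well defined on outer classes, I would replace $\tau$ by $\tau \circ \mathrm{Inn}(w)$ for $w \in F_g$ and compute: the new $\eps$ differs from $\eps_\tau$ by an element of $\varphi_x(F_g) = \Gamma_x$, so the right coset is unchanged. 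This produces the map~$\Phi$.

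Multiplicativity follows at once from the identity $\varphi_x\circ(\tau_1\tau_2) = (\eps_{\tau_1}\eps_{\tau_2})\, \varphi_x\, (\eps_{\tau_1}\eps_{\tau_2})^{-1}$, together with the uniqueness of the normalizing transformation — a M\"obius transformation is determined by its effect on three distinct points — to conclude $\eps_{\tau_1\tau_2} = \eps_{\tau_1}\eps_{\tau_2}$. Injectivity uses that $\varphi_x$ is an isomorphism onto $\Gamma_x$: if $\eps_\tau = \varphi_x(w) \in \Gamma_x$, then $\varphi_x\circ\tau = \varphi_x(w)\,\varphi_x\,\varphi_x(w)^{-1} = \varphi_x \circ \mathrm{Inn}(w)$, forcing $\tau = \mathrm{Inn}(w)$ and $[\tau] = 1$ in $\Out(F_g)$. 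For surjectivity, given $\eps \in N(\Gamma_x)$ I would define $\tau \in \Aut(F_g)$ by $\tau(v) := \varphi_x^{-1}(\eps\,\varphi_x(v)\,\eps^{-1})$, which is an automorphism of $F_g$ because $\eps$ normalizes $\Gamma_x$ and $\varphi_x$ is an isomorphism onto~$\Gamma_x$; by construction $\varphi_x \circ \tau = \eps\,\varphi_x\,\eps^{-1}$, so $\tau x = x$ with $\Phi([\tau]) = \Gamma_x \eps$.

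I expect the main technical nuisance to lie in the careful bookkeeping with the normalization convention as $\tau$ varies. Both the well-definedness on outer classes and the multiplicativity boil down to showing that, after conjugation by different representatives, the resulting $\eps$'s are pinned down uniquely by their action on $0$, $\infty$ and $1$; this is an application of the fact that three pairwise distinct points determine a M\"obius transformation, combined with Lemma~\ref{lem:conjugationfixedpoints} to track the images of fixed points under conjugation. Once this is handled, the rest of the argument is standard group theory transported across $\varphi_x$.
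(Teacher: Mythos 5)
Your proof is correct and follows essentially the same route as the paper's: the paper builds the inverse map $\nu\colon N(\Gamma_x)\to \Stab_{\Aut(F_g)}(x)$, $\eps\mapsto \psi_x^{-1}\circ c_\eps\circ\psi_x$, shows its induced map on $\Gamma_x\backslash N(\Gamma_x)$ is injective via Lemma~\ref{lem:conjugationfixedpoints} (exactly your trivial-centralizer step, using $g\ge 2$ to get more than two fixed points), and proves surjectivity by unwinding the definition of the action just as you do. Reversing the direction of the isomorphism is immaterial, so there is nothing to add.
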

\begin{proof}

Let $\eps \in N(\Gamma_{x})$. The morphism~$\varphi_{x}$ induces an isomorphism $\psi_{x} \colon F_{g} \xrightarrow[]{\sim} \Gamma_{x}$. Since~$\eps$ belongs to the normalizer of~$\Gamma_{x}$ in~$\PGL_{2}(\calH(x))$, the conjugation by~$\eps$ in~$\PGL_{2}(\calH(x))$ induces an automorphism~$c_{\eps}$ of~$\Gamma_{x}$. It follows from the definitions that $\psi_{x}^{-1} \circ c_{\eps} \circ \psi_{x}$ is an automorphism of~$F_{g}$ stabilizing~$x$. We have just constructed a map $\nu \colon N(\Gamma_x) \to \Stab_{\Aut(F_{g})}(x)$. It is a morphism of groups.

Let $\eps \in N(\Gamma_{x})$. The automorphism~$\nu(\eps)$ is inner if, and only if, there exists $w\in F_{g}$ such that $\nu(\eps) = c_{w}$, where~$c_{w}$ denotes the automorphism defined by the conjugation by~$w$ in~$F_{g}$. Note that we have $c_{w} = \psi_{x}^{-1} \circ c_{\psi_{x}(w)} \circ \psi_{x}$. It follows that $\nu(\eps)$ is inner if, and only if, there exists~$\delta \in \Gamma_{x}$ such that $c_{\eps} = c_{\delta}$. If the latter condition holds, then, by Lemma~\ref{lem:conjugationfixedpoints}, $\eps^{-1}$ and $\delta^{-1}$ coincide on all the fixed points of the~$M_{i}(x)$'s. Since $g\ge 2$, there are more than two fixed points, hence $\eps=\delta$. The argument shows that~$\nu$ induces an injective morphism $\nu' \colon \Gamma_{x} \backslash N(\Gamma_x) \to \Stab_{\Out(F_{g})}(x)$. 

To conclude, it remains to prove that~$\nu'$ is surjective. It is enough to prove that~$\nu$ is surjective. Let $\sigma \in \Stab_{\Aut(F_{g})}(x)$. For each $i\in \{1,\dots,g\}$, $\sigma(e_{i})$ is an element of~$F_{g}$, that is to say a word $w_{i} = e_{j_{i,0}}^{n_{i,0}} \dotsb e_{j_{i,r_{i}}}^{n_{i,r_{i}}}$, for some $r_{i} \in \N$, $j_{i,0},\dotsc,j_{i,r_{i}} \in \{1,\dotsc,g\}$, $n_{i,0},\dotsc,n_{i,r_{i}} \in \Z$.
 Since~$\varphi_{x}$ is a morphism of groups, we have 
\[ N_{i}(x) := \varphi_{x} \circ \sigma(e_{i}) = M_{j_{i,0}}(x)^{n_{i,0}} \dotsb M_{j_{i,r_{i}}}(x)^{n_{i,r_{i}}}.\] 
In particular, $N_{i}(x) \in \Gamma_{x}$. By definition of the action, there exists $\eps \in \PGL_2(\calH(x))$ such that, for each $i\in \{1,\dotsc,g\}$, we have $M_{i}(x)=\eps N_i(x) \eps^{-1}$. By using the words associated to the morphism~$\sigma^{-1}$, one may express the~$M_{j}(x)$'s in terms of the $N_{i}(x)$'s. It follows that the $N_{i}(x)$'s generate the group~$\Gamma_{x}$, hence $\eps \in N(\Gamma_x)$. Moreover, we have $\nu(\eps) = \sigma$. The result follows.
\end{proof}

\begin{remark}\label{rem:stab<=autC}
Let $x\in \calS_{g}$. It is easy to check that each element of $N(\Gamma_x)$ preserves the limit set~$\calL_{x}$, hence inducing an automomorphism of $\pana{\calH(x)} - \calL_x$, and that the latter descends to an automorphism of $\calC_{x}$. This construction gives rise to a group homomorphism $\phi:N(\Gamma_x)\to\Aut(\calC_x)$ with kernel~$\Gamma_{x}$.

Let $x\in \calS_{g}^{\text{na}}$ be a non-archimedean point of the Schottky space. Then, the homomorphism $\phi$ is surjective (see \cite[Corollary 4.12]{Mumford72} for the first occurrence of this result and \cite[\S6.5.2]{VIASMII} for a discussion using Berkovich geometry), and as a result the quotient group $\Gamma_x\backslash N(\Gamma_x)$ considered above is isomorphic to the automorphism group $\Aut(\calC_{x})$ of the curve~$\calC_{x}$.
Every element of $\Aut(\calC_x)$ restricts to an isometry of the skeleton $\Sigma_x$ defined in \ref{not:GammaxCx}.
This restriction induces an injection of the automorphism group~$\Aut(\calC_{x})$ in the group~$\Aut(\Sigma_x)$ of isometric automorphisms of $\Sigma_x$ (cf. \cite[Proposition 6.5.9]{VIASMII}).

Let $x\in \calS_{g}^{\text{a}}$ be an archimedean point of the Schottky space. 
In this case the homomorphism $\phi$ is not surjective in general, and hence we only have an injection of $\Gamma_x\backslash N(\Gamma_x)$ into the group $\Aut(\calC_{x})$.
The question of lifting automorphisms of a Riemann surface to $\Gamma\backslash N(\Gamma)$ for some Schottky group $\Gamma$ uniformizing such surface has been thoroughly investigated by R. Hidalgo (see for example \cite{Hidalgo05}).
\end{remark}

\begin{proposition}\label{prop:outeraction}
The action of $\Out(F_g)$ on $\calS_{g}$ is analytic and has finite stabilizers.
Moreover:
\begin{enumerate}
\item If $g\geq3$, then this action is faithful;
\item If $g=2$, then the element $\iota\in \Out(F_2)$ defined by $\iota(e_i)=e_i^{-1}$ for $i=1,2$ stabilizes every point of $\calS_2$, and the action of the quotient $\Out(F_2)/\langle\iota\rangle$ on $\calS_{2}$ is faithful.
\end{enumerate}

\end{proposition}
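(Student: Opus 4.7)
For analyticity, fix $\tau \in \Aut(F_{g})$. Writing $\tau(e_{i})$ as a reduced word in $e_{1}^{\pm1},\dotsc,e_{g}^{\pm1}$, the matrix $\varphi_{x}\circ\tau(e_{i})$ is an explicit product of the $M_{j}(x)^{\pm 1}$ and thus an analytic function of~$x$. By Proposition~\ref{prop:Koebe}, its fixed points and multiplier are analytic, so the normalizing M\"obius transformation~$\eps$ sending the relevant attracting/repelling fixed points to~$0$, $\infty$, $1$ is analytic too, and reading off the Koebe coordinates of $\eps^{-1}\varphi_{x}\tau\eps$ produces an analytic map $x\mapsto \tau x$. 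Finite stabilizers follow from Proposition~\ref{prop:stabilizer} combined with Remark~\ref{rem:stab<=autC}: $\Stab_{\Out(F_{g})}(x)\cong \Gamma_{x}\backslash N(\Gamma_{x})$ embeds into $\Aut(\calC_{x})$, which is finite---by injection into the automorphism group of the finite skeleton~$\Sigma_{x}$ in the non-archimedean case, and by Hurwitz's theorem (since $g\ge 2$) in the archimedean case.

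For case~(ii), a direct computation in Koebe coordinates shows that $\iota$ fixes every $x\in \calS_{2}$: the matrices $M_{1}(x)^{-1}, M_{2}(x)^{-1}$ have Koebe coordinates $(\infty,0,Y_{1}(x))$ and $(X'_{2}(x),1,Y_{2}(x))$, and the M\"obius transformation $z\mapsto X'_{2}(x)/z$ normalizes them back to $(0,\infty,Y_{1}(x))$ and $(1,X'_{2}(x),Y_{2}(x))$, recovering the original coordinates. The outer automorphism~$\iota$ is not inner: if $w\in F_{2}$ satisfied $we_{i}w^{-1}=e_{i}^{-1}$ for $i=1,2$, then its image in $F_{2}^{\mathrm{ab}}\simeq \Z^2$ would satisfy $2e_{i}=0$, which is impossible. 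To obtain faithfulness of $\Out(F_{2})/\langle\iota\rangle$, I pick $x\in \calS_{2}$ corresponding to a generic complex Riemann surface of genus~$2$: such a surface has $\Aut(\calC_{x}) = \Z/2\Z$, generated by the hyperelliptic involution that exists on every genus~2 curve, and admits a Schottky uniformization by Koebe's retrosection theorem. Its stabilizer is therefore at most of order two and contains~$\iota$, so equals $\langle\iota\rangle$, which shows that the kernel of the $\Out(F_{2})$-action is exactly~$\langle\iota\rangle$.

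Case~(i) is treated in the same spirit: for $g\ge 3$, the generic complex curve of genus~$g$ has trivial automorphism group and, by Koebe's retrosection theorem, is Schottky uniformized; the corresponding point $x\in \calS_{g}$ satisfies $\Stab_{\Out(F_{g})}(x)\hookrightarrow \Aut(\calC_{x}) = \{1\}$, so the $\Out(F_{g})$-action is faithful. The main conceptual subtlety is the case-$g=2$ verification that~$\iota$ stabilizes every point, which reflects the hyperelliptic nature of genus~$2$ curves; the remaining arguments combine results established earlier in the paper with classical facts on automorphism groups of Riemann surfaces.
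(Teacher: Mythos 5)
Your proof is correct and follows essentially the same route as the paper's: analyticity via Proposition~\ref{prop:Koebe}, finite stabilizers via Proposition~\ref{prop:stabilizer} and Remark~\ref{rem:stab<=autC}, and faithfulness via generic curves with trivial (resp.\ order-two) automorphism group together with the explicit normalization computation showing that $\iota$ fixes every point of $\calS_2$. The only difference is that the paper first reduces the analyticity check to Nielsen's four generators of $\Out(F_g)$ and treats each one separately, whereas you argue directly for an arbitrary automorphism by writing $\tau(e_i)$ as a word in the $M_j(x)^{\pm1}$; both versions rest on the same ingredient, and your added verification that $\iota$ is not inner (via the abelianization of $F_2$) is a welcome detail the paper leaves implicit.
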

\begin{proof}
It is a classical result of Nielsen 
that $\Out(F_g)$ is generated by the set of four elements $\{\sigma_1, \sigma_2, \sigma_3, \sigma_4\}$ defined by:

\[
\begin{cases}
\sigma_1(e_1)=e_g, \sigma_1(e_i)=e_{i-1} \; \forall i > 1 \\
\sigma_2(e_1)=e_2, \sigma_2(e_2)=e_1, \sigma_2(e_i)=e_i \; \forall i >2\\
\sigma_3(e_1)= e_1^{-1}, \sigma_3(e_i)=e_i \; \forall i > 1\\
\sigma_4(e_2)=e_1^{-1}e_2, \sigma_4(e_i)=e_i \; \forall i \neq 2
\end{cases}
\]
For $i=1,2,3$ a simple computation shows that every $\sigma_i$ acts on~$\calS_{g}$ by M\"obius transformations on the Koebe coordinates.
For example, in the case of $\sigma_1$, the point~$x$ of $\calS_g$ with Koebe coordinates $(\alpha_i, \alpha_i', \beta_i)$ is sent to a point with basis $\big(M(\alpha_g, \alpha_g', \beta_g), M(0, \infty, \beta_1), \dots, M(\alpha_{g-1}, \alpha_{g-1}', \beta_{g-1})\big)$.
To describe the action in terms of Koebe coordinates, we need to conjugate this basis by the unique M\"obius transformation $\gamma$ such that $\gamma(\alpha_g)=0$, $\gamma(\alpha_g')=\infty$, and $\gamma(0)=1$.
This conjugation sends the fixed points of a transformation to their images under $\gamma$, while leaving multipliers untouched.
Hence, the Koebe coordinates of~$\sigma_{1}(x)$ are
\[\sigma_1(\alpha_i, \alpha_i', \beta_i)=  
\big(\gamma(1),\dots,\gamma(\alpha_{g-1}),\gamma(\infty),\dots,\gamma(\alpha_{g-1}'),\beta_g,\beta_1,\dots,\beta_{g-1}\big).\]
The cases of $\sigma_2$ and $\sigma_3$ are completely analogous, and so in these three cases the action is analytic.

Let us show that this is the case for $\sigma_4$ as well.
Denote by $M:=M(\alpha^\star, {\alpha'}^\star, \beta^\star)$ 
the matrix representing the product 
$M(0, \infty, \beta_1)^{-1}M(1, \alpha'_2, \beta_2)$.
The Koebe coordinates $(\alpha^\star, {\alpha'}^\star, \beta^\star)$ are analytic functions in the coefficients of $M$ by virtue of Proposition \ref{prop:Koebe}.
Moreover, the coefficients of $M$ are rational functions without poles in the variables $\beta_1, \alpha'_2, \beta_2$ on~$\calS_{g}$, and then analytic as well.
Finally, if we want to get a basis with $1$ as attracting fixed point of the second generator we have to conjugate every element by multiplication by $\frac{1}{\alpha^\star}$.
Summarizing, we get the following expression of the action of $\sigma_4$ on $\calS_{g}$:

\[\sigma_4(\alpha_i, \alpha_i', \beta_i) = \big(\frac{\alpha_3}{\alpha^\star}, \dots, \frac{\alpha_g}{\alpha^\star}, \frac{{{\alpha'}^\star}}{\alpha^\star}, \frac{\alpha_3'}{\alpha^\star}, \dots,\frac{\alpha_g'}{\alpha^\star}, \beta_1, \beta^\star, \beta_3, \dots, \beta_g\big).\]
Since $\alpha^\star$, ${\alpha'}^\star$, and $\beta^\star$ are analytic functions of the Koebe coordinates, the action of $\sigma_4$, and hence of $\Out(F_g)$, is analytic on $\calS_{g}$.\\

The finiteness of the stabilizers follows from Proposition~\ref{prop:stabilizer} and Remark~\ref{rem:stab<=autC}.
To prove faithfulness for $g\geq 3$, it is enough to remark that for every valued field there exist Schottky uniformized curves with trivial automorphism groups.
For $g=2$, the outer automorphism $\iota$ of order~2 defined by $\iota(e_i)=e_i^{-1}$ for $i=1,2$ fixes every point $x\in \calS_2$.
In fact, writing in Koebe coordinates $x=(\alpha_2',\beta_1,\beta_2)$, the point $\iota(x)$ corresponds to the ordered basis $\big (M(\infty,0,\beta_1),M(\alpha_2',1,\beta_2)\big )$.
Then, the conjugation of this basis by the M\"obius transformation $i: z \mapsto \frac{\alpha_2'}{z}$ 
produces the ordered basis $\big (M(0,\infty,\beta_1),M(1,\alpha_2',\beta_2)\big )$, so that $\iota(x)=x$. Since a generic genus 2 curve that admits Schottky uniformization has an automorphism group of order 2, we can conclude that the action of the quotient $\Out(F_2)/\langle\iota\rangle$ on $\calS_{2}$ is faithful.
\end{proof}

\begin{remark}
The automorphism $\iota$ appearing in part (ii) of Proposition \ref{prop:outeraction} induces, via the isomorphism given in Proposition \ref{prop:stabilizer}, the element $J=\begin{bmatrix} 0 & \alpha_2' \\ 1 & 0\end{bmatrix} \in N(\Gamma_x)$.
In turn, the class $[J]$ in $\Gamma_x\backslash N(\Gamma_x) \cong \Aut(\calC_x)$ induces on $\calC_x$ the hyperelliptic involution.
If $x$ is a non-archimedean point, this follows from the fact that $[J]$ acts on $\Sigma_x$ in such a way that the quotient is a tree.
If $x$ is an archimedean point, one can argue as follows.
Writing $\gamma_1= M(0,\infty,\beta_1)$ and $\gamma_2= M(1,\alpha_2',\beta_2)$, the elements $J$, $\gamma_1 J$, and $\gamma_2 J$ restrict to the same automorphism of $\calC_x$ and give rise to 6 distinct fixed points in $\pana{k} - \calL_x$.
Up to conjugation by elements of $\Gamma_x$, one can assume that these points all lie in the same fundamental domain. 
Hence, the cover of curves $\calC_x \to \calC_x / \langle [J] \rangle$ is ramified at least at 6 points and applying Riemann-Hurwitz formula one finds that the genus of the target curve is 0.
\end{remark}

\subsection{Entr'acte: path connectedness of $\calS_g$}\label{subsec:connect}

We now apply the results of the previous section to show that $\calS_g$ is a connected topological space.

We already know that the archimedean part $\mathcal{S}_g^{a}$ of the Schottky space is path-connected, thanks to its relation to the complex Schottky space (see Lemma~\ref{lem:Sga}). This allows to use global arguments to show the connectedness of $\calS_g$.


\begin{theorem}\label{thm:pathconn}
The Schottky space $\calS_g$ is path-connected.
\end{theorem}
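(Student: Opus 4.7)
The plan is to identify a single path component $\calC$ of $\calS_{g}$ and show that it exhausts the whole space. Two substantial subsets are already known to be path-connected: the archimedean part $\calS_{g}^{a}$ by Lemma~\ref{lem:Sga}, and the non-archimedean locus of Schottky bases $\mathcal{SB}_{g}^{na}$ by Corollary~\ref{cor:eqSgna}. The strategy is to first build a continuous bridge from $\calS_{g}^{a}$ to $\mathcal{SB}_{g}^{na}$ in order to place both of them inside a common path component $\calC$, then use the $\Out(F_{g})$-invariance of $\calS_{g}^{a}$---and hence of $\calC$---combined with Corollary~\ref{cor:Gerritzenvoisinage}, to conclude $\calC = \calS_{g}$.

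The technical heart, and the main obstacle, is the construction of an archimedean-to-non-archimedean bridge: a continuous path in $\calS_{g}$ from a point of $\calS_{g}^{a}$ to the distinguished point $P_{a_{0}} \in \mathcal{SB}_{g}^{na}$ used in the proof of Corollary~\ref{cor:eqSgna}. I would combine the sections of Examples~\ref{ex:sectionGauss} and~\ref{ex:sectionetar} coordinate by coordinate, using the former on each fixed-point variable and the latter on each multiplier variable. Fix $r \in (0,1/2]$ and complex numbers $\alpha_{3},\dotsc,\alpha_{g},\alpha_{2}',\dotsc,\alpha_{g}'$ on the unit circle, algebraically independent over~$\Q$ and distinct from $0,1,\infty$, then define
\[\sigma(t) \colon P \longmapsto \bigl|\, P\bigl(\alpha_{3},\dotsc,\alpha_{g},\alpha_{2}',\dotsc,\alpha_{g}', r^{1/(1-t)},\dotsc,r^{1/(1-t)}\bigr)\,\bigr|_{\infty}^{\,1-t}\]
for $t \in [0,1)$, extended by $\sigma(1) := P_{a_{0}}$. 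For $t<1$, $\sigma(t)$ lies in the archimedean fiber over $a_{\infty}^{1-t}$ and corresponds to Koebe data $(\alpha_{i},\alpha_{i}',r^{1/(1-t)})$; since the multipliers $r^{1/(1-t)}$ only shrink as~$t$ grows, a single smallness check at $t=0$---carried out via Lemma~\ref{lem:explicitisometricdiscs} to separate the associated Ford discs---is enough to keep $\sigma(t)$ inside $\calS_{g}$ for all $t \in [0,1)$. The delicate point is continuity at $t=1$: evaluating on a polynomial $P$ and grouping terms by multi-degree in the $Y$-variables, the algebraic independence of the $\alpha$'s rules out cancellation of the coefficient of the dominant monomial, and the archimedean expression $|P(\alpha,\alpha',r^{1/\epsilon})|_{\infty}^{\epsilon}$ converges as $\epsilon = 1-t \to 0$ to the Gauss norm of~$P$ at $P_{a_{0}}$. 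Finally, $P_{a_{0}}$ lies in $\mathcal{SB}_{g}^{na}$ because every difference $X_{i}^{\sigma}-X_{j}^{\sigma'}$ has Gauss norm~$1$ there, so the cross-ratio criterion of Corollary~\ref{cor:eqSgna} reduces to $r<1$.

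With the bridge in hand, let~$\calC$ denote the path component of $\calS_{g}$ containing $\calS_{g}^{a}$; it also contains~$P_{a_{0}}$, hence the entire path-connected set~$\mathcal{SB}_{g}^{na}$. Because the action of $\Out(F_{g})$ from Definition~\ref{def:outeraction} is fiberwise over~$\calM(\Z)$, each $\tau$ is a self-homeomorphism of~$\calS_{g}$ stabilizing~$\calS_{g}^{a}$, so it permutes path components while fixing~$\calC$. Therefore $\calC = \tau^{-1}(\calC) \supset \tau^{-1}(\mathcal{SB}_{g}^{na})$ for every~$\tau$. Corollary~\ref{cor:Gerritzenvoisinage} reads exactly as the statement that every $x \in \calS_{g}^{na}$ admits an open neighborhood $W \subset \calS_{g}$ on which $\tau\cdot(M_{1},\dotsc,M_{g})$ is a Schottky basis for some $\tau\in\Aut(F_{g})$, i.e.\ $W \subset \tau^{-1}(\mathcal{SB}_{g}^{na}) \subset \calC$. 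Thus $\calS_{g}^{na} \subset \calC$, and combined with $\calS_{g}^{a} \subset \calC$ this gives $\calS_{g} = \calC$, as required.
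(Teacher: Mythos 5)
Your overall architecture coincides with the paper's: a continuous degeneration from the archimedean part to a distinguished point of $\mathcal{SB}_{g}^{\mathrm{na}}$, followed by the $\Out(F_{g})$-action together with Corollary~\ref{cor:Gerritzenvoisinage} to sweep up all of $\calS_{g}^{\mathrm{na}}$. The final step is sound. The gap is in the bridge, specifically in the continuity claim at $t=1$.

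Because you give all $g$ multipliers the same value $r^{1/(1-t)}$, all $Y$-monomials of the same total degree contribute at the same scale $r^{d/(1-t)}$, and their coefficients can cancel even though the $\alpha$'s are algebraically independent. Concretely, take $P = Y_{1}-Y_{2}$: then $P(\sigma(t))=0$ for every $t<1$, so $|P(\sigma(t))|\to 0$, whereas the Gauss norm of $P$ at $P_{a_{0}}$ equals $r>0$. Hence $\sigma(t)$ does \emph{not} converge to $P_{a_{0}}$; your ``no cancellation of the dominant monomial'' argument only controls the dominant total degree, not a single dominant monomial. This is precisely why the paper takes radii $r_{1},\dotsc,r_{g}$ whose images in the $\Q$-vector space $\R_{>0}$ are linearly independent (i.e.\ multiplicatively independent): that forces a unique dominant monomial, whose coefficient is nonzero by the algebraic independence of the $\alpha$'s, and only then is the limit the Shilov point of the polydisc. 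Your path does converge, but to the point where all the $Y_{i}$ collapse to a single coordinate of absolute value $r$ (the seminorm obtained from $Y_{i}\mapsto S$ followed by the Gauss norm in $S$ over trivially valued $\Q(X,X')$); as it happens that point still satisfies the inequalities of Corollary~\ref{cor:eqSgna} and lies in $\mathcal{SB}_{g}^{\mathrm{na}}$, so the bridge is repairable either by taking this as the endpoint or by switching to independent radii --- but as written the limit you assert and the justification for it are both incorrect.

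A secondary, smaller issue: keeping $\sigma(t)\in\calS_{g}$ for all $t\in[0,1)$ by ``a single smallness check at $t=0$'' is not immediate, since the centers of the Ford discs in Lemma~\ref{lem:explicitisometricdiscs} move with $\beta$, and a fixed bound $r\le 1/2$ cannot work uniformly in the configuration of the $\alpha$'s (which may be arbitrarily close on the unit circle). This part is avoidable altogether: once the endpoint at $t=1$ lies in $\mathcal{SB}_{g}^{\mathrm{na}}\subset\calS_{g}$ and $\calS_{g}$ is open (Theorem~\ref{thm:open}), a terminal segment of the path near $t=1$ already lies in $\calS_{g}$, and its archimedean portion connects to the rest of the path-connected set $\calS_{g}^{\mathrm{a}}$, which is all you need. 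This is exactly how the paper argues.
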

\begin{proof}
Let $x \in \calS_g^{\mathrm{na}}$ be a non-archimedean point of the Schottky space over $\Z$.
By Corollary~\ref{cor:Gerritzenvoisinage}, there is an automorphism $\tau \in \Out(F_g)$ such that $\tau(x) \in \mathcal{SB}_{g}^\text{na}$.


Let $\alpha_{3},\dotsc,\alpha_{g},\alpha'_{2},\dotsc,\alpha'_{g} \in \C$ such that the degree of transcendence of the extension of~$\Q$ they generate is maximal (equal to $2g-3$). Let $r_{1},\dotsc,r_{g} \in (0,1)$ whose images in the $\Q$-vector space~$\R_{>0}$ are linearly independent. For $\eps \in (0,1]$, set 
\[\sigma(a_{\infty}^{\eps}) := \rho_{\eps}(\alpha_3, \dotsc, \alpha_g, \alpha_2', \dotsc, \alpha_g', r_{1}^{1/\eps}, \dotsc, r_{g}^{1/\eps}) \in \pr_\Z^{-1}(a_{\eps})\]
(see Section~\ref{sec:BerkovichZ} for this notation). For each $a\in \calM(\Z)^\text{na}$, denote by~$\sigma(a)$ the unique point in the Shilov boundary of the disc inside $\pr_{\Z}^{-1}(a) \simeq \AA^{3g-3}_{\calH(a)}$ defined by the inequalities
\[\begin{cases}
|X_i|\le 1 \text{ for } 3\le i \le g;\\
|X'_i|\le 1 \text{ for } 2\le i \le g;\\
|Y_i|\le r_{i} \text{ for } 1\le i \le g.
\end{cases}\]
By comparing the limit of $\sigma(a_{\infty}^{\eps})$ for $\eps \to 0$ with the non-archimedean valuation $\sigma(a_0)$ over the central point (cf. Examples~\ref{ex:sectionGauss} and~\ref{ex:sectionetar}), one shows that the map
\[\sigma \colon a \in \calM(\Z) \mapsto \sigma(a) \in \AA^{3g-3, \an}_\Z\]
is a continuous section of the projection morphism $\pr_{\Z}\colon \AA^{3g-3, \an}_\Z \to \calM(\Z)$. 
By Corollary~\ref{cor:eqSgna}, $\sigma(a_{0})$ belongs to~$\mathcal{SB}_{g}$ and to the same path-connected component in~$\calS_{g}$ as~$\tau(x)$. 

Since~$\calS_{g}$ is open, by Theorem~\ref{thm:open}, we deduce that~$\sigma(a_{\infty}^{\eps})$ belongs to~$\calS_{g}$ for $\eps\in (0,1]$ small enough. In particular, $\tau(x)$ belongs to the same path-connected component of~$\calS_{g}$ as~$\mathcal{S}_g^{\mathrm{a}}$. By Proposition~\ref{prop:outeraction}, $\tau^{-1}$ acts continuously on~$\calS_{g}$, hence~$x$ belongs to the same path-connected component of~$\calS_{g}$ as~$\mathcal{S}_g^{\mathrm{a}}$. The result now follows from the path-connectedness of the latter (see Lemma~\ref{lem:Sga}).

\end{proof}

\subsection{Properness of the action of $\Out(F_g)$} Recall that we discussed proper actions at the beginning of Section~\ref{sec:limitsets}.

Let us first consider the archimedean case first. The following proposition is a consequence of well known facts about the complex Schottky space.

\begin{theorem}\label{thm:actionOutFga}
The action of $\Out(F_g)$ on $\calS_g^{\mathrm{a}}$ is proper. The quotient space $\Out(F_{g}) \backslash \calS_g^{\mathrm{a}}$ is Hausdorff and reduces locally to a quotient by a finite group.
\end{theorem}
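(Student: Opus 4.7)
The plan is to reduce the statement to the classical properness of the $\Out(F_g)$-action on the complex Schottky space, and then to deduce the geometric consequences from standard properties of proper discontinuous actions.

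First, I would exploit the explicit description of the archimedean part obtained in Section~\ref{sec:BerkovichZ}. Recall from Lemma~\ref{lem:Sga} that the homeomorphism $\Phi\colon \AA^{3g-3,\an}_{\R,\va_{\infty}} \times (0,1] \to (\AA^{3g-3,\an}_{\Z})^\mathrm{a}$ restricts to a homeomorphism $\calS_{g,\R,\va_{\infty}} \times (0,1] \xrightarrow{\sim} \calS_g^{\mathrm{a}}$. Since the generators $\sigma_1,\sigma_2,\sigma_3,\sigma_4$ of $\Out(F_g)$ act on $\calS_g$ by formulas that are rational in the Koebe coordinates with $\Z$-coefficients (as made explicit in the proof of Proposition~\ref{prop:outeraction}), raising the absolute value to the power $\eps$ does not affect these formulas. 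Hence $\Phi$ is $\Out(F_g)$-equivariant, with $\Out(F_g)$ acting trivially on the second factor $(0,1]$. It therefore suffices to establish properness of the action on $\calS_{g,\R,\va_{\infty}}$.

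Next I would descend to the complex case via the base change $\C/\R$. By Remark~\ref{rem:SgA}, the projection $\pi\colon \calS_{g,\C} \to \calS_{g,\R,\va_{\infty}}$ is exactly the quotient by complex conjugation, and the $\Out(F_g)$-action on $\calS_{g,\C}$ commutes with complex conjugation (again because the action is given by $\Z$-rational formulas in the Koebe coordinates). Hence a proper action on $\calS_{g,\C}$ descends to a proper action on the quotient $\calS_{g,\R,\va_{\infty}}$: given compact sets $K,K' \subset \calS_{g,\R,\va_{\infty}}$, their preimages in $\calS_{g,\C}$ are compact, so $\{\tau\in\Out(F_g) : \tau K\cap K'\ne\emptyset\}$ is finite. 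Thus the statement reduces to the properness of $\Out(F_g) \curvearrowright \calS_{g,\C}$, which is the main obstacle.

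For this classical input I would invoke Teichm\"uller theory: as recalled in Remark~\ref{rem:SgA} (Hejhal), there is a covering map $\calT_{g,\C} \to \calS_{g,\C}$ from Teichm\"uller space, and the action of $\Out(F_g)$ on $\calS_{g,\C}$ is compatible with the action of the mapping class group $\MCG_g$ on $\calT_{g,\C}$, which is well known to be properly discontinuous (see \cite{HerrlichSchmithuesen07} for a detailed exposition of the comparison between the two actions). Since $\calT_{g,\C} \to \calS_{g,\C}$ is a covering and $\MCG_g$ acts properly discontinuously on $\calT_{g,\C}$, a standard argument yields properness of the $\Out(F_g)$-action downstairs.

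Finally, once properness is established, the two remaining assertions are formal. The quotient $\Out(F_g)\backslash\calS_g^{\mathrm{a}}$ is Hausdorff by \cite[III, \S4, Proposition~3]{BourbakiTG14}. As for the local description, since stabilizers are finite by Proposition~\ref{prop:outeraction}, a standard slice argument for proper actions of discrete groups produces, around each point $x\in\calS_g^{\mathrm{a}}$, a $\Stab(x)$-invariant open neighborhood $U_x$ such that $\{\tau\in\Out(F_g) : \tau U_x\cap U_x\ne\emptyset\}=\Stab(x)$; the image of $U_x$ in the quotient is then canonically identified with $\Stab(x)\backslash U_x$, a quotient by a finite group acting analytically.
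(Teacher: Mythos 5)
Your proposal is correct and follows essentially the same route as the paper: reduce via the homeomorphism $\Phi$ and the invariance of the Schottky condition under equivalent absolute values and field extensions to the complex Schottky space, then deduce properness from the properness of the $\MCG_g$-action on Teichm\"uller space through the covering $\calT_{g,\C}\to\calS_{g,\C}$ and the identification of $\Out(F_g)$ inside the relevant quotient of $\MCG_g$ (as in \cite{HerrlichSchmithuesen07}), with the Hausdorff and local-finite-quotient claims following formally from properness and finiteness of stabilizers. Your treatment of the descent through complex conjugation is slightly more explicit than the paper's, but the substance is the same.
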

\begin{proof}
Using the homeomorphism
\[ \Phi \colon \AA^{3g-3,\an}_{\R} \times (0,1] \to \big(\AA^{3g-3,\an}_{\Z}\big)^\text{a}\]
from Section~\ref{sec:BerkovichZ} as well as Remarks~\ref{rem:Schottkyva} and~\ref{rem:SchottkyKL} one reduces to the case of the complex Schottky space~$\calS_{g,\C}$. Recall from the discussion in Remark \ref{rem:SgA} the universal covering map 
$\Psi: \calT_{g,\C} \longrightarrow \calS_{g,\C}$ from the complex Teichm\"uller space. 
The mapping class group $\MCG_g$ acts properly on $\calT_{g,\C}$ (\cite[\S8, Theorem 6]{Gardiner87}) in such a way that $\Psi$ is a $G$-covering for a normal subgroup $G \subset \MCG_g$.
The action of $\Out(F_g)$ on $\calS_{g,\C}$ considered here comes from the realization of $\Out(F_g)$ as a subgroup of the quotient $\MCG_g / G$ (see \cite[Proposition 5.10]{HerrlichSchmithuesen07}). In particular, is is a proper action. 

The fact that the quotient space $\Out(F_{g}) \backslash \calS_g^{\mathrm{a}}$ is Hausdorff follows directly from the properness of the action. The fact that it reduces locally to a quotient by a finite group is a consequence of the finiteness of the stabilizers (see Proposition~\ref{prop:outeraction}).
\end{proof}

Let us now focus on the the non-archimedean case. We follow the strategy outlined in the proof of \cite[Proposition~7]{Gerritzen82} and take this opportunity to add details to said proof.

\begin{theorem}\label{thm:SBfinite}
The set
\[ SB=\{\tau \in \Out(F_g) : \tau(\calS\calB_g^{\mathrm{na}})\cap\calS\calB_g^{\mathrm{na}} \neq \emptyset \}\]
is finite.
\end{theorem}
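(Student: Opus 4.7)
The plan is to adapt Gerritzen's strategy from \cite[Proposition 7]{Gerritzen82} to the Berkovich-analytic setting, combining it with the description of the skeleton of a Mumford curve via the action on the Berkovich projective line recalled in Notation~\ref{not:GammaxCx}. The key idea is to associate to every Schottky basis a combinatorial invariant on the skeleton of the Mumford curve, and then to bound the number of such invariants uniformly.

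Let $x \in \calS\calB_g^{\mathrm{na}}$. The Schottky group~$\Gamma_x$ acts freely and discontinuously on the minimal invariant subtree $T_x$ of $\pana{\calH(x)}$ (the convex hull of its limit set), with quotient naturally identified with the skeleton $\Sigma_x$ of the Mumford curve $\calC_x$. Given a Schottky figure $\calB$ adapted to the ordered basis $(M_1(x),\dots,M_g(x))$, the associated closed fundamental domain $F^+$ has a tree-like skeleton whose image in $\Sigma_x$ is a spanning tree, and the $g$ complementary oriented edges of $\Sigma_x$ correspond bijectively to the basis elements, with a prescribed ordering. The first step of the proof is to make this correspondence rigorous: to every Schottky basis of $\Gamma_x$ modulo conjugation by $\Gamma_x$, one assigns the combinatorial datum consisting of a spanning tree of $\Sigma_x$ together with a numbering and an orientation of the $g$ edges outside it, and this assignment has uniformly bounded fibers (depending only on~$g$).

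The second step provides a uniform bound on the combinatorial data. Since $\Sigma_x$ is a stable graph of genus~$g$, it has at most $3g-3$ edges and $2g-2$ vertices, so only finitely many isomorphism classes of such graphs occur as $x$ varies in $\calS\calB_g^{\mathrm{na}}$. For each class, the number of spanning trees together with orderings and orientations of complementary edges is finite, and the graph's automorphism group is finite. Now suppose $\tau \in SB$ and pick $x \in \calS\calB_g^{\mathrm{na}}$ with $\tau(x) \in \calS\calB_g^{\mathrm{na}}$. By Definition~\ref{def:outeraction}, the tuple $\bigl(\varphi_x(\tau(e_1)),\dots,\varphi_x(\tau(e_g))\bigr)$ is a Schottky basis of $\Gamma_x$ (up to conjugation, since $\Gamma_{\tau(x)}$ is conjugate to $\Gamma_x$ and their skeleta canonically identified). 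Both this basis and the original $(M_i(x))$ yield combinatorial data on the same skeleton $\Sigma_x$, and the pair of these data, together with the identification of skeleta, determines $\tau \in \Out(F_g)$ up to the finite automorphism group $\Aut(\Sigma_x)$. Since all these data live in finite sets whose cardinality depends only on~$g$, the set $SB$ is covered by finitely many cosets of finite groups and is therefore finite.

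The main obstacle I expect is the rigorous establishment of the correspondence between Schottky bases and spanning-tree-based presentations of $\pi_1(\Sigma_x) \simeq F_g$ in the first step. While the picture is transparent from Bass--Serre theory applied to the free action of $\Gamma_x$ on the tree $T_x$, verifying that \emph{every} Schottky basis arises in this way (not just a distinguished class of them) requires a careful matching between the combinatorial structure of the Schottky figure (Definition~\ref{def:Schottkyfigure}) --- in particular the pairwise identification of the Shilov boundary points of the discs under the basis elements --- and the edge structure of $\Sigma_x$, in the spirit of \cite[\S 3]{Serre77}.
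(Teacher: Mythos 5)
Your proposal follows essentially the same route as the paper's proof: the paper likewise uses Bass--Serre theory (Serre's theorem on free actions on trees) to set up a bijection between Schottky bases of $\Gamma_x$ up to $\Gamma_x$-conjugacy and representative (spanning) trees of the skeleton $\Sigma_x$ with labeled complementary edges, proves the non-obvious direction you flag --- that \emph{every} Schottky basis arises from such a tree, and conversely --- by explicitly matching Schottky figures with the edges leaving a representative tree, and then concludes by observing that the set $SB_x$ depends only on the labeled combinatorial type of the fundamental domain, which ranges over a finite set depending only on $g$. The obstacle you identify (showing the correspondence is onto the set of all Schottky bases) is precisely the content of the paper's key lemma, so your plan is sound and correctly locates where the real work lies.
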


\begin{proof}
For every point $x\in \calS\calB_g^{\mathrm{na}}$, let us denote by $L_x \subset \pana{\calH(x)}\big(\calH(x)\big)$ the limit set of $\Gamma_x$, and by $T_{\Gamma_x} \subset \pana{\calH(x)}$ the infinite tree defined as the skeleton of $\pana{\calH(x)} - L_x$.\footnote{Recall from \cite[4.1.3]{Berkovich90} that the skeleton of an analytic curve $C$ is defined as the subset of $C$ consisting of those points that do not have a neighborhood potentially isomorphic to a disc. If $C$ is the analytification of a smooth proper algebraic curve, its skeleton is a finite graph and this definition coincides with the one at the end of Notation \ref{not:GammaxCx}.}
The action of $\Gamma_x$ on the infinite tree $T_{\Gamma_x}$ is free and without inversions, and gives rise to a universal covering $p_x:T_{\Gamma_x}\to\Sigma_x$ of the skeleton of the Mumford curve uniformized by $\Gamma_x$.

Following Serre \cite[\S3.1]{Serre77}, we call \emph{representative tree of $T_{\Gamma_x}$} any subtree of $T_{\Gamma_x}$ that is a lifting of a spanning tree of $\Sigma_x$ via $p_x$.
Equivalently, a representative tree is a connected subtree of $T_{\Gamma_x}$ that has a unique vertex in any given $\Gamma_x$-orbit on the set of vertices of $T_{\Gamma_x}$.
With a representative tree $T \subset T_{\Gamma_x}$, we can associate a generating set of $\Gamma_x$ as follows.
Let us call $E_T$ the set of edges in~$T_{\Gamma_x}$ that have one endpoint in $T$ and the other in its complement $T_{\Gamma_x} - T$.
Note that the set $E_T$ consists of $2g$ elements.

\begin{lemma}\label{lem:Serre}
The set
\[G_T = \{ \gamma \in \Gamma_x - \{1\} : \exists \; e \in E_T \text{ with } \gamma(e) \in E_T\}\]
 is of the form $B \cup B^{-1}$ with $B=\{\gamma_1,\dots,\gamma_g\}$ a basis of $\Gamma_x$ and $B^{-1}=\{\gamma_1^{-1},\dots,\gamma_g^{-1}\}$.
\end{lemma}
\begin{proof}
Let us choose an orientation on the tree $T_{\Gamma_x}$ compatible with the action of $\Gamma_x$, and we consider the set $B\subset \Gamma_x$ consisting of the elements $\gamma\in \Gamma_x$ such that there exists an edge $e$ of $T_{\Gamma_x}$ starting in $T$ and ending in $\gamma(T)$.
By applying a theorem of Serre on free actions on oriented trees \cite[\S3.3 Th\'eor\`eme $4'$, a)]{Serre77}, we deduce that $B$ is a basis of $\Gamma_x$.
By construction, the set $B\cup B^{-1}$ is contained in $G_T$, so it suffices to show that $G_T\subset B\cup B^{-1}$ to conclude.
To show this, let us pick $\gamma \in G_T$ and $e_1, e_2\in E_T$, such that $\gamma(e_1)=e_2$.
If we call $v_i,w_i$ the endpoints of $e_i$ in such a way that $v_i \in T$, then $v_1$ and $v_2$ can not be in the same orbit, hence $\gamma(v_1)=w_2$ and $\gamma(w_1)=v_2$.
As a result, $w_1\in\gamma(T)$ and $w_2\in \gamma^{-1}(T)$.
Then, depending on the orientation chosen at the beginning, either $\gamma\in B$ or $\gamma\in B^{-1}$, as desired.
\end{proof}

We denote by $\frakF_x$ the set of representative trees of $T_{\Gamma_x}$, and by $\frakB_x$ the set of generating sets of $\Gamma_x$ of the form $B \cup B^{-1}$ as in the statement of Lemma \ref{lem:Serre}.

\begin{lemma}
The function 
\begin{align*}
G_x: &\frakF_x \to \frakB_x \\
& T \mapsto G_T
\end{align*}
is injective. Its image consists of those generating sets $B \cup B^{-1}$ such that $B$ is a Schottky basis.
\end{lemma}

\begin{proof}
Let $B$ be a Schottky basis, choose a Schottky figure adapted to $B$, and consider the associated analytic space $F^{+} \subset \pana{\calH(x)}$ as in Definition \ref{def:Schottkyfigure}.
We claim that the maximal subtree $T_B$ of $T_{\Gamma_x}$ contained in $F^{+}$ has a unique vertex in any $\Gamma_x$-orbit on the set of vertices of $T_{\Gamma_x}$, and therefore is a representative tree.
To prove this, recall from \ref{not:GammaxCx} that the skeleton $\Sigma_x$ is obtained by pairwise identifying the endpoints of the intersection $F^{+} \cap T_{\Gamma_x}$ according to the action of $\Gamma_x$.
In particular, $F^{+}$ contains a fundamental domain for the action of $\Gamma_x$ on $\pana{\calH(x)} - L_x$, so that there is a vertex of $T_B$ in the orbit of $v$, for every vertex $v$ of $T_{\Gamma_x}$.
Moreover, if $\xi$ is an endpoint of $F^{+} \cap T_{\Gamma_x}$, then the map $p_x$ identifies $\xi$ with only another endpoint of $F^{+} \cap T_{\Gamma_x}$, so that $p_x(\xi)$ is a point of degree 2 of $\Sigma_x$ and hence by definition it is not a vertex of $\Sigma_x$.
In particular, $\xi$ is not a vertex of $T_B$, so that all vertices of $T_B$ are contained in a fundamental domain for the action of $\Gamma_x$ on $\pana{\calH(x)} - L_x$.
This shows that every element in the orbit of a vertex $v$ of $T_B$ different from itself does not lie in $T_B$, concluding the proof that $T_B$ is a representative tree.

Note that the endpoints of $F^{+} \cap T_{\Gamma_x}$ lie precisely on those edges of $T_{\Gamma_x}$ that are in $E_{T_B}$, and by construction of $F^{+}$ the elements of $\Gamma_x$ acting on these endpoints are precisely those lying in $B \cup B^{-1}$.
Hence we have $G_{T_B}=B\cup B^{-1}$, showing that $B\cup B^{-1}$ is in the image of $G_x$.\\
Conversely, if $B=\{\gamma_1,\dots,\gamma_g\}$ is a basis of $\Gamma_x$ and $B\cup B^{-1}\in \frakB_x$ can be written as $G_x(T)$ for some representative tree $T$, one can build a Schottky figure adapted to $B$ as follows.
First one writes the set $E_T$ as $\{e_{-g},\dots, e_{-1}, e_1,\dots, e_g\}$ in such a way that $\gamma_i(e_{-i})=e_i$.
Then one chooses a set of $2g$ points $\{x_{-g},\dots, x_{-1}, x_1,\dots, x_g\}$ in $\pana{\calH(x)}$ in such a way that $x_{-i} \in e_{-i}$, $x_i \in e_i$ and $\gamma_i(x_{-i})=x_i$ for every $i=1,\dots,g$ (here we tacitly identify an element of $E_T$ with the corresponding subset of $\pana{\calH(x)}$).
Each $x_i \in \pana{\calH(x)}$ is the Shilov boundary of a unique closed disc of $\pana{\calH(x)}$ not containing $T$, that we denote by $B^+_i$.
The family $\calB = \big\{B^+_i ,i \in \{-g,\dots, -1,1,\dots,g \} \big\}$ is a Schottky figure adapted to $B$: In fact, if we denote by $B^-_i$ the unique connected component of $\pana{\calH(x)} - \{x_i\}$ such that $T\cap B^-_i = \emptyset$ and $L_x\cap B^-_i \neq \emptyset$, we have that $B^-_i$ is a maximal open disc inside $B^+_i$ and that \[B^-_i = \gamma_{i} (\pana{\calH(x)} - B_{-i}^+),\] where we adopted the convention $\gamma_{-i}=\gamma_i^{-1}$.
Then $B$ is a Schottky basis, constructed in such a way that the representative tree $T_B$ as above coincides with $T$.

The injectivity of $G_x$ is proved as follows: let $G_x(T_1)=G_x(T_2)=B \cup B^{-1}$.
By what we just proved, $B$ is a Schottky basis and the injectivity of $G_x$ is equivalent to the fact that every Schottky figure associated with $B$ gives rise to the same representative tree $T_B$.
We can show this by contradiction: suppose that two Schottky figures $\calB_1$ and $\calB_2$ adapted to $B$ give rise to different representative trees $T_1$ and $T_2$.
Then there is a vertex $v$ of $T_1$ that is not a vertex of $T_2$, and there are at least two edges $e_i, e_j$ of $E_{T_1}$ having $v$ as an endpoint that are not edges of $T_2$ nor they belong to $E_{T_2}$, for instance those edges departing from $v$ in a direction different from the one of $T_2$.
As a result, there is a unique connected component of $T_{\Gamma_x} - T_2$ that contains both $e_i$ and $e_j$.
Hence, the two closed discs $B_i^+, B_j^+ \in \calB_1$ corresponding to $e_i, e_j$ are contained in a single closed disc $B^+ \in \calB_2$.
This leads to a contradiction, since every disc in a Schottky figure adapted to $B$ contains a unique fixed point of a unique element of $B$, but $B^+$ contains at least two of them.
This shows that $T_1=T_2$.
\end{proof}

Thanks to the injectivity of $G_x$, one can associate with $x$ a unique representative tree $T_x\in\frakF_x$, defined as the preimage by $G_x$ of the generating set $\{M_1(x),\dots, M_g(x),M_1^{-1}(x),\dots, M_g^{-1}(x)\}$.
Furthermore, with the point $x$ one can also associate the set 
\[SB_x=\{\tau \in \Out(F_g) : \tau(x)\in\calS\calB_g \}.\]
Our proof of the theorem then relies on the following lemmas.

\begin{lemma}\label{lem:sbx}
Let $x \in \calS\calB_g^{\mathrm{na}}$.
Then $SB_x$ is a finite subset of $\Out(F_g)$.
\end{lemma}

\begin{proof}
Let us fix a vertex $v\in T_x$, and call $\frakF_{x,v}$ the subset of $\frakF_x$ consisting of those representative trees that contain $v$.
We first prove that every Schottky basis is $\Gamma_x$-conjugated to a unique Schottky basis in the image $G_x(\frakF_{x,v})$.
In fact, if $B$ is a Schottky basis, then $B \cup B^{-1}=G_x(T)$ for some $T\in \frakF_x$.
For every $\gamma\in \Gamma_x$, $\gamma G_x(T) \gamma^{-1}$ is the image by~$G_{x}$ of the representative tree $\gamma(T)$.
Since there exists a unique $\gamma\in \Gamma_{x}$ such that $v\in \gamma(T)$, there is a unique generating set in $G_x(\frakF_{x,v})$ conjugated to $B \cup B^{-1}$ by an element of $\Gamma_x$.
As a result, the function $G_x$ realizes a bijection between the set $\frakF_{x,v}$ and the set of $\Gamma_x$-conjugacy classes of generating sets of the form $B\cup B^{-1}$ with $B$ a Schottky basis of $\Gamma_x$.
As the former is a finite set, the latter is also finite and, in particular, the $\PGL_2$-conjugacy classes of Schottky bases of $\Gamma_x$ are finite.

For every $y\in \calS\calB_g^{\mathrm{na}}$, let us define the subset $SB_{x,y}=\{\tau \in \Out(F_g) : \tau(x)=y\}$ of $SB_x$.
If $\tau_1, \tau_2 \in SB_{x,y}$, then $\tau_1\tau_2^{-1}(x)=x$, so there is an element $\sigma\in \Out(F_g)$ in the stabilizer of $x$ such that $\tau_1=\sigma \tau_2$.
By Proposition \ref{prop:stabilizer}, this stabilizer is finite, so the set $SB_{x,y}$ is finite too.
We can write the set $SB_x$ as a union 
\[SB_x = \bigcup_{y\in \calS\calB_g^{\mathrm{na}}} SB_{x,y}.\]
Note that the set $SB_{x,y}$ is non-empty only if the Schottky basis $\big(M_1(y),\dots, M_g(y)\big)$ is $PGL_2$-conjugated to $\tau\big(M_1(x),\dots, M_g(x)\big)$.
Hence, by what precedes, $SB_x$ is a finite union of finite sets and then it is finite.

\end{proof} 

Given a tree $\calT$, recall that a vertex of degree one of $\calT$ is called a  \emph{leaf}.
If the set of leaves $L(\calT)$ of $\calT$ has cardinality $2g$, we call \emph{leaf labeling} of $\calT$ a bijection between $L(\calT)$ and the set $\{-g,\dots, -1,1,\dots, g\}$.
Let us denote by $\mathfrak{L}_g$ the set of pairs $(\calT, \ell)$ where $\calT$ is a finite tree with $2g$ leaves and no vertices of degree 2, and $\ell$ is a leaf labeling of $\calT$.
Since $g$ is fixed, $\mathfrak{L}_g$ is a finite set.
For a point $x~\in~\calS\calB_g^{\mathrm{na}}$, the subtree $T_x \cup E_{T_x}$ of $T_{\Gamma_x}$ has $2g$ leaves and no vertices of degree 2, and can naturally be endowed with a labeling induced by the writing $E_{T_x}=\{e_{-g},\dots, e_{-1}, e_1,\dots, e_g\}$ as in the first part of the proof.
This assignment defines a map $\lambda:\calS\calB_g^{\mathrm{na}} \to \mathfrak{L}_g$.

\begin{lemma}\label{lem:sbx=sby}
Let $x,y$ be two points of $\calS\calB_g^{\mathrm{na}}$ such that $\lambda(x)=\lambda(y)$. Then $SB_x=SB_y$. 
\end{lemma}
\begin{proof}
Since $\lambda(x)=\lambda(y)$, there exists an isomorphism of finite trees
\[\psi: T_x\cup E_{T_x} \to T_y \cup E_{T_y} \] that sends $T_x$ to $T_y$ and respects the leaf labelings.
Consider the group isomorphism $\phi:\Gamma_x \to \Gamma_y$ such that $\phi(M_i(x))=M_i(y)$.
Then there is a unique way to extend $\phi$-equivariantly the isomorphism $\psi$ to an isomorphism of infinite trees $\Psi:T_{\Gamma_x} \to T_{\Gamma_y}$.
Namely, for every vertex $v'\in T_{\Gamma_x}$, there is a unique pair $(\gamma,v)$ with  $\gamma \in \Gamma_x$ and $v$ a vertex of $T_x$ such that $v'=\gamma(v)$.
The assignment $\Psi(v')=\phi(\gamma)(\Psi(v))$ uniquely determines the isomorphism $\Psi$.

Let us fix vertices $v_x\in T_x$ and $v_y\in T_y$ such that $\Psi(v_x)=v_y$.
Note that we constructed $\Psi$ in such a way to be equivariant, so there is a commutative diagram of the form
\[\begin{tikzcd}
T_{\Gamma_x} \arrow[r, "\Psi"] \arrow[d,"p_x"] & T_{\Gamma_y}\arrow[d,"p_y"] \\
\Sigma_x \arrow[r, "\sim"] & \Sigma_y
\end{tikzcd},\]
where the arrow on the bottom is an isomorphism of graphs, and in particular sends spanning trees in $\Sigma_x$ to spanning trees in $\Sigma_y$.
As a result, $\Psi$ sends representative trees in $T_{\Gamma_x}$ to representative trees in $T_{\Gamma_y}$.
In particular, $\Psi$ restricts to a function 
\[\Psi_{|\frakF_{x,v_x}}:\frakF_{x,v_x}\to\frakF_{y,v_y}\]
satisfying $\Psi_{|\frakF_{x,v_x}}(T_x)=T_y$.

Now we consider an element $\tau\in SB_x$.
This has a unique representative $\sigma \in \Aut(F_g)$ such that $B_\sigma$, the Schottky basis resulting from applying $\sigma$ to $(M_1(x),\dots,M_g(x))$, satisfies $G_x^{-1}(B_\sigma)\in\frakF_{x,v_x}$.
If we consider the representative tree $T = \Psi_{|\frakF_{x,v_x}} \big( G_x^{-1}(B_\sigma) \big)$, we have that the generating set $G_y(T)$ of $\Gamma_y$ is of the form $B \cup B^{-1}$ for some Schottky basis $B$ of $\Gamma_y$.
By definition, $G_y(T)$ consists of those elements of $\Gamma_y$ that act on the set $E_T$.
Note that for every vertex $v\in T_{\Gamma_x}$, we have $\Psi\big(\big(M_i(x)\big)(v)\big)=\big(M_i(y)\big)(\Psi(v))$ thanks to the fact that $\Psi$ is $\phi$-equivariant.
This condition ensures that, if we set $B_\sigma = \big(M_{j_{1,0}}^{n_{1,0}}(x) \dotsb M_{j_{1,r_{1}}}^{n_{1,r_{1}}}(x), \dots, M_{j_{g,0}}^{n_{g,0}}(x) \dotsb M_{j_{g,r_{g}}}^{n_{g,r_{g}}}(x) \big)$, then $B$ can be taken to be $\big(M_{j_{1,0}}^{n_{1,0}}(y) \dotsb M_{j_{1,r_{1}}}^{n_{1,r_{1}}}(y), \dots, M_{j_{g,0}}^{n_{g,0}}(y) \dotsb M_{j_{g,r_{g}}}^{n_{g,r_{g}}}(y) \big)$, that is, $\tau(y)\in \calS\calB_g^{\mathrm{na}}$.
Hence $\tau$ is in $SB_y$ and so $SB_x \subset SB_y$.
The same construction applied to the isomorphism $\Psi^{-1}$ shows that $\tau\in SB_y$ implies $\tau\in SB_x$, so that $SB_x = SB_y$.
\end{proof}

The theorem then follows from the two lemmas above.
We write
\[SB=\bigcup_{x\in \calS\calB_g^{\mathrm{na}}} SB_x = \bigcup_{\lambda(x) \in \mathfrak{L}_g} SB_x,\]
where the second equality is given by Lemma \ref{lem:sbx=sby}.
The result of Lemma \ref{lem:sbx} ensures the finiteness of $SB_x$ for every $x$, and the finiteness of the set $\mathfrak{L}_g$ allows to conclude.
\end{proof}


\begin{corollary}\label{cor:actionOutFgna}
The action of $\Out(F_g)$ on $\calS_g^{\mathrm{na}}$ is proper. The quotient space $\Out(F_{g}) \backslash \calS_g^{\mathrm{na}}$ is Hausdorff and reduces locally to a quotient by a finite group.
\end{corollary}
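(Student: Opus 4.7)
The plan is to deduce properness from Theorem~\ref{thm:SBfinite} by using Corollary~\ref{cor:Gerritzenvoisinage} to reduce to the locus $\calS\calB_g^{\mathrm{na}}$ of Schottky bases. Let $x,y \in \calS_g^{\mathrm{na}}$. I need to produce open neighborhoods $U_x$ of $x$ and $U_y$ of $y$ such that the set
\[ \Sigma(U_x,U_y) := \{\sigma \in \Out(F_g) : \sigma(U_x) \cap U_y \ne \emptyset\} \]
is finite.

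By Corollary~\ref{cor:Gerritzenvoisinage} (combined with Proposition~\ref{prop:geometrygroup} and Proposition~\ref{prop:equationsSBg}, as exploited in the proof of Theorem~\ref{thm:pathconn}), there exist $\tau_x, \tau_y \in \Out(F_g)$ together with open neighborhoods $W_x \ni x$ and $W_y \ni y$ in $\calS_g^{\mathrm{na}}$ such that $\tau_x(W_x) \subseteq \calS\calB_g^{\mathrm{na}}$ and $\tau_y(W_y) \subseteq \calS\calB_g^{\mathrm{na}}$; here I use the openness of $\calS\calB_g^{\mathrm{na}}$ in $\calS_g^{\mathrm{na}}$ (Corollary~\ref{cor:eqSgna}) together with the continuity of the action (Proposition~\ref{prop:outeraction}). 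Setting $U_x := W_x$ and $U_y := W_y$, any $\sigma \in \Sigma(U_x,U_y)$ satisfies
\[ (\tau_y \sigma \tau_x^{-1})\bigl(\tau_x(U_x)\bigr) \cap \tau_y(U_y) \ne \emptyset, \]
and both $\tau_x(U_x)$ and $\tau_y(U_y)$ are contained in $\calS\calB_g^{\mathrm{na}}$. Hence $\tau_y \sigma \tau_x^{-1} \in SB$, the finite set of Theorem~\ref{thm:SBfinite}, which forces $\sigma \in \tau_y^{-1} \cdot SB \cdot \tau_x$, a finite subset of $\Out(F_g)$. This proves that the action of $\Out(F_g)$ on $\calS_g^{\mathrm{na}}$ is proper.

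Once properness is established, the Hausdorff property of the quotient $\Out(F_g) \backslash \calS_g^{\mathrm{na}}$ follows from \cite[III, \S4, Proposition~3]{BourbakiTG14}, as recalled in Section~\ref{sec:limitsets}. Finally, to see that the quotient reduces locally to a quotient by a finite group, fix $x \in \calS_g^{\mathrm{na}}$: by Proposition~\ref{prop:stabilizer} and Remark~\ref{rem:stab<=autC}, its stabilizer $H_x \subseteq \Out(F_g)$ is finite (being isomorphic to a subgroup of $\Aut(\Sigma_x)$, hence of $\Aut(\calC_x)$). Properness then allows one to choose an open neighborhood $V_x$ of $x$ such that $\{\sigma \in \Out(F_g) : \sigma(V_x) \cap V_x \ne \emptyset\} = H_x$, and up to shrinking $V_x$ further we may assume $V_x$ is $H_x$-stable. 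The image of $V_x$ in the quotient is then canonically identified with $H_x \backslash V_x$, as required.

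The only delicate point is the reduction step in the first paragraph: I need Corollary~\ref{cor:Gerritzenvoisinage} to apply uniformly on a whole neighborhood of $x$, which is precisely its content (the automorphism $\tau$ is produced together with an open $W$ over which the transformed matrices admit a Schottky figure). Everything else is formal once Theorem~\ref{thm:SBfinite} is in hand.
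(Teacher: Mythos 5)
Your proposal is correct and follows essentially the same route as the paper: reduce to the Schottky-basis locus $\calS\calB_g^{\mathrm{na}}$ via Corollary~\ref{cor:Gerritzenvoisinage} and the openness statement of Corollary~\ref{cor:eqSgna}, conjugate the condition $\sigma(U_x)\cap U_y\ne\emptyset$ into the finite set $SB$ of Theorem~\ref{thm:SBfinite}, and then derive the Hausdorff and local-finite-quotient statements from properness and the finiteness of stabilizers. The only cosmetic difference is that the paper takes $U_x:=\sigma_x^{-1}(\calS\calB_g^{\mathrm{na}})$ outright rather than a smaller neighborhood supplied by Corollary~\ref{cor:Gerritzenvoisinage}; the logic is identical.
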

\begin{proof}
Let $x,y \in \calS_{g}^{\mathrm{na}}$. By Corollary~\ref{cor:Gerritzenvoisinage}, there are $\sigma_{x}, \sigma_{y} \in \Out(F_g)$ such that $\sigma_{x}(x), \sigma_{y}(y) \in \mathcal{SB}_{g}^\mathrm{na}$. By Corollary~\ref{cor:eqSgna}, $\mathcal{SB}_{g}^\mathrm{na}$ is open in~$\calS_{g}^{\mathrm{na}}$. It follows that $U_{x} := \sigma_{x}^{-1}(\mathcal{SB}_{g}^\mathrm{na})$ and $U_{y} := \sigma_{y}^{-1}(\mathcal{SB}_{g}^\mathrm{na})$ are open neighborhoods of~$x$ and~$y$ respectively. By Theorem~\ref{thm:SBfinite}, the set 
\[ \{\tau \in \Out(F_g) : \tau(U_{x})\cap U_{y} \neq \emptyset \} = \{\tau \in \Out(F_g) : \sigma_{y}\tau \sigma_{x}^{-1}(\calS\calB_g^{\mathrm{na}})\cap\calS\calB_g^{\mathrm{na}} \neq \emptyset \} \] 
is finite. It follows that the action is proper, and that $\Out(F_{g}) \backslash \calS_g^{\mathrm{na}}$ is Hausdorff.

Let us now prove the last part of the statement. Let $x \in \calS_{g}^{\mathrm{na}}$. The previous result applied with $y=x$ ensures that there exists an open neighborhood~$U_{x}$ of~$x$ such that 
\[ T := \{\tau \in \Out(F_g) : \tau(U_{x})\cap U_{x} \neq \emptyset \}\]
is finite. Up to shrinking~$U_{x}$, we may assume that $T = \Stab(x)$ and that $U_{x}$ is stable under~$\Stab(x)$. We then have a canonical isomorphism $\Out(F_{g}) \backslash \calS_g^{\mathrm{na}} \simeq \Stab(x) \backslash \calS_g^{\mathrm{na}}$. The result follows.

\end{proof}

\begin{corollary}\label{cor:quotientOutFgfiber}
For each $a\in \calM(\Z)$, the quotient space $\Out(F_{g}) \backslash(\calS_{g}\cap\pr_{\Z}^{-1}(a))$ inherits a structure of $\calH(a)$-analytic space. 
\end{corollary}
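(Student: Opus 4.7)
The plan is to reduce the statement to the existence of analytic quotients by finite groups, which is a classical fact in both the archimedean and non-archimedean settings.

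\textbf{Step 1: Fibral structure.} First I would observe that the fiber $\pr_{\Z}^{-1}(a)$ of $\pr_{\Z}\colon \AA^{3g-3,\an}_{\Z}\to\calM(\Z)$ is canonically identified with $\AA^{3g-3,\an}_{\calH(a)}$ (up to the archimedean normalization discussed in Section~\ref{sec:BerkovichZ}, which accounts for the parameter $\varepsilon$ but does not affect the underlying analytic structure). Since $\calS_g$ is open in $\AA^{3g-3,\an}_{\Z}$ by Theorem~\ref{thm:open}, the intersection $\calS_{g,a} := \calS_g \cap \pr_{\Z}^{-1}(a)$ is open in $\AA^{3g-3,\an}_{\calH(a)}$, hence naturally inherits the structure of a $\calH(a)$-analytic space.

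\textbf{Step 2: The action preserves fibers and is analytic.} The action of $\Out(F_g)$ described in Section~\ref{sec:outer} is given by $\Z$-rational expressions in the Koebe coordinates (as verified explicitly for the Nielsen generators in the proof of Proposition~\ref{prop:outeraction}). Thus it commutes with $\pr_{\Z}$ and restricts to an action of $\Out(F_g)$ by $\calH(a)$-analytic automorphisms on $\calS_{g,a}$.

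\textbf{Step 3: Local reduction to finite group quotients.} By Theorem~\ref{thm:actionOutFga} (if $a$ is archimedean) or Corollary~\ref{cor:actionOutFgna} (if $a$ is non-archimedean), the action of $\Out(F_g)$ on $\calS_g \cap \pr_{\Z}^{-1}(\calM(\Z)^{\mathrm{a}})$, respectively on $\calS_g^{\mathrm{na}}$, is proper, and the quotient is locally a quotient by the (finite) stabilizer of a point. Since the action preserves fibers, the restriction to $\calS_{g,a}$ is still proper with finite stabilizers, and the same local description applies: every point $x\in \calS_{g,a}$ admits a $\Stab(x)$-stable open neighborhood $U_x$ in $\calS_{g,a}$ such that the natural map $\Stab(x)\backslash U_x \to \Out(F_g)\backslash \calS_{g,a}$ is an open embedding.

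\textbf{Step 4: Existence of finite quotients as analytic spaces.} It remains to endow each $\Stab(x)\backslash U_x$ with a structure of $\calH(a)$-analytic space. In the archimedean case, this is the classical theorem of H.~Cartan on quotients of complex analytic spaces by finite groups of holomorphic automorphisms (applied, after shrinking $U_x$, on the complex cover $\rho_{\varepsilon}^{-1}(U_x)\subset\C^{3g-3}$ and descended through the complex conjugation when $\calH(a) = \R$). In the non-archimedean case, since $\calS_{g,a}$ is a good $\calH(a)$-analytic space (being open in an affine space), every point $x$ admits, up to further shrinking, a $\Stab(x)$-stable affinoid neighborhood, and the quotient of an affinoid space by a finite group of analytic automorphisms is again affinoid (see \cite[\S1.3, Remark~(v)]{Berkovich90}); gluing these local quotients gives the desired analytic structure on $\Stab(x)\backslash U_x$, and these local models glue along the open embeddings of Step~3 to produce the global $\calH(a)$-analytic structure on $\Out(F_g)\backslash \calS_{g,a}$.

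The main subtlety is verifying in Step 4 that the local finite-quotient charts are compatible, but this follows formally from the properness of the action, which ensures that the overlap of two such charts is itself a quotient by the intersection of the stabilizers, so the construction is canonical.
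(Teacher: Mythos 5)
Your proposal is correct and follows essentially the same route as the paper: the paper's (much terser) proof likewise reduces to the existence of quotients of $k$-analytic spaces by finite groups, citing Cartan's theorem in the archimedean case and \cite[Proposition~6.3.3/3]{BGR} in the non-archimedean case, and then invokes Theorem~\ref{thm:actionOutFga} and Corollary~\ref{cor:actionOutFgna} for the properness, finite stabilizers, and local finite-quotient description. Your Steps 1--3 and the gluing discussion merely make explicit what the paper leaves implicit.
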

\begin{proof}
Let $(k,\va)$ be a complete valued field. Recall that, if $X$ is a $k$-analytic space and $G$ a finite group acting on~$X$, then the quotient $G\backslash X$ inherits a structure of $k$-analytic space. The archimedean case reduces to the case of complex analytic spaces, which is handled in~\cite[Th\'eor\`eme~4]{CartanQuotient}. The non-archimedean case is a consequence of~\cite[Proposition~6.3.3/3]{BGR}.

With these results at hand, the statement follows from Theorem~\ref{thm:actionOutFga} and Corollary~\ref{cor:actionOutFgna}.
\end{proof}

\begin{remark}\label{rem:quotientOutFgglobal}
Even though the results of this section mostly apply to the local situation, we believe that they should hold globally, which is to say the action of $\Out(F_g)$ on $\calS_g$ is proper and the quotient space $\Out(F_{g}) \backslash \calS_{g}$ inherits a structure of analytic space over~$\Z$. 
\end{remark}

\section{Schottky uniformization for families of curves}\label{sec:modular}

\subsection{The universal Mumford curve over $\Z$}

\begin{definition}
We call \emph{universal Schottky group} the following subgroup of $\PGL_{2}(\calO(\calS_{g}))$:
\[\calG_{g} := \langle M(0,\infty,Y_{1}), M(1,X'_{2},Y_{2}),\dotsc, M(X_{g},X'_{g},Y_{g})\rangle.\]

For $x\in \calS_{g}$, we denote by $L_{x} \subseteq \pi^{-1}(x)\simeq \PP^{1,\an}_{\calH(x)}$ the limit set of~$\Gamma_{x}$. We call \emph{limit set} of~$\calG_{g}$ the set
\[\calL_{g} := \bigcup_{x\in\calS_{g}} L_{x} \subseteq \PP^1_{\calS_{g}}.\]
We set
\[\Omega_{g} := \PP^1_{\calS_{g}} - \calL_{g}.\]
\end{definition}

\begin{theorem}\label{thm:universalaction}
The limit set~$\calL_{g}$ of~$\calG_{g}$ is a closed subset of $\PP^1_{\calS_{g}}$, the action of~$\calG_{g}$ on its complement~$\Omega_{g}$ is free and proper and the quotient map $\calG_{g}\backslash\Omega_{g}\to \calS_{g}$ is proper.
\end{theorem}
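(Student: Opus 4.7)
The three conclusions are local over~$\calS_g$: closedness of~$\calL_g$ in~$\PP^1_{\calS_g}$ is a local statement, as are freeness and properness of the $\calG_g$-action on~$\Omega_g$, and properness of the quotient map follows from local properness together with a finite-cover argument over compact subsets of~$\calS_g$. The plan is therefore to produce, for every $x \in \calS_g$, an open neighborhood $W \subseteq \calS_g$ over which (i)~$\calL_g \cap \pi^{-1}(W)$ is closed in~$\pi^{-1}(W)$, (ii)~$\calG_g$ acts freely and properly on $\Omega_g \cap \pi^{-1}(W)$, and (iii)~the quotient map $\calG_g \backslash (\Omega_g \cap \pi^{-1}(W)) \to W$ is proper.

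For a non-archimedean point~$x$, Corollary~\ref{cor:Gerritzenvoisinage} furnishes such a~$W$ together with an automorphism $\tau \in \Aut(F_g)$ and a Schottky figure over~$W$ adapted to $(N_1,\dotsc,N_g) := \tau \cdot (M_1,\dotsc,M_g) \in \PGL_2(\calO(W))^g$. Since $\tau$ is a group automorphism of~$F_g$, the subgroup of $\PGL_2(\calO(W))$ generated by~$(N_i)$ coincides, fiberwise and hence globally, with the restriction $\calG_g|_W$. The plan is then to apply Proposition~\ref{prop:actionproper} with $S = W$ and generators~$(N_i)$: its conclusions give directly (i), (ii), and (iii), since in its notation $\calL_g \cap \pi^{-1}(W) = \PP^1_W - O$ with $O$ open.

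For an archimedean point~$x$, a classical theorem of Marden warns that Schottky figures in the sense of Definition~\ref{def:Schottkyfigure} may fail to exist, and no direct analogue of Corollary~\ref{cor:Gerritzenvoisinage} is available. The plan is to reduce to the classical complex setting through the homeomorphism~$\Phi$ of Section~\ref{sec:BerkovichZ} and Lemma~\ref{lem:Sga}, which identify the archimedean part of~$\calS_g$ with a quotient of $\calS_{g,\C} \times (0,1]$ by complex conjugation. Over~$\calS_{g,\C}$, conclusions (i)--(iii) are classical: a Schottky group $\Gamma \subseteq \PGL_2(\C)$ acts freely and properly on its domain of discontinuity, its limit set is closed, and the quotient is a compact Riemann surface. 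These statements vary well in families, because each complex Schottky group is surrounded by a family of $2g$ analytic Jordan curves with disjoint interiors (as in Notation~\ref{not:GammaxCx}), and the topological content of Section~\ref{sec:geometricSchottky}---\emph{i.e.} Lemma~\ref{lem:wordF}, Corollary~\ref{cor:intersectiondiscs}, and Proposition~\ref{prop:actionproper}---carries over once discs are replaced by Jordan-curve interiors. The resulting conclusions descend through the conjugation quotient and propagate to all $\eps \in (0,1]$ by Remark~\ref{rem:Schottkyva}.

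Gluing then yields the global statements. Conclusions (i) and (ii) are genuinely local. For (iii), given a compact $K \subseteq \calS_g$, cover $K$ by finitely many compact subsets $K_i \subseteq W_i$; then $\calG_g \backslash (\Omega_g \cap \pi^{-1}(K))$ is a finite union of the compact sets $\calG_g \backslash (\Omega_g \cap \pi^{-1}(K_i))$, which gives the required properness. The main obstacle is the archimedean case: one must either invoke the classical theory of complex Kleinian groups in families, or re-prove the topological part of Section~\ref{sec:geometricSchottky} with Jordan-curve interiors in place of open discs. The non-archimedean case, in contrast, is a direct consequence of the machinery already assembled in Sections~\ref{sec:geometricSchottky} and~\ref{sec:limitsets}.
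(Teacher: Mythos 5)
Your reduction to a local statement over~$\calS_g$, your treatment of the non-archimedean case (Corollary~\ref{cor:Gerritzenvoisinage} to produce a relative Schottky figure for $\tau\cdot(M_1,\dotsc,M_g)$, which generates the same group as $(M_1,\dotsc,M_g)$, followed by Proposition~\ref{prop:actionproper}), and your gluing argument for properness of the quotient map all match the paper's proof. The divergence, and the gap, is in the archimedean case. There the paper does not ``re-prove Section~\ref{sec:geometricSchottky} with Jordan curves'': it invokes quasiconformal deformation theory following Bers. The key ingredient is a local analytic parametrization $\xi\colon U_0\to U$ of a neighborhood of~$x$ in~$\calS_{g,\C}$ by Beltrami coefficients, together with quasiconformal automorphisms~$w^{s\mu}$ of~$\PP^1(\C)$ satisfying $M_i(\xi(s)) = w^{s\mu}\,M_i(x)\,(w^{s\mu})^{-1}$; hence $\Omega_{g,\xi(s)} = w^{s\mu}(\Omega_{g,x})$, and the joint continuity of $(s,z)\mapsto w^{s\mu}(z)$ (Ahlfors--Bers) yields at once that $\calL_{g,\C}$ is closed in the total space and that the sets $w^{s\mu}(F^+_x)$ assemble into a single compact set meeting every orbit over a compact neighborhood in the base, which is exactly what properness of the quotient map requires.

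Your proposed substitute --- ``each complex Schottky group is surrounded by $2g$ Jordan curves with disjoint interiors, and the topological content of Section~\ref{sec:geometricSchottky} carries over'' --- establishes the three conclusions fiber by fiber, but the theorem is a statement about the total space, and fiberwise information does not suffice. Concretely: closedness of $\calL_g$ in $\PP^1_{\calS_g}$ is an upper-semicontinuity statement about how the limit set varies with the base point, and properness of $\calG_g\backslash\Omega_g\to\calS_g$ requires a fundamental set that is uniformly controlled over a compact piece of the base; neither follows from knowing that each $\Gamma_y$ separately has a Jordan-curve fundamental domain. To close the gap along your route you would have to construct Jordan curves varying continuously with $y$ in a neighborhood of $x$ (e.g.\ fixing $C_{i,1}$ and setting $C_{i,-1}(y):=\gamma_i(y)(C_{i,1})$, then checking disjointness of interiors for $y$ near $x$) \emph{and} re-prove the shrinking estimates of Proposition~\ref{prop:radiusn} and Corollary~\ref{cor:intersectiondiscs}, whose proofs use the disc structure through Lemma~\ref{lem:quotient} and do not transfer verbatim to Jordan domains. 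You correctly identify this as ``the main obstacle,'' but you leave it unresolved, whereas the quasiconformal argument is precisely the device the paper uses to dispose of it.
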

\begin{proof}
It is enough to prove that every $x\in \calS_{g}$ admits a neighborhood~$U_{x}$  such that $\calL_{g} \cap \pi^{-1}(U_{x})$ is a closed subset of~$\PP^1_{U_{x}}$, the action of~$\calG_{g}$ on $\Omega_{g} \cap \pi^{-1}(U_{x})$ is free and proper and the quotient map $\calG_{g}\backslash(\Omega_{g}\cap \pi^{-1}(U_{x}))\to U_{x}$ is proper. Let $x\in \calS_{g}$. 

\medbreak

Assume that $x$ is archimedean. Arguing as in Remark~\ref{rem:SgA} and the proof of Lemma~\ref{lem:Sga}, we may replace~$\calS_{g}$ by the complex Schottky space $\calS_{g,\C}$. We will add subscripts~$\C$ to denote the various objects under consideration in this setting. 

By \cite[Proposition~3]{Bers75}, $\calL_{g,\C}$ is closed. The main ingredient in the proof is the existence of an open neighborhood $U_{0}$ of~0 in~$\C^{3g-3}$, a neighborhood~$U$ of~$x$ in~$\calS_{g,\C}$, an analytic isomorphism $\xi\colon U_{0}\to U$, and Beltrami coefficients $\mu_{1},\dotsc,\mu_{3g-3}\colon \PP^1(\C)\to \C$ such that, for each $s\in U_{0}$, we have
\[M_{i}(\xi(s)) = w^{s\mu}\, M_{i}(x) \,(w^{s\mu})^{-1} \textrm{ for } 1\le i\le 3g-3,\]
where $w^{s\mu}$ denotes the unique quasiconformal automorphism of~$\PP^1(\C)$ with Beltrami coefficient $s\mu = \sum_{i=1}^{3g-3} s_{i}\mu_{i}$ that fixes~0, 1 and~$\infty$. One may then relate the domain of discontinuity~$\Omega_{g,x}$ over~$x$ to that over~$\xi(s)$, for some  
$s\in U_{0}$, by $\Omega_{g,\xi(s)} = w^{s\mu}(\Omega_{g,x})$. It now follows from the continuity properties of~$w^{s\mu}$ (see \cite[Theorem~8]{AhlforsBers60}) that~$\Omega_{g}$ is open, or, equivalentely, that~$\calL_{g}$ is closed.

Similarly, let $F^+_{x}$ be a compact subset of $\Omega_{g,x}$ intersecting every orbit of the action of~$\calG_{g,x}$. Let~$V_{0}$ be a compact neighborhood of~0 in~$\C^{3g-3}$. Then, the set 
\[ F^+ := \bigcup_{s\in V_{0}} w^{s\mu}(V_{0})\]
is a compact subset of~$\Omega_{g}$ that intersects every orbit of the action of~$\calG_{g,\xi(s)}$ for $s\in V_{0}$. It follows that $\calG_{g}\backslash (\Omega_{g}\cap \pi^{-1}(V_{0}))$ is compact. 

Finally, the group $\calG_{g,\C}$ acts on~$\Omega_{g,\C}$ by preserving the fibers of the morphism $\Omega_{g,\C}\to \calS_{g,\C}$. Since its action is free and proper on each fiber, it is free and proper on the whole~$\Omega_{g,\C}$. 


\medbreak

Assume that $x$ is non-archimedean. The result then follows from Corollary~\ref{cor:Gerritzenvoisinage} and Proposition~\ref{prop:actionproper}.
\end{proof}

It follows from Theorem \ref{thm:universalaction} that the quotient
\[\calC_{g} := \Omega_{g}/\calG_{g}\]
makes sense as an analytic space over~$\Z$. We call it the \emph{universal Mumford curve} over~$\Z$.
This definition is motivated by the following corollary, summarizing the results of this section.
\begin{corollary}\label{cor:universalunif}
There is a commutative diagram in the category of analytic spaces over~$\Z$:
\[\begin{tikzcd}
\Omega_{g} \arrow[dd, "\pi"] \arrow[rd]& \\
& \calC_{g} \arrow[dl]\\
\calS_{g}&
\end{tikzcd}\]
where the quotient map $\Omega_{g} \to \calC_{g}$ is a local isomorphism and the natural map $\calC_{g}\to \calS_{g}$ is proper and smooth of relative dimension~1.\footnote{The property ``smooth of relative dimension~1'' has not yet been defined in the setting of Berkovich spaces over arbitrary Banach rings. However, it certainly holds here in any reasonable sense, since the fibration~$\psi$ is locally isomorphic to the relative line.}
Given a point $x\in \calS_{g}$, its preimage in $\calC_{g}$ is a curve over $\calH(x)$ which is isomorphic to the curve uniformized by the Schottky group $\Gamma_x$.
\end{corollary}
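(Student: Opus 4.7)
The plan is to deduce all four assertions of the corollary from Theorem~\ref{thm:universalaction} together with the general formalism of quotients by free, proper actions introduced in Section~\ref{sec:limitsets}. The statements are essentially a packaging of what has already been established, so no new ingredient is needed; the only points requiring care are the passage from the topological quotient to the analytic quotient and the compatibility of this quotient with restriction to fibers.

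First, Theorem~\ref{thm:universalaction} asserts that $\calL_{g}$ is closed in $\PP^1_{\calS_{g}}$, so $\Omega_{g}$ is an open subset and therefore naturally inherits a structure of analytic space over~$\Z$. The same theorem provides that the action of $\calG_{g}$ on~$\Omega_{g}$ is free and proper, so by the discussion preceding Proposition~\ref{prop:actionproper} the quotient map $q\colon \Omega_{g} \to \calC_{g} := \calG_{g}\backslash \Omega_{g}$ is a local homeomorphism. Since every element of $\calG_{g}$ acts by an analytic automorphism of $\PP^1_{\calS_{g}}$, one may transport the structure sheaf through local sections of~$q$ and endow~$\calC_{g}$ with the unique structure of analytic space over~$\Z$ making~$q$ a local isomorphism of analytic spaces. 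The projection $\pi\colon \Omega_{g} \to \calS_{g}$ is $\calG_{g}$-invariant, hence factors uniquely through~$q$, producing the morphism $\calC_{g} \to \calS_{g}$ and the commutativity of the diagram.

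Next, the properness of $\calC_{g} \to \calS_{g}$ is precisely the last clause of Theorem~\ref{thm:universalaction}. The smoothness of relative dimension~$1$ is a local assertion on the source: since~$q$ is a local isomorphism and $\Omega_{g}$ is open in $\PP^1_{\calS_{g}}$, the morphism $\calC_{g} \to \calS_{g}$ is, locally on~$\calC_{g}$, the restriction of the relative projective line $\PP^1_{\calS_{g}} \to \calS_{g}$ to an open subset. This is enough in any reasonable sense of smoothness for analytic spaces over Banach rings, as acknowledged in the footnote.

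The main (mild) point is the fiber description. Let $x \in \calS_{g}$. The construction of the quotient commutes with base change along $\calM(\calH(x)) \to \calS_{g}$: indeed, $\Omega_{g} \cap \pi^{-1}(x)$ is stable under the action of $\calG_{g}$, which restricts to an action by $\Gamma_{x} = \calG_{g}(x)$, and this restricted action remains free and proper. Combining this with Remark~\ref{rem:SgA}, which identifies $\Gamma_{x}$ together with its limit set $L_{x}$ to the Schottky group and limit set attached to~$x$, one sees that the fiber of $\calC_{g} \to \calS_{g}$ over~$x$ is $\Gamma_{x}\backslash (\PP^{1,\an}_{\calH(x)} - L_{x})$, which is by definition the curve uniformized by the Schottky group~$\Gamma_{x}$.
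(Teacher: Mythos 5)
Your proposal is correct and follows exactly the route the paper intends: the corollary is stated as a summary of Theorem~\ref{thm:universalaction} (the paper gives no separate proof), and your unpacking — openness of $\Omega_{g}$ from closedness of $\calL_{g}$, the local-isomorphism structure on the quotient via the free and proper action as in Section~\ref{sec:limitsets}, properness from the last clause of the theorem, smoothness by locality on the source, and the fiber description from the fiberwise nature of the action — is precisely the intended argument.
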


\subsection{Moduli spaces of Mumford curves}
The existence of the universal Mumford curve $\calC_g$ raises the question of the existence of a moduli space of Mumford curves and its connections with the moduli space of stable curves.
Over a non-archimedean fiber this space is obtained as the quotient of $\calS_g$ by the action of $\Out(F_g)$ described in section \ref{sec:outer}, which was previously known in the rigid analytic context from work of L. Gerritzen \cite{Gerritzen81} and F. Herrlich \cite{Herrlich84}, among others.
\begin{remark}
Over an archimedean fiber, the quotient of $\calS_g$ by the action of $\Out(F_g)$ captures more than isomorphism classes of Riemann surfaces, as there are different Schottky groups that uniformize the same complex curve.
In the case $g=1$, one gets a trivial action, and the space $\calS_{1,\C}$ is the pointed open unit disc over $\C$, image of the Poincar\'e open-half plane under the map $z \mapsto e^{2i\pi z}$.
As such, $\calS_{1,\C}$ is a cover of the modular curves $X_0(N)$ and $X_1(N)$ for every $N$. 
Hence a point of $\calS_{1,\C}$ not only determines an isomorphy class of an elliptic curve, but also takes into account its $N$-level structures for every $N$.
It would be interesting to investigate whether the space $\Out(F_g) \backslash \calS_g$ over the complex numbers can bear a similar interpretation for curves of higher genus.
\end{remark}

In what follows, we consider only the non-archimedean case: let $a\in\calM(\Z)$ be a non-archimedean point, and consider the fiber $\calS_{g,a}=\calS_g \cap \pr^{-1}_\Z(a)$ over $a$ of the Schottky space.
We denote by $\Mumf_{g,a}$ the quotient of $\calS_{g,a}$ by the continuous action of $\Out(F_g)$ defined in \ref{def:outeraction}.
This quotient is a $\calH(a)$-analytic space by Corollary \ref{cor:quotientOutFgfiber}. Since any element of $\Out(F_g)$ acts on the marking but does not affect the conjugacy class of a Schottky group, each point $x \in \Mumf_{g,a}$ corresponds to a unique conjugacy class of a Schottky group $\Gamma_x \subset PGL_2(\calH(x))$.
Moreover, by \cite[Corollary (4.11)]{Mumford72}, the isomorphism class of a Mumford curve over a non-archimedean field determines the conjugacy class in $\PGL_2$ of its Schottky group.
This shows that, for every valued extension $k$ of $\calH(a)$ the $k$-points of $\Mumf_{g,a} \times_{\calH(a)} k$ are in 1-to-1 correspondence with isomorphism classes of Mumford curves of genus $g$ defined over $k$.

\subsubsection{Relationship with geometric group theory and tropical moduli}\label{subsec:tropical}

The existence of a faithful action of $\Out(F_g)$ on $\calS_g$ with finite stabilizers is reminescent of Culler-Vogtmann definition of the \emph{outer space} in the context of geometric group theory, as introduced in their seminal paper \cite{CullerVogtmann86}.
This is not a coincidence, and in this section we show that we can indeed relate the topology of $\calS_g$ with that of the outer space.

Let us recall the definition of the Culler-Vogtmann outer space.
We fix $g\geq 2$ and an abstract graph $R_g$ with one vertex and $g$ edges, identifying its fundamental group $\pi_1(R_g)$ with $F_g$.
A finite connected graph $G$ is said to be \emph{stable} if all its vertices have degree $\geq$ 3.
A \emph{marking} on a stable graph $G$ of Betti number $g$ is a homotopy equivalence $m:R_g \to G$ or, equivalently, the choice of a group isomorphism between $F_g$ and the fundamental group $\pi_1(G)$.
Two pairs $(G,m)$ and $(G', m')$ each consisting of a stable metric graph and a marking are \emph{equivalent} if there is an isometry $s:G \to G'$ such that $s \circ m$ is homotopic to $m'$.
For a given marked graph $(G,m)$, the isomorphism $F_g \cong \pi_1(G)$ determines an action of $F_g$ on the universal cover $T$ of $G$, a tree naturally endowed with a metric, denoted by $d_T$.
The \emph{translation length function} of $(G,m)$ is the function $\ell_G:F_g \to \R$
associating to any $\sigma \in F_g$ the quantity $\ell_G(\sigma):=\min_{x\in T}\{ d_T(\sigma(x),x)\}$.
Let $CV_g$ denote the set of equivalence classes of stable marked graphs endowed with a metric such that the sum of edge lengths is unitary, and let $\calC$ denote the set of conjugacy classes in $F_g$.
The rule associating with a marked tree its translation length function defines an embedding $CV_g \hookrightarrow \R^{\calC}$ into the infinite dimensional real vector space $\R^{\calC}$.
Thanks to this fact, $CV_g$ inherits a topology from the product topology on $\R^{\calC}$. 
The topological space so obtained is called the \emph{Culler-Vogtmann outer space}, and it is also denoted by $CV_g$.

The original definition of the outer space can be found in \cite[\S0]{CullerVogtmann86}, where more details about the length functions and the topology of the outer space are given.
In what follows, it will be useful to drop the condition that the marked graphs have unitary sum of edge lengths. 
We will then denote by $CV_g'$ the \emph{unprojectivized outer space} $CV_g\times \R_{>0}$, which parametrizes marked graphs with arbitrary edge lengths.
There is a natural continuous action of $\Out(F_g)$ on $CV_g$, which extends to $CV_g'$ using the trivial action on the factor~$\R_{>0}$.
The quotient space $CV_g'/\Out(F_g)$ has a canonical injection in the moduli space of abstract weighted tropical curves $M_g^{\mathrm{trop}}$, whose image is given by those tropical curves that have weight zero at every vertex.
The induced map $CV_g'\to M_g^{\mathrm{trop}}$ is continuous and corresponds to forgetting the marking on a given metric graph.
For more details about equivalent definitions of~$M_g^{\mathrm{trop}}$ and its properties, we refer to \cite[\S 3]{BrannettiMeloEtAl11}, while a comparison between $CV_g$ and $M_g^{\mathrm{trop}}$ is discussed in \cite[\S 5.2]{Caporaso13}.

An isomorphism of Mumford curves induces an isometry between their skeletons.
This allows to define a continuous function $\Mumf_{g,a}\to M_g^{\mathrm{trop}}$ sending (the class of) a Mumford curve in (the class of) its skeleton.
For a point $x\in \Mumf_{g,a}$, recall that the Schottky uniformization $\big(\pana{\calH(x)} - \calL_x \big) \to \calC_x$ restricts to a universal cover of the skeleton $\Sigma_x$ (see \cite[Theorem 6.4.18]{VIASMII}).
Via this restriction, the Schottky group $\Gamma_x\cong \pi_1(\calC_x)$ can be identified with the topological fundamental group $\pi_1(\Sigma_x)$.

\begin{theorem}\label{thm:Schottky-outer-space}
There is a continuous surjective function
\[\phi:\calS_{g,a}\longrightarrow CV_g' \times_{M_g^{\mathrm{trop}}} \Mumf_{g,a}.\]
\end{theorem}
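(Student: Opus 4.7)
Define $\phi$ by sending $x\in\calS_{g,a}$ to $\phi(x) := ((\Sigma_x, m_x), [x])$, where $[x]\in \Mumf_{g,a}$ is the orbit class and $(\Sigma_x, m_x)\in CV_g'$ is the skeleton of $\calC_x$ with the marking $m_x \colon R_g \to \Sigma_x$ induced by the Schottky basis as follows: this basis gives an isomorphism $\varphi_x \colon F_g \xrightarrow{\sim} \Gamma_x$, $e_i \mapsto M_i(x)$, and the Schottky uniformization restricts to a Galois cover $T_{\Gamma_x} \to \Sigma_x$ with deck group~$\Gamma_x$, providing a canonical identification $\Gamma_x \simeq \pi_1(\Sigma_x)$ up to conjugation. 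The pair $\phi(x)$ lies in the fiber product because both $[x]$ (via the skeleton map) and $(\Sigma_x, m_x)$ (via forgetting the marking) project to $\Sigma_x$ with weight~$0$ at every vertex in $M_g^{\mathrm{trop}}$, the weights being zero because every component of the stable reduction of a Mumford curve is rational.

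For continuity, the projection to $\Mumf_{g,a}$ is the quotient map and hence continuous. For the projection to $CV_g' \subset \R^{\calC}$ endowed with the product topology, it suffices to show that, for each $\sigma \in F_g$, the translation length $\ell_{\Sigma_x}(\sigma)$ depends continuously on $x$. This is trivial for $\sigma = 1$. For $\sigma \neq 1$, $\varphi_x(\sigma)$ is a fixed word in the $M_i(x)^{\pm 1}$ and thus depends analytically on $x$; it is loxodromic by Corollary~\ref{cor:loxodromic}, and its multiplier $\beta_\sigma(x)$ is an analytic function on~$\calS_{g,a}$ by Proposition~\ref{prop:Koebe}. Identifying $T_{\Gamma_x}$ with the universal cover of $\Sigma_x$ and applying Lemma~\ref{lem:betamodulus} to the axis of $\varphi_x(\sigma)$ gives $\ell_{\Sigma_x}(\sigma) = -\log|\beta_\sigma(x)|$, which is continuous and positive.

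For surjectivity, take a point $((G, m), [y])$ of the fiber product and pick any lift $z\in \calS_{g,a}$ of $[y]$. Since $G$ and $\Sigma_z$ have the same image in $M_g^{\mathrm{trop}}$, there exists an isometry $f \colon G \to \Sigma_z$. The composition
\[
\psi \colon F_g \xrightarrow{m_*} \pi_1(G) \xrightarrow{f_*} \pi_1(\Sigma_z) \simeq \Gamma_z
\]
is well-defined modulo inner automorphisms of $\Gamma_z$, and comparing it with the Schottky marking~$\varphi_z$ yields a well-defined class $\tau := \varphi_z^{-1}\circ\psi \in \Out(F_g)$. By the definition of the $\Out(F_g)$-action of Section~\ref{sec:outer}, the point $x := \tau(z)$ lies in $\calS_{g,a}$ and satisfies $[x] = [z] = [y]$; its Schottky marking~$\varphi_x$ is conjugate to $\varphi_z \circ \tau = \psi$, so that $(\Sigma_x, m_x)$ is equivalent to $(G, m)$ in $CV_g'$ and $\phi(x) = ((G, m), [y])$.

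The main technical difficulty is the continuity statement: the combinatorial type of $\Sigma_x$ is not locally constant on $\calS_{g,a}$ (edges of the skeleton can contract and the dual graph of the stable reduction can change), which would complicate any attempt to describe the marked graph $(\Sigma_x, m_x)$ combinatorially. The key insight is that the topology on $CV_g'$ is determined by length functions on a fixed set of conjugacy classes of $F_g$, reducing continuity to the analyticity of the multipliers $\beta_\sigma$, which is intrinsic and requires no case analysis by combinatorial type.
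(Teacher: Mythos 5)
Your proposal is correct and follows essentially the same route as the paper's proof: the map is built from the quotient map to $\Mumf_{g,a}$ and the skeleton-with-marking map to $CV_g'$, continuity is reduced via the length-function embedding $CV_g'\hookrightarrow\R^{\calC}$ to the continuity of the multipliers of words in the $M_i(x)^{\pm1}$ (Proposition~\ref{prop:Koebe} together with Lemma~\ref{lem:betamodulus}), and surjectivity is obtained by choosing an isometry between the given graph and the skeleton and correcting the marking by an element of $\Out(F_g)$. The only cosmetic difference is that you record the translation length additively as $-\log|\beta_\sigma(x)|$ where the paper writes the multiplicative length $|\beta|^{-1}$; this does not affect the argument.
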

\begin{proof}
Let us consider the following:
\begin{itemize}
\item The continuous function $\phi_1: \calS_{g,a} \to \Mumf_{g,a}$ given by the quotient by the action of $\Out(F_g)$. Note that continuity descends from Proposition \ref{prop:outeraction}; 
\item The continuous function $\phi_2:\calS_{g,a} \to CV_{g}'$ given by assigning to each $y\in \calS_{g,a}$ the metric graph corresponding to the skeleton~$\Sigma_{y}$ of the Mumford curve $\calC_y$ and the marking as follows: recall from Lemma \ref{lem:kpoints} that the point $y$ can be identified with the conjugacy class of a morphism $\varphi_y:F_g \hookrightarrow \mathrm{PGL}_2(\calH(y))$, whose image is the fundamental group $\pi_1(\calC_y)$, and associate with $y$ the marking corresponding to the isomorphism $F_g \cong \Gamma_y$ induced by $\varphi_y$.
To prove continuity for $\phi_2$, we prove that the composite function $\calS_{g,a} \to \R^{\calC}$ is continuous.
This amounts to prove that the following:
if $\sigma\in \Aut(F_g)$ is defined by $\sigma(e_i) = e_{j_{i,0}}^{n_{i,0}} \dotsb e_{j_{i,r_{i}}}^{n_{i,r_{i}}}$, for some $r_{i} \in \N$, $j_{i,0},\dotsc,j_{i,r_{i}} \in \{1,\dotsc,g\}$, $n_{i,0},\dotsc,n_{i,r_{i}} \in \Z$, the assignment
\[ y \mapsto \ell_{\Sigma_{y}}(M_{j_{i,0}}(y)^{n_{i,0}} \dotsb M_{j_{i,r_{i}}}(y)^{n_{i,r_{i}}})\]
defines a continuous function $L:\calS_{g,a} \to \R$.
By Lemma~\ref{lem:betamodulus}, the length $\ell_{\Sigma_{y}}(M)$ for $M\in \Gamma_y$ coincides with $|\beta|^{-1}$, where $\beta$ is the multiplier of~$M$.
The result then follows from Proposition~$\ref{prop:Koebe}$, that ensures that the multiplier of the element $M_{j_{i,0}}(y)^{n_{i,0}} \dotsb M_{j_{i,r_{i}}}(y)^{n_{i,r_{i}}}$ is a continuous function in the Koebe coordinates of $y$.

\end{itemize}
The function $\phi_2$ is $\Out(F_g)$-equivariant, and then agrees with $\phi_1$ on $M_g^{\mathrm{trop}}$. 
By the universal property of the fiber product, the pair $(\phi_1,\phi_2)$ defines a continuous function 
\[\phi:\calS_{g,a}\longrightarrow CV'_g \times_{M_g^{\mathrm{trop}}} \Mumf_{g,a}.\]

We now prove that the function $\phi$ is surjective.
Let $\big([G,m],[C]\big) \in CV'_g \times_{M_g^{\mathrm{trop}}} \Mumf_{g,a}$ be a pair consisting of an equivalence class of a marked graph and an isomorphism class of a Mumford curve of genus $g$, such that the graph $G$ is isometric to the skeleton of $C$.
We fix an isometry between $G$ and the skeleton of $C$, inducing an isomorphism $j \colon \pi_{1}(G) \xrightarrow[]{\sim} \pi_{1}(C)$.
Let us denote by $y$ the point of $\calS_{g,a}$ whose underlying Schottky group is $\pi_{1}(C)$, with marking given by the image of the basis of $F_g$ under the composition of isomorphisms $F_g\xrightarrow[]{m} \pi_{1}(G) \xrightarrow[]{j} \pi_1(C)$.
Then $\phi_1(y)=[C]$ and $\phi_2(y)=[G,m]$. 
Hence $\phi$ is surjective.
%
%
%
%
\end{proof}

\begin{remark}\label{rem:marking}
In the proof of the surjectivity of the continuous function $\phi$ above, a different choice of isometry $j$ between $G$ and the fundamental group $\pi_1(C)$ might determine a different preimage in $\calS_{g,a}$ of the pair $\big([G,m],[C]\big)$.
For example, when $G$ is a rose with $g$ loops all of the same length, a permutation of the loops corresponds to a permutation of the basis $\{\gamma_1, \dots, \gamma_g\}$ of the Schottky group $\pi_1(C)$.
In most cases the element of $\Out(F_g)$ corresponding to such a permutation does not stabilize a point in $\calS_{g,a}$, for instance when two distinct elements $\gamma_i \neq \gamma_j$ have distinct multipliers $\beta_i\neq\beta_j$.
This shows in particular that the function $\phi$ is not injective.
\end{remark}

\begin{remark}\label{rmk:Martin}
In \cite{Ulirsch21}, Ulirsch constructs a non-archimedean analogue of Teichm\"uller space $\overline{\calT}_g$, using the tropical Teichm\"uller space that Chan, Melo, and Viviani introduced in \cite{ChanMeloEtAl13} and tools from logarithmic geometry.
The space $\overline{\calT}_g$ is a Deligne-Mumford analytic stack over a non-archimedean algebraically closed field $k$ whose points morally correspond to pairs $(C,\phi)$ consisting of a stable projective curve $C$ over a valued extension of $k$ and an isomorphism $\phi:\pi_1^{\mathrm{top}}(C^{\mathrm{an}}) \cong F_{b(C)}$, where $b(C)$ is the first Betti number of $C^{\mathrm{an}}$.
When restricting this construction on the locus of Mumford curve, one retrieves a space $\calT_g^{Mum}$, and a corollary of Ulirsch's construction is the realization of $CV_g$ as a strong deformation retract of $\calT_g^{Mum}$.
Moreover, the fibered product $CV_g' \times_{M_g^{\mathrm{trop}}} \Mumf_{g,a}$ is identified (after a suitable base-change to an algebraically closed field) with the locus of Mumford curves inside the coarse moduli space of $\overline{\calT}_g$.
As a result, Theorem \ref{thm:Schottky-outer-space} and Remark \ref{rem:marking} clarify the relationship between non-archimedean fibers of the Schottky space $\calS_g$ over $\Z$ and Ulirsch's $\calT_g^{Mum}$.
\end{remark}

\newpage

\bibliographystyle{alpha}
\bibliography{../MumfordBiblio}

\end{document}